\newtheorem{lemma}{Lemma}[section]
\newtheorem{theorem}{Theorem}[section]
\newtheorem{coro}{Corollary}[section]
\newtheorem{definition}{Definition}[section]
\newtheorem{problem}{Problem}[section]
\newtheorem{remark}{Remark}[section]
\newtheorem{question}{Question}[section]
\newtheorem{example}{Example}[section]
\newtheorem{assume}{Assumption}[section]
\newtheorem{exo}{Exercise}[section]
\definecolor{metrorange}{RGB}{235,131,32}
\definecolor{myblue}{RGB}{51,51,178}
\definecolor{myred}{RGB}{189,26,26}
\definecolor{mygreen}{RGB}{0,128,0}
\renewcommand{\baselinestretch}{1.3}
\numberwithin{equation}{section}
\newcommand{\dZ}{\mathbb {Z}}
\newcommand{\id}{\mathrm {id}}
\newcommand{\supp}{\mathrm {supp}}
\newcommand{\Lip}{\mathrm {Lip}}
\newcommand{\TV}{\mathrm {TV}}
\newcommand{\dN}{\mathbb {N}}
\newcommand{\dR}{\mathbb {R}}
\newcommand{\dC}{\mathbb {C}}
\newcommand{\dS}{\mathbb {S}}
\newcommand{\EE}{{\mathbb{E}}}
\newcommand{\bE}{{\mathbf{E}}}
\newcommand{\Ent}{\mathrm{Ent}}
\newcommand{\Varent}{\mathrm{Varent}}
\newcommand{\Var}{\mathrm{Var}}
\newcommand{\Cov}{\mathrm{Cov}}
\newcommand{\PP}{{\mathbb{P}}}
\newcommand{\dd}{\,\mathrm{d}}
\newcommand{\ee}{\mathfrak {e}}
\newcommand{\cE}{\mathscr {E}}
\newcommand{\cF}{\mathcal {F}}
\newcommand{\cN}{\mathcal {N}}
\newcommand{\dX}{\mathbb{X}}
\newcommand{\cP}{\mathcal{P}}
\newcommand{\dtv}{{\rm d}_{\textsc{tv}}}
\newcommand{\trel}{{\rm t}_{\textsc{rel}}}
\newcommand{\tmix}{{\rm t}_{\textsc{mix}}}
\newcommand{\wmix}{{\rm w}_{\textsc{mix}}}
\newcommand{\dist}{\mathrm{dist}}
\newcommand{\diam}{\mathrm{diam}}
\title{Modern aspects of Markov chains:\\entropy, curvature and the cutoff phenomenon}
\author{Justin Salez}
\date{}
\begin{document}
\maketitle

\begin{abstract}
The cutoff phenomenon is an abrupt transition from out of equilibrium to equilibrium undergone by certain Markov processes in the limit where the size of the state space tends to infinity: instead of decaying gradually over time, their distance to equilibrium remains close to its maximal value for a while and suddenly drops to zero as the time parameter reaches a critical threshold. Discovered four decades ago in the context of card shuffling, this surprising phenomenon has since then been observed in a variety of models, from random walks on groups or complex networks to interacting particle systems. It is now believed to be universal among fast-mixing high-dimensional processes. Yet, current proofs are heavily model-dependent, and identifying the general conditions that trigger a cutoff remains one of the biggest challenges in the quantitative analysis of finite Markov chains.  The purpose of these lecture notes is to  provide a self-contained introduction to this fascinating question, and to describe its recently-uncovered relations with entropy, curvature and concentration.\end{abstract}
\newpage
\renewcommand{\baselinestretch}{1.26}\normalsize 
{\tableofcontents} 
\renewcommand{\baselinestretch}{1.3}\normalsize 
\newpage
\section*{Acknowledgment}
The present lecture notes grew up from a series of mini-courses given in various  schools and conferences, including the Oberwolfach Seminar (Oberwolfach, November 2021), the  Latin American Congress of Probability and Mathematical Statistics (S\~ao Paulo, July 2023), the Clay-Heilbronn Summer School (Bristol, July 2024), and the Saint-Flour Probability Summer School (Saint-Flour, July 2025). The author warmly thanks the organizers and hosting institutes for the opportunity to present this material to a wide audience. The research that led to the  main results  has been supported by the ERC consolidator grant CUTOFF (101123174). Views and opinions expressed are however those of the authors only and do not necessarily reflect those of the European Union or the European Research Council Executive Agency. Neither the European Union nor the granting authority can be held responsible.
\newpage
\section{The cutoff phenomenon}
Markov processes are widely used to model complex random evolutions in our physical world. They also play an important role in modern algorithms for the exploration of massive networks, the enumeration of large-scale combinatorial structures, or the generation of random high-dimensional data. 
The broad aim of this course is to investigate the remarkable nature of their transition from out of equilibrium to equilibrium. In this first chapter, we introduce the  framework, notation and terminology that will allow us to formalize this question. 
\subsection{Continuous-time Markov chains}
The central object of our study is a stochastic matrix $T$ on a finite state space $\dX$. The term \red[stochastic] means that $T(x,y)\ge 0$ for all $x,y\in\dX$ and that
\begin{eqnarray*}
\forall x\in\dX,\qquad \sum_{y\in\dX}T(x,y) & = & 1.
\end{eqnarray*}
We call $T$ the \red[transition matrix] and think of it as describing the  evolution of a random system which jumps from its current location $x\in\dX$ to a new  state $y\in\dX$ with probability $T(x,y)$, independently of the past. If we start from a random state $X_0$ drawn according to some law $\mu_0\in\cP(\dX)$ and iterate this simple stochastic rule, we obtain a random sequence of states $(X_k)_{k\in \dN}$ whose finite-dimensional marginals admit the following product form:
\begin{eqnarray*}
\PP(X_0=x_0,\ldots,X_k=x_k) & = & \mu_0(x_0)T(x_0,x_1)\cdots T(x_{k-1},x_k),
\end{eqnarray*}
 for any $k\in\dN$ and any $(x_0,\ldots,x_k)\in\dX^{k+1}$. Stochastic processes of this form are famously known as \red[Markov chains]. In particular, summing over all trajectories that end up in a given state shows that the law of  $X_k$ is simply
\begin{eqnarray*}
\mu_k & = & \mu_0 T^k, 
\end{eqnarray*}
in the usual sense of vector-matrix multiplication, with $T^k$ denoting the $k-$th power of $T$ and with probability measures on $\dX$ being naturally viewed as row vectors.  

It will be slightly more convenient   to embed the above dynamics into continuous time by introducing independent mean-one exponential  waiting times between the jumps. This gives rise to a continuous-time Markov process, denoted by $(X_t)_{t\ge 0}$. Since the number of transitions occurring in the time interval $[0,t]$ is a Poisson random variable with mean $t$, the probability of the outcome $\{X_t=y\}$, given the initial datum $\{X_0=x\}$ is 
\begin{eqnarray}
\label{def:Pt}
P_t(x,y) &  := & e^{-t}\sum_{k=0}^\infty \frac{T^k(x,y)t^k}{k!}.
\end{eqnarray}
In exponential matrix notation, this rewrites compactly as $P_t=e^{tL}$, where  $L:=T-{\rm Id}$ is the \red[infinitesimal generator] of the process. 
Note that we then have the composition rule 
\begin{eqnarray}
\label{def:semi-group}
\forall s,t\in[0,\infty),\qquad P_tP_s & = &P_{t+s},
\end{eqnarray}
as well as the continuity property $P_t\to \rm Id$ as $t\to 0$. A family of  stochastic matrices $(P_t)_{t\ge 0}$ satisfying those two properties is called a \red[Markov semi-group]. In fact, \emph{any} Markov semi-group on $\dX$ can be put into the form (\ref{def:Pt}) after a trivial time re-scaling, making our story  quite general. For future use, let us note that $(P_t)_{t\ge 0}$ solves the \red[Kolmogorov equation]
\begin{eqnarray}
\label{kolmogorov}
\frac{\dd P_t}{\dd t} & = & LP_t\ = \ P_tL.
\end{eqnarray}

\begin{assume}[Irreducibility]\label{assume:irred}
From now on, we will always assume that $T$ is \red[irreducible]:
\begin{eqnarray*}
\forall (x,y)\in\dX^2,\qquad \exists k\in\dN, \qquad T^k(x,y)>0. 
\end{eqnarray*}
\end{assume}
In graph-theoretical terms, this condition expresses the strong connectedness of the \red[diagram of the chain], defined as the directed graph with vertex set $\dX$ and edge set 
\begin{eqnarray}
\label{def:E}
E & := & \left\{(x,y)\in\dX^2\colon T(x,y)>0\right\}.
\end{eqnarray}Note that irreducibility also means that all entries of $P_t$ are positive as soon as $t>0$. 
Our starting point is the following fundamental result, whose proof  will be given later.
\begin{theorem}[Mixing]\label{th:mixing}There is a unique probability law $\pi\in\cP(\dX)$ satisfying the \red[stationarity equation] $\pi T=\pi$. Moreover, $\pi$ has full support, and it is a global attractor:
\begin{eqnarray*}
\forall \mu\in\cP(\dX),\quad \mu P_t & \xrightarrow[t\to+\infty]{} & \pi.
\end{eqnarray*} 
\end{theorem}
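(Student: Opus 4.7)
The plan is to leverage the fact---already noted in the excerpt---that irreducibility implies strict positivity of every entry of $P_{t_0}$ for every $t_0>0$. This positivity is the engine of the whole proof: it allows a Doeblin-type minorization that turns $P_{t_0}$ into a strict contraction in total variation, from which existence, uniqueness, full support and convergence will all follow by soft arguments.

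Concretely, I would fix any $t_0>0$ and set $\delta:=\min_{x,y\in\dX}P_{t_0}(x,y)>0$. Each row of $P_{t_0}$ then admits the convex decomposition
\[
P_{t_0}(x,\cdot)\ =\ |\dX|\delta\cdot\unif_{\dX}(\cdot)\ +\ (1-|\dX|\delta)\,Q(x,\cdot)
\]
for a suitable stochastic matrix $Q$. Since $\mu-\nu$ is a signed measure of total mass zero, its pairing with the $x$-independent row $\unif_{\dX}$ vanishes, yielding the one-step contraction
\[
\dtv(\mu P_{t_0},\nu P_{t_0})\ \le\ (1-|\dX|\delta)\,\dtv(\mu,\nu)\qquad\forall\,\mu,\nu\in\cP(\dX).
\]

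Iterating this inequality, for any initial law $\mu_0\in\cP(\dX)$ the sequence $(\mu_0 P_{nt_0})_{n\in\dN}$ is Cauchy in the complete metric space $(\cP(\dX),\dtv)$, so it converges to some limit $\pi\in\cP(\dX)$; the same contraction forces $\pi$ to be independent of $\mu_0$. Passing to the limit in $\mu_0 P_{(n+1)t_0}=(\mu_0 P_{nt_0})P_{t_0}$ yields $\pi P_{t_0}=\pi$. Commutativity of the semigroup (\ref{def:semi-group}) then shows that $\pi P_s$ is a fixed point of $P_{t_0}$ for every $s\ge 0$, hence equal to $\pi$ by the uniqueness already obtained; differentiating $s\mapsto \pi P_s\equiv\pi$ at $s=0$ via (\ref{kolmogorov}) gives $\pi L=0$, equivalently $\pi T=\pi$. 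Uniqueness among solutions of $\widetilde\pi T=\widetilde\pi$ then comes for free: such a $\widetilde\pi$ is fixed by every $P_{nt_0}$, so $\widetilde\pi=\lim_n \widetilde\pi P_{nt_0}=\pi$. Full support is immediate from $\pi(y)=\sum_x\pi(x)P_{t_0}(x,y)\ge\delta>0$.

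It remains to upgrade convergence from the discrete subsequence $t\in t_0\dN$ to arbitrary $t\to\infty$. For this I would use the general fact that $\dtv(\mu P,\nu P)\le\dtv(\mu,\nu)$ for any stochastic matrix $P$: writing $t=n t_0+r$ with $r\in[0,t_0)$,
\[
\dtv(\mu_0 P_t,\pi)\ =\ \dtv\bigl((\mu_0 P_r)P_{nt_0},\,\pi P_{nt_0}\bigr)\ \le\ (1-|\dX|\delta)^n\,\dtv(\mu_0 P_r,\pi),
\]
which tends to $0$ uniformly in $r$. The argument has no genuinely hard step; the one conceptual point worth highlighting is that it is the continuous-time smoothing in (\ref{def:Pt}) which makes $P_{t_0}$ strictly positive, thereby dissolving the periodicity obstructions that would otherwise plague a direct contraction argument for $T^k$ in discrete time.
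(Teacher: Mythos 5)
Your proof is correct, and it takes a genuinely different route from the one in the paper. The paper establishes existence by forming Ces\`aro averages $\pi_k := \frac{1}{k}(\nu + \nu T + \cdots + \nu T^{k-1})$ and extracting a subsequential limit, and then proves the attractor property by a soft compactness argument: along a subsequence $\mu P_{t_k}\to\mu_\infty$, the quantity $\dtv(\mu P_t,\pi)$ has a limit $\ell$, and unless $\mu_\infty=\pi$ the \emph{strict} (but not uniform) decrease of $\dtv$ under a positive-entry $P_t$ would force $\dtv(\mu_\infty P_t,\pi)<\ell$, a contradiction; uniqueness then comes for free from the attractor property. You instead use a Doeblin minorization $P_{t_0}(x,\cdot)\ge |\dX|\delta\cdot\unif_\dX(\cdot)$ to obtain a \emph{uniform} contraction constant $1-|\dX|\delta<1$, after which existence, uniqueness, full support and convergence all drop out of the Banach fixed-point theorem (plus the observation that a signed measure of zero mass is annihilated by a rank-one stochastic matrix). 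Your approach is more quantitative --- it even yields an exponential rate $\dtv(t)\le (1-|\dX|\delta)^{\lfloor t/t_0\rfloor}$ for free --- and it has the pedagogical virtue of making explicit why continuous-time smoothing dissolves periodicity. The paper's approach is softer and avoids extracting the constant $\delta$; in this finite setting both are equally elementary, but the Ces\`aro argument has the advantage of generalizing to situations where no uniform minorization is available. One small presentational point: your derivation of $\pi T=\pi$ via differentiation of $s\mapsto\pi P_s\equiv\pi$ at $s=0$ is fine, but you could equally well compare Taylor coefficients in $s$ in the explicit series $\pi P_s = e^{-s}\sum_k \pi T^k s^k/k!\equiv\pi$, which avoids invoking the Kolmogorov equation altogether.
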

In  probabilistic terms, this result asserts that our process $(X_t)_{t\ge 0}$ progressively \emph{forgets} the details of its particular initial condition and approaches a unique \emph{global} distribution which is spread out across the entire state space. This phenomenon is called \red[mixing], and it is ubiquitous: it occurs for example when a deck of cards is shuffled, or when a gas of particles equilibrates inside a box. It also plays a fundamental role in many modern applications, such as the exploration of complex networks by random walks, the enumeration of large-scale combinatorial structures in computer science, or the generation of random high-dimensional data in computational statistics. In all those situations (and many others), it is of paramount importance to understand the speed at which the convergence to equilibrium occurs. 
\begin{question}[Speed]\label{qu:main}How fast is the convergence promised by Theorem \ref{th:mixing} ?
\end{question}
This question is the starting point of a beautiful and actively growing theory known as \emph{Mixing Times of Markov Chains}. The latter lies at the crossroads between probability, functional analysis and discrete geometry, and has a broad range of applications. We warn the reader that the present lecture notes will only cover a small fraction of the many tools, concepts and results that have been developed so far, and we strongly encourage her to consult the classical textbooks  \cite{MR3726904,MR2341319} for a much more comprehensive account. Before honest work can begin on the above question, we need to agree on a way to quantify the divergence between the law of the system at a given time $t\ge 0$, and the  equilibrium distribution $\pi$. We will here focus on the total-variation distance, which is arguably the most natural choice from a probabilistic viewpoint. Of course, relative entropy, $L^p$ norms and Wasserstein distances constitute perfectly legit alternative options, and they will enter the scene in due time. 
\begin{definition}[Total-variation distance] The \red[total-variation distance] between two probability measures $\mu,\nu\in\cP(\dX)$ is defined by any of the following equivalent expressions:
\begin{eqnarray*}
\dtv(\mu,\nu) & = & \max_{A\subseteq\dX}\left|\mu(A)-\nu(A)\right|\\ 
 \label{dtv:ell1}
 & = & \sum_{x\in\dX}\left(\mu(x)-\nu(x)\right)_+\\
 & = & \frac{1}{2}\sum_{x\in \dX}\left|\mu(x)-\nu(x)\right|\\
 & = & \min_{X\sim\mu,Y\sim\nu}\PP(X\ne Y),
\end{eqnarray*}
where the minimum in the last expression runs over all possible couplings $(X,Y)$ of $(\mu,\nu)$. 
\end{definition}
\begin{exo}[Equivalence]Prove that the four definitions of $\dtv(\mu,\nu)$ coincide. 
\end{exo}
A first important observation is that the total-variation distance decreases under the action of the semi-group: indeed, for any $t\ge 0$ and any $\mu,\nu\in\cP(\dX)$, we can write
\begin{eqnarray*}
\label{dtv:contract}
\dtv\left(\mu P_t,\nu P_t\right) & = & \frac{1}{2}\sum_{y\in\dX}\left|\sum_{x\in\dX}(\mu(x)-\nu(x))P_t(x,y)\right| \\ 
& \le & \frac{1}{2}\sum_{x,y\in\dX}\left|\mu(x)-\nu(x)\right|P_t(x,y) \\ & = & \dtv(\mu,\nu).
\end{eqnarray*}
 Moreover, as soon as $t>0$, this inequality is strict unless $\mu=\nu$, because all entries of $P_t$ are positive. This is actually sufficient to deduce the Markov Chain Convergence Theorem.
\begin{proof}[Proof of Theorem \ref{th:mixing}]Fix any law $\nu\in\cP(\dX)$ and consider the  Cesar\'o average 
\begin{eqnarray*}
\pi_k & := & \frac{\nu + \nu T+\cdots + \nu T^{k-1}}{k},
\end{eqnarray*}
for $k\ge 1$. By compactness,  the sequence $(\pi_k)_{k\ge 1}$ admits a sub-sequential limit $\pi$, and the latter must be stationary because $\pi_k T-\pi_k = \frac{1}{k}\left(\nu T^k-\nu\right)\to 0$ as $k\to\infty$, proving existence. Note that we then have $\pi P_t=\pi$ for all $t\ge 0$ by definition of $P_t$, and since all entries of $P_t$ are positive, this implies that $\pi$ has full support. Now, consider an arbitrary initial law $\mu\in\cP(\dX)$ and set $\ell:=\lim_{t\to\infty}\dtv(\mu P_t,\pi)$, which exists by monotony. By compactness again, the limit $\mu_\infty:=\lim_{k\to \infty}\mu P_{t_k}$  exists along some diverging sequence $(t_k)_{k\ge 1}$, and we  have $\ell=\lim_{k\to\infty}\dtv(\mu P_{t_k},\pi) \ = \ \dtv(\mu_\infty,\pi)$. But for any fixed time $t>0$, we also have $\ell=\lim_{k\to\infty}\dtv(\mu P_{t_k+t},\pi) \ = \ \dtv(\mu_\infty P_t,\pi)$, which violates the strict decrease of total-variation distance unless $\mu_\infty=\pi$, i.e., $\ell=0$. In other words, our stationary measure $\pi$ is a global attractor, and must therefore be unique. 
\end{proof}
With Question \ref{qu:main} in mind, we now make the following definition.
\begin{definition}[Distance to equilibrium] The \red[distance to equilibrium] at time $t\ge 0$ is 
\begin{eqnarray*}
\dtv(t) & := &  \sup_{\mu\in\cP(\dX)}\dtv\left(\mu P_t,\pi\right) \  = \ \max_{x\in\dX}\dtv\left(P_t(x,\cdot),\pi\right).
\end{eqnarray*}
\end{definition}
The second equality is of course a direct consequence of the convexity of $\dtv(\cdot,\cdot)$. Note that $\dtv(0)=1-\pi_\star$, where $\pi_\star:=\min_{x\in\dX}\pi(x)$ denotes the minimum stationary mass (which is positive by Theorem \ref{th:mixing}, but never larger than $1/|\dX|$). Thus, the function $t\mapsto\dtv(t)$ is continuously decreasing from near $1$ (when $|\dX|$ is large) to $0$, and our task will consist in understanding the time-scale on which it does so. 

\subsection{Mixing times}\label{sec:mixrel}
The function $t\mapsto\dtv(t)$ admits a simple and natural operator-theoretic representation, which will be useful. First recall that any matrix $A\in\dR^{\dX\times\dX}$ can be viewed as a linear operator acting on functions $f\colon\dX\to\dR$ in the usual way:
\begin{eqnarray}
\label{def:action}
\forall x\in\dX,\qquad Af(x) & := & \sum_{y\in\dX}A(x,y)f(y).
\end{eqnarray}
Given $p\in[1,\infty]$, we will always write $L^p$ for the Banach space $L^p(\dX,\pi)$, $\|\cdot\|_p$ for the associated norm, and $\|A\|_{p\to q}:=\sup_{f\in \dR^\dX\setminus\{0\}}\left\{\frac{\|Af\|_p}{\|f\|_q}\right\}$ for  the operator norm of $A\colon L^p\to L^q$. Finally, we let $\Pi:=\lim_{t\to\infty}P_t$ be the rank-one transition matrix whose rows are equal to $\pi$:
\begin{eqnarray}
\label{def:Pi}
\forall x,y\in\dX,\qquad \Pi(x,y) & := & \pi(y).
\end{eqnarray}
\begin{lemma}[Operator-theoretic representation of the distance to equilibrium]We have
\label{lm:subadd}
\begin{eqnarray*}
\forall t\ge 0,\qquad 2\dtv(t) & = & \|P_t-\Pi\|_{\infty\to\infty}.
\end{eqnarray*}
In particular, the function $t\mapsto 2\dtv(t)$ is sub-multiplicative: 
\begin{eqnarray}
\forall s,t\ge 0,\qquad \dtv(t+s) & \le & 2\dtv(t)\dtv(s).
\end{eqnarray}
\end{lemma}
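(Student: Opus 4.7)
\medskip

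\noindent\textbf{Proof plan.} The plan has two parts: first identify the operator norm with twice the distance to equilibrium, then derive submultiplicativity from a simple algebraic identity about $\Pi$.

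For the first part, I recall the elementary fact that for any matrix $A\in\dR^{\dX\times\dX}$ viewed as an operator $L^\infty\to L^\infty$, one has
\begin{eqnarray*}
\|A\|_{\infty\to\infty} & = & \max_{x\in\dX}\sum_{y\in\dX}|A(x,y)|.
\end{eqnarray*}
Indeed, the upper bound follows from $|Af(x)|\le \|f\|_\infty\sum_y|A(x,y)|$, and equality is attained by the sign function $f(y):=\mathrm{sign}(A(x^\star,y))$ at the row $x^\star$ that realizes the maximum. Applying this to $A=P_t-\Pi$ and recognizing the inner sum, via the third expression in the definition of total-variation distance, as $2\dtv(P_t(x,\cdot),\pi)$, the claim
\begin{eqnarray*}
\|P_t-\Pi\|_{\infty\to\infty} \ = \ \max_{x\in\dX}2\dtv(P_t(x,\cdot),\pi) \ = \ 2\dtv(t)
\end{eqnarray*}
drops out immediately from the second formula defining $\dtv(t)$.

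For the second part, the key observation is the factorization $P_{t+s}-\Pi=(P_t-\Pi)(P_s-\Pi)$. This follows from the three identities $\Pi P_s=\Pi$, $P_t\Pi=\Pi$, and $\Pi^2=\Pi$. The first is just the stationarity $\pi P_s=\pi$ applied rowwise; the second uses that each row of $P_t$ is a probability vector so that $(P_t\Pi)(x,y)=\sum_z P_t(x,z)\pi(y)=\pi(y)$; the third is similar since rows of $\Pi$ are probability vectors. Expanding $(P_t-\Pi)(P_s-\Pi)$ and using the semigroup property $P_tP_s=P_{t+s}$ from \eqref{def:semi-group} then gives the factorization. Submultiplicativity of the operator norm on $L^\infty$ combined with the first part yields
\begin{eqnarray*}
2\dtv(t+s) \ = \ \|P_{t+s}-\Pi\|_{\infty\to\infty} \ \le \ \|P_t-\Pi\|_{\infty\to\infty}\|P_s-\Pi\|_{\infty\to\infty} \ = \ 4\dtv(t)\dtv(s),
\end{eqnarray*}
which is the desired inequality.

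\medskip

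\noindent\textbf{Anticipated difficulty.} There is no serious obstacle here: the whole argument is a clean translation between a probabilistic and an operator-theoretic viewpoint, plus the projector-like algebraic properties of $\Pi$. The only point that requires a moment of care is making sure the row-sum formula for $\|\cdot\|_{\infty\to\infty}$ is the right one (as opposed to the column-sum formula, which would correspond to $L^1\to L^1$); once the duality between $L^1$ densities and $L^\infty$ test functions is kept straight, the rest is bookkeeping.
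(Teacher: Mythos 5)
Your proof is correct and follows essentially the same route as the paper: the row-sum formula for the $L^\infty\to L^\infty$ operator norm, the application to $A=P_t-\Pi$, and the factorization $P_{t+s}-\Pi=(P_t-\Pi)(P_s-\Pi)$ combined with submultiplicativity of the operator norm. The only cosmetic difference is that you verify $\Pi P_t=P_t\Pi=\Pi=\Pi^2$ directly from the definitions, whereas the paper obtains these identities by letting $s\to\infty$ in the semigroup property; both are fine.
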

\begin{proof}
Fix a matrix $A\in\dR^{\dX\times\dX}$ and a state $x\in\dX$, and observe that 
\begin{eqnarray*}
\left|Af(x)\right| & \le & \|f\|_\infty\sum_{y\in\dX}|A(x,y)|,
\end{eqnarray*}
for all functions $f\colon\dX\to\dR$, with equality in the special case where $f(y)=\textrm{sign}(A(x,y))$. In operator-theoretic terms, we have just established the identity
\begin{eqnarray*}
\|A\|_{\infty\to\infty} & = & \max_{x\in\dX}\left\{\sum_{y\in\dX}|A(x,y)|\right\}.
\end{eqnarray*}
In particular, for any $t\ge 0$, we can apply this to the matrix $A=P_t-\Pi$ to obtain 
\begin{eqnarray*}
\|P_t-\Pi\|_{\infty\to\infty} & = & \max_{x\in\dX}\left\{\sum_{y\in\dX}\left|P_t(x,y)-\pi(y)\right|\right\}\\
& = & 2\max_{x\in\dX}\dtv(P_t(x,\cdot),\pi)\\
& = & 2\dtv(t),
\end{eqnarray*}
establishing the first claim. On the other hand, taking $s\to\infty$ (or $t\to\infty$, or both) in the semi-group property (\ref{def:semi-group}) gives $\Pi P_t= P_t\Pi = \Pi = \Pi^2$ for all $t\ge 0$, hence
\begin{eqnarray*}
(P_t-\Pi)(P_s-\Pi) & = &  P_{t+s}-\Pi,
\end{eqnarray*}
for all $t,s\ge 0$. Thus, the second claim is a consequence of the first one and the sub-multiplicativity of operator norms:  $\|AB\|_{\infty\to\infty}\le\|A\|_{\infty\to\infty}\|B\|_{\infty\to\infty}$, for all $A,B\in\dR^{\dX\times\dX}$. 
\end{proof}
The second part of Lemma \ref{lm:subadd} asserts that $f\colon t\mapsto \log(2\dtv(t))$ is a \red[sub-additive function]:
\begin{eqnarray}
\label{def:subadd}
\forall s,t\ge 0,\qquad f(t+s) & \le & f(t)+f(s).
\end{eqnarray}
We can thus apply a classical result known as Fekete's lemma, which we now recall. 
\begin{exo}[Fekete's lemma]Consider a measurable function $f\colon \dR_+\to \dR$ satisfying (\ref{def:subadd}). Prove that  $\lim_{t\to+\infty}\frac{f(t)}{t} $ exists in $\dR\cup\{-\infty\}$ and equals $\inf_{t>0}\frac{f(t)}{t}$. 
\end{exo}
Note that in our case,  the function $f(t)=\log (2\dtv(t))$ tends to $-\infty$ as $t\to\infty$,  hence $\inf_{t>0}\frac{f(t)}{t}<0$. This legitimates the following definition: 
\begin{definition}[Spectral gap]\label{def:trel}There is a number $\lambda>0$, called the \red[spectral gap], such that
\begin{enumerate}[(i)]
\item $\dtv(t) = \exp\left(-\lambda t+o(t)\right)$ as $t\to +\infty$;
\item $\dtv(t) \ge \frac{1}{2}\exp\left(-\lambda t\right)$, for all $t\ge 0$.
\end{enumerate}
\end{definition}
The asymptotic decay rate $\lambda$ appearing here happens to admit a remarkable spectral interpretation, which explains its name: it is exactly the distance between the two largest eigenvalues of $T$, once projected onto the real axis. This is a classical consequence of Gelfand's formula, which we recall as an exercise. 
\begin{exo}[Gelfand's formula]Recall (or prove) that for any linear operator $A$ on a finite-dimensional space, and any norm $\|\cdot\|$ on the space of such operators, we have
\begin{eqnarray*}
\frac{1}{t}\log\|e^{tA}\| & \xrightarrow[t\to\infty]{} & \max_{\theta\in\mathrm{spec}(A)}\Re(\theta),
\end{eqnarray*}
where $\mathrm{spec(A)}:=\{\theta\in\dC\colon \det(A-\theta{\rm Id})=0\}$ denotes the spectrum of $A$. 
Choosing $A$ to be the restriction of $L$ to the space of zero-mean functions on $\dX$, deduce that
\begin{eqnarray}
\label{def:gap}
\lambda & = & {1-\max_{\theta\in\mathrm{spec}(T)\setminus\{1\}}\Re(\theta)}.
\end{eqnarray}
\end{exo}
In view of Definition \ref{def:trel}, it is tempting to conclude that the  time-scale on which the distance to equilibrium decays is simply the inverse spectral gap
\begin{eqnarray}
\trel & := & \frac 1{\lambda},
\end{eqnarray}
sometimes called the \red[relaxation time]. However, this is not the right  answer to Question \ref{qu:main}, as one often needs to wait much  longer  before the chain even starts to mix. The reason is that the approximation $\dtv(t)\approx e^{-\lambda t}$ only holds asymptotically, in a regime  where $t$ is so large that our process could very well  be  infinitesimally close to equilibrium already. In other words, the relaxation time is a priori only a \emph{near-equilibrium} time-scale, which has little to say about the early behavior of the function $t\mapsto\dtv(t)$. In particular, it does not address the following practical question: \emph{how long do we need to wait before the distance to equilibrium becomes small?} This motivates the following definition, illustrated on Figure \ref{fig:cutoff}.
\begin{definition}[Mixing time]The \red[mixing time]  with precision $\varepsilon\in(0,1)$ is defined as
\begin{eqnarray*}
\tmix(\varepsilon) & := & \min\{t\ge 0\colon \dtv(t)\le \varepsilon\}.
\end{eqnarray*}
\end{definition}
A default choice for the precision parameter is $\varepsilon=\frac{1}{2e}$, in which case we simply write $\tmix:=\tmix\left(\frac{1}{2e}\right)$.
Achieving a better precision is then simply a matter of increasing $\tmix$ by a universal multiplicative factor. Indeed, the sub-multiplicativity of $2\dtv$ (Lemma  \ref{lm:subadd}) implies 
\begin{eqnarray*}
\label{eps:irrelev}
\forall\varepsilon\in\left(0,\frac{1}{2e}\right),\qquad 1 \ \le \ \frac{\tmix(\varepsilon)}{\tmix} & \le & \left\lceil\log\frac{1}{\varepsilon}\right\rceil.
\end{eqnarray*}
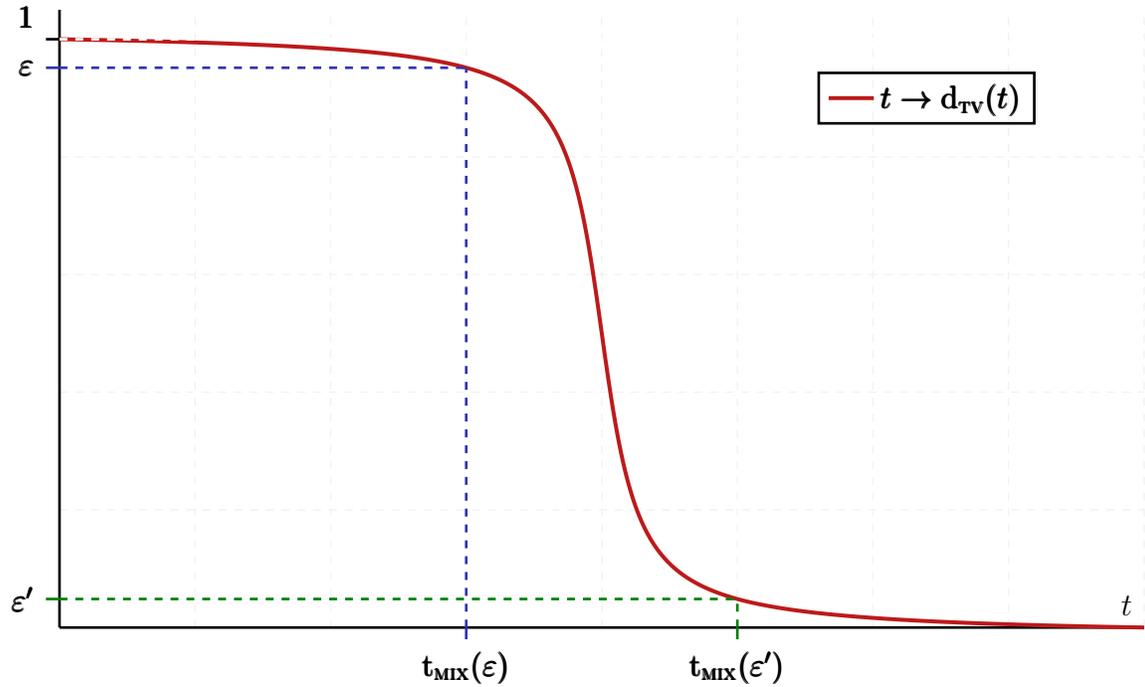
\begin{figure}
\begin{center}
\pgfplotsset{compat=1.16}
\pgfplotsset{ticks=none}
\begin{tikzpicture}
	\begin{axis}[
		width = 16cm,
		height = 9.8cm,
		axis x line=middle,
		axis y line=middle,
		xlabel = $t$,
		clip = false,
		grid=both,
		grid style={dashed, line width=.5pt, draw=gray!10},
		xmode = normal,
		ymode = normal,
		line width = 1pt,
		legend cell align = left,
		legend style = {fill=none, at={(0.9,0.9)}, anchor = north east},
		yticklabel style={above left},
		anchor = north west,
		tickwidth={5pt},
		xtick align = outside,
		ytick align = outside,
		ymax = 1.05,
		ymin = 0,
		xmin = 0,
		xmax = 40,
		x axis line style=-,
		y axis line style=-,
		domain = 0:40,
		samples = 1000
		]
		\def\scale{3}
\addplot[solid,  myred, line width = 1.5 pt] {
-rad(atan(x-20))/rad(atan(20))/2 + 0.5	
	};
\addplot[dashed, white] {1.00};
\legend
{
	$\pmb{t \to \dtv(t)}$
}
\draw[dashed, color= myblue] (15,0) -- (15,0.9515);
\draw[dashed, color= myblue] (0,0.9515) -- (15,0.9515);
\draw[dashed, color= mygreen] (25,0) -- (25,0.0484723);
\draw[dashed, color= mygreen] (0,0.0484723) -- (25,0.0484723);	

\draw[color= myblue] (0,0.9515) -- (-0.5,0.9515) node[color = black, anchor=east] {$\pmb{\varepsilon}$};
\draw[color= myblue] (15,0) -- (15,-0.02) node[color=black, anchor=north] {$\pmb{\tmix(\varepsilon)}$};
\draw[color= mygreen] (25,0) -- (25,-0.02) node[color = black, anchor=north] {$\pmb{\tmix(\varepsilon')}$};
\draw[color= mygreen] (0,0.0484723) -- (-0.5,0.0484723) node[color = black,anchor=east] {$\pmb{\varepsilon'}$};
\draw[color= black] (0,1.0) -- (-0.5,1.0) node[color = black,anchor=south east] {$\pmb{1}$};
\end{axis}
	
\end{tikzpicture}
\caption{Distance to equilibrium and mixing times.}
\label{fig:cutoff}
\end{center}
\end{figure} 

In this sense, the quantity $\tmix$ provides a rigorous formalization of  Question \ref{qu:main}, and its analysis will occupy the center of our attention.

\begin{remark}[Mixing vs relaxation]\label{rk:tmixvstrel}Because of Property (ii) in Definition \ref{def:trel}, we have
\begin{eqnarray}
\label{lb:rel}
\forall\varepsilon\in(0,1),\quad \tmix(\varepsilon) & \ge & \frac{1}{\lambda}\log\left(\frac 1{2\varepsilon}\right),
\end{eqnarray}
hence $\tmix\ge\trel$. However, this bound can be arbitrary loose without further assumptions. 
\end{remark}
Over the past decades, a rich variety of probabilistic, geometric and functional-analytic techniques have been developed for estimating mixing times, and we will survey some of them in due time. Of course, this task becomes particularly relevant when the number of states becomes large. Rather than working with a fixed transition matrix $T$, we will therefore often assume to be given an entire \red[model], by which we simply mean the following.

\begin{definition}[Model] A \red[model] is a sequence  of irreducible transition matrices $(T_n)_{n\ge 1}$ defined on a corresponding sequence of state spaces $(\dX_n)_{n\ge 1}$, such that $|\dX_n|\to\infty$ as $n\to\infty$.
\end{definition}
Given a model, our goal will naturally consist in determining the asymptotic behavior of the associated mixing times $(\tmix^{(n)})_{n\ge 1}$. If $(a_n)_{n\ge 1},(b_n)_{n\ge 1}$ are sequences of positive numbers, we will use the  classical asymptotic notation $a_n=o(b_n)$ or $a_n\ll b_n$ (resp. $a_n=\omega(b_n)$ or $a_n\gg b_n$) to mean that the ratio $a_n/b_n$ tends to $0$ (resp. $\infty$) as $n_to\infty$, and $a_n\sim b_n$ when this ratio tends to $1$. Similarly, we will write $a_n=O(b_n), a_n=\Omega(b_n)$ and $a_n=\Theta(b_n)$ to mean that the ratio $a_n/b_n$ is respectively bounded away from infinity, zero and both. As usual, we add the subscript $\PP$ to this notation when the sequences are random and the corresponding property holds in probability. Also, to lighten the exposition, we will often drop the subscript $n$ from the notation, and simply write $\dX$, $T$, $\pi$, $\tmix$, $\lambda$... The reader will have to keep in mind that those objects implicitly depends on an underlying `size parameter' $n$, and that what we mostly care about is the large-size limit $n\to\infty$.  For concreteness, let us analyze in detail two fundamental models which are simple enough to allow for explicit computations, and which will serve as our running examples throughout the  notes.

\subsection{Two toy models}
Our first toy model is called \red[random walk on  the $n-$cycle]. The state space $\dX$ is here the set of integers modulo $n$, and the transition matrix is
\begin{eqnarray*}
T(x,y) & := & \left\{
\begin{array}{ll}
1/2 & \textrm{if }y=x\pm 1\mod n\\
0 & \textrm{else}.
\end{array}
\right.
\end{eqnarray*}
By symmetry, the stationary distribution $\pi$ is uniform on $\dX$, and we may assume that the initial state is $0$. The position of the walk at any time $t\ge 0$ can then be represented as
\begin{eqnarray*}
X_t & = & U_t-V_t\mod n,
\end{eqnarray*}
where $U_t,V_t$ are independent Poisson random variables with mean $t/2$. When $t$ is large, the Central Limit Theorem (CLT) tells us that $U_t-V_t\approx \sqrt{t}Z$, where $Z\sim\cN(0,1)$. More formally, for any fixed $t>0$ and any $0\le a\le b\le 1$, we have 
\begin{eqnarray}
\label{CLT}
\PP\left(X_{tn^2}\in[an,bn]\right) & = & \int_a^b f_t(z)\, {\rm d}z +o(1),
\end{eqnarray}
as $n\to\infty$,  where $f_t\colon\dR/\dZ\to(0,\infty)$ denotes the density of the random variable $\sqrt{t}Z\mod 1$:
\begin{eqnarray*}
f_t(z) & := & \frac{1}{\sqrt{2\pi t}}\sum_{k\in\dZ}e^{-\frac{(z-k)^2}{2t}}.
\end{eqnarray*}
Unfortunately, the approximation (\ref{CLT}) is far too weak for our purpose: indeed, it is  restricted to macroscopic intervals of the form $A=[an,bn]$, whereas the definition of the total variation distance to equilibrium $\dtv(t)$ involves a maximum over \emph{all} possible regions $A\subseteq \dX$. To estimate this delicate quantity, we need a classical  local refinement of the CLT known as the \red[Local Central Limit Theorem], which asserts that for any fixed $z\in\dR/\dZ$,
\begin{eqnarray}
\label{localCLT}
n\PP(X_{tn^2}=\lfloor zn\rfloor) & = & f_t(z)+o(1).
\end{eqnarray}
This powerful microscopic estimate -- which implies the macroscopic estimate (\ref{CLT})  by integrating over $z\in[an,bn]$ --  allows us to conclude as follows: for any fixed $t>0$, we have
\begin{eqnarray*}
\dtv(tn^2) & = & \sum_{x\in\dX}\left(\pi(x)-\PP(X_{tn^2}=x)\right)_+\\
& = & \int_{0}^1\left(1-n\PP(X_{tn^2}=\lfloor zn\rfloor)\right)_+\,\mathrm{d}z\\
& = & \int_{0}^1\left(1-f_t(z)\right)_+\,\mathrm{d}z + o(1),
\end{eqnarray*}
 thanks to (\ref{localCLT}) and dominated convergence. Let us summarize this in a theorem. 
\begin{theorem}[Random walk on the $n-$cycle] In the $n\to\infty$ limit, the function $t\mapsto \dtv^{(n)}(tn^2)$ converges pointwise (in fact uniformly, by Dini's theorem) to the  limiting profile
\begin{eqnarray*}
F \colon t &  \longmapsto &  \int_{0}^1\left(1-f_t(z)\right)_+\,\mathrm{d}z \ = \ \frac{1}{2}\int_{0}^1\left|1-f_t(z)\right|\,\mathrm{d}z,
\end{eqnarray*}
which decays smoothly from $F(0)=1$ to $F(+\infty)=0$, as illustrated on Figure \ref{fig:jacobi}. Thus,
\begin{eqnarray}
\label{ex:cycle}
\forall \varepsilon\in(0,1),\qquad \tmix^{(n)}(\varepsilon) & \sim & n^2  F^{-1}(\varepsilon).
\end{eqnarray}
\end{theorem}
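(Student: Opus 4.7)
The plan is to leverage the Riemann-sum identity and the Local CLT already displayed in the excerpt, and then to convert the resulting pointwise convergence of profiles into the quantitative mixing-time asymptotic (\ref{ex:cycle}). Since $\pi$ is uniform with mass $1/n$, the $\ell^1$ representation of $\dtv$ gives
\begin{eqnarray*}
\dtv(tn^2) & = & \int_0^1 \left(1 - n\PP\bigl(X_{tn^2}=\lfloor zn\rfloor\bigr)\right)_+\,\mathrm{d}z,
\end{eqnarray*}
in which the integrand is uniformly bounded by $1$. Bounded convergence combined with the Local CLT (\ref{localCLT}) then yields pointwise convergence $\dtv^{(n)}(tn^2)\to F(t)$ for every fixed $t>0$. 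The second expression for $F$ comes from $\int_0^1 f_t = 1$, which forces $\int_0^1(1-f_t)_+ = \int_0^1(f_t-1)_+ = \frac{1}{2}\int_0^1|1-f_t|$.

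Next I would verify that $F$ is continuous, strictly decreasing, with $F(0^+)=1$ and $F(+\infty)=0$. Continuity on $(0,\infty)$ again follows by bounded convergence in $t$. As $t\downarrow 0$, the density $f_t$ concentrates near $0\in\dR/\dZ$, so $(1-f_t(z))_+\to 1$ for Lebesgue-almost every $z$ and hence $F(0^+)=1$; as $t\to\infty$, the series expression shows $f_t(z)\to 1$ uniformly in $z$, giving $F(+\infty)=0$. Monotonicity is inherited from the monotonicity of $t\mapsto\dtv(t)$ noted earlier in the chapter, and strict monotonicity follows from the smoothness and non-constancy of $f_t$ on $[0,1]$.

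With those ingredients, Dini's theorem promotes the pointwise convergence of the monotone functions $t\mapsto\dtv^{(n)}(tn^2)$ to the continuous monotone limit $F$ into \emph{uniform} convergence on every compact subset of $(0,\infty)$. To deduce (\ref{ex:cycle}), fix $\varepsilon\in(0,1)$ and $\delta>0$, set $t_\varepsilon:=F^{-1}(\varepsilon)$, and use strict monotonicity to pick $\eta>0$ with $F(t_\varepsilon-\delta)>\varepsilon+\eta$ and $F(t_\varepsilon+\delta)<\varepsilon-\eta$. Uniform convergence on $[t_\varepsilon-\delta,t_\varepsilon+\delta]$ then forces $\dtv^{(n)}((t_\varepsilon-\delta)n^2)>\varepsilon>\dtv^{(n)}((t_\varepsilon+\delta)n^2)$ for all large $n$, which is exactly $|n^{-2}\tmix^{(n)}(\varepsilon)-t_\varepsilon|\le\delta$ eventually.

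The main obstacle is not internal to this proof: it is the Local CLT (\ref{localCLT}) itself, which provides \emph{microscopic} control of the heat kernel on $\dZ/n\dZ$ rather than the macroscopic control of the ordinary CLT. Once that is taken as a black box, what remains is bookkeeping — the bounded-convergence reduction to a Riemann integral, elementary analysis of the theta-type density $f_t$, and the routine Dini-plus-inversion argument above. The one subtle point worth double-checking is strict monotonicity of $F$, which is what makes $F^{-1}(\varepsilon)$ unambiguously defined.
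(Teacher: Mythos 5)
Your proposal is correct and follows essentially the same route as the paper: the paper's proof is exactly your first paragraph (the $\ell^1$/Riemann-sum identity $\dtv(tn^2)=\int_0^1(1-n\PP(X_{tn^2}=\lfloor zn\rfloor))_+\dd z$, then local CLT plus dominated convergence), with the remaining steps (properties of $F$, Dini, inversion to extract $\tmix^{(n)}(\varepsilon)\sim n^2F^{-1}(\varepsilon)$) left implicit. You have simply spelled out the bookkeeping that the paper compresses into the phrases ``by Dini's theorem'' and ``Thus.''
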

In words, the random walk on the $n-$cycle mixes gradually, on the timescale $n^2$. What is perhaps surprising, and quite exciting, is that this smooth convergence to stationarity is actually \emph{not} the typical picture: as we will see, many interesting systems undergo instead a remarkably abrupt transition from out of equilibrium to equilibrium, known as a \emph{cutoff}.
\begin{figure}[!h]
\begin{center}
\label{fig:jacobi}
\includegraphics[scale=0.8]{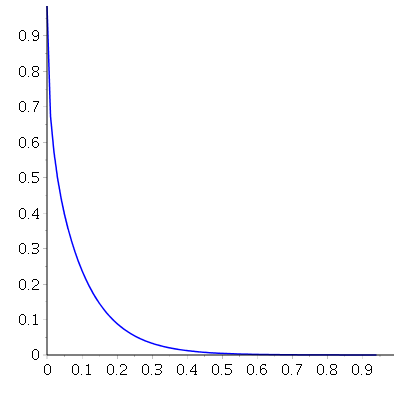}
\caption{The smooth limiting profile $F$ for simple random walk on the $n-$cycle.}
\end{center}
\end{figure}

To demonstrate this, we now introduce our second toy model, \red[random walk on the $n$-cube]. The latter is the Markov chain with state space $\dX=\{-1,1\}^n$ and transition matrix
\begin{eqnarray*}
T(x,y) & := & \left\{
\begin{array}{ll}
\frac 1n & \textrm{if }y\textrm{ differs from }x\textrm{ at a single coordinate};\\
0 & \textrm{else}.
\end{array}
\right.
\end{eqnarray*}
Here again, $\pi$ is the uniform distribution by symmetry, and we may assume without loss of generality that the initial state is $o:=(1,\ldots,1)$. Under our continuous-time dynamics, each coordinate is refreshed independently at rate $2/n$ according to the uniform law on $\{-1,1\}$. Consequently, the density  w.r.t. equilibrium has an explicit product form at any time $t\ge 0$:
\begin{eqnarray*}
\frac{P_t(o,x)}{\pi(x)} & = & \prod_{i=1}^n\left(1+x_ie^{-\frac{2t}{n}}\right).
\end{eqnarray*}
If we let $X=(X_1,\ldots,X_n)$ denote a uniform random element of $\dX$, we obtain the following simple representation for  the distance to equilibrium:
\begin{eqnarray*}
\dtv(t) & = & \sum_{x\in\dX}\pi(x)\left(1-\frac{P_t(o,x)}{\pi(x)}\right)_+ \\
& = & \EE\left[\left(1-\frac{P_t(o,X)}{\pi(X)}\right)_+\right]\\
& = & \EE\left[\left(1-e^{W_t}\right)_+\right],
\end{eqnarray*}
where we have introduced the log-likelihood ratio 
\begin{eqnarray*}
W_t & := & \log\left(\frac{P_t(o,X)}{\pi(X)}\right) \ = \ \sum_{i=1}^n\log\left(1+X_ie^{-\frac{2t}{n}}\right).
\end{eqnarray*}
In particular, if $t=t(n)$ is chosen so that $W_t\to -\infty$ in probability as $n\to\infty$, then $\dtv(t)=1-o(1)$ and mixing has not started. On the other hand, if $W_t\to 0$  in probability as $n\to\infty$, then $\dtv(t)=o(1)$ and the chain is completely mixed. Thus, the transition from out of equilibrium to equilibrium should occur  when $t=t(n)$ is such that $W_t=\Theta_\PP(1)$. To make this precise, observe that as soon as $t\gg n\gg 1$, we have the Taylor expansion
\begin{eqnarray*}
W_t & = & e^{-\frac{2t}{n}}\sum_{i=1}^n X_i-\frac{n}{2}e^{-\frac{4t}{n}}+o_\PP(ne^{-\frac{4t}{n}})\\
& = & \sqrt{n}e^{-\frac{2t}{n}}(Z+o_\PP(1))-\frac{n}{2}e^{-\frac{4t}{n}}+o_\PP(ne^{-\frac{4t}{n}}),
\end{eqnarray*}
thanks to the CLT, with $Z\sim\cN(0,1)$. This strongly suggests looking at times of the form
\begin{eqnarray*}
t & = & \frac{n\log n}{4} + sn+o(n),
\end{eqnarray*}
where $s\in\dR$ remains fixed as $n\to\infty$. In this regime, we have $W_t = e^{-2s} Z-\frac{e^{-4s}}{2}+o_\PP(1)$ and therefore, $\dtv(t)=F(s)+o(1)$, where
\begin{eqnarray*}
F(s) & := & \EE\left[\left(1-e^{e^{-2s} Z-\frac{e^{-4s}}{2}}\right)_+\right]\\
& = & \EE\left[\left(1-e^{\frac{Z^2-(Z-e^{-2s})^2}{2}}\right){\bf 1}_{\left\{Z\le \frac{e^{-2s}}{2}\right\}}\right]\\
& = & \PP\left(Z\le \frac{e^{-2s}}{2}\right)-\PP\left(Z+e^{-2s}\le \frac{e^{-2s}}{2}\right)\\
& = & \mathrm{erf}\left(\frac{e^{-2s}}{2\sqrt{2}}\right).
\end{eqnarray*}
Here, $\mathrm{erf}\colon u\mapsto \frac{2}{\sqrt{\pi}}\int_0^ue^{-z^2}\,\mathrm{d}z$ is the Gauss error function. Let us summarize this in a theorem. 
\begin{theorem}[Simple random walk on the $n-$cube]\label{th:ncube} For any fixed $s\in\dR$, we have
\begin{eqnarray*}
\dtv^{(n)}\left(\frac{n\log n}{4} + sn\right) & \xrightarrow[n\to\infty]{} & F(s),
\end{eqnarray*}
where $F(s)=\mathrm{erf}\left(\frac{e^{-2s}}{2\sqrt{2}}\right)$. In other words, for any fixed $\varepsilon\in(0,1)$,
\begin{eqnarray}
\label{ex:cube}
\tmix^{(n)}(\varepsilon) & = & \frac{n\log n}{4} +n F^{-1}(\varepsilon)+o(n).
\end{eqnarray}
\end{theorem}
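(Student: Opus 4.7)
My strategy is to exploit the product structure of the chain to obtain an explicit representation of $\dtv(t)$ in terms of a log-likelihood ratio $W_t$, identify the weak limit of $W_t$ at the critical timescale via the classical CLT, and then pass to the limit in the expectation using bounded convergence. The remaining elementary integral is computed via a Gaussian shift identity.

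\textbf{Step 1: product representation.} Because the $n$ coordinates evolve as independent rate$-2/n$ Bernoulli refreshments, the density of $P_t(o,\cdot)$ with respect to $\pi$ factorizes as $\prod_{i=1}^n(1+x_i e^{-2t/n})$. Combined with the $\ell^1$ formula for the total-variation distance, this gives
\begin{eqnarray*}
\dtv(t) & = & \EE\left[\left(1-e^{W_t}\right)_+\right],\qquad W_t\ :=\ \sum_{i=1}^n\log\left(1+X_i e^{-2t/n}\right),
\end{eqnarray*}
with $X=(X_1,\ldots,X_n)$ uniform on $\dX$. Since $w\mapsto(1-e^w)_+$ is a bounded continuous function of $w$, the task reduces to identifying the weak limit of $W_t$.

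\textbf{Step 2: CLT plus Taylor expansion.} Fix $s\in\dR$, set $t=\frac{n\log n}{4}+sn$, and observe that $\varepsilon_n:=e^{-2t/n}=n^{-1/2}e^{-2s}\to 0$. Using $\log(1+u)=u-\frac{u^2}{2}+O(u^3)$ together with $X_i^2=1$, I would write
\begin{eqnarray*}
W_t & = & \varepsilon_n\sum_{i=1}^n X_i\ -\ \frac{n\varepsilon_n^2}{2}\ +\ R_n, \qquad |R_n|\ \le\ Cn\varepsilon_n^3\ =\ O(n^{-1/2}).
\end{eqnarray*}
The CLT gives $\frac{1}{\sqrt n}\sum_i X_i\Rightarrow Z\sim\cN(0,1)$, hence Slutsky's lemma yields $W_t\Rightarrow e^{-2s}Z-\frac{e^{-4s}}{2}$.

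\textbf{Step 3: passage to the limit and identification of $F$.} Since $(1-e^w)_+\in[0,1]$, bounded convergence gives $\dtv(t)\to\EE[(1-e^{aZ-a^2/2})_+]$ with $a:=e^{-2s}$. The argument is non-positive exactly on $\{Z\le a/2\}$, and the Gaussian shift identity $\EE[e^{aZ-a^2/2}g(Z)]=\EE[g(Z+a)]$ gives
\begin{eqnarray*}
F(s) & = & \PP(Z\le a/2)-\PP(Z+a\le a/2)\ =\ \PP(|Z|\le a/2)\ =\ \mathrm{erf}\left(\frac{a}{2\sqrt 2}\right).
\end{eqnarray*}
The mixing-time statement (\ref{ex:cube}) follows by inverting $F$, which is continuous and strictly decreasing from $1$ to $0$.

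\textbf{Main obstacle.} There is no serious obstacle here: the only point to watch is that the cubic Taylor remainder is genuinely negligible compared to the Gaussian fluctuations, which is immediate from $n\varepsilon_n^3=n^{-1/2}e^{-6s}=o(1)$. Everything else is a routine application of Slutsky's lemma and bounded convergence; in particular, no concentration inequality or coupling is needed, in sharp contrast with the much more delicate techniques that will be required later for models lacking such a convenient product structure.
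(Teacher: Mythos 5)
Your proposal is correct and follows essentially the same approach as the paper: product representation of the density, reduction to a log-likelihood ratio $W_t$, Taylor expansion plus CLT at the critical timescale, bounded convergence, and identification of $F$ via the Gaussian change-of-measure (which the paper carries out by completing the square — the same computation in a slightly different disguise). One small improvement you make explicit is that the cubic Taylor remainder is deterministically $O(n\varepsilon_n^3)=O(n^{-1/2})$ because $|X_i|=1$, which is a touch cleaner than invoking $o_\PP$.
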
 
What is striking here, compared to the previous example, is that the leading-order asymptotics of $\tmix(\varepsilon)$ is independent of the precision parameter $\varepsilon$. This means that our process undergoes an abrupt transition from out of equilibrium to equilibrium, in the following precise sense: for any fixed $t>0$, as $n\to\infty$, 
\begin{eqnarray}
\label{cutoff:cube}
\dtv^{(n)}(tn\log n) & = & \left\{
\begin{array}{ll}
1-o(1) & \textrm{if }t<1/4\smallskip\\
o(1) & \textrm{if }t>1/4.
\end{array}
\right.
\end{eqnarray}
In words, the distance to equilibrium remains arbitrarily close to one until the critical time $t=(n\log n)/4$, where it abruptly drops to zero. In fact, the statement (\ref{ex:cube}) is even more precise: not only is there an abrupt transition from out of equilibrium to equilibrium  around the critical time $(n\log n)/4$ but moreover,  this transition occurs over a time interval of width $n$, and the distance to equilibrium inside this window approaches a smooth explicit shape. 
\subsection{The cutoff phenomenon}

The surprising phase transition appearing above is our first example of a generic phenomenon known as a cutoff, and which can be formally defined as follows.
\begin{definition}[Cutoff]\label{def:cutoff}A model $(T_n)_{n\ge 1}$  exhibits a (worst-case)  \red[cutoff] if
\begin{eqnarray*}
\forall \varepsilon\in(0,1),\qquad \frac{\tmix^{(n)}(1-\varepsilon)}{\tmix^{(n)}(\varepsilon)} & \xrightarrow[n\to\infty]{} &  1.
\end{eqnarray*}
Equivalently, the function $t\mapsto \dtv^{(n)}(t)$, suitably rescaled,  approaches a step function as $n\to\infty$, i.e. there is a sequence of non-negative numbers $(\theta_n)_{n\ge 1}$ such that for any fixed $t\ge 0$, 
\begin{eqnarray*}
\dtv^{(n)}(\theta_n t) & \xrightarrow[n\to\infty]{} & \left\{
\begin{array}{ll}
1 & \textrm{if }t<1\smallskip\\
0 & \textrm{if }t>1.
\end{array}
\right.
\end{eqnarray*}

\end{definition}
Although the name \emph{cutoff} was coined   in 1986 by D. Aldous and P. Diaconis \cite{aldous1986shuffling}, the phenomenon itself was actually discovered in 1981 by P. Diaconis and M. Shahshahani  \cite{diaconis1981generating}, and several instances of it were collected under the generic name \emph{abrupt switch} in lecture notes published by D. Aldous in 1983 \cite{aldous1983mixing}. Further historical examples can be found in the 1996 survey paper \emph{The cutoff phenomenon in finite Markov chains}, by P. Diaconis \cite{diaconis1996cutoff}. Over the past few years, this dynamical phase transition has been established in nearly a hundred different models. An exhaustive account would take too long, but here is a very personal selection of recent works that the author finds particularly inspiring:
\begin{itemize}
\item the work of N. Berestycki and B. Seng\"ul  on conjugacy-invariant random walks on the symmetric group \cite{MR3936154}, which substantially generalizes the historical result \cite{diaconis1981generating} and bypasses the delicate use of Fourier analysis on non-Abelian groups  \cite{MR964069,MR4152644,MR4312845,MR4367947,MR4021252};
\item the proof of cutoff for the random walk on random regular graphs by E. Lubetzky and A. Sly \cite{MR2667423}, which initiated a long line of works establishing cutoff for random walks on various models of sparse random graphs \cite{MR3758735,MR4132638,MR4385126,MR4476123,hermon2021cutoff,arxiv.2212.04469};
\item the result of E. Lubetzky and Y. Peres \cite{MR3558308} establishing cutoff for random walks on Ramanujan graphs, which later received several alternative proofs \cite{MR3693771,MR4178418,MR4476123};
\item the work of E. Lubetzky and A. Sly on the stochastic Ising model \cite{MR3020173}, which went for the first time beyond the mean-field case \cite{levin2010glauber,cuff2012glauber} and introduced \emph{Information Percolation}  to prove cutoff for Glauber dynamics on high-temperature spin systems \cite{lubetzky2014cutoff,MR3486171,MR4038041};
\item the work of H. Lacoin on the one-dimensional symmetric exclusion process \cite{MR3474475}, which is the source of many refinements and extensions  \cite{MR3551201,MR3689972,MR3320314,MR3945753,MR4021252,MR4291457,MR4152648,MR4421608,arxiv.2003.03781,arxiv.2110.06353}.
\end{itemize}
Despite the accumulation of many examples, the cutoff phenomenon remains vastly unexplained: the above proofs are model-specific, and ultimately based on the explicit determination of $\tmix(\varepsilon)$ up to a $1 + o(1)$ factor. More precisely, the current theory consists of a rich set of tools and techniques to obtain upper or lower bounds on the mixing time of a Markov chain, in terms of various natural statistics. Implementing those generic methods on a particular model will result in a conclusion of the form
\begin{eqnarray}
\label{bruteforce}
\tmix^-(\varepsilon) \ \le & \tmix(\varepsilon) & \le \ \tmix^+(\varepsilon),
\end{eqnarray}
with the quantities $\tmix^\pm(\varepsilon)$ being  hopefully simple enough to admit explicit first-order asymptotics as the size parameter $n$ of the model tends to infinity. If we are lucky enough, those upper and lower asymptotics will have the same order of magnitude. However, in order to determine whether a cutoff occurs or not, we need them to coincide exactly, i.e.,
\begin{eqnarray}
\frac{\tmix^+(\varepsilon)}{\tmix^-(\varepsilon)} & = & 1+o(1).
\end{eqnarray}
Beyond the fact that it does not really provide any conceptual explanation as to why a sharp transition occurs, such a brute-force approach is inherently fragile: it typically relies on the exactly solvable nature of certain features of the model (dominant eigenvalues and eigenvectors, lower-dimensional projections, single-site marginals), and is therefore intrinsically restricted to highly structured examples: product or near-product chains,
one-dimensional processes, mean-field systems, Abelian walks, etc. This limitation is particularly
frustrating in regard to the widely shared belief that cutoff should be a universal
feature of rapidly mixing high-dimensional systems. Bridging this  four-decade-old gap has  become one of the most challenging open problems in the quantitative analysis of Markov processes.  
\begin{problem}[Holy grail]\label{pb:main}
Find an  easily verifiable and model-independent  criterion allowing to predict cutoff without having to determine mixing times within a $1+o(1)$ precision.
\end{problem}

To better explain what is hoped for here,  it is perhaps worth spending a minute on a crude analogy with another ubiquitous -- but far better understood -- phase transition in probability: the \emph{concentration-of-measure} phenomenon. Let us say that a sequence of non-negative random variables $(Z_n)_{n\ge 1}$  \red[concentrates] if  there is a deterministic  sequence of positive numbers $(\theta_n)_{n\ge 1}$ -- typically the means or medians -- such that 
\begin{eqnarray*}
\frac{Z_n}{\theta_n} & \xrightarrow[n\to\infty]{\PP} & 1.
\end{eqnarray*}
In terms of the tail distribution function $F_n\colon t\mapsto \PP(Z_n\ge t)$, this is equivalent to the assertion
\begin{eqnarray*}
F_n(\theta_n t) & \xrightarrow[n\to\infty]{} & \left\{
\begin{array}{ll}
1 & \textrm{if }t<1;\\
0 & \textrm{if }t>1,
\end{array}
\right.
\end{eqnarray*}
and  replacing $F_n$ with $\dtv^{(n)}$ yields exactly the definition of a cutoff. The point we are trying to make with this analogy is as follows. There is  a very well developed and highly effective theory for establishing that random variables concentrate, without having to explicitly estimate their tail distribution functions, nor even their means: the conclusion is guaranteed, for example, as soon as $Z_n$ is a function of many independent random variables, each having very little influence by itself (we refer the unfamiliar reader to the excellent textbook \cite{MR3185193} for an introduction). Problem \ref{pb:main} rightfully asks whether a similar theory can be developed for the distance to equilibrium of Markov chains. Another fruitful analogy is provided by the theory of \emph{sharp thresholds for monotone boolean functions}, which allows one to systematically predict the emergence of phase transitions in large random structures without computing the critical threshold at which they occur; see the survey \cite{MR3921417} and the references therein. 

\subsection{The product condition}
\label{sec:PC}
 In a famous attempt to solve Problem \ref{pb:main}, Y. Peres proposed in 2004 an important criterion for cutoff based on the spectral gap and known as the \emph{product condition} \cite{peresamerican}.
\begin{definition}[Product condition] A model $(T_n)_{n\ge 1}$ satisfies the \red[product condition] if
\begin{eqnarray*}
\lambda_n\,\tmix^{(n)}  & \xrightarrow[n\to\infty]{}  & \infty.
\end{eqnarray*}
\end{definition}
This condition is easy to verify in practice because it only requires rough lower bounds on the mixing time and the spectral gap, in contrast with the brute-force approach (\ref{bruteforce}) which consists in determining the precise pre-factor in front of mixing times. It is not hard to show that the product condition is at least \emph{necessary} for cutoff.
\begin{lemma}[Necessity]Any model exhibiting cutoff must satisfy  the product condition.
\end{lemma}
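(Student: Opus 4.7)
The plan is to argue by contradiction, with the spectral-gap lower bound $\tmix(\varepsilon) \ge \lambda^{-1}\log(1/(2\varepsilon))$ from Remark \ref{rk:tmixvstrel} as the only substantive ingredient. Assume that cutoff holds but the product condition fails. Failure of the product condition means that $\lambda_n\tmix^{(n)}$ does not tend to infinity, so there is a constant $C<\infty$ and a sub-sequence along which $\lambda_n\tmix^{(n)}\le C$. Restricting attention to this sub-sequence, the goal is to exhibit a fixed $\varepsilon\in(0,1)$ such that the ratio $\tmix^{(n)}(1-\varepsilon)/\tmix^{(n)}(\varepsilon)$ stays uniformly bounded away from $1$.

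To produce such an $\varepsilon$, I pick any $\varepsilon\in(0,1/(2e))$. Because $\varepsilon'\mapsto\tmix^{(n)}(\varepsilon')$ is non-increasing and $1-\varepsilon>1/(2e)$, the cheap monotonicity bound yields $\tmix^{(n)}(1-\varepsilon)\le\tmix^{(n)}\le C/\lambda_n$. On the other hand, the spectral-gap lower bound from Definition \ref{def:trel}(ii) gives $\tmix^{(n)}(\varepsilon)\ge \lambda_n^{-1}\log(1/(2\varepsilon))$. Dividing,
\[
\frac{\tmix^{(n)}(1-\varepsilon)}{\tmix^{(n)}(\varepsilon)} \ \le \ \frac{C}{\log(1/(2\varepsilon))},
\]
and since the right-hand side is an $n$-independent quantity that can be made arbitrarily small by choosing $\varepsilon$ small enough, this contradicts Definition \ref{def:cutoff}.

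There is no real obstacle here: the whole argument is essentially one line once one notices that the exponential lower bound $\dtv(t)\ge\frac12 e^{-\lambda t}$ forces the $\dtv$ curve to need $\Omega(\trel\log(1/\varepsilon))$ additional time to drop from any fixed small constant down to $\varepsilon$. If the chain can reach a small distance from equilibrium in a bounded number of relaxation times (which is exactly the negation of the product condition), then this mandatory exponential tail inevitably dominates the total mixing time and prevents the rescaled distance-to-equilibrium curve from becoming a step function. Equivalently, cutoff and a mixing time of order $\trel$ are mutually incompatible.
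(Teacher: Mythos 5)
Your argument is correct and uses exactly the same key ingredient as the paper, namely the spectral-gap lower bound $\tmix(\varepsilon)\ge\lambda^{-1}\log(1/(2\varepsilon))$ combined with the observation that under cutoff $\tmix$ is asymptotically insensitive to $\varepsilon$; the only difference is that you phrase it as a proof by contradiction along a bounded subsequence, whereas the paper argues directly via $\liminf_{n}\lambda_n\tmix^{(n)}\ge\log(1/(2\varepsilon))$ and then lets $\varepsilon\to 0$.
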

\begin{proof}Suppose that $(T_n)_{n\ge 1}$ exhibits cutoff, and fix a precision $\varepsilon\in(0,\frac 12)$. We then have
$
t_{\textsc{mix}}^{(n)}  \sim t_{\textsc{mix}}^{(n)}(\varepsilon)
$ 
as $n\to\infty$. On the other hand, we have seen at (\ref{lb:rel}) that
\begin{eqnarray*}
\lambda_n\tmix^{(n)}(\varepsilon) & \ge & \log\left(\frac{1}{2\varepsilon}\right).
\end{eqnarray*}
Combining those two estimates, we deduce that
\begin{eqnarray*}
\liminf_{n\to\infty}\lambda_n\tmix^{(n)} & \ge & {\log\left(\frac{1}{2\varepsilon}\right)},
\end{eqnarray*}
and the right-hand side can be made arbitrarily large by choosing $\varepsilon$ small.
\end{proof}
Unfortunately, the converse statement -- which is the one that would have solved Problem \ref{pb:main} -- does not always hold. Even worse, any model  exhibiting cutoff can be perturbed so as to produce a counter-example. To see this, consider the rank-one perturbation 
\begin{eqnarray}
\label{def:bias}
\widetilde{T}(x,y) & := & (1-\theta)T(x,y)+\theta\pi(y),
 \end{eqnarray} 
where $\theta\in(0,1)$ a parameter to be adjusted later.  Note that $\widetilde{T}$ remains irreducible and with stationary law $\pi$. The interpretation of (\ref{def:bias}) is simple: before each jump, a biased coin with parameter $\theta$ is tossed, and the outcome determines whether the next state is chosen  according to the original rule, or to the stationary law $\pi$. The effect of this perturbation on the mixing and relaxation times is summarized in the following result, in which we naturally use tildas to distinguish the perturbed model from the original one. 
 \begin{lemma}[Rank-one perturbations preserve the product condition but destroy cutoff]Consider a model $(T_n)_{n\ge 1}$ which exhibits cutoff, and choose $(\theta_n)_{n\ge 1}$  in $(0,1)$ so that
\begin{eqnarray*}
\frac{1}{\tmix^{(n)}} \ \ll \ {\theta_n} \ \ll \ \lambda_n.
\end{eqnarray*}
Then, the model $(\widetilde{T}_n)_{n\ge 1}$ produced by the rank-one perturbation (\ref{def:bias}) satisfies
\begin{eqnarray*}
\widetilde{\lambda}_n  \sim \ \lambda_n,& \quad\textrm{and}\quad & 
\widetilde{{\rm t}}^{(n)}_{\textsc{mix}}(\varepsilon) \ \sim \ \frac{1}{\theta_n}\log\left(\frac{1}{\varepsilon}\right).
\end{eqnarray*}
In particular, $(\widetilde{T}_n)_{n\ge 1}$ still satisfies the product condition, but fails to exhibit cutoff. 
\end{lemma}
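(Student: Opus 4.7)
The plan is to derive a single closed-form identity linking the perturbed and unperturbed distances to equilibrium, and then extract every conclusion by elementary asymptotic analysis. The key algebraic observation is that $\Pi$ is a rank-one projection satisfying $\Pi^2=\Pi$ and $L\Pi = \Pi L = 0$, so $I-\Pi$ is an idempotent that commutes with $L$. Writing $\widetilde{L} = (1-\theta)L - \theta(I-\Pi)$ and using $e^{-\theta t(I-\Pi)} = \Pi + e^{-\theta t}(I-\Pi)$, a short computation gives
\begin{equation*}
\widetilde{P}_t - \Pi \;=\; e^{-\theta t}\bigl(P_{(1-\theta)t} - \Pi\bigr).
\end{equation*}
Taking $\|\cdot\|_{\infty\to\infty}$ and invoking Lemma \ref{lm:subadd} yields the master identity
\begin{equation*}
\widetilde{d}_{\textsc{tv}}(t) \;=\; e^{-\theta t}\,\dtv\bigl((1-\theta)t\bigr).
\end{equation*}

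For the spectral gap, I would feed this identity into Definition \ref{def:trel}(i): as $t\to\infty$, $\widetilde{d}_{\textsc{tv}}(t) = \exp\bigl(-(\theta + (1-\theta)\lambda)t + o(t)\bigr)$, so $\widetilde{\lambda} = \theta + (1-\theta)\lambda$. Since $\lambda_n$ is bounded above (trivially by $2$, using $|\mu|\le 1$ for $\mu \in \mathrm{spec}(T)$), the hypothesis $\theta_n \ll \lambda_n$ forces both $\theta_n \to 0$ and $\theta_n/\lambda_n \to 0$, whence $\widetilde{\lambda}_n/\lambda_n = 1 - \theta_n + \theta_n/\lambda_n \to 1$.

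For the mixing-time asymptotics, the upper bound is immediate: $\widetilde{d}_{\textsc{tv}}(t) \le e^{-\theta t}$ yields $\widetilde{\tmix}(\varepsilon) \le \log(1/\varepsilon)/\theta$. For the matching lower bound I would fix $\delta \in (0,1)$, set $t_n := (1-\delta)\log(1/\varepsilon)/\theta_n$, and note that $e^{-\theta_n t_n} = \varepsilon^{1-\delta}$ while $(1-\theta_n)t_n \sim \log(1/\varepsilon)/\theta_n \ll \tmix^{(n)}$ by the assumption $\theta_n \gg 1/\tmix^{(n)}$. The cutoff hypothesis on the original model then forces $\dtv\bigl((1-\theta_n)t_n\bigr)\to 1$, so $\widetilde{d}_{\textsc{tv}}(t_n)\to \varepsilon^{1-\delta} > \varepsilon$, giving $\widetilde{\tmix}(\varepsilon) > t_n$ for large $n$; letting $\delta\to 0$ closes the estimate. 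The product condition then follows from $\widetilde{\lambda}_n \widetilde{\tmix}^{(n)} \sim \lambda_n/\theta_n \to \infty$, while failure of cutoff follows from $\widetilde{\tmix}^{(n)}(\varepsilon)/\widetilde{\tmix}^{(n)}(1-\varepsilon) \to \log(1/\varepsilon)/\log(1/(1-\varepsilon)) \ne 1$.

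The main obstacle is isolating the master identity; once it is in hand, what remains is a routine two-sided asymptotic analysis. The only mildly delicate point is the lower mixing-time bound, where one must ensure that $(1-\theta_n)t_n$ sits sufficiently far below $\tmix^{(n)}$ for the cutoff of the original model to push $\dtv\bigl((1-\theta_n)t_n\bigr)$ close to $1$ — which is exactly the role played by the hypothesis $\theta_n \gg 1/\tmix^{(n)}$.
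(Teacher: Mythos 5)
Your proof is correct and follows essentially the same route as the paper: both hinge on the master identity $\widetilde{\rm d}_{\textsc{tv}}(t)=e^{-\theta t}\,\dtv\bigl((1-\theta)t\bigr)$, which you derive from the operator exponential of $\widetilde{L}=(1-\theta)L-\theta(I-\Pi)$ and the idempotence of $I-\Pi$, while the paper reaches the equivalent point-wise formula $\widetilde{P}_t(x,y)=e^{-\theta t}P_{(1-\theta)t}(x,y)+(1-e^{-\theta t})\pi(y)$ by a direct probabilistic decomposition of the continuous-time dynamics. Your formula $\widetilde{\lambda}=\theta+(1-\theta)\lambda$ is the correct one (the paper's printed $\widetilde{\lambda}=1-\theta+\theta\lambda$ is visibly a typo, as it would force $\widetilde{\lambda}_n\to 1$ rather than $\widetilde{\lambda}_n\sim\lambda_n$), and you spell out the two-sided mixing-time asymptotics and the final deductions that the paper compresses into ``easily leads to the claim.''
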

\begin{proof}The effect of the rank-one perturbation (\ref{def:bias}) on the semi-group is simple: 
\begin{eqnarray*}
\widetilde{P}_t(x,y) & = &  e^{-\theta t}P_{(1-\theta)t}(x,y)+(1-e^{-\theta t})\pi(y),
\end{eqnarray*}
for all $t\ge 0$ and all $x,y\in\dX$. Now, substract $\pi(y)$ from both sides, take absolute values, sum over all $y\in\dX$ and finally maximize over the choice of the starting state $x\in\dX$ to obtain
\begin{eqnarray*}
\widetilde{\rm d}_{\textsc{tv}}(t) & = &  e^{-\theta t} {\dtv}((1-\theta)t).
\end{eqnarray*}
Examining the $t\to\infty$ behavior, we deduce that $\widetilde{\lambda} =   1-\theta+\theta \lambda$, and specializing those general identities to $T=T_n$ and $\theta=\theta_n$ easily leads to the claim.
\end{proof}
The  transformation (\ref{def:bias}) happens to preserve many structural properties of the  model, and can therefore be used to produce counter-examples to Peres' conjecture even within very well-behaved ensembles, such as random walks on Abelian groups (as defined below). Note that the perturbed model is extremely close to the original one, not only entry-wise but also in terms of eigenvalues and eigenvectors. Yet, the latter exhibits cutoff whereas the former does not,  and any general criterion for predicting cutoff will have to be subtle enough to distinguish between them. This demonstrates that cutoff is a highly delicate phenomenon. 
 One  key feature which is radically affected by (\ref{def:bias}), however, is \emph{sparsity}: while  $T$ may have  very few non-zero entries, its perturbed version $\widetilde{T}$ is always full, meaning that its diagram is the complete graph. This might be taken as a hint that sparsity should play an important role, and we will come back to this point later. Still, restricting attention to sparse models is not enough to repair the product condition, since E. Lubetzky and A. Sly produced explicit families of cubic expanders without cutoff \cite{MR2774096}. Nevertheless, those constructions are widely considered as \emph{pathological}, and the general consensus is that the product condition should correctly predict cutoff on \emph{reasonable} models. Giving an honest mathematical content to this vague claim is a major open problem.
\begin{problem}[Cutoff for reasonable chains]\label{pb:PC}Prove that  cutoff  is equivalent to the product condition for all `reasonable' models, in a sense that remains to be defined.
\end{problem}
Limited progress has been made on this question so far: J. Ding, E. Lubeztky and Y. Peres proved the equivalence for birth-and-death processes \cite{ding2010total}, and this was later extended to random walks on weighted trees by R. Basu, J. Hermon and Y. Peres \cite{MR3650406}. Unfortunately, those results remain extremely scarce in comparison with the rich variety of \emph{reasonable} models that satisfy the product condition, but have resisted all attempts to prove cutoff. The next chapter contains a selection of some of the most emblematic such processes.

\newpage
\section{Some important models}
\label{sec:models}
In this chapter, we describe several important classes of Markov chains with good structural properties, for which cutoff is still far from being understood. 
\subsection{Reversible chains}
Viewing a matrix $A\in\dR^{\dX\times\dX}$ as a linear operator acting on the Hilbert space $L^2=L^2(\dX,\pi)$ via (\ref{def:action}), we may consider its \red[adjoint] $A^\star$, abstractly defined by the  duality relation
\begin{eqnarray*}
\forall f,g\in L^2,\qquad \langle A^\star f,g \rangle & = & \langle f,Ag\rangle.
\end{eqnarray*}
Choosing $f=\delta_y$ and $g=\delta_x$, we arrive at the more concrete expression
\begin{eqnarray*}
\forall (x,y)\in\dX^2,\qquad A^\star(x,y) & = & \frac{\pi(y)A(y,x)}{\pi(x)}.
\end{eqnarray*}
When applied to our transition matrix $T$, this formula clearly produces another stochastic matrix  $T^\star$, with the same stationary law $\pi$. From a dynamical viewpoint, the duality $T\mapsto T^\star$ can be interpreted as \emph{time reversal}, as explained in the following classical exercise.
\begin{exo}[Time reversal]\label{exo:reversal}
Let $(X_t)_{t\ge 0}$ and $(X_t^\star)_{t\ge 0}$ be Markov processes with transition matrices $T$ and $T^\star$ respectively, both initialized from the stationary law $\pi$. Show that
\begin{eqnarray*}
\forall t\ge 0,\qquad \left(X^\star_s\right)_{s\in[0,t]} & \stackrel{d}{=} & \left(X_{t-s}\right)_{s\in[0,t]}.
\end{eqnarray*}
\end{exo}
Of course, the continuous-time Markov semi-group generated by the transition matrix $T^\star$ is then simply the adjoint $(P_t^\star)_{t\ge 0}$ of the semi-group generated by $T$. Because self-adjoint operators are much better behaved than non self-adjoint ones, Markov chains that satisfy the self-duality relation $T^\star=T$ play a distinguished role in the theory of mixing times. 
\begin{definition}[Reversibility]$T$ is called \red[reversible] when $T^\star=T$, or equivalently, 
\begin{eqnarray}
\label{DBE}
\forall (x,y)\in\dX^2,\qquad \pi(x)T(x,y) & = & \pi(y)T(y,x).
\end{eqnarray}
\end{definition}
Note that summing the  identity (\ref{DBE}) over all $x\in\dX$ yields exactly the stationarity property $\pi T=\pi$,  also known as the \red[global balance equation]. For this reason, (\ref{DBE}) is sometimes referred to as the \red[detailed balance equation]. This powerful local symmetry happens to hold in many fundamental models, as we will see. Whenever it does, the spectral theorem for self-adjoint operators provides a particularly convenient decomposition of the form
\begin{eqnarray}
\frac{T^k(x,y)}{\pi(y)} & = & 1+\sum_{j=2}^N \lambda_j^k\phi_j(x)\phi_j(y),
\end{eqnarray}
for all $x,y\in\dX$ and all $k\in\dN$, where $1=\lambda_1>\lambda_2\ge\ldots\ge \lambda_N$ denote the ordered eigenvalues of $T$, and $1=\phi_1,\ldots,\phi_N$ a corresponding orthonormal basis of eigenvectors. It follows that
\begin{eqnarray}
\label{spectraldec}
\frac{P_t(x,y)}{\pi(y)}  & = & 1+\sum_{j=2}^N e^{-\gamma_j t}\phi_j(x)\phi_j(y),
\end{eqnarray}
for all times $t\ge 0$, with $\gamma_j=1-\lambda_j$. This exact expansion is the starting point of various classical estimates on the mixing times of reversible chains, whose strength depends on the amount of spectral information that one is ready to use. The most basic instance is the following converse to (\ref{lb:rel}), which shows that the ratio between the mixing time and the  relaxation time of a reversible chain is at most logarithmic in the volume parameter $\frac{1}{\pi_\star}$.
\begin{lemma}[Mixing vs relaxation, reversed]\label{lm:L2bound} For reversible chains, we have
\begin{eqnarray*}
\forall\varepsilon\in(0,1),\qquad \tmix(\varepsilon) & \le & \frac{1}{2\lambda} \log\left(\frac{1-\pi_\star}{4\pi_\star\varepsilon^2}\right).
\end{eqnarray*}
\end{lemma}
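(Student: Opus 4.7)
The plan is to go through the $L^2$ (chi-squared) distance, which is the natural object produced by the spectral expansion (\ref{spectraldec}), and then bound total variation by $L^2$ using Cauchy--Schwarz.

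First, I would use the Cauchy--Schwarz inequality on counting measure against $\pi$ to upgrade the $\ell^1$-type expression for total variation to an $L^2(\pi)$ norm of the density:
\begin{eqnarray*}
2\dtv(P_t(x,\cdot),\pi) \ = \ \sum_{y\in\dX}\pi(y)\left|\frac{P_t(x,y)}{\pi(y)}-1\right| & \le & \left\|\frac{P_t(x,\cdot)}{\pi}-1\right\|_2,
\end{eqnarray*}
so it suffices to control the right-hand side. Squaring and using the spectral decomposition (\ref{spectraldec}) together with the orthonormality of $\phi_1,\ldots,\phi_N$ in $L^2$, the cross terms disappear and we get the exact identity
\begin{eqnarray*}
\left\|\frac{P_t(x,\cdot)}{\pi}-1\right\|_2^2 & = & \sum_{j=2}^N e^{-2\gamma_j t}\phi_j(x)^2.
\end{eqnarray*}

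Second, since $\gamma_j\ge\gamma_2=\lambda$ for every $j\ge 2$, the exponential factor is bounded by $e^{-2\lambda t}$, leaving us with the task of controlling $\sum_{j\ge 2}\phi_j(x)^2$. Here the key point is Parseval's identity applied to the indicator $\delta_x$, viewed as an element of $L^2(\pi)$ with squared norm $1/\pi(x)$: expanding in the orthonormal basis $(\phi_j)_{j=1}^N$ gives $\sum_{j=1}^N\phi_j(x)^2=1/\pi(x)$, and subtracting the $j=1$ term (recall $\phi_1\equiv 1$) yields $\sum_{j\ge 2}\phi_j(x)^2=\frac{1}{\pi(x)}-1$. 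Maximizing over $x\in\dX$ then produces
\begin{eqnarray*}
4\,\dtv(t)^2 & \le & e^{-2\lambda t}\left(\frac{1}{\pi_\star}-1\right) \ = \ e^{-2\lambda t}\cdot\frac{1-\pi_\star}{\pi_\star}.
\end{eqnarray*}

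Finally, requiring the right-hand side to be at most $4\varepsilon^2$ and solving for $t$ gives exactly the claimed upper bound $\tmix(\varepsilon)\le\frac{1}{2\lambda}\log\frac{1-\pi_\star}{4\pi_\star\varepsilon^2}$. The proof is essentially a bookkeeping exercise once the right objects are in place: the only substantive inputs are reversibility (to diagonalize $T$ in an orthonormal basis of $L^2(\pi)$), the Cauchy--Schwarz comparison $\dtv\le\frac{1}{2}\|\cdot\|_2$, and the Parseval computation of $\sum_{j\ge 2}\phi_j(x)^2$. There is no real obstacle; the step most easily miswritten is the Parseval identification of $\sum_j\phi_j(x)^2=1/\pi(x)$, so I would be careful to justify it by expanding $\delta_x\in L^2(\pi)$ and matching coefficients.
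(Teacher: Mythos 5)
Your proof is correct and follows essentially the same route as the paper: Cauchy--Schwarz to pass from $\dtv$ to the $L^2(\pi)$ norm of the density, the spectral expansion to diagonalize, the crude bound $\gamma_j\ge\gamma_2=\lambda$, and the Parseval identity $\sum_{j\ge 2}\phi_j(x)^2=\frac{1}{\pi(x)}-1$. The only difference is that you spell out the Parseval step (by expanding the Dirac density at $x$) which the paper leaves implicit.
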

\begin{proof}
Fix  $x\in\dX$ and $t\ge 0$, and let us use the Cauchy-Schwartz inequality to write
\begin{eqnarray*}
2\dtv(P_t(x,\cdot),\pi) & = & \left\|\frac{P_t(x,\cdot)}{\pi(\cdot)}-1\right\|_1
\  \le \ \left\|\frac{P_t(x,\cdot)}{\pi(\cdot)}-1\right\|_2.
\end{eqnarray*}
The motivation for doing so is that, as any orthonormal decomposition, the expansion (\ref{spectraldec}) is particularly convenient for estimating $L^2$ norms. Specifically, we have
\begin{eqnarray}
\label{Parseval}
\left\|\frac{P_t(x,\cdot)}{\pi(\cdot)}-1\right\|_2^2 & = & {\sum_{j=2}^N e^{-2\gamma_j t}|\phi_j(x)|^2}\\
\label{gamma2}
& \le & e^{-2\gamma_2 t}\left(\frac{1}{\pi(x)}-1\right),
\end{eqnarray}
thanks to the crude bound $\gamma_j\ge \gamma_2$ for all $2\le j\le N$. Combining those two inequalities readily leads to the result, since $\lambda=\gamma_2$, thanks to (\ref{def:gap}). 
\end{proof}
\begin{remark}[Room for improvement]
The bound $\gamma_j\ge\gamma_2$ used at (\ref{gamma2}) is of course very crude, and leaves plenty of room for improvement. One should keep in mind, however, that the exact formula (\ref{Parseval}) is rarely used at full strength in practice, because most  eigenvalues and eigenvectors do not  admit simple, explicit and tractable expressions on concrete models. 
\end{remark}
\begin{remark}[No cutoff in $L^2$]In the above proof, we have upper bounded $\dtv(t)$ by 
\begin{eqnarray*}
d_2(t) & := & \frac{1}{2}\max_{x\in\dX}\left\|\frac{P_t(x,\cdot)}{\pi(\cdot)}-1\right\|_2.
\end{eqnarray*}
This quantity is naturally called the \red[$L^2-$distance to equilibrium], and it constitutes a perfectly legitimate alternative to the total-variation distance. Unlike the latter however, it can \emph{not} be normalized so as to approach a step function in the sense of Definition \ref{def:cutoff}. Indeed, the spectral expansion (\ref{Parseval}) implies that the function $t\mapsto d_2(t)$ is log-convex, hence convex: it can not  drop abruptly to zero after having remained flat for a long time.
\end{remark}
Let us introduce a substantial relaxation of reversibility, which suffices for many purposes. 
\begin{definition}[Weak reversibility]$T$ is called \red[weakly reversible] when $E$ is symmetric, i.e.,  
\begin{eqnarray}
\label{assume:sym}
\forall x,y\in\dX,\qquad 
T(x,y)>0 & \Longrightarrow & T(y,x)>0.
\end{eqnarray}
\end{definition}
Since the diagram of a weakly reversible chain is undirected, it naturally induces a graph metric on $\dX$, henceforth  called the  \red[hop-count distance], and defined as follows:
\begin{eqnarray}
\label{def:hopcount}
\forall x,y\in\dX,\qquad \dist(x,y) & := & \min\left\{k\in\dN\colon T^k(x,y)>0\right\}.
\end{eqnarray}
 As a consequence, the usual notions pertaining to metric spaces apply to $\dX$, and will be used  repeatedly here. In particular,  the  \red[Lipschitz constant] of a  function $f\colon\dX\to\dR$ is
\begin{eqnarray}
\label{def:lip}
 \Lip(f) & := & \max_{x\ne y}\left\{\frac{|f(x)-f(y)|}{\dist(x,y)}\right\} \ = \ \max_{\{x,y\}\in E}\left\{|f(x)-f(y)|\right\},
 \end{eqnarray}  
while the \red[diameter] of the space is defined as
\begin{eqnarray}
\label{def:diam}
\diam(\dX) & := & \max_{(x,y)\in\dX^2}\dist(x,y),
\end{eqnarray}
 The latter represents a natural \emph{geometric time-scale} for our semi-group, and it will provide a simple  and useful lower bound for many more sophisticated quantities used to quantify mixing. Here is a first instance of this general principle.
\begin{lemma}[Diameter bound]\label{lm:diammix}If $T$ is weakly reversible, then 
\begin{eqnarray*}
\forall\varepsilon\in(0,1),\quad \tmix(\varepsilon) & \ge & \left\lfloor\frac{(1-\varepsilon)^2\diam(\dX)}{10}\right\rfloor.
\end{eqnarray*}
\end{lemma}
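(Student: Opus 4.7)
The strategy is a geodesic averaging trick, combined with the elementary fact that in continuous time the walker moves at most one unit of hop-count distance per Poisson arrival.

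First I would pick $u,v\in\dX$ realising the diameter, $\dist(u,v)=D:=\diam(\dX)$, and extract a shortest path $u=x_0,\ldots,x_D=v$; weak reversibility guarantees its existence, and the triangle inequality forces $\dist(x_i,x_j)=|i-j|$ for all $i,j$. The crucial consequence is that for every $y\in\dX$ and every integer $r\ge 0$, the indices $i$ with $\dist(x_i,y)\le r$ sit inside an interval of length at most $2r$ in $\{0,\ldots,D\}$, so summing against $\pi$ yields
\[
\sum_{i=0}^{D}\pi\bigl(B(x_i,r)\bigr)\ \le\ 2r+1,
\]
where $B(x,r)$ denotes the closed hop-count ball around $x$. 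By pigeonhole, some vertex $x=x_{i^*}$ on the geodesic has a stationary-light neighbourhood: $\pi(B(x,r))\le (2r+1)/(D+1)$.

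Starting the chain from this $x$ is the right move. Since each jump of the continuous-time process changes the state by exactly one unit of hop-count distance, the random variable $\dist(x,X_t)$ is dominated by the number $N_t\sim\mathrm{Poisson}(t)$ of jumps by time $t$, and Markov's inequality gives $P_t(x,B(x,r)^c)\le\PP(N_t>r)\le t/(r+1)$. Evaluating the total-variation distance on the test set $B(x,r)^c$ then produces
\[
\dtv(t)\ \ge\ \pi(B(x,r)^c)-P_t(x,B(x,r)^c)\ \ge\ 1-\frac{2r+1}{D+1}-\frac{t}{r+1}.
\]

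To conclude I would optimise over the integer radius $r$: taking $r+1$ of order $(1-\varepsilon)(D+1)/4$ makes each of the two error terms contribute at most $(1-\varepsilon)/2$, leaving a margin of order $(1-\varepsilon)/2$ above $\varepsilon$ and yielding $\dtv(t)>\varepsilon$ as long as $t\lesssim(1-\varepsilon)^2(D+1)/8$. The slack between this clean constant $1/8$ and the stated $1/10$, together with the floor, comfortably absorbs any rounding loss when turning the optimal $r$ into an integer, and trivialises the small-diameter regime $D<10/(1-\varepsilon)^2$ where $\lfloor(1-\varepsilon)^2 D/10\rfloor=0$ anyway. The one genuine subtlety is really the geodesic averaging step: a naive attempt that only uses the two diameter-realising endpoints goes through the triangle inequality $\dtv(P_t(u,\cdot),P_t(v,\cdot))\le 2\dtv(t)$ and caps the lower bound at $\dtv(t)\le 1/2$, which fails for $\varepsilon\ge 1/2$; one really needs to interpolate along the geodesic and exhibit an interior vertex whose local stationary mass is provably small, thereby getting $\dtv(t)$ all the way up to $1-o(1)$.
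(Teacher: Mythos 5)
Your proof is correct and follows essentially the same strategy as the paper: pick a vertex on a diameter-realizing geodesic whose hop-count ball carries little stationary mass, start the chain there, and use the Poisson clock together with Markov's inequality to bound the escape probability from that ball. The only (cosmetic) difference is how you exhibit the light vertex: the paper spaces out $k$ disjoint balls of radius $r-1$ along the geodesic and applies pigeonhole to them, whereas you average $\pi(B(x_i,r))$ over all $D+1$ geodesic vertices, using that each state can be $r$-close to at most $2r+1$ of them; the resulting bound $\pi(B(x_{i^*},r))\le (2r+1)/(D+1)$ is marginally cleaner and yields the same constant up to the slack the lemma already builds in. Your side remark that the naive two-endpoint triangle-inequality argument only reaches $\dtv(t)\ge 1/2$, and therefore cannot handle $\varepsilon\ge 1/2$, correctly identifies why one must interpolate along the geodesic.
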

\begin{proof}We can assume that $(1-\varepsilon)\diam(\dX)\ge 10$, otherwise the claim  trivializes. Now, consider two integers $k,r\ge 1$, to be adjusted later, satisfying the condition
\begin{eqnarray}
\label{assume:diam}
(k-1)\times(2r-1) & \le & \diam(\dX).  
\end{eqnarray}
Then, along a geodesic path realizing the diameter, one can find $k$ points at  distance at least $2r-1$ from each other. Since the balls of radius $r-1$ around those $k$ points are pairwise disjoint, one of those balls -- call it $B$ -- must satisfy $\pi(B)\le 1/k$. On the other hand, when started from the center of $B$, our Markov process $(X_t)_{t\ge 0}$ satisfies, 
\begin{eqnarray*}
\PP(X_t\notin B) & = & \PP\left(\dist(X_t,X_0)\ge r\right)\
 \le \ \frac{t}{r},
\end{eqnarray*}
for any $t\ge 0$, by Markov's inequality and the fact that the number of transitions occurring in the interval $[0,t]$ is a Poisson random variable with mean $t$. Therefore, we deduce that
\begin{eqnarray*}
\dtv(t) & \ge & \PP(X_t\in B)-\pi(B) \ \ge \ 1-\frac{t}{r}-\frac{1}{k}.
\end{eqnarray*}
To make the right-hand side at least $\varepsilon$, it is enough to choose $k,r,t$ as follows:
\begin{eqnarray*}
k \ :=\ \left\lceil \frac{2}{1-\varepsilon}\right\rceil,\qquad
r \ :=\ \left\lceil \frac{(1-\varepsilon)\diam(\dX)}{5}\right\rceil,\qquad
t \ := \ \left\lfloor\frac{(1-\varepsilon)^2\diam(\dX)}{10}\right\rfloor.
 \end{eqnarray*} 
Note that this is compatible with the constraint (\ref{assume:diam}), since
\begin{eqnarray*}
(k-1)(2r-1) & \le & \frac{2}{1-\varepsilon}\left(\frac{2}{5}\left(1-\varepsilon\right)\diam(\dX)+1\right),
\end{eqnarray*}
which is at most $\diam(\dX)$ thanks to our initial assumption that $(1-\varepsilon)\diam(\dX)\ge 10$. 
\end{proof}
\begin{remark}[Room for improvement]The ballistic estimate $\EE[\dist(X_t,X_0)]\le t$ used in the above argument is very pessimistic compared to the diffusive behavior $\EE[\dist(X_t,X_0)])=O(\sqrt{t})$ observed in various models, including random walk on the $n-$cycle. Similarly, the estimate on the volume of balls can be considerably improved on the $n-$cube and other models. Nevertheless, there are important models in which $\tmix=\Theta(\diam(\dX))$, including random walks on expanders (as defined below).
\end{remark}
\subsection{Random walks on graphs and groups}
\label{sec:RW}
\paragraph{Random walks on graphs.}
Consider a finite, connected, directed  graph $G$ with vertex set 
$V_G$ and  edge set $E_G$. The most natural stochastic dynamics that one could think of on this graph consists in repeatedly traversing an arc chosen uniformly at random among all those that start from the current position. More formally, we define the \red[random walk] on  $G$ as the Markov chain with state space $\dX=V_G$ and transition matrix 
\begin{eqnarray*}
T(x,y) & := & \left\{
\begin{array}{ll}
\frac{1}{\deg(x)} & \textrm{ if }(x,y)\in E_G\\
0 & \textrm{ else},
\end{array}
\right.
\end{eqnarray*}
where $\deg(x):=\#\{y\in V_G\colon (x,y)\in E_G\}$ here denotes the out-degree of the vertex $x$. 
\begin{exo}[Reversibility]Prove that the above Markov chain is reversible if and only if the graph is undirected (i.e. $(x,y)\in E_G\Longleftrightarrow (y,x)\in E_G$), and explicitate $\pi$ in that case. 
\end{exo}
The time it takes for a random walk to forget its initial position is a gauge for an array of properties of the underlying graph: typical distances, local traps, global bottlenecks, high-degree hubs, etc. Moreover, the ability of random walks to quickly approximate their equilibrium law plays a prominent role in many modern applications, such as error-correcting codes, pseudo-random generators, and link-based ranking of massive databases. Those applications have motivated the detailed analysis of random walks on \emph{random graphs}, which serve as good models for complex networks. Over the past decade, our understanding of their  asymptotic geometry and, in particular, of their intrinsic tree-like nature, has become sufficiently developed to obtain a very precise description of  typical random-walk trajectories. While this has led to rigorous proofs of cutoff in several models  \cite{MR2667423,MR3758735,MR3650414,MR3773804,MR4132638,MR4385126,MR4476123,hermon2021cutoff,arxiv.2212.04469}, the common mechanism at play behind all those instances has not yet been identified. 
\begin{problem}[Derandomization]\label{pb:derandom}Find a structural property which is almost-surely satisfied by the various random graphs considered above, and which deterministically triggers  cutoff. 
\end{problem}
\paragraph{Vertex-transitive graphs.} Vertex-transitive graphs form a distinguished ensemble with enough structure and symmetry to raise hopes for a better understanding of cutoff. Recall that a graph $G=(V_G,E_G)$  is \red[vertex-transitive] if for any vertices $x,y\in V_G$, there is an edge-preserving bijection $\phi\colon V_G\to V_G$ that maps $x$ to $y$. In words, $G$ \emph{looks the same from every vertex}. 
This strong spatial homogeneity precludes many pathological phenomena observed in more heterogeneous settings, and entails considerably simplified expressions for a number of random-walk statistics \cite{MR994089}. Very recently, a tremendous progress in our understanding of the structure of finite vertex-transitive graphs was achieved by R. Tessera and M. Tointon in the remarkable series of papers \cite{arxiv.2001.01467,MR4253426}. 
Yet and perhaps surprisingly,  no general positive result is known regarding cutoff on this natural ensemble. This problem was explicitly raised  two decades ago at the AIM Workshop on Mixing Times \cite[Conjecture 7]{peresamerican}.
\begin{problem}[Vertex-transitive graphs]\label{pb:transitive}Characterize cutoff on vertex-transitive graphs.
\end{problem}

\paragraph{Expander graphs.}\red[Expanders] are sequences of finite undirected graphs  $(G_n)_{n\ge 1}$ whose sizes diverge, whose degrees are uniformly bounded, and whose spectral gaps are bounded away from zero. Those fascinating graphs enjoy nearly as good connectivity properties as complete graphs, but at a much lower cost in terms of edges. Consequently, they have found numerous practical applications,  some of which are described in the beautiful survey paper \cite{MR2247919} by S. Hoory, N. Linial and A. Wigderson. It follows from Lemmas \ref{lm:L2bound} and \ref{lm:diammix} that the mixing time of expanders scales logarithmically with the number of vertices, while their spectral gap is by definition bounded away from $0$. In particular,  the product condition is satisfied. Consequently, expanders constitute natural (and highly valuable) targets for cutoff hunters. In particular,  proving cutoff on vertex-transitive expanders is one of the most famous open problems in the field  (see \cite[Open Question 34]{peresamerican} or \cite[Question 5]{MR3726904}), but positive results remain  frustratingly limited to the extremal case of (near-)\emph{Ramanujan graphs} \cite{MR3558308,MR3693771,MR4178418,MR4476123}, whose remarkable structure is very well understood. \begin{problem}[Expanders]\label{pb:exp}Prove that all vertex-transitive expanders exhibit cutoff.
\end{problem}

\paragraph{Random walks on groups.} One place where vertex-transitive graphs arise naturally is group theory: let $\dX$ be a finite group and let $S\subseteq \dX$ be a set of elements that generates the group. By definition, the \red[Cayley graph]  $G=\mathrm{Cay}(\dX,S)$ is the graph with vertex set $V_G=\dX$ and edge set $E_G=\left\{(x,sx)\colon x\in\dX,s\in S\right\}$. This graph is vertex-transitive, since for any $(x,y)\in\dX^2$ the map $\sigma_{xy}\colon z\mapsto zx^{-1}y$ is an edge-preserving bijection that sends $x$ to $y$. Note that a transition of the random walk  on $G$ simply consists in left-multiplying the current position by an element drawn uniformly at random from $S$.   Rather embarrassingly, our understanding of cutoff for random walks on Cayley graphs is  far from complete even in the special case of  Abelian groups, of which our two toy models (the $n-$cycle and $n-$cube) are particular instances. In particular, the following reasonable prediction  was made at the aforementioned workshop \cite[Open Question 33]{peresamerican}, and reiterated  recently \cite[Conjecture 1]{hermon2021cutoff}. 
Its confirmation  would represent a substantial progress on Problems \ref{pb:PC} and \ref{pb:transitive}. 
\begin{problem}[Abelian graphs]\label{pb:abelian}Prove that cutoff is equivalent to the product condition for random walks on Cayley graphs of Abelian groups.
\end{problem} 
Random walks on Cayley graphs admit a fairly natural ``weighted'' generalization: instead of being drawn uniformly at random from a generating set $S$, the increments can be sampled according to any probability distribution $\nu\in\cP(\dX)$ whose support generates the group.  More formally, this corresponds to the transition matrix
\begin{eqnarray}
\label{def:RWgroup}
T(x,y) & := & \nu(yx^{-1}),
\end{eqnarray}
and the resulting Markov process (started from the identity element w.l.o.g.) is naturally called the \red[random walk on $\dX$ generated by $\nu$]. Understanding how the mixing time of this random walk depends on the group $\dX$ and on the  law $\nu$ is a rich and active research area, but our current understanding of cutoff remains limited to a handful of special cases. 
\subsection{Interacting particle systems} \label{sec:interacting}Broadly speaking, interactive particle systems are continuous-time Markov processes that model the collective behavior of stochastically interacting components \cite{Liggettbook1,Liggettbook2}. The most widely studied example is the \red[Exclusion Process], introduced by F. Spitzer \cite{Spitzer} in 1970. 
\paragraph{Symmetric Exclusion Processes.} Imagine particles performing independent random walks on the vertices of a network, except that they are not allowed to overlap: any jump that would violate this \emph{exclusion rule}  is simply canceled. More formally, the state space is 
\begin{eqnarray*}
\dX & := & \left\{x\in\{0,1\}^n\colon \sum_{i=1}^n x_i=m\right\},
\end{eqnarray*}
where $n>m$ are positive integers representing the number of sites and particles respectively, and where the boolean variable $x_i\in\{0,1\}$ indicates whether the $i-$th site is empty or occupied. The underlying geometry is specified by an irreducible symmetric stochastic matrix $G=(G_{ij})_{1\le i,j\le n}$, which is often chosen to be the transition matrix of simple random walk on a regular graph. The generator then acts on any function $f\colon\dX\to\dR$ as follows:
\begin{eqnarray*}
Lf(x) & := & \frac{1}{n}\sum_{1\le i,j\le n}G_{ij}x_i(1-x_j)\left(f(x+\ee_j-\ee_i)-f(x)\right),
\end{eqnarray*}
where $(\ee_1,\ldots,\ee_n)$ denotes the canonical basis. This dynamics is easily seen to be reversible w.r.t. the uniform measure on $\dX$. A particularly pleasant feature of this model is the explicitly solvable nature of its single-site marginals: if $(X_t)_{t\ge 0}$ is a Markov process with the above generator, then its mean $\mu_t:=\EE[X_t]$ evolves according to the \red[discrete heat equation]
\begin{eqnarray*}
\frac{\dd \mu_t}{\dd t} & = & (G-\mathrm{Id})\mu_t,
\end{eqnarray*}
independently of the number of particles. Another striking property, conjectured by D. Aldous and proved by P. Caputo, T. Liggett and T. Richthammer \cite{MR2629990}, is the highly non-trivial fact that  the spectral gap of the process does not depend on the number $m$ of particles. Those remarkable features suggest a deep and intimate relation between the convergence to equilibrium of the Exclusion Process and that of  independent random walkers. Formalizing this idea has been and continues to be the subject of active research \cite{MR3077529,MR4164461,MR4164852,MR4254474}. In particular, the occurrence of a cutoff was conjectured long ago by D. Wilson on various sequences of graphs  \cite[Section 9]{MR2023023}, along with precise candidates for its location, width and profile. Frustratingly, those predictions have been so far only  confirmed on the segment and the cycle, in a remarkable series of papers by H. Lacoin \cite{MR3474475,MR3551201,MR3689972} that inspired many closely related results \cite{MR3945753,MR4021252,MR4291457,arxiv.2003.03781,arxiv.2104.10478,arxiv.2110.06353,arxiv.2201.03463}. The proof relies on a very special monotonicity property of the model which, unfortunately, fails in higher dimensions. 
\begin{problem}[Exclusion]\label{pb:exclusion}Establish cutoff for the Symmetric Exclusion Process on growing lattices or tori in dimension $d\ge 2$. 
\end{problem}

\paragraph{Zero-Range Processes.} Another widely-studied conservative interacting particle system -- also introduced by F. Spitzer, in the very same paper -- is the \red[Zero-Range Process] (ZRP). Just like the Exclusion Process, it involves a fixed number $m$ of particles evolving on a network whose geometry is specified by an irreducible stochastic matrix $G=(G_{ij})_{1\le i,j\le n}$.  However, those particles are now allowed to overlap: the state space becomes
\begin{eqnarray*}
\dX & := & \left\{x\in\dZ_+^n\colon \sum_{i=1}^nx_i = m\right\},
 \end{eqnarray*} 
and the exclusion rule is replaced by an interaction function $r_i\colon\{1,2,\ldots\}\to (0,\infty)$ at each site $i$, indicating how the jump rate out of $i$ depends on the number of particles present on $i$. More formally, the generator of the process acts on functions $f\colon\dX\to\dR$ as follows:
\begin{eqnarray*}
Lf(x) & := & \frac{1}{n}\sum_{1\le i,j\le n}G_{ij}r_i(x_i)\left(f(x+\ee_j-\ee_i)-f(x)\right),
\end{eqnarray*}
with the interpretation that $r_i(0)=0$ (no jumps from empty sites). 
An interesting feature of the model is that the stationary distribution $\pi$ has an explicit product form,  whose single-site marginals can be freely adjusted by an appropriate choice of the parameters:
\begin{eqnarray*}
 \pi(x) & \propto & \prod_{i=1}^n\frac{p_i^{x_i}}{r_i(1)r_i(2)\cdots r_i(x_i)},
 \end{eqnarray*} 
where $p=(p_i)_{i=1}^n$ is the stationary probability vector of $G$.   Also, by playing with the choice of the functions $(r_i)_{1\le i\le n}$, a variety of interesting physical behaviors can be reproduced. For example,  super-linear interactions will make the particles accelerate upon meeting, inducing \emph{repulsion} and a tendency to spread evenly across the state space. On the contrary, sub-linear interactions will make the particles slow down upon meeting, inducing \emph{attraction} and a tendency to accumulate in macroscopic proportion on a single site (condensation). Quantifying the impact of those phenomena on the mixing time of the system is a natural and important question, which has received considerable attention; see, e.g., \cite{Cap,MR2184099,MorrisZRP,MR3704767,MR3984254,MR4332696} and the references therein. Contrastingly, the delicate problem of establishing cutoff has so far only been tackled in the \red[mean-field case], where $G_{ij}=\frac 1n$ for all $1\le i,j\le n$ \cite{MR4021248,MR4089493,arxiv.2104.10478}. 

\begin{problem}[Zero-Range Processes]\label{pb:zrp}Establish cutoff for the Zero-Range Process with unit rate on growing lattices/tori in dimension $d\ge 2$. 
\end{problem}
\subsection{Monte Carlo Markov Chains} The celebrated \emph{Markov chain Monte Carlo revolution} in computational statistics is fundamentally based on the simple but far-reaching idea -- attributed to Metropolis \cite{doi:10.1063/1.1699114} and Hastings \cite{MR3363437} -- that approximate samples from a target probability distribution $\pi$ can be efficiently produced by running an appropriate Markov chain on $\dX=\supp(\pi)$, which admits $\pi$ has its equilibrium law; see the survey paper by P. Diaconis \cite{MR2476411} and the references therein. To keep things simple and concrete, let us here assume that $\pi$ lives on $\{-1,1\}^n$ or $\{0,1\}^n$ and that the transitions are coordinate flips. In other words, the generator has the form
\begin{eqnarray}
\label{def:Glauber}
\forall x\in\dX,\quad Lf(x) & := & \sum_{i=1}^n c_i(x)\left(f(x^i)-f(x)\right),
\end{eqnarray}
where $x^i$ denotes the state obtained from $x$ by flipping the $i-$th coordinate, and where the rates  $c_i\colon\dX\to[0,\infty)$ satisfy the reversibility condition
\begin{eqnarray*}
\forall x\in\dX,\quad \pi(x)c_i(x) & = & \pi(x^i)c_i(x^i).
\end{eqnarray*}
Here are three widely-used implementations that share a big advantage: they  only involve the ratio $\pi(x^i)/\pi(x)$, whose computation is significantly less costly than that of $\pi$ itself.
\begin{eqnarray}
\label{def:rates}
c_i(x) & \propto & \left\{
\begin{array}{ll}
\displaystyle{\frac{\pi(x^i)}{\pi(x)+\pi(x^i)}} &  (\textrm{Gibbs sampler})\medskip\\
\displaystyle{1\wedge\frac{\pi(x^i)}{\pi(x)}} & (\textrm{Metropolis sampler})\medskip\\
\displaystyle{\sqrt{\frac{\pi(x^i)}{\pi(x)}}} & (???).
\end{array}
\right.
\end{eqnarray}
Note that, in the basic case where $\pi$ is uniform on $\{-1,+1\}^n$, those implementations all reduce to our favorite toy model: random walk on the $n-$cube. Since the latter exhibits cutoff, it is  natural to expect a similar phenomenon when sampling from more general high-dimensional measures with weak dependencies, such as high-temperature spin systems \cite{MR1746301}.

\begin{problem}[Cutoff under weak dependencies]\label{pb:glauber}Find a weak-dependency condition on the sequence of target measures $(\pi_n)_{n\ge 1}$, where $\pi_n\in\cP(\{-1,1\}^n)$, under which cutoff occurs for the three MCMC implementations described above. 
\end{problem}

For concreteness, let us now describe two emblematic special cases of this problem.

\paragraph{Ising model.} The best-known example of a spin system is certainly the \red[Ising model], which was introduced a century ago by the physicists Ernst Ising and Wilhelm Lenz to explain ferromagnetism. In the above language, it corresponds to the probability measure
\begin{eqnarray*}
\pi(x) & := & \frac{1}{Z}\exp\left(\sum_{1\le i,j\le n}G_{ij}x_ix_j\right),
\end{eqnarray*}
where $G$ is a $n\times n$ matrix describing the geometry of interactions -- typically, the adjacency matrix of a $n-$vertex graph --   and $Z$ a (highly non-trivial) normalizing constant. The \red[Curie-Weiss model] is the mean-field case where  $G_{ij}=\frac{\beta}{2n}$ for all $1\le i,j\le n$ and some \red[inverse temperature] $\beta\in\dR$. When $\beta=0$, the coordinates are simply independent and the dynamics (\ref{def:Glauber}) reduces to  random walk on the $n-$cube, which mixes in time $\Theta(n\log n)$ and  exhibits cutoff. Interestingly, the same conclusion actually persists throughout the low-interaction phase $\beta<1$, but when $\beta>1$ mixing becomes exponentially slow and cutoff is destroyed \cite{levin2010glauber}. Establishing a similar dynamical phase transition beyond the mean-field case is of course far more challenging.   In a remarkable series of works \cite{MR3020173,lubetzky2014cutoff,MR3434254,MR3486171,MR3729612}, E. Lubetzky and A. Sly developed a  framework called \emph{Information Percolation} which enabled them to establish cutoff for the Ising model on finite lattices, throughout the regime of \emph{strong spatial mixing}. 
 This ground-breaking approach also yields cutoff on other geometries, but under a strong decoupling assumption which only holds  when the temperature is high enough. Overcoming this intrinsic limitation is a challenging problem, with numerous practical motivations.
Very recently, the development of spectral independence and entropy factorization has resulted in the confirmation of the long-standing prediction that Gibbs sampling is rapidly mixing throughout the so-called \emph{tree uniqueness} regime  (see \cite{MR4232133,MR4398939,MR4415182} and the references therein). A natural but far-reaching conjecture, suggested by this breakthrough, is that cutoff should occur under the very same structural assumptions. 

\begin{problem}[Cutoff for the Ising Model]\label{pb:ising}Establish cutoff throughout the rapidly-mixing regime for the Gibbs sampler of the Ising Model on bounded-degree graphs. 
\end{problem}
Of course, the same question can be asked for other emblematic spin systems.
\paragraph{Hardcore model.} Given a finite graph $G=(V_G,E_G)$, an \red[independent set] (also known as a stable set, or an anticlique) is a subset of vertices in $G$, no two of which are adjacent. An example is illustrated on Figure \ref{fig:hardcore}. Finding a large independent set is one of the most fundamental examples of a computationally hard task. It has a broad range of applications, from scheduling problems to graph coloring, communications and coding theory. 
\begin{figure}[!h]
\begin{center}
\label{fig:hardcore}
\includegraphics[scale=.8]{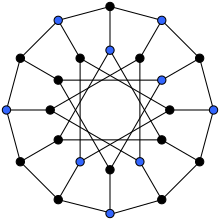}
\caption{The blue vertices form an independent set with the largest possible size.}
\end{center}
\end{figure}
The \red[Hardcore model] with fugacity $\zeta>0$ is the probability measure on $\{0,1\}^{V_G}$ defined by
\begin{eqnarray}
\pi(x) & \propto & \zeta^{\sum_{i\in V_G} x_i}\prod_{\{i,j\}\in E_G}(1-x_ix_j).
\end{eqnarray}
This distribution is supported on the independent sets of $G$, and the parameter $\zeta$  controls their typical size. In particular, $\pi$ concentrates on the empty set in the  $\zeta\to 0$ limit, and converges to the uniform distribution on maximum independent sets in the $\zeta\to\infty$ limit.  For moderate values of $\zeta$, running one of the samplers presented at (\ref{def:rates}) -- which are  equivalent up to scaling in this case -- appears to be one of the most  efficient ways to produce a large independent set in a graph. For this reason, understanding how the mixing time of this process depends on $G$ and $\zeta$ has been the subject of many works. When the degrees are bounded and the fugacity is low enough, the \emph{Information Percolation} machinery can be implemented to establish cutoff \cite{lubetzky2014cutoff}, but going beyond this perturbative regime seems hard.
\begin{problem}[Cutoff for the Hardcore Model]\label{pb:hardcore}Establish cutoff throughout the rapid-mixing regime for the Gibbs sampler of the Hardcore Model on bounded-degree graphs. 
\end{problem}
\subsection{Markov diffusions}
\label{sec:manifolds}
Although this course is primarily concerned with Markov chains on finite state spaces, we will occasionally allow ourselves to take a step aside and examine what happens in the infinitely smooth setup of Markov diffusions on Euclidean spaces, or more generally, weighted manifolds. Beyond their considerable practical interest, such processes happen to satisfy a convenient chain rule that makes their mathematical analysis particularly simple and elegant. They will therefore constitute excellent \emph{ideal models}, which we will use to develop a better intuition of what may be expected in our discrete setup. However, to keep the present lecture notes at an elementary level (or more honestly, because of the author's incompetence), we will \emph{not} attempt to  enter the technical details of their rigorous foundations, and refer the interested reader  to the beautiful expository textbooks \cite{MR889476,MR3155209} for a comprehensive treatment.  

\paragraph{Overdamped Langevin diffusions.} Consider the $d-$dimensional Euclidean space $\dX=\dR^d$, equipped with its Borel $\sigma-$algebra and a probability measure of the form
\begin{eqnarray}
\label{mu}
\pi(\dd x) & = & e^{-U(x)}\, \dd x,
\end{eqnarray}
where $U\colon\dX^d\to\dR$ is a smooth function called the \red[potential]. The associated \red[overdamped Langevin dynamics] is a celebrated model of molecular motion, as well as a widely-used algorithm for sampling from $\pi$. It is described by the stochastic differential equation
\begin{eqnarray}
\label{SDE}
\dd X_t & = & -\nabla U(X_t)\,\dd t+ \sqrt{2}\,\dd B_t,
\end{eqnarray}
where $(B_t)_{t\ge 0}$ is a standard $d-$dimensional Brownian motion. Under mild conditions on $U$, there is a unique solution $(X_t^x)_{t\ge 0}$ for from each initial state $x\in\dX$, and the formula
\begin{eqnarray}
\label{diff:Pt}
P_tf(x) & := & \EE\left[f(X_t^x)\right],
\end{eqnarray}
 defines a reversible Markov semi-group on $L^2(\dX,\pi)$ whose generator  acts  as follows:
\begin{eqnarray}
\label{def:L}
Lf & = & \Delta f - \nabla U \cdot \nabla f,
\end{eqnarray}
for any smooth function $f\colon\dX\to\dR$. An immediate consequence of this second-order differential expression is the following convenient \red[chain rule] for composed functions: 
\begin{eqnarray}
\label{def:CR}
L(\phi\circ f) & = & (\phi'\circ f)Lf + (\phi''\circ f)|\nabla f|^2,
\end{eqnarray}
for all smooth functions $f\colon\dX\to\dR$ and $\phi\colon\dR\to\dR$. As we will see, this identity makes the analysis of diffusions much more pleasant, in several ways,  than that of discrete models. 
\paragraph{Extension to manifolds.}
Much more generally, the  Euclidean space $\dR^d$  can be replaced with any weighted Riemannian manifold, provided  the formulae (\ref{mu}-\ref{def:CR}) are interpreted in an appropriate way. Specifically, consider a Riemannian manifold $\dX$ equipped with a metric tensor $g$, and let $\pi$ be a probability measure on of the form (\ref{mu}), where ${\rm d} x$ is the volume measure on $\dX$ and $U\colon \dX\to\dR$ a geodesically semi-convex function. Now, consider the Langevin equation (\ref{SDE}), with $\nabla$ denoting the gradient in  $(\dX,g)$ and $(B_t)_{t\ge 0}$ a Brownian motion thereon.  Then, the formula (\ref{diff:Pt})  defines a Markov semi-group of self-adjoint operators on $L^2(\dX,\pi)$ whose generator  acts as in (\ref{def:L}), with $\Delta$  denoting the Laplace-Beltrami operator on $\dX$. The chain rule (\ref{def:CR}) remains valid, and we again refer to \cite{MR889476,MR3155209} for details.

\newpage 
\section{Functional inequalities}
One of the most powerful techniques for bounding mixing times consists in establishing an appropriate \red[functional inequality] that quantifies the \emph{regularizing nature} of the semi-group. This can be done in several non-equivalent ways, leading to a jungle of functional-analytic constants that can be difficult to navigate through. To help the new comer, we have here opted for a unified presentation that starts with an abstract ``master theorem'' (Theorem \ref{th:analytic} below), from which the relevant functional inequalities can  then be easily extracted.
\subsection{A general recipe}
Throughout the notes, we  write $\cF$ for the set of real-valued functions on $\dX$, and 
\begin{eqnarray}
\bE[f] & : = & \sum_{x\in\dX}\pi(x)f(x),
\end{eqnarray}
for the stationary mean of $f\in\cF$. We shall add the superscript $+$ (resp. $++$) when we want to restrict  $\cF$  to non-negative (resp. positive) functions, and the subscript $m\in\dR$  to impose the constraint $\bE[f]=m$. In particular, $\cF_1^{+}$ denotes the set of \red[density functions], which can be identified with the set of probability measures on $\dX$ through the  correspondence 
\begin{eqnarray*}
f & = &  \frac{\dd \mu}{\dd \pi}.
\end{eqnarray*}
The functional-analytic approach to mixing times is based on the following trivial observation: rather than studying the convergence  of the marginal laws of our Markov process $(X_t)_{t\ge 0}$ towards  $\pi$, we may equivalently investigate the convergence of the corresponding densities $(f_t)_{t\ge 0}$ towards  $1$. Those densities evolve according to a  well-known   equation, which we recall in the following exercise.
\begin{exo}[Fokker-Planck equation]Prove that $(f_t)_{t\ge 0}$ solves the \red[Fokker-Planck equation]
\begin{eqnarray}
\label{FP}
\frac{\dd f_t}{\dd t} &  =  & L^\star f_t,
\end{eqnarray}
or equivalently, that $f_t=P_t^\star f_0$ for all $t\ge 0$. 
\end{exo}
The fact that our Markov process mixes (Theorem \ref{th:mixing}) can thus be formulated in functional-analytic terms by saying that  densities \emph{flatten} under the action of the  semi-group:
\begin{eqnarray*}
\forall f\in\cF_1^+,\qquad P_t^\star f  & \xrightarrow[t\to\infty]{} & 1.
\end{eqnarray*}
In order to quantify this regularizing effect, we need to agree on the choice of a  functional $\Phi\colon\cF_1^+\to[0,\infty)$ that measures the variations of functions. Concrete examples will be given soon enough, but for the time being, just think of $\Phi(f)$ as a non-negative quantity that vanishes precisely when $f$ is constant. In light of the semi-group property, it is  reasonable to hope that $\Phi$ will decay exponentially under our dynamics. We are thus naturally led to the task of finding a constant $\kappa\ge 0$, as large as possible, such that
\begin{eqnarray}
\label{generic:decay}
\forall f\in\cF_1^+,\qquad\forall  t\ge 0,\qquad \Phi(P_t^\star f) & \le & e^{-\kappa t}\Phi(f).
\end{eqnarray}
Fortunately, this strong dynamical property  admits a rather simple static characterization. In order to formulate it, we   introduce  the following important object.
\begin{definition}[Dirichlet form] The \red[Dirichlet form] is the quadratic form on $L^2$ defined by
\begin{eqnarray*}
\cE(f,g) & := & \langle f,-Lg\rangle \ = \ \sum_{x\in\dX}\pi(x)T(x,y)f(x)\left(g(x)-g(y)\right).
\end{eqnarray*}
When $T^\star=T$ or  $f=g$, we also have the useful symmetrized expression
\begin{eqnarray}
\label{rk:sym}
\cE(f,g) & = & 
\frac{1}{2}\sum_{(x,y)\in\dX^2}\pi(x)T(x,y)\left(f(x)-f(y)\right)\left(g(x)-g(y)\right).
\end{eqnarray}
\end{definition}
We henceforth assume that $\Phi$ is \emph{smooth}, in the sense that  it is continuous on $\cF_1^+$ and that it admits, at each point  $f_0\in\cF_1^{++}$, a first-order expansion of the form
\begin{eqnarray}
\label{assume:smooth}
\Phi(f) & = & \Phi(f_0)+\langle \nabla\Phi(f_0),(f-f_0)\rangle + o(\|f-f_0\|_2),
\end{eqnarray}
as $f\to f_0$ in $\cF_1^+$, for some continuous function $\nabla\Phi\colon\cF_1^{++}\to \cF$. The norm and scalar product appearing here are of course those of $L^2(\dX,\pi)$. Finally, we let $f^x:=\frac{{\bf 1}_{\{x\}}}{\pi(x)}$ denote the Dirac density function  at a state $x\in\dX$. The following result summarizes, in a compact and abstract form, how functional inequalities arise in the study of mixing times. 
\begin{theorem}[The functional-analytic approach to mixing times]\label{th:analytic}For each $\kappa\ge 0$, the exponential decay (\ref{generic:decay}) is equivalent to the static criterion
\begin{eqnarray}
\label{generic:ineq}
\forall f\in{\cF_1^{++}},\qquad \cE(f,\nabla\Phi(f)) & \ge &\kappa\Phi(f).
\end{eqnarray}
Moreover, when this holds, we have the mixing-time estimate
\begin{eqnarray*}
\forall\varepsilon\in(0,1),\quad \tmix(\varepsilon) & \le & \frac{1}{\kappa}\log\frac{\Delta}{\delta(\varepsilon)},
\end{eqnarray*}
where we have introduced the quantities
\begin{eqnarray*}
\Delta\ :=\ \max_{x\in\dX}\Phi\left(f^x\right), &  & \delta(\varepsilon) \ := \ \min\left\{\Phi\left(\frac{\dd \mu}{\dd \pi} \right)\colon \mu\in\cP(\dX),\dtv(\mu,\pi)\ge \varepsilon\right\}.
\end{eqnarray*}
\end{theorem}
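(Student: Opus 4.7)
The plan is to derive both directions of the equivalence from a single identity obtained by differentiating $t\mapsto\Phi(P_t^\star f)$ along the dynamics, and then read off the mixing bound by testing (\ref{generic:decay}) against the worst-case Dirac initial condition.

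First, I would combine the Fokker-Planck equation (\ref{FP}) with the smoothness hypothesis (\ref{assume:smooth}) to compute, for any $f\in\cF_1^+$ and $t>0$,
\begin{eqnarray*}
\frac{\dd}{\dd t}\Phi(P_t^\star f) & = & \langle \nabla\Phi(P_t^\star f),L^\star P_t^\star f\rangle \ = \ \langle L\nabla\Phi(P_t^\star f),P_t^\star f\rangle \ = \ -\cE(P_t^\star f,\nabla\Phi(P_t^\star f)),
\end{eqnarray*}
where the second equality is the adjoint relation defining $L^\star$, and the third recalls the definition of the Dirichlet form. Note that $P_t^\star f$ lies in $\cF_1^{++}$ for every $t>0$ by irreducibility (Assumption \ref{assume:irred}), which legitimates the evaluation of $\nabla\Phi$ at this point. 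Given this identity, the static-to-dynamic direction follows by applying (\ref{generic:ineq}) along the orbit: we obtain the differential inequality $\frac{\dd}{\dd t}\Phi(P_t^\star f)\le -\kappa\Phi(P_t^\star f)$ for all $t>0$, and Grönwall's lemma together with continuity at $t=0$ yields (\ref{generic:decay}). Conversely, assuming (\ref{generic:decay}), I would fix $f\in\cF_1^{++}$, subtract $\Phi(f)$ from both sides, divide by $t>0$ and let $t\to 0$, which turns (\ref{generic:decay}) into $-\cE(f,\nabla\Phi(f))\le -\kappa\Phi(f)$, i.e. precisely (\ref{generic:ineq}).

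For the mixing bound, I would proceed as follows. Fix $\mu\in\cP(\dX)$ and write $f:=\dd\mu/\dd\pi\in\cF_1^+$, so that $P_t^\star f=\dd(\mu P_t)/\dd\pi$. Applying (\ref{generic:decay}) gives $\Phi(P_t^\star f)\le e^{-\kappa t}\Phi(f)$. Now, if $\dtv(\mu P_t,\pi)\ge\varepsilon$, the definition of $\delta(\varepsilon)$ forces $\Phi(P_t^\star f)\ge\delta(\varepsilon)$, hence $e^{\kappa t}\le\Phi(f)/\delta(\varepsilon)$. Specializing to the worst-case Dirac $\mu=\delta_x$, for which $f=f^x$ and $\Phi(f)\le\Delta$, and contraposing, I conclude that $t>\tfrac{1}{\kappa}\log(\Delta/\delta(\varepsilon))$ implies $\dtv(\mu P_t,\pi)<\varepsilon$ for every starting state $x$, which by the second equality in the definition of $\dtv(t)$ gives the claimed bound on $\tmix(\varepsilon)$.

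The main obstacle I expect is not in the algebra, which is essentially bookkeeping once the time-derivative identity is in place, but in rigorously justifying the computation of that derivative: one needs the chain rule to apply to $t\mapsto\Phi(P_t^\star f)$, which requires $t\mapsto P_t^\star f$ to be sufficiently regular to match the first-order expansion (\ref{assume:smooth}). This is unproblematic here because the state space is finite and the Kolmogorov equation (\ref{kolmogorov}) makes $t\mapsto P_t^\star f$ analytic with values in $\cF$, but it is worth pointing out that the whole argument extends, \emph{mutatis mutandis}, to  general Markov semi-groups provided one has enough smoothness to differentiate $\Phi$ along the flow.
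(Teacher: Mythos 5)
Your proposal is correct and follows essentially the same route as the paper: differentiate $t\mapsto\Phi(P_t^\star f)$ via the Fokker--Planck equation and the duality defining the Dirichlet form, integrate the resulting Gr\"onwall inequality for the static-to-dynamic implication, Taylor-expand at $t=0$ for the converse, and test exponential decay against Dirac initializations to obtain the mixing-time bound. (Your explicit intermediate step $\langle L\nabla\Phi(P_t^\star f),P_t^\star f\rangle$ and the resulting formula $-\cE(P_t^\star f,\nabla\Phi(P_t^\star f))$ are the correct ones; the paper's displayed derivative has a typographical slip, writing $\Phi$ where $\nabla\Phi$ is meant.)
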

\begin{proof}Fix $f\in\cF_1^+$. The smoothness of $\Phi$ together with the Fokker-Planck equation (\ref{FP}) ensure that  $u\colon t\mapsto \Phi(P_t^\star f)$ is continuously differentiable on $(0,\infty)$, with derivative
\begin{eqnarray*}
u'(t) & = & \langle \nabla\Phi(P_t^\star f),L^\star P_t^\star f\rangle \ = \ -\cE(P_t^\star f,\Phi(P_t^\star f)).
\end{eqnarray*}
In particular, the condition (\ref{generic:ineq}) gives the differential inequality $u'\le -\kappa u$, whose integration yields exactly (\ref{generic:decay}). Conversely, if  (\ref{generic:decay}) holds, then a first-order Taylor expansion at $t=0$ on both sides shows that (\ref{generic:ineq}) must also hold. Finally, the definition of $\delta$ implies that
\begin{eqnarray*}
\forall \mu\in\cP(\dX),\qquad \delta\left(\dtv(\mu,\pi)\right) & \le & \Phi\left(\frac{\dd \mu}{\dd \pi}\right).
\end{eqnarray*}
In particular, choosing $\mu=P_t(x,\cdot)$, applying 
 (\ref{generic:decay}), and then maximizing over $x\in\dX$ yields
 \begin{eqnarray*}
\forall t\ge 0,\qquad \delta\left(\dtv(t)\right) & \le & \Delta e^{-\kappa t}.
\end{eqnarray*}
Evaluating this at time $t=\tmix(\varepsilon)$ produces the desired mixing-time estimate.
\end{proof}
\subsection{Variance and entropy}
\label{sec:poincaré}
To implement the general strategy summarized in Theorem \ref{th:analytic}, we need to choose a concrete functional $\Phi$ that quantifies the variations of functions on our probability space $(\dX,\pi)$. The first choice that comes to mind is of course the \red[variance] $\Phi=\Var$, defined for all $f\in\cF$ by
\begin{eqnarray*}
\Var(f) & := & \bE[f^2]-\bE^2[f].
\end{eqnarray*}
Note that, by the Cauchy-Schwartz inequality, we have
\begin{eqnarray*}
4\dtv^2(\mu,\pi) & = & \left\|\frac{\dd\mu}{\dd\pi}-1\right\|_1^2 \ \le \  \left\|\frac{\dd\mu}{\dd\pi}-1\right\|_2^2 \ = \  \Var\left(\frac{\dd\mu}{\dd\pi}\right),
\end{eqnarray*}
showing that the function $\delta(\cdot)$ in Theorem \ref{th:analytic} satisfies $\delta(\varepsilon)\ge 4\varepsilon^2$. Moreover, we easily compute $\nabla\Var(f)  =  2(f-\bE[f])$ for any $f\in\cF$, and  $\Var(f^x)=\frac{1-\pi(x)}{\pi(x)}$ for any  $x\in\dX$. Thus, our meta-theorem  here takes the following particularly simple form.
\begin{coro}[Variance decay]\label{co:var}For each $\gamma\ge 0$, the following conditions are equivalent:
\begin{enumerate}[(i)]
\item The \red[Poincaré inequality] with constant $\gamma$ holds: 
\begin{eqnarray}
\label{def:poincaré}
\forall f\in\cF,\quad \cE(f,f) & \ge &  \gamma \Var(f).
 \end{eqnarray}
\item The variance decays  at rate $2\gamma$: 
\begin{eqnarray*}
\forall f\in\cF,\quad\forall t\ge 0,\quad \Var(P_t^\star f) & \le & e^{-2\gamma t}\Var(f).
\end{eqnarray*} 
 \end{enumerate}
Moreover, when this holds, we have the mixing-time estimate
 \begin{eqnarray}
 \label{L2mixbound}
 \tmix(\varepsilon) & \le &  \frac{1}{2\gamma}\log\left(\frac{1-\pi_\star}{4\varepsilon^2\pi_\star}\right).
 \end{eqnarray}
\end{coro}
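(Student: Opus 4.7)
The plan is to deduce this corollary as a direct specialization of Theorem~\ref{th:analytic} to the functional $\Phi=\Var$. First I would verify that $\Var$ is smooth on $\cF_1^+$ with gradient $\nabla\Var(f)=2(f-\bE[f])$, as already noted in the excerpt. Since the Dirichlet form $\cE(f,\cdot)$ vanishes on constants (because $\cE(f,g)$ depends on $g$ only through the increments $g(y)-g(x)$ along edges), one has
\begin{eqnarray*}
\cE(f,\nabla\Var(f)) & = & 2\,\cE(f,f).
\end{eqnarray*}
Hence the abstract criterion $\cE(f,\nabla\Phi(f))\ge\kappa\Phi(f)$ of Theorem~\ref{th:analytic} with $\kappa=2\gamma$ is exactly the Poincar\'e inequality $\cE(f,f)\ge\gamma\Var(f)$, and the equivalence (i)$\Leftrightarrow$(ii) will follow verbatim from that theorem.

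The only mild subtlety is that Theorem~\ref{th:analytic} produces its equivalence for positive densities $f\in\cF_1^{++}$, whereas (i) is stated for all $f\in\cF$. To reconcile these, I would exploit the fact that both $\cE(f,f)$ and $\Var(f)$ are invariant under translations $f\mapsto f+c$ and scale quadratically under $f\mapsto\lambda f$; thus shifting any real-valued function by a sufficiently large constant and then normalizing places it in $\cF_1^{++}$ without affecting either side of the Poincar\'e inequality, so the two versions are equivalent.

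For the mixing-time estimate, I would instantiate the two constants appearing in Theorem~\ref{th:analytic}. A direct calculation gives $\Var(f^x)=(1-\pi(x))/\pi(x)$, so $\Delta=(1-\pi_\star)/\pi_\star$. For $\delta(\varepsilon)$, the Cauchy--Schwartz inequality yields
\begin{eqnarray*}
2\dtv(\mu,\pi) & = & \left\|\tfrac{\dd\mu}{\dd\pi}-1\right\|_1 \ \le \ \left\|\tfrac{\dd\mu}{\dd\pi}-1\right\|_2 \ = \ \sqrt{\Var\!\left(\tfrac{\dd\mu}{\dd\pi}\right)},
\end{eqnarray*}
whence $\delta(\varepsilon)\ge 4\varepsilon^2$. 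Substituting these into $\tmix(\varepsilon)\le \frac{1}{\kappa}\log\frac{\Delta}{\delta(\varepsilon)}$ with $\kappa=2\gamma$ produces precisely~(\ref{L2mixbound}). Since every ingredient is an elementary calculation, I do not anticipate a genuine obstacle; the main care required is simply to track the factors of two correctly when passing from $\nabla\Var$ to $\cE(f,f)$ and to apply Cauchy--Schwartz in the right direction when bounding $\delta$ from below.
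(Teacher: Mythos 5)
Your proposal is correct and follows exactly the route the paper itself sketches just before stating the corollary: compute $\nabla\Var(f)=2(f-\bE[f])$ so that $\cE(f,\nabla\Var(f))=2\cE(f,f)$, identify $\kappa=2\gamma$, and plug $\Delta=\frac{1-\pi_\star}{\pi_\star}$ and $\delta(\varepsilon)\ge 4\varepsilon^2$ (via Cauchy--Schwarz) into Theorem~\ref{th:analytic}. Your additional remark about translation and scaling invariance to pass from densities in $\cF_1^{++}$ to general $f\in\cF$ is a correct handling of a detail the paper leaves implicit.
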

\begin{definition}[Poincaré constant]The \red[Poincaré constant] of the chain, henceforth denoted by $\gamma$, is the largest constant such that the equivalent conditions (i)-(ii) hold.
\end{definition}
\begin{remark}[Spectral characterization]Because both sides of (\ref{def:poincaré}) are quadratic forms in $f$, the Poincaré constant $\gamma$ is is easily seen to coincide with the spectral gap of the symmetrized matrix $T_{\mathrm{sym}}:=(T+T^\star)/2$. In particular, when $T$ is reversible, we have
\begin{eqnarray*}
\gamma & = & \lambda.
\end{eqnarray*}
Thus, the mixing-time estimate (\ref{L2mixbound})  can be seen as a generalization of Lemma \ref{lm:L2bound}.
\end{remark}
For concreteness, let us see how the bound (\ref{L2mixbound})   performs on our toy models. 
\begin{example}[Random walk on the $n-$cycle]Consider the special case of simple random walk on the $n-$cycle. It is straightforward to check that the family $(\phi_k)_{1\le k\le n}$ defined by
\begin{eqnarray}
\phi_k(x) & := & e^{\frac{2i\pi kx}n},
\end{eqnarray}
forms an orthonormal basis of $L^2_{\mathbb C}(\dX,\pi)$ with $T\phi_k=\cos\left(\frac{2\pi k}{n}\right)\phi_k$. In particular, the Poincaré constant of this chain is $\gamma=1-\cos\left(\frac{2\pi}n\right)$, and (\ref{L2mixbound}) gives
\begin{eqnarray*}
\tmix^{(n)}(\varepsilon) & \le & \frac{n^2\log n}{4\pi^2}+O_\varepsilon(n^2),
\end{eqnarray*}
as $n\to\infty$, which is off by a logarithmic factor when compared with  (\ref{ex:cycle}).
\end{example}

\begin{example}[Random walk on the $n-$cube]Consider the special case of simple random walk on the $n-$cube. It is straightforward to check that the family $(\phi_S)_{S\subseteq[n]}$ defined by
\begin{eqnarray}
\phi_S(x) & := & (-1)^{\sum_{i\in S}x_i},
\end{eqnarray}
forms an orthonormal basis of $L^2(\dX,\pi)$ and that $T\phi_S=\left(1-\frac{2|S|}{n}\right)\phi_S$. Consequently, the Poincaré constant of this chain is $\gamma=2/n$, so that (\ref{L2mixbound}) gives
\begin{eqnarray*}
\tmix^{(n)}(\varepsilon) & \le & n^2\log 2+O_\varepsilon(n),
\end{eqnarray*}
as $n\to\infty$, which is rather disappointing when compared with the asymptotics   (\ref{ex:cube}).
\end{example}
This second example clearly demonstrates the need for a better functional inequality. Instead of bounding an $L^1$ norm (total-variation) with an $L^2$ norm (variance), we will now use an $L\log L$ norm (entropy). More precisely, we  define the \red[entropy] of  $f\in\cF_+$ as follows:
\begin{eqnarray}
\Ent(f) & := & \bE[f\log f]-\bE[f]\log\bE[f],
\end{eqnarray}
with the usual convention $0\log 0=0$. The strict convexity of $u\mapsto u\log u$ ensures that this quantity is non-negative and that it vanishes if and only if $f$ is constant. It thus constitutes a reasonable candidate for our functional $\Phi$, and it turns out to provide a much sharper control on the total-variation distance than the variance. Indeed, we have
\begin{eqnarray*}
\forall\mu\in\cP(\dX),\quad 2\dtv^2(\mu,\pi) \ \le \ \Ent\left(\frac{\dd \mu}{\dd \pi}\right) \ \le \ \log\left[1+\Var\left(\frac{\dd \mu}{\dd \pi}\right)\right].
\end{eqnarray*}
The first inequality is the celebrated \red[Csiszár-Kullback-Pinsker inequality] (see, e.g., \cite[Theorem 4.19]{MR3185193}), while the second simply uses the concavity of $\log$. Following our general recipe, we easily compute
$
\nabla\Ent(f)   =  \log\frac{f}{\bE[f]}
$
for all $f\in\cF^{++}$, and $\Ent(f^x)=\log\frac{1}{\pi(x)}$ for all $x\in\dX$. Thus, the entropic analogue of the previous result reads as follows.
\begin{coro}[Entropy decay]For each $\alpha\ge 0$, the following are equivalent:
\begin{enumerate}[(i)]
\item The \red[modified log-Sobolev inequality] (MLSI) holds with constant $\alpha$: 
\begin{eqnarray}
\label{MLSI}
\forall f\in\cF^{++},\quad  \cE(f,\log f) & \ge & \alpha \Ent(f).
 \end{eqnarray}
\item The entropy decays at rate at least $\alpha$: 
\begin{eqnarray}
\forall f\in\cF^+,\quad \forall t\ge 0,\quad \Ent(P_t^\star f) & \le & \Ent(f)e^{-\alpha t}.
\end{eqnarray} 
\end{enumerate}
Moreover, when this holds, we have the mixing-time estimate
\begin{eqnarray}
\label{Entmixbound}
\tmix(\varepsilon) & \le & \frac{1}{\alpha}\left(\log\frac{1}{2\varepsilon^2}+\log\log\frac{1}{\pi_\star}\right).
\end{eqnarray}
\end{coro}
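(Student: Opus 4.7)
My plan is to deduce the result as a direct instantiation of the master theorem (Theorem \ref{th:analytic}) with the choice $\Phi = \Ent$, viewed as a functional on the space of densities $\cF_1^+$. The first task is to verify the smoothness hypothesis. Since $u \mapsto u \log u$ is continuous on $[0,\infty)$ with the convention $0 \log 0 = 0$, $\Ent$ extends continuously to $\cF_1^+$, and a pointwise Taylor expansion of $u \log u$ at any $u = f_0(x) > 0$ yields the required first-order expansion (\ref{assume:smooth}) with $\nabla \Ent(f_0) = \log f_0$ on $\cF_1^{++}$ (the constant $1$ arising from the derivative of $u \log u$ drops out because $\bE[f - f_0] = 0$ for any pair of densities, matching the general formula $\nabla\Ent(f) = \log(f/\bE[f])$ quoted just before the statement).

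Next I would address a small bookkeeping point: the MLSI in (i) quantifies over all $f \in \cF^{++}$, while the abstract criterion (\ref{generic:ineq}) in Theorem \ref{th:analytic} is phrased on densities $\cF_1^{++}$. But both $\Ent(f)$ and $\cE(f, \log f)$ are positively homogeneous of degree one in $f$, the second because $\cE(f, \log f + c) = \cE(f, \log f)$ for any constant $c$ (the Dirichlet form is insensitive to constant shifts in its second argument). The two formulations are therefore equivalent via the rescaling $f \mapsto f/\bE[f]$, so Theorem \ref{th:analytic} with $\kappa = \alpha$ instantly delivers the equivalence of (i) and (ii).

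Finally, for the mixing-time bound I would read off the two parameters $\Delta$ and $\delta(\varepsilon)$ appearing in Theorem \ref{th:analytic}. A one-line computation gives $\Ent(f^x) = \pi(x) \cdot \tfrac{1}{\pi(x)} \log \tfrac{1}{\pi(x)} = \log(1/\pi(x))$, so $\Delta = \log(1/\pi_\star)$. For $\delta(\varepsilon)$, the Csisz\'ar–Kullback–Pinsker inequality recalled in the excerpt gives $\Ent(\dd\mu/\dd\pi) \ge 2\,\dtv^2(\mu,\pi) \ge 2\varepsilon^2$ for every $\mu$ with $\dtv(\mu,\pi) \ge \varepsilon$, whence $\delta(\varepsilon) \ge 2\varepsilon^2$. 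Substituting into the conclusion $\tmix(\varepsilon) \le \frac{1}{\alpha}\log(\Delta/\delta(\varepsilon))$ of Theorem \ref{th:analytic} yields exactly (\ref{Entmixbound}).

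I do not anticipate a serious obstacle: the heavy analytic lifting has already been absorbed into Theorem \ref{th:analytic}, and what remains is essentially bookkeeping. The only mildly delicate point worth flagging is that $\Ent$ is smooth in the interior $\cF_1^{++}$ but not at densities that vanish on some states; this is harmless because irreducibility guarantees $P_t^\star f \in \cF^{++}$ for every $t > 0$, so the differential inequality $u'(t) \le -\alpha u(t)$ driving the proof of Theorem \ref{th:analytic} remains valid on $(0,\infty)$, while $\Ent(P_t^\star f) \to \Ent(f)$ as $t \to 0^+$ by continuity of $\Ent$ on $\cF_1^+$—enough to integrate up to $t=0$ and conclude.
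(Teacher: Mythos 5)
Your proposal is correct and follows exactly the route the paper itself takes: the corollary is indeed an instantiation of Theorem \ref{th:analytic} with $\Phi=\Ent$, using the gradient $\nabla\Ent(f)=\log(f/\bE[f])$, the value $\Ent(f^x)=\log(1/\pi(x))$, and Pinsker's inequality to control $\delta(\varepsilon)$. The homogeneity and boundary-continuity points you flag are correct and are precisely the bookkeeping that the paper silently absorbs into "following our general recipe."
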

\begin{definition}[MLS constant]The \red[modified log-Sobolev constant] of the chain, henceforth denoted by   $\alpha$, is the largest constant such that the equivalent conditions (i)-(ii) hold. 
\end{definition}
We refer the unfamiliar reader to the seminal paper \cite{MR2283379} for a more detailed account on this fundamental constant. Note that the dependency on  $1/\pi_{\star}$ in (\ref{Entmixbound}) is now doubly logarithmic, which constitutes a huge improvement over (\ref{L2mixbound}). The downside is that $\alpha$ is generally harder to estimate than $\gamma$, due to the lack of a spectral characterization. A few methods have been developed for doing so, and we will present a powerful one based on couplings in the next chapter. For the time being, let us  admit the following estimates, and admire the considerable gain. 
\begin{example}[Random walk on the $n-$cycle] In the case of random walk on the $n-$cycle, it can be shown that the modified log-Sobolev constant is of order $n^{-2}$, so that (\ref{Entmixbound}) gives
\begin{eqnarray*}
\tmix(\varepsilon) & = & O_\varepsilon(n^2\log \log n).
\end{eqnarray*}
This is much sharper than what we had obtained using the $L^2$ bound (\ref{L2mixbound}), albeit still off by a slowly diverging factor when compared to  the true asymptotics found at (\ref{ex:cube}).
\end{example}
\begin{example}[Random walk on the $n-$cube] In the case of random walk on the $n-$cube, it can be shown that the modified log-Sobolev constant is exactly $\alpha=4/n$, so that (\ref{Entmixbound}) gives
\begin{eqnarray*}
\tmix(\varepsilon) & \le & \frac{n\log n}{4}+O_\varepsilon(n).
\end{eqnarray*}
In view of (\ref{ex:cube}), this is remarkably sharp.
\end{example}
\begin{remark}[Linearization]\label{rk:linearization}The expansion $(1+u)\log(1+u)-u\sim\frac{u^2}{2}$ as $u\to 0$ gives
\begin{eqnarray*}
\forall f\in\cF,\quad \varepsilon^{-2}\Ent(1+\varepsilon f) & \xrightarrow[\varepsilon\to 0]{} & \Var(f).
\end{eqnarray*}
As a consequence, many functional inequalities that involve entropy automatically imply their variance counterparts. A simple example is exponential decay: by definition of $\alpha$ we have $\Ent(P_t^\star f)\le e^{-\alpha t}\Ent(f)$ for all $f\in\cF^+_1$, and by taking $f=1+\varepsilon g$ and letting $\varepsilon\to 0$, we deduce that $\Var(P_t^\star g)\le e^{-\alpha t}\Var(g)$ for any $g\in\cF_0$, hence for any $g\in\cF$. This establishes 
\begin{eqnarray}
\label{MLSIvsP}
\alpha & \le & 2\gamma,
\end{eqnarray}
and the example of the $n-$cube  shows that this is sharp.
\end{remark}
Beyond the fact that it provides very sharp bounds on mixing times, the MLSI happens to have a remarkable implication in terms of concentration, as we will see next.

\subsection{Sub-Gaussian concentration}
There is a well-established and beautiful connection  between the mixing properties of Markov semi-groups and the celebrated concentration-of-measure phenomenon, which can be informally summarized as follows: \emph{mixing is fast if and only if functions that do not vary much under a single transition are approximately constant under the stationary law}. To formalize this vague principle, let us consider a function $f\in\cF$, normalized so that $\Lip(f)\le 1$. Then clearly, $\cE(f,f)\le 1/2$ and therefore, the Poincaré inequality (\ref{def:poincaré}) implies
\begin{eqnarray*}
\Var(f) & \le & \frac{1}{2\gamma}.
\end{eqnarray*}
Combining this with Cantelli's one-sided improvement of Chebychev's inequality, we obtain 
\begin{eqnarray*}
\forall r\ge 0,\qquad \pi\left(f\ge \bE[f]+r\right) & \le & \frac{1}{1+2\gamma r^2}.
\end{eqnarray*}

This naive estimate already implies, for example, that any sequence of expanders exhibits the \red[thin shell phenomenon]: most vertices lie in an annulus whose inner radius grows logarithmically with the number of vertices, but whose thickness remains constant ! As we will now see, the modified log-Sobolev inequality leads to a considerably stronger deviation estimate, through what is now known as \red[Herbst's argument]. We refer the reader to the classical references \cite{MR1767995,MR3185193,MR2283379} for a general account, and to \cite{MR4203344} for the precise version used here. We note that this result is remarkably sharp: the example of random walk on the $n-$cube with $f(x)=\frac{1}{2}(x_1+\cdots+x_n)$ shows that the absolute  constant $2$ can not be improved. 
\begin{theorem}[Sub-Gaussian concentration]\label{th:herbst}If $T$ is reversible, then 
\begin{eqnarray}
\label{herbst:0}
\log \bE[e^{\theta f}] & \le & \theta \bE[f]+\frac{\theta^2}{2\alpha},
\end{eqnarray}
for any $f\in\cF$ with $\Lip(f)\le 1$ and any $\theta\in\dR$. In particular, 
\begin{eqnarray}
\forall r\ge 0,\qquad\pi\left(f \ge \bE[f]+r\right) & \le & e^{-\frac{\alpha r^2}{2}}.
\end{eqnarray}
\end{theorem}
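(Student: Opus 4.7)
The approach I would take is the classical Herbst argument: bootstrap the modified log-Sobolev inequality into a bound on the moment generating function $H(\theta):=\bE[e^{\theta f}]$ via a first-order differential inequality, and then conclude by a standard Chernoff-type argument. I would treat the case $\theta>0$ in detail, deducing $\theta<0$ by applying the bound to $-f$ (which also has Lipschitz constant at most $1$), and observe that both sides of \eqref{herbst:0} vanish at $\theta=0$.

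The first step is to plug $g:=e^{\theta f}\in\cF^{++}$ into the MLSI (\ref{MLSI}). Since $\log g=\theta f$, this yields
\begin{eqnarray*}
\alpha\,\Ent(e^{\theta f}) & \le & \theta\,\cE(e^{\theta f},f).
\end{eqnarray*}
The key estimate is to bound the right-hand side by $\tfrac{\theta^2}{2}\bE[e^{\theta f}]$ using reversibility together with the assumption $\Lip(f)\le 1$. Using the symmetric expression (\ref{rk:sym}) valid under $T^\star=T$, I would apply the elementary inequality
\begin{eqnarray*}
(e^a-e^b)(a-b) & \le & \frac{(a-b)^2}{2}(e^a+e^b),
\end{eqnarray*}
which reduces (setting $h=a-b\ge 0$) to the textbook fact $\tanh(h/2)\le h/2$. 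With $a=\theta f(x)$, $b=\theta f(y)$ and $|f(x)-f(y)|\le 1$ whenever $T(x,y)>0$, this gives
\begin{eqnarray*}
\theta\,\cE(e^{\theta f},f) & \le & \frac{\theta^2}{4}\sum_{(x,y)\in\dX^2}\pi(x)T(x,y)\bigl(e^{\theta f(x)}+e^{\theta f(y)}\bigr)\ =\ \frac{\theta^2}{2}\bE[e^{\theta f}],
\end{eqnarray*}
where the last equality uses reversibility $\pi(x)T(x,y)=\pi(y)T(y,x)$ and the stochasticity $\sum_y T(x,y)=1$.

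At this point, the inequality $\alpha\,\Ent(e^{\theta f})\le \tfrac{\theta^2}{2}H(\theta)$ together with the identity $\Ent(e^{\theta f})=\theta H'(\theta)-H(\theta)\log H(\theta)$ furnishes a first-order ODE for $K(\theta):=\theta^{-1}\log H(\theta)$: dividing by $\theta^2 H(\theta)$ transforms it into
\begin{eqnarray*}
K'(\theta) & \le & \frac{1}{2\alpha},\qquad \theta>0.
\end{eqnarray*}
Since $K(\theta)\to\bE[f]$ as $\theta\downarrow 0$ by l'Hôpital, integration yields $K(\theta)\le \bE[f]+\theta/(2\alpha)$, which is precisely (\ref{herbst:0}). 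Finally, the concentration bound is just exponential Markov: for any $r\ge 0$ and $\theta>0$,
\begin{eqnarray*}
\pi(f\ge \bE[f]+r) & \le & e^{-\theta r}\bE[e^{\theta(f-\bE[f])}] \ \le\ \exp\left(-\theta r+\frac{\theta^2}{2\alpha}\right),
\end{eqnarray*}
and optimizing at $\theta=\alpha r$ gives the claimed sub-Gaussian tail. The main obstacle, in my view, is the derivation of the pointwise inequality $(e^a-e^b)(a-b)\le \tfrac12(a-b)^2(e^a+e^b)$ and the verification that reversibility is exactly what allows one to convert the symmetric double sum into a single expectation; everything else is routine calculus.
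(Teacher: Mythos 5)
Your proof is correct and follows essentially the same route as the paper's: the Herbst argument, applying the MLSI to $e^{\theta f}$, bounding the Dirichlet form via the elementary inequality $2(e^u-1)\le u(e^u+1)$ (your version $(e^a-e^b)(a-b)\le\tfrac12(a-b)^2(e^a+e^b)$ is the same thing after dividing by $e^b$), and using reversibility to turn the symmetric double sum into $\bE[e^{\theta f}]$. The only difference is bookkeeping: you integrate the differential inequality for $K(\theta)=\theta^{-1}\log\bE[e^{\theta f}]$, whereas the paper builds the correction $e^{-\theta^2/(2\alpha)}$ into the test function $F_\theta$ and shows $\Lambda(\theta)=\theta^{-1}\log\bE[F_\theta]$ is non-increasing; since both $\Ent$ and $\cE(\cdot,\log\cdot)$ are $1$-homogeneous, these are the same argument.
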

\begin{proof}We focus on the first statement, since the second is a  classical consequence of Chernov's bound. Also, upon replacing $f$ with $-f$ if needed, we may assume that $\theta> 0$. Set
\begin{eqnarray*}
\Lambda(\theta) \ := \ \frac{1}{\theta}\log\bE\left[F_\theta \right],  & \textrm{ where } & F_\theta \ := \ \exp\left(\theta f-\frac{\theta^2}{2\alpha}\right),
\end{eqnarray*}
and note that $\Lambda(\theta)\to \bE[f]$ as $\theta\to 0$. Thus,  the claim (\ref{herbst:0}) simply asserts that $\Lambda(\theta)\le\Lambda(0)$, and the most natural way to prove it is to show that the derivative
\begin{eqnarray*}
\Lambda'(\theta) & = & \frac{\Ent\left[F_\theta \right]-\frac{\theta^2}{2\alpha}\bE[F_\theta]}{\theta^2\bE\left[F_\theta\right]},
\end{eqnarray*}
of $\Lambda$ is non-positive at every point $\theta> 0$. To do so, we will use the basic inequality
\begin{eqnarray}
\label{herbst:1}
\forall u\ge 0,\qquad 2(e^u - 1) & \le & u(e^u + 1),
\end{eqnarray}
which can be readily checked by a term-wise comparison of the corresponding  power series. Now, if  $x,y\in\dX$ satisfy $f(x)\ge f(y)$, then we can use (\ref{herbst:1}) with $u=\theta (f(x)-f(y))$ and multiply through by $(f(x)-f(y))F_\theta(y)$  to get 
\begin{eqnarray*}
2\left(F_\theta (x)-F_\theta (y)\right)\left(f(x)-f(y)\right)  & \le & \theta \left(f(x)-f(y)\right)^2\left(F_\theta (x)+F_\theta (y)\right). 
\end{eqnarray*}
By symmetry, the conclusion also holds when  $f(x)\le f(y)$. Thus,
\begin{eqnarray*}
\cE\left(F_\theta ,\log F_\theta \right) 
& = &  \frac{\theta }{2} \sum_{(x,y)\in \dX^2}\pi(x)T(x,y)\left(F_\theta (x)-F_\theta (y)\right)\left(f(x)-f(y)\right)\\
& \le & \frac{\theta^2}{4} \sum_{(x,y)\in \dX^2}\pi(x)T(x,y)\left(F_\theta (x)+F_\theta (y)\right)\left(f(x)-f(y)\right)^2\\
& = & \frac{\theta^2}{2} \sum_{(x,y)\in \dX^2}\pi(x)T(x,y)F_\theta (x) \left(f(x)-f(y)\right)^2\\
& \le & \frac{\theta^2  \bE[F_\theta]}{2},
\end{eqnarray*}
since $\Lip(f)\le 1$. Applying the MLSI (\ref{MLSI}) to the function $F_\theta$ completes the proof.
\end{proof}
\begin{remark}[Terminology] The inequality (\ref{herbst:0})  is often referred to as \red[sub-Gaussian concentration], because it would be an equality if $f$ were Gaussian with variance $1/\alpha$.
\end{remark}
\begin{remark}[Refinement] An inspection of the proof shows that the weaker condition
\begin{eqnarray*}
\forall x\in\dX,\qquad \sum_{y\in\dX}T(x,y)\left(f(x)-f(y)\right)^2 & \le & 1,
 \end{eqnarray*} 
is all we actually need from the function $f$, instead of $\Lip(f)\le 1$.
\end{remark}
As an application, let us use $\alpha$ to bound the diameter of the chain, as defined at (\ref{def:diam}).  
\begin{coro}[Diameter estimate]\label{co:diam}If $T$ is reversible, then
\begin{eqnarray}
\diam(\dX) & \le & \sqrt{\frac{8}{\alpha}\log \frac{1}{\pi_\star}}.
\end{eqnarray}\end{coro}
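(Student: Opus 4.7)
The plan is to apply the sub-Gaussian concentration bound from Theorem \ref{th:herbst} to a well-chosen $1$-Lipschitz test function. Reversibility implies weak reversibility (via the detailed balance equations), so the hop-count distance $\dist(\cdot,\cdot)$ introduced at (\ref{def:hopcount}) is well defined, and the diameter $D:=\diam(\dX)$ is realized by some pair $x_0,y_0\in\dX$ with $\dist(x_0,y_0)=D$.

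Define $f\colon\dX\to\dR$ by $f(x):=\dist(x,x_0)$. By the triangle inequality for the hop-count distance, $|f(x)-f(y)|\le\dist(x,y)$, so $\Lip(f)\le 1$; in particular, both $f$ and $-f$ satisfy the hypothesis of Theorem \ref{th:herbst}. Writing $m:=\bE[f]$ and applying the concentration bound in both directions yields
\begin{eqnarray*}
\pi(f\le m-r) & \le & e^{-\alpha r^2/2}, \qquad \pi(f\ge m+r)\ \le\ e^{-\alpha r^2/2},
\end{eqnarray*}
for every $r\ge 0$.

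Now I would extract the diameter by testing these inequalities against the two extreme points. Since $f(x_0)=0$, we have $\pi_\star\le\pi(x_0)\le\pi(f\le 0)$, and since $f(y_0)=D$, we have $\pi_\star\le\pi(y_0)\le\pi(f\ge D)$. Plugging $r=m$ into the first inequality and $r=D-m$ into the second (assuming $0\le m\le D$, which I would verify from $0=f(x_0)\le m\le f(y_0)=D$) gives
\begin{eqnarray*}
\pi_\star \ \le\ e^{-\alpha m^2/2}, & \qquad & \pi_\star\ \le\ e^{-\alpha(D-m)^2/2},
\end{eqnarray*}
so that $m\le\sqrt{(2/\alpha)\log(1/\pi_\star)}$ and $D-m\le\sqrt{(2/\alpha)\log(1/\pi_\star)}$. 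Adding these bounds produces $D\le 2\sqrt{(2/\alpha)\log(1/\pi_\star)}=\sqrt{(8/\alpha)\log(1/\pi_\star)}$, which is the claim.

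There is no real obstacle here: once Theorem \ref{th:herbst} is available, the only trick is choosing $f(\cdot)=\dist(\cdot,x_0)$ with $x_0$ one endpoint of a diameter-realizing pair, and then controlling both tails separately, spending the full budget $\sqrt{(2/\alpha)\log(1/\pi_\star)}$ on each side of the mean. The factor $8$ in the final bound is precisely $2\times 2\times 2$, accounting for the $2$ in the exponent of Herbst's inequality and the doubling that comes from splitting $D=m+(D-m)$.
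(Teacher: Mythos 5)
Your proof is correct and is essentially the same as the paper's: both apply Theorem \ref{th:herbst} to $f(\cdot)=\dist(x_0,\cdot)$ (and its negation), bound each of the two one-sided tails by spending $\sqrt{(2/\alpha)\log(1/\pi_\star)}$ on the gap between the mean and each extreme point $x_0,y_0$, and add. The paper phrases this via centering $f$ and bounding $\max f$ and $-\min f$ separately, but this is the same decomposition $D=m+(D-m)$ that you use.
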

\begin{proof}
For any fixed $f\in\cF_0$ with  $\Lip(f)=1$, and any $r \ge 0$, Theorem \ref{th:herbst} yields
\begin{eqnarray*}
\pi(f\ge r) & \le & e^{-\frac{\alpha r^2}{2}}.
\end{eqnarray*}
Choosing $r=\max f$ and noting that the left-hand side is  at least $\pi_\star$, we obtain
\begin{eqnarray*}
(\max f)^2 & \le & \frac{2}{\alpha}\log\frac{1}{\pi_\star}.
\end{eqnarray*}
Replacing $f$ with $-f$ gives the same estimate for $\min f$, and combining the two leads to
\begin{eqnarray*}
\max f-\min f & \le &  \sqrt{\frac{8}{\alpha}\log\frac{1}{\pi_\star}}.
\end{eqnarray*}
At this point, our earlier requirement that $\bE[f]=0$ can simply be dropped, because the conclusion is invariant under shifting $f$ by a constant. In particular, we may take $f(x)=\dist(o,x)$ for any $o\in\dX$, and the claim follows by taking a maximum over all $o\in\dX$.
\end{proof}
\begin{example}[The $n-$cube]For our favorite toy model (random walk on the $n-$cube), we have $\diam(\dX)=n$ and $\alpha=4/n$, so that Corollary \ref{co:diam} is  off by a factor $\sqrt{2\log 2}\approx 1.18$ only. 
\end{example}
\subsection{Hyper-contractivity} In the smooth setting of Markov diffusions, the Dirichlet form takes the pleasant form 
\begin{eqnarray*}
\cE(f,g) & = & \int_\dX \nabla f\nabla g\dd \pi,
\end{eqnarray*}
for all smooth functions $f,g$ on $\dX$. In particular, the classical chain rule for differentiating composed functions allows us to rewrite the asymmetric term $\cE(f,\log f)$  as follows:
\begin{eqnarray}
\label{MLSI=LSI}
\cE(f,\log f) & = & 4\cE(\sqrt{f},\sqrt{f}).
\end{eqnarray}
The  functional inequality obtained by replacing $\cE(f,\log f)$ with $\cE(\sqrt{f},\sqrt{f})$ in (\ref{MLSI}) is famously known as a \red[log-Sobolev inequality] (LSI); see \cite{MR420249,MR1410112}. Of course, the latter makes perfect sense on discrete state spaces as well, but its equivalence with the MLSI is lost because the chain rule used at (\ref{MLSI=LSI}) no longer applies. It is thus natural to wonder whether a discrete LSI has any interesting dynamical interpretation, and the answer turns out to be yes. To describe it, we first recall that our Markov semi-group is a contraction on $L^2$ (and in fact, on $L^p$ for any $1\le p\le\infty$): for every $f\in\cF$ and $t\ge 0$,  
\begin{eqnarray*}
\|P_tf\|_2^2 & = & \sum_{x\in\dX}\pi(x)\EE_x^2\left[f(X_t)\right]\ 
 \le \ \sum_{x\in\dX}\pi(x)\EE_x[f^2(X_t)] \ = \ \|f\|_2^2,
\end{eqnarray*}
with equality when $f$ is constant. \red[Hypercontractivity] is the considerably stronger statement obtained by replacing the $L^2$ norm on the left-hand side by an $L^q$ norm, with an exponent $q=q_t\ge 2$ which is independent of  $f$ and which grows exponentially with $t$. 
While non-reversible versions of the following result exist \cite{MR1410112}, they are slightly less pretty.
\begin{theorem}[Hypercontractivity]For reversible chains, the following  are equivalent:
\begin{enumerate}[(i)]
\item The \red[log-Sobolev inequality] holds with constant $\beta$:
\begin{eqnarray}
\label{LSI}
\forall f\in\cF^{++},\quad \cE(\sqrt{f},\sqrt{f}) & \ge & \beta\Ent(f).
\end{eqnarray}
\item  The semi-group is \red[hypercontractive] at rate $4\beta$, i.e. 
\begin{eqnarray}
\forall f\in\cF,\quad\forall t\ge 0,\quad \|P_tf\|_{1+e^{4\beta t}} & \le & \|f\|_2.
 \end{eqnarray} 
 \end{enumerate}
\end{theorem}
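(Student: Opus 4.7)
The plan is to follow Gross's classical approach, which consists in studying the monotonicity of $N(t):=\|P_tf\|_{q(t)}$ along a carefully chosen time-dependent exponent $q(t):=1+e^{4\beta t}$. The crucial features of this choice are $q(0)=2$ (so that $N(0)=\|f\|_2$) together with the differential identity $q'(t)=4\beta(q(t)-1)$, which is precisely what will allow the LSI to interface nicely with the $L^q$-computation below.

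For the implication (i)$\Rightarrow$(ii), I would first restrict to $f\in\cF^{++}$ (the general case following by approximating $|f|+\varepsilon$ and letting $\varepsilon\to 0$), set $u_t:=P_tf>0$, and differentiate the identity $N(t)^{q(t)}=\bE[u_t^{q(t)}]$ using the Kolmogorov equation $\partial_t u_t=Lu_t$ together with the pointwise chain rule $\partial_t u_t^{q(t)}=q'(t)u_t^{q(t)}\log u_t+q(t)u_t^{q(t)-1}Lu_t$. A short manipulation using the identity $\bE[u_t^{q-1}Lu_t]=-\cE(u_t^{q-1},u_t)$ and the definition of $\Ent(u_t^q)$ yields
$$
\frac{d}{dt}\log N(t) \;=\; \frac{q'(t)\,\Ent\!\bigl(u_t^{q(t)}\bigr)-q(t)^2\,\cE\bigl(u_t^{q(t)-1},u_t\bigr)}{q(t)^2\,\bE[u_t^{q(t)}]},
$$
so the desired monotonicity $N'(t)\le 0$ reduces to the inequality $q'\,\Ent(u^q)\le q^2\,\cE(u^{q-1},u)$.

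The heart of the argument is then the pointwise scalar estimate
$$
(a^{q-1}-b^{q-1})(a-b)\ \ge\ \frac{4(q-1)}{q^2}\bigl(a^{q/2}-b^{q/2}\bigr)^2,\qquad a,b>0,\ q\ge 2,
$$
which, thanks to the symmetric representation (\ref{rk:sym}) of the Dirichlet form available for reversible chains, lifts to $\cE(u^{q-1},u)\ge \tfrac{4(q-1)}{q^2}\cE(u^{q/2},u^{q/2})$. Applying the LSI (\ref{LSI}) to the positive function $u^q$ then gives $\cE(u^{q/2},u^{q/2})\ge\beta\,\Ent(u^q)$, and plugging in the key identity $q'=4\beta(q-1)$ closes the loop. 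For the converse (ii)$\Rightarrow$(i), the same derivative formula evaluated at $t=0$ (where $q=2$, $q'=4\beta$, $u=f$) shows that the hypothesis $N(t)\le N(0)$ forces the right-derivative at the origin to be non-positive, which reads $\beta\,\Ent(f^2)\le\cE(f,f)$ for every $f\in\cF^{++}$; this is exactly (\ref{LSI}) after the substitution $g=f^2$.

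The main obstacle is the scalar inequality displayed above: it is the genuinely discrete substitute for the chain-rule identity (\ref{MLSI=LSI}), and it does not follow from any soft argument. I would verify it by homogeneity (setting $b=1$) and studying the one-variable function $\phi_q(a):=(a^{q-1}-1)(a-1)-\tfrac{4(q-1)}{q^2}(a^{q/2}-1)^2$, observing that $\phi_q(1)=\phi_q'(1)=0$ and then establishing $\phi_q\ge 0$ by a convexity-type analysis of $\phi_q''$ on $a<1$ and $a>1$ separately. Two minor technical nuisances to handle in passing are the positivity restriction on $f$ (which is needed to differentiate powers and logarithms, and is removed by a density argument) and the fact that reversibility is genuinely invoked when symmetrizing the Dirichlet form to obtain the pointwise comparison.
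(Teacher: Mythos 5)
Your proposal is correct and follows essentially the same route as the paper: both set $q(t)=1+e^{4\beta t}$, differentiate $\|P_t f\|_{q(t)}$ along the semi-group, reduce the monotonicity to the pointwise scalar inequality $(a-b)(a^{q-1}-b^{q-1})\ge\frac{4(q-1)}{q^2}(a^{q/2}-b^{q/2})^2$, and close the loop with the LSI and the identity $q'=4\beta(q-1)$; the converse in both cases comes from evaluating the same derivative at $t=0$.
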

\begin{proof}It is easy to see that both conditions can be restricted to positive functions without loss of generality. Now, fix $f\in\cF^{++}$ and for $t\ge 0$, introduce the shorthands $f_t:=P_tf$ and $q_t := 1+e^{4\beta t}$. Then, an easy (though tedious) computation gives
\begin{eqnarray}
\label{hypercontractivity:diff}
\frac{\dd \|f_t\|_{q_t}}{\dd t} & = & \|f_t\|_{q_t}^{1-q_t}\left[\frac{q_t'}{q_t^2}\Ent\left(f_t^{q_t}\right)-\cE\left(f_t,f_t^{q_t-1}\right)\right].
\end{eqnarray}
In particular, if (ii) holds, then the left-hand side is non-positive at time $t=0$, hence so is the right-hand side. This reads  $\cE(f,f) \ge \beta \Ent(f^2)$, establishing (i). We now turn to the converse. For $q\ge 2$ and $u,v>0$, it is easy to check that
\begin{eqnarray*}
(u-v)\left(u^{q-1}-v^{q-1}\right) & \ge & \frac{4(q-1)}{q^2}\left(u^{q/2}-v^{q/2}\right)^2.
\end{eqnarray*}
If we apply this inequality to $q=q_t$, $u=f_t(x)$, $v=f_t(y)$, and multiply both sides by $\frac{1}{2}\pi(x)T(x,y)$ before summing over all $(x,y)\in\dX^2$, we obtain
\begin{eqnarray*}
\cE\left(f_t,f_t^{q_t-1}\right) & \ge &\frac{4(q_t-1)}{q_t^2}\cE(f_t^{q_t/2},f_t^{q_t/2})\\
& \ge & \frac{4\beta(q_t-1)}{q_t^2}\Ent(f_t^{q_t})\\
& = & \frac{q_t'}{q_t^2}\Ent(f_t^{q_t})
\end{eqnarray*}
thanks to (\ref{LSI}) and our specific choice for $q_t$.  Thus, the right-hand side of  (\ref{hypercontractivity:diff}) is non-positive, hence $t\mapsto \|f_t\|_{q_t}$ is non-increasing. In particular, $\|f_t\|_{q_t}\le \|f_0\|_{q_0}$, which is (ii).
\end{proof}
\begin{definition}[log-Sobolev constant]The \red[log-Sobolev constant] of the chain, henceforth denoted by $\beta$, is the largest constant for which the equivalent conditions (i)-(ii) hold.
\end{definition}
Note that for diffusions,  the identity   (\ref{MLSI=LSI}) implies $\alpha=4\beta$. For reversible chains on discrete spaces, ``half'' of this relation survives. Indeed, it will be shown below that
\begin{eqnarray}
\label{half:LSIvsMLSI}
\forall f\in\cF^{++},\quad 4\cE(\sqrt{f},\sqrt{f}) & \le & \cE(f,\log f),
\end{eqnarray}
which readily implies that $4\beta\le \alpha$. It is important to realize, however, that the inequality (\ref{half:LSIvsMLSI}) can \emph{not} be reversed at a uniform cost, because  $\cE(f,\log f)$ can be made arbitrary larger than $\cE(\sqrt{f},\sqrt{f})$ by just increasing the value of $f$ at a single point. Summarizing the relations established so far between our functional-analytic constants, we have
\begin{eqnarray}
\label{hierarchy}
4\beta & \le \ \alpha\ \le & 2\gamma,
\end{eqnarray} 
and this chain of inequalities reflects a genuine hierarchy between three different regularizing properties of Markov semi-groups: hyper-contractivity is stronger than entropy contraction, which is in turn stronger than variance contraction. Nevertheless, those inequalities happen to be equalities (or nearly so) in various examples, including  our two toy models.

\begin{example}[Random walk on the $n-$cube]An ingenious induction on $n$ shows that  simple random walk on the $n-$cube satisfies $\beta\ge 1/n$. Discovered independently by A. Bonami and W. Beckner \cite{MR283496,MR385456}, this celebrated hypercontractivity estimate has become a cornerstone result in the modern analysis of Boolean functions, with spectacular implications in combinatorics, probability, and theoretical computer science \cite{MR3443800,MR4732718}. Recalling that $\gamma\le 2/n$ in this case, we deduce that (\ref{hierarchy}) is in fact saturated here, i.e.
\begin{eqnarray*}
4\beta \ = \ \alpha\ =\ 2\gamma \ =\ 4/n.
 \end{eqnarray*}  
\end{example}
\begin{example}[Random walk on the $n-$cycle]For random walk on the $n-$cycle with $n$ even, it can be shown that  (\ref{hierarchy}) is again saturated (see \cite{MR1990534} for details):
  \begin{eqnarray*}
4\beta \ = \ \alpha\ =\ 2\gamma \ =\ 2-2\cos\frac{2\pi}{n} \ \sim \ \frac{4\pi^2}{n^2}.
 \end{eqnarray*}
When $n$ is odd, however, the precise value of $\beta$ is embarrassingly still unknown. Nevertheless, the above equalities remain valid up to constant multiplicative errors.
\end{example}
We end this section with a simple  relation between the log-Sobolev constant and the diameter of the chain, which we will use later. Here again, the case of simple random walk on the $n-$cube shows that it  is sharp, except perhaps for the $\sqrt{2}$ pre-factor.  
\begin{lemma}[Diameter bound]\label{lm:diamLSI}
For any reversible chain,  $\diam(\dX)  \leq \sqrt{2}\beta^{-1}$. 
\end{lemma}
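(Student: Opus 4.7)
The plan is to combine two independent consequences of the LSI: the first controls the minimal stationary mass $\pi_\star$ in terms of $\beta$ alone, and the second is the sub-Gaussian diameter bound already established in Corollary \ref{co:diam}.

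For the first ingredient, I would plug into the LSI (\ref{LSI}) the simplest non-constant density: the Dirac density $f^x := \mathbf{1}_{\{x\}}/\pi(x)$ at an arbitrary state $x\in\dX$. Using that $\sqrt{f^x}(y)=\pi(x)^{-1/2}\mathbf{1}_{\{x\}}(y)$, and that the edges $(x,\cdot)$ and $(\cdot,x)$ contribute equally to the symmetrized Dirichlet form (\ref{rk:sym}) by reversibility, one quickly computes $\cE(\sqrt{f^x},\sqrt{f^x})=1-T(x,x)\le 1$, while $\Ent(f^x)=\log(1/\pi(x))$. The LSI then forces
\begin{eqnarray*}
\log\frac{1}{\pi(x)} \ \le \ \frac{1-T(x,x)}{\beta} \ \le \ \frac{1}{\beta},
\end{eqnarray*}
uniformly over $x\in\dX$, and in particular $\log(1/\pi_\star)\le 1/\beta$.

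For the second ingredient, I would invoke the diameter estimate of Corollary \ref{co:diam}, which gives $\diam(\dX)\le\sqrt{(8/\alpha)\log(1/\pi_\star)}$. Combining it with the hierarchy $4\beta\le\alpha$ (i.e. (\ref{half:LSIvsMLSI})), which was recorded just before the statement, and then with the bound on $\log(1/\pi_\star)$ from the previous paragraph, we get
\begin{eqnarray*}
\diam(\dX) \ \le \ \sqrt{\frac{2}{\beta}\log\frac{1}{\pi_\star}} \ \le \ \sqrt{\frac{2}{\beta^2}} \ = \ \frac{\sqrt{2}}{\beta},
\end{eqnarray*}
which is the desired bound.

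There is essentially no obstacle here: the proof is a two-line combination once one notices the correct test function to feed into the LSI. The only conceptual point worth emphasizing is that the Dirac density is what turns the $\pi_\star$-dependent Herbst-type diameter bound of Corollary \ref{co:diam} into a truly $\pi_\star$-free statement; philosophically, LSI is strong enough that vertices of very small equilibrium mass are automatically forbidden, which is precisely what allows the logarithm of $1/\pi_\star$ to be absorbed into $1/\beta$.
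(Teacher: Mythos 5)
Your proposal is correct and follows essentially the same route as the paper: feed the Dirac density $f^x$ into the LSI to get $\log(1/\pi_\star)\le 1/\beta$, then combine with $4\beta\le\alpha$ and the Herbst-type diameter bound of Corollary \ref{co:diam}. Your computation of the Dirichlet form is in fact slightly more careful than the paper's, which writes $\cE(\sqrt{f^x},\sqrt{f^x})=1$ rather than the exact value $1-T(x,x)$, but both use only the bound $\le 1$ and the resulting chain of inequalities is identical.
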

\begin{proof}
For any $x\in\dX$, we have 
$
\Ent(f^x) =  \log\frac{1}{\pi(x)}$ and  $\cE(\sqrt{f^x},\sqrt{f^x}) = 1$, so $\log\frac{1}{\pi(x)}\le \beta^{-1}$. Optimizing this bound over $x\in\dX$, we obtain
\begin{eqnarray}
\label{LSvsDimension}
\log \frac{1}{\pi_\star} & \le & \frac 1\beta.
\end{eqnarray} 
On the other hand, in view of Lemma \ref{lm:LSIvsMLSI}, we also have
\begin{eqnarray*}
\frac{4}{\alpha} & \le & \frac{1}{\beta}.
\end{eqnarray*}
Multiplying those two lines together concludes the proof, thanks to 
 Corollary \ref{co:diam}. 
\end{proof}

\subsection{Approximate chain rule}
\label{sec:MLSIvsLSI}
Here again, we restrict ourselves to reversible chains for simplicity.
While hyper-contractivity is genuinely stronger than entropy decay on discrete spaces, we will see that the inequality $4\beta\le \alpha$ can be reversed by paying a price of order $\log d$ only, where
\begin{eqnarray}
\label{def:d}
d & := & \max_{x\sim y}\left\{\frac{1}{T(x,y)}\right\}. 
\end{eqnarray}
Note that this simple parameter controls the \red[sparsity] of the transition matrix $T$: indeed, no row or column can have more than $d$ non-zero entries, and $d$ is exactly the maximum degree of the graph in the special case of simple random walks.
\begin{theorem}[Upgrading MLSI to LSI]\label{th:LSIvsMLSI}For any reversible chain, $4\beta\le \alpha \le  15\beta \log d$.
\end{theorem}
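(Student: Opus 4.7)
The lower bound $4\beta \le \alpha$ is immediate from the approximate chain rule (\ref{half:LSIvsMLSI}): assuming the LSI holds with constant $\beta$, we have $\cE(\sqrt f,\sqrt f)\ge \beta\,\Ent(f)$ for all $f\in\cF^{++}$, whence
\begin{eqnarray*}
\cE(f,\log f) & \ge & 4\,\cE(\sqrt f,\sqrt f) \ \ge \ 4\beta\,\Ent(f),
\end{eqnarray*}
and the MLSI holds with constant $\ge 4\beta$. The substance of the theorem therefore lies in the reverse bound $\alpha\le 15\beta\log d$, which amounts to extracting a log-Sobolev inequality from a modified one with a loss factor $\log d$.

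My plan is an edge-by-edge comparison between the two Dirichlet forms, using the symmetric representation (\ref{rk:sym}). For an ordered pair $(x,y)$ with $f(x)\ge f(y)>0$, set $r:=f(x)/f(y)$; the substitution $s:=\sqrt r$ reveals that
\begin{eqnarray*}
\frac{(r-1)\log r}{(\sqrt r-1)^2} & = & \frac{2(s+1)\log s}{s-1} \ \le \ 4+2\log r,
\end{eqnarray*}
the right-hand inequality being a routine calculus check (equal to $4$ at $r=1$, asymptotic to $2\log r$ at infinity). Summing $\pi(x)T(x,y)$ times this against $(\sqrt{f(x)}-\sqrt{f(y)})^2$ over ordered edges and using reversibility to fold the two orientations yields
\begin{eqnarray*}
\cE(f,\log f) & \le & 4\,\cE(\sqrt f,\sqrt f)\ +\ 2\,R(f),
\end{eqnarray*}
where $R(f):=\sum_{f(x)\ge f(y)}\pi(x)T(x,y)\log(f(x)/f(y))(\sqrt{f(x)}-\sqrt{f(y)})^2$. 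If one knew \emph{a priori} that every edge ratio $f(x)/f(y)$ were at most $d^{C}$, then trivially $R(f)\le C\log d\cdot\cE(\sqrt f,\sqrt f)$, and adjusting $C$ would deliver the constant $15$.

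The main obstacle is precisely that the uniform pointwise inequality $\cE(f,\log f)\le 15\log d\cdot\cE(\sqrt f,\sqrt f)$ is \emph{false} in general: spiky functions make the left-hand side blow up while the right-hand side remains finite. To bypass this, my plan is a truncation argument. Given $f\in\cF^{++}$, introduce $\widetilde f:=f\wedge M$ with $M$ chosen so that the edge ratios of $\widetilde f$ are bounded by a fixed power of $d$; then apply the edge-wise bound to $\widetilde f$ while controlling three error terms: (i) the entropy lost in truncation, which I would bound through a tail estimate on $f$ derived from Theorem \ref{th:herbst} applied to $\log f$; (ii) the square-root Dirichlet form, which is monotone under truncation because $(a,b)\mapsto(\sqrt{a\wedge M}-\sqrt{b\wedge M})^2$ is non-decreasing in $M$; and (iii) the contribution of ``wild'' edges across which $f$ spikes above $M$, which I would telescope through dyadic level sets $A_k:=\{2^{k-1}\le f<2^k\}$ and charge back to $\cE(\sqrt f,\sqrt f)$ via the sparsity lower bound $\pi(x)T(x,y)\ge\pi(x)/d$. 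The delicate step---and the place where the explicit constant $15$ ultimately emerges---is calibrating the threshold $M$ so that the three error terms balance uniformly in $f$.
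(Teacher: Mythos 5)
Your lower bound $4\beta\le\alpha$ matches the paper's argument. For the reverse inequality, your approach (truncate an arbitrary $f$ and control error terms) is genuinely different from the paper's, and as sketched it has a gap that looks hard to close. The central difficulty is that the truncation $\widetilde f=f\wedge M$ only cuts $f$ from above; an edge $(x,y)$ where $f(y)$ is very small and $f(x)\ge M$ still has $\widetilde f(x)/\widetilde f(y)=M/f(y)$ arbitrarily large, so the premise that ``the edge ratios of $\widetilde f$ are bounded by a fixed power of $d$'' is simply not achievable by a one-sided truncation, no matter how $M$ is calibrated. You would need a two-sided truncation, and then the entropy-error analysis becomes substantially more delicate. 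A second problem is your proposed control of the truncation error via Theorem~\ref{th:herbst}: that theorem applies to Lipschitz observables $g$ with $\Lip(g)\le 1$, whereas $\log f$ is exactly the quantity whose Lipschitz norm you are trying to control, so you cannot feed it in without assuming what you want to prove. (Using $\alpha$ in Herbst to bound a quantity used to bound $\alpha$ is not fatal in itself, but the Lipschitz hypothesis is.)

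The paper sidesteps both issues by a variational reduction rather than a truncation. It observes that the LSI need only be verified at an extremizer of $f\mapsto\cE(\sqrt f,\sqrt f)/\Ent(f)$: Lemma~\ref{lm:LSI} shows that either such an $f\in\cF^{++}_1\setminus\{1\}$ exists and solves the Euler--Lagrange equation $L\sqrt f+\beta\sqrt f\log f=0$, or the infimum is not attained and then $4\beta=\alpha=2\gamma$ automatically. Lemma~\ref{lm:regularity} then exploits that local equation (together with the diameter bound of Lemma~\ref{lm:diamLSI}) to prove the a~priori regularity $\Lip\log f\le 14\log d$ for the extremizer. With that regularity in hand, the approximate chain rule of Lemma~\ref{lm:LSIvsMLSI} gives the upper bound directly, with $\phi(14\log d)\le 15\log d$, and no truncation or error-term balancing is needed. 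This ``regularity of the extremizer'' mechanism is the key idea; your edge-by-edge calculus inequality is consistent with the paper's $\phi(r)$, but without the reduction to an extremizer the uniform control on edge ratios that your argument requires is out of reach.
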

There is in fact an emerging principle hidden behind this result:  several classical estimates that hold in the smooth setting of diffusions due to the chain rule were recently extended to discrete state spaces, by paying the very same cost $\log d$. The key underlying observation is that an approximate version of the chain rule always holds, with a price that depends on the \red[roughness] $r:=\Lip\log f$ of the considered observable. This is formalized in the following result, of which (\ref{MLSI=LSI}) can be seen as the infinitesimally smooth limit $r=0$. 
\begin{lemma}[Approximate chain rule]\label{lm:LSIvsMLSI}Fix  $f\in\cF^{++}$ and set $r:=\Lip\log f$. Then, 
\begin{eqnarray*}
4\cE(\sqrt{f},\sqrt{f}) \ \le \ \cE(f,\log f) \ \le \ \phi(r)\,\cE(\sqrt{f},\sqrt{f}),
\end{eqnarray*}
where we have introduced the  cost function $\phi(r)=r\frac{e^{\frac r2}+1}{e^{\frac r2}-1}$. 
\end{lemma}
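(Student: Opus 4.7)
The plan is to reduce both inequalities to a pointwise comparison between the summands that build $\cE(f,\log f)$ and $\cE(\sqrt{f},\sqrt{f})$ in the reversible symmetrized form (\ref{rk:sym}). For each edge $\{x,y\} \in E$, set $a = f(x)$, $b = f(y)$, and $s = |\log a - \log b|$; the crucial identity we shall verify is
\begin{eqnarray*}
(a-b)(\log a - \log b) & = & \phi(s)\left(\sqrt{a}-\sqrt{b}\right)^2,
\end{eqnarray*}
where $\phi(s) = s\frac{e^{s/2}+1}{e^{s/2}-1}$. Indeed, writing $a \ge b$ and $s = \log(a/b)$, both sides equal $b\cdot s(e^s-1)$ after using $(\sqrt{a}-\sqrt{b})^2 = b(e^{s/2}-1)^2$ and $(e^s-1) = (e^{s/2}-1)(e^{s/2}+1)$. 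Since $T(x,y) = 0$ off the edge set $E$, this identity is all that is ever evaluated in the symmetric sum.

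With the identity in hand, both bounds follow from two elementary facts about the one-variable function $\phi$ on $[0,\infty)$: (a) $\phi \ge 4$ everywhere, and (b) $\phi$ is non-decreasing. For (a), one shows $s(e^{s/2}+1) \ge 4(e^{s/2}-1)$ by noting that both sides vanish at $s=0$ and that the derivative of the LHS minus RHS is non-negative (an easy second-derivative check, or equivalently the arithmetic-logarithmic mean inequality). For (b), substituting $u = s/2$ rewrites $\phi$ as $2u\coth(u/2)$, whose derivative equals $2\coth(u/2) - u\csch^2(u/2)$; multiplying through by $\sinh^2(u/2)$ reduces positivity to the classical $\sinh(u) \ge u$ for $u \ge 0$. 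Thus $4 = \phi(0^+) \le \phi(s) \le \phi(r)$ for every $s \in [0,r]$.

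Now multiply the pointwise identity by $\tfrac{1}{2}\pi(x)T(x,y)$ and sum over $(x,y) \in \dX^2$. On the right-hand side, every edge satisfies $s = |\log f(x) - \log f(y)| \le r$ by definition of $r = \Lip \log f$, so the sandwich $4(\sqrt{a}-\sqrt{b})^2 \le (a-b)(\log a - \log b) \le \phi(r)(\sqrt{a}-\sqrt{b})^2$ valid pairwise integrates to
\begin{eqnarray*}
4\,\cE(\sqrt{f},\sqrt{f}) \ \le \ \cE(f,\log f) \ \le \ \phi(r)\,\cE(\sqrt{f},\sqrt{f}),
\end{eqnarray*}
as desired.

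The only real obstacle is the identity $(a-b)(\log a - \log b) = \phi(s)(\sqrt{a}-\sqrt{b})^2$, which is not immediately obvious but becomes transparent once one factors $e^s - 1 = (e^{s/2}-1)(e^{s/2}+1)$; everything else is a routine verification of monotonicity and of the lower threshold $\phi \ge 4$. I expect no difficulty in either.
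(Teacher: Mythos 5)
Your proof is correct and takes essentially the same route as the paper's: you establish the pointwise identity $(f(x)-f(y))\log\frac{f(x)}{f(y)} = \phi\bigl(\log\frac{f(x)}{f(y)}\bigr)\bigl(\sqrt{f(x)}-\sqrt{f(y)}\bigr)^2$, note that $\phi$ is even with $\phi(0^+)=4$ and increases on $[0,\infty)$, and then sum against $\tfrac{1}{2}\pi(x)T(x,y)$. The only difference is cosmetic: the paper uses the even extension of $\phi$ directly, while you work with $s=|\log a - \log b|$ and supply an explicit hyperbolic verification of the monotonicity that the paper leaves to the reader.
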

\begin{proof}
Fix a function $f\in\cF^{++}$ and two states $x,y\in\dX$, and simply observe that
\begin{eqnarray}
\label{corrector}
\left(f(x)-f(y)\right)\log \frac{f(x)}{f(y)} & = & \left(\sqrt{f(x)}-\sqrt{f(y)}\right)^2 \phi\left(\log \frac{f(x)}{f(y)}\right),
\end{eqnarray}
thanks to our definition of $\phi$. Now, since $\phi\colon\dR\to(0,\infty)$ is even, and increasing on $(0,\infty)$, its value in the above identity is between $\phi(0+)=4$ and $\phi(r)$. Multiplying through by $\frac{1}{2}\pi(x)T(x,y)$ and summing over all $(x,y)\in\dX^2$ finishes the proof. 
\end{proof}
The first inequality in Lemma \ref{lm:LSIvsMLSI} can be directly inserted into the definitions (\ref{MLSI}) and (\ref{LSI}) to yield $4\beta\le\alpha$. However, the second inequality seems unexploitable, because the comparison cost $\phi(r)$ is unbounded as $f$ varies in  $\cF^{++}$. The key observation is that we do not actually need to verify the LSI (\ref{LSI}) for \emph{all}  $f\in\cF^{++}$, but only for \emph{one} very particular choice: an extremizer of the LSI. Fortunately, the latter happens to solve a pointwise equation, just like extremizers in the Poincaré inequality solve an eigenvalue problem. Write
\begin{eqnarray}
\label{def:beta}
\beta & = & \inf_{\cF^+_1\setminus\{1\}}\Phi,\quad \textrm{ where }\quad\Phi(f) \ := \ \frac{\cE(\sqrt{f},\sqrt{f})}{\Ent(f)}.
\end{eqnarray}
\begin{lemma}[Local characterization of log-Sobolev extremizers]\label{lm:LSI}Either  the infimum in (\ref{def:beta}) is achieved by some $f\in\cF^{++}_1\setminus\{1\}$, and the latter must then solve
\begin{eqnarray}
\label{local}
 L \sqrt{f} + \beta \sqrt{f}\log f & = & 0,
\end{eqnarray}
or the infimum in (\ref{def:beta}) is not achieved at all, and   $4\beta=\alpha=2\gamma$.
\end{lemma}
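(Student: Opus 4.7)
The plan is to combine standard variational calculus on the compact domain $\cF^+_1$ with a careful asymptotic analysis at the singular point $f=1$ and on the boundary $\partial\cF^+_1$. First observe that $\Phi$ is continuous on $\cF^+_1\setminus\{1\}$, since $\Ent(f)>0$ for any non-constant density by strict convexity of $u\mapsto u\log u$. I would then work out the linearization at $f=1$: writing $f=1+\varepsilon g$ with $\bE[g]=0$ and $\|g\|_2=1$, the expansions $\Ent(1+\varepsilon g)=\tfrac{\varepsilon^2}{2}\Var(g)+O(\varepsilon^3)$ and $\cE(\sqrt{1+\varepsilon g},\sqrt{1+\varepsilon g})=\tfrac{\varepsilon^2}{4}\cE(g,g)+O(\varepsilon^3)$ give $\Phi(1+\varepsilon g)\to \cE(g,g)/(2\Var(g))$ as $\varepsilon\to 0$. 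Taking the infimum over unit-norm mean-zero $g$ and invoking the spectral characterization of $\gamma$ yields
\begin{eqnarray*}
\liminf_{f\to 1,\,f\in\cF^+_1\setminus\{1\}}\Phi(f) & = & \frac{\gamma}{2}.
\end{eqnarray*}

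The next step is to rule out boundary minimizers. Suppose for contradiction that the infimum is attained at some $f_0\in\cF^+_1\setminus\{1\}$ with $f_0(x_0)=0$ for some $x_0\in\dX$. By irreducibility there is a neighbor $y$ with $T(x_0,y)>0$ and $f_0(y)>0$. I would pick $h\in\cF_0$ with $h(x_0)>0$, $h\ge 0$ on $\{f_0=0\}$ and $|h|$ uniformly dominated by $f_0$ on $\{f_0>0\}$, so that $f_\varepsilon:=f_0+\varepsilon h\in\cF^+_1$ for all small $\varepsilon>0$. A direct computation, relying on $u\log u=O(u|\log u|)$ near $0$ for the entropy part and on $\sqrt{\varepsilon h(x_0)}=\sqrt{\varepsilon}\sqrt{h(x_0)}$ for the Dirichlet part, yields
\begin{eqnarray*}
\Ent(f_\varepsilon)-\Ent(f_0) & = & O(\varepsilon|\log\varepsilon|),\\
\cE(\sqrt{f_\varepsilon},\sqrt{f_\varepsilon})-\cE(\sqrt{f_0},\sqrt{f_0}) & \le & -C\sqrt{\varepsilon}+O(\varepsilon),
\end{eqnarray*}
with $C>0$ coming from the edge $\{x_0,y\}$. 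The $\sqrt{\varepsilon}$ decrease in the numerator dominates the $\varepsilon|\log\varepsilon|$ change in the denominator, forcing $\Phi(f_\varepsilon)<\Phi(f_0)$ for small $\varepsilon$, a contradiction.

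If the infimum is therefore attained, any minimizer $f$ must lie in $\cF^{++}_1\setminus\{1\}$, where Lagrange multipliers apply. Using reversibility, the partial derivatives of the three relevant functionals compute to
\begin{eqnarray*}
\frac{\partial\cE(\sqrt{f},\sqrt{f})}{\partial f(x)} & = & -\frac{\pi(x)\,L\sqrt{f}(x)}{\sqrt{f(x)}},\\
\frac{\partial\Ent(f)}{\partial f(x)} & = & \pi(x)\log f(x),\\
\frac{\partial\bE[f]}{\partial f(x)} & = & \pi(x),
\end{eqnarray*}
so the criticality condition for $\cE(\sqrt{f},\sqrt{f})/\Ent(f)$ under the constraint $\bE[f]=1$ reads $L\sqrt{f}(x)+\beta\sqrt{f(x)}\log f(x)+\lambda\sqrt{f(x)}=0$ for some multiplier $\lambda\in\dR$. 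Testing this identity against $\sqrt{f(x)}\pi(x)$ and summing over $\dX$ collapses the three terms to $-\cE(\sqrt{f},\sqrt{f})+\beta\Ent(f)+\lambda=0$; since $\beta=\cE(\sqrt{f},\sqrt{f})/\Ent(f)$ by definition, we conclude $\lambda=0$, recovering (\ref{local}).

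Finally, if no minimizer exists, compactness of $\cF^+_1$ together with continuity of $\Phi$ on $\cF^+_1\setminus\{1\}$ forces any minimizing sequence to accumulate at $f=1$, the boundary case being ruled out by the previous step. The linearization then gives $\beta=\gamma/2$, and the hierarchy $4\beta\le\alpha\le 2\gamma$ of (\ref{hierarchy}) collapses to $4\beta=\alpha=2\gamma$. I expect the perturbation estimate in the boundary step to be the main obstacle: one must quantify carefully the singular interplay between $\sqrt{\cdot}$ and $\log$ near $0$, and the decisive algebraic point is that $\sqrt{\varepsilon}$ dominates $\varepsilon|\log\varepsilon|$ as $\varepsilon\to 0^+$.
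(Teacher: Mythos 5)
Your proof is correct and follows the same three‑part roadmap as the paper's (interior criticality, exclusion of boundary minimizers, linearization at $f=1$ when the infimum is not attained), but with two technical variations worth noting. First, for the interior case the paper exploits the scale‑invariance of $\Phi$ to drop the constraint $\bE[f]=1$ altogether and apply Fermat's theorem in the unconstrained (enlarged) space, whereas you keep the constraint and introduce a Lagrange multiplier $\lambda$, which you then eliminate by pairing the Euler--Lagrange equation against $\sqrt{f}\,\pi$. Both yield \eqref{local}; yours costs one extra line but is more self‑contained. Second, for excluding boundary minimizers the paper reads off the negativity (in fact, divergence to $-\infty$) of the one‑sided directional derivative $\nabla\Phi(f)(x)$ from its explicit formula when $f(x)=0$ and a neighbour has $f(y)>0$; you instead exhibit a concrete admissible perturbation $f_\varepsilon=f_0+\varepsilon h$ (with $\bE[h]=0$, $h(x_0)>0$, $h\ge 0$ on $\{f_0=0\}$, $|h|\lesssim f_0$ elsewhere) and quantify the competing rates, showing that the Dirichlet form drops by $\Theta(\sqrt{\varepsilon})$ while the entropy moves only by $O(\varepsilon|\log\varepsilon|)$. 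This is a more explicit justification of the same inequality; it buys you rigour at the singular point at the price of a slightly longer argument. The remaining step — that a non‑attained infimum forces minimizing sequences to accumulate at $1$ and hence $\beta=\gamma/2$ by linearization, collapsing \eqref{hierarchy} — matches the paper exactly.
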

\begin{proof}
The definition of $\Phi(f)$ actually makes sense for any non-constant non-negative function $f$, and it is invariant under scaling, so we may work on this enlarged space. Now, $\Phi$ admits a smooth gradient at every  non-constant positive function $f$, given by 
\begin{eqnarray}
\label{nablaphi}
\nabla\Phi(f) & = & -\frac{1}{2\sqrt{f}\Ent(f)}\left(L\sqrt{f}+\Phi(f)\sqrt{f}\log \frac{f}{\bE[f]}\right).
\end{eqnarray}
Thus, the first part of the claim is just a particular instance of Fermat's theorem, which asserts that the differential of a smooth function must vanish at any interior minimizer. Next, observe that $\Phi$ can not be minimized at a non-interior point. Indeed, such a point is, by definition, a function $f\in\cF^+$ that vanishes somewhere but not everywhere, so we can find an edge  $(x,y)\in E$ with $f(x)=0$ and $f(y)>0$; but then  (\ref{nablaphi}) shows that $\nabla\Phi(f)(x)<0$, hence  $\Phi(f+\varepsilon{\bf 1}_x)<\Phi(f)$ when $\varepsilon>0$ is small enough. Finally, suppose that $\inf\Phi$ is not achieved at all, and consider a sequence $(f_n)_{n\ge 1}$ in $\cF^+_1\setminus\{1\}$ such that
\begin{eqnarray*}
\Phi(f_n) & \xrightarrow[n\to\infty]{} & \beta.
\end{eqnarray*} 
Upon extracting a subsequence if needed, we may assume that $(f_n)_{n\ge 1}$ admits a point-wise limit, which must be in $\cF_1^+$ because $\dX$ is finite. If it were not identically $1$, we would have found a minimizer of $\Phi$, contradicting our assumption. Thus, $f_n\to 1$ as $n\to\infty$, and this easily implies that $2\Ent(f_n) \sim \Var(f_n) \sim  4\Var(\sqrt{f_n})$ as $n\to\infty$, so that 
\begin{eqnarray*}
\Phi(f_n) & \sim & \frac{\cE(\sqrt{f_n},\sqrt{f_n})}{2\Var(\sqrt{f_n})}.
\end{eqnarray*}
Since the right-hand side is at least $\gamma/2$, we obtain $\beta\ge\gamma/2$, forcing equalities in (\ref{hierarchy}).
\end{proof}
As anticipated, the local equation (\ref{local}) inherently imposes some spatial regularity. 
\begin{lemma}[Roughness estimate]\label{lm:regularity}If $f\in\cF^{++}_1$ solves (\ref{local}), then 
$
\Lip \log f  \le  14\log d.
$
\end{lemma}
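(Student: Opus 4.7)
The plan is to reparametrize the solution as $g:=\sqrt{f}$ and $h:=\log g=\frac{1}{2}\log f$, so that $\Lip\log f=2\Lip h$ and the pointwise equation (\ref{local}) rewrites as
\begin{eqnarray*}
\forall x\in\dX,\qquad \sum_{y\in\dX}T(x,y)\,e^{h(y)-h(x)} & = & 1-2\beta h(x).
\end{eqnarray*}
All the information we will use about $f$ is contained in this single identity, together with the fact that positive entries of $T$ are at least $1/d$ and that reversibility makes the diagram symmetric.

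First, I would extract the local control this equation imposes on an edge where $h$ jumps most violently. Setting $R:=\Lip h$ and picking an edge $\{u,v\}\in E$ realizing the maximum $h(u)-h(v)=R$, I isolate the term $y=u$ in the equation at $v$ and use $T(v,u)\ge 1/d$ to obtain
\begin{eqnarray}
\label{myplan:lb}
\frac{e^R}{d} & \le & 1-2\beta h(v),
\end{eqnarray}
which in particular forces $h(v)\le 0$ and $|h(v)|\ge (e^R-d)/(2\beta d)$ as soon as $R>\log d$ (otherwise we are already done).

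Next, I would match this with an upper bound on $|h(v)|$ of linear growth in $R$. The key observation is that the equation provides a distinguished reference vertex of bounded height: at any maximizer $x^\star$ of $g$, the inequality $Lg(x^\star)\le 0$ combined with (\ref{local}) yields $h(x^\star)\ge 0$, while the non-negativity of the left-hand side of the equation at $x^\star$ yields $h(x^\star)\le 1/(2\beta)$. Propagating along a geodesic from $v$ to $x^\star$ and invoking the diameter estimate $\diam(\dX)\le\sqrt{2}/\beta$ of Lemma \ref{lm:diamLSI}, I get
\begin{eqnarray}
\label{myplan:ub}
|h(v)| & \le & h(x^\star)+R\,\diam(\dX) \ \le \ \frac{1+2\sqrt{2}\,R}{2\beta}.
\end{eqnarray}

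Combining (\ref{myplan:lb}) and (\ref{myplan:ub}) and clearing the factor $2\beta$ leaves the elementary scalar inequality $e^R\le d(2+2\sqrt{2}R)$. A direct check shows that $R\ge 7\log d$ forces $d^6\le 2+14\sqrt{2}\log d$, which fails for every $d\ge 2$; hence $R<7\log d$ and $\Lip\log f=2R\le 14\log d$. The main obstacle will be ensuring that the ``blow-up'' in (\ref{myplan:lb}) and the diameter-based ceiling in (\ref{myplan:ub}) are aligned through the same quantity $\beta$; this is exactly what the maximum principle at $x^\star$ and Lemma \ref{lm:diamLSI} jointly guarantee, and everything else is routine bookkeeping.
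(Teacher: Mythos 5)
Your proposal is correct and, modulo the cosmetic reparametrization $h=\tfrac12\log f$, it follows the paper's proof step for step: isolate the extremal edge to get $e^{\Lip\log\sqrt f}/d\le 1-2\beta h(v)$, control $|h(v)|$ via the Lipschitz constant times the diameter bound of Lemma \ref{lm:diamLSI}, and close by a scalar inequality in $d$. The only minor deviation is that you derive the anchor fact $\max\log f\ge 0$ from the maximum principle $L\sqrt f\le 0$ at a maximizer of $\sqrt f$, whereas the paper reads it off directly from $\bE[f]=1$ (the additional ceiling $h(x^\star)\le\frac1{2\beta}$ you also invoke is in fact superfluous, since once $h(x^\star)\ge0$ and $h(v)<0$ one already has $|h(v)|\le h(x^\star)-h(v)\le R\,\diam(\dX)$); both routes yield $R<7\log d$ and hence the stated $\Lip\log f\le 14\log d$.
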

\begin{proof}
Set $r:=\Lip \log f$ and consider an edge $(x,y)\in E$ where $r$ is attained, i.e.
$
r  = \log\frac{f(y)}{f(x)}.
$
Recalling the definition of $d$ at (\ref{def:d}), we have in particular
\begin{eqnarray*}
\sum_{z\in\dX}T(x,z)\sqrt{\frac{f(z)}{f(x)}} & \ge & d^{-1}e^{\frac{r}2}.
\end{eqnarray*}
On the other hand, evaluating (\ref{local}) at  $x$ and dividing through by $\sqrt{f(x)}$, we know that
\begin{eqnarray*}
\sum_{z\in\dX}T(x,z)\sqrt{\frac{f(z)}{f(x)}} & = & 1-\beta\log f(x).
\end{eqnarray*}
But $\bE[f]=1$, so $\max f\geq 1$, and we may thus write
\begin{eqnarray*}
-\log f(x) & \le & \max \log f-\min \log f \\
& \le & r\diam(\dX) \\
& \le & \sqrt{2}r\beta^{-1},
\end{eqnarray*}
thanks to Lemma \ref{lm:diamLSI}. Combining the last three displays and taking logarithms, we obtain
\begin{eqnarray*}
\frac{r}{2} & \le & \log d+\log \left(1+\sqrt{2}r\right)\\
& \le & \log d +\log\left(1+\frac{r}4\right)+\frac{5}{2}\log 2\\
& \le & \frac{r+14\log d}4,
\end{eqnarray*}
where the last line uses $\log (1+u)\le u$ and the fact that $d\ge 2$. The claim follows.
\end{proof}

\begin{proof}[Proof of Theorem \ref{th:LSIvsMLSI}]According to Lemma \ref{lm:LSI}, either the infimum in (\ref{def:beta}) is not attained, in which case $\alpha=4\beta$ and the claim is trivial, or it is attained at some $f\in\cF^{++}_1\setminus \{1\}$, in which case we may use the definition of $\alpha$ and  Lemmas \ref{lm:LSIvsMLSI} to write  
\begin{eqnarray*}
\alpha\Ent(f) & \le & \cE(f,\log f)\\
& \le & \cE(\sqrt{f},\sqrt{f})\phi(14\log d)\\
& = &  \beta\Ent(f)\phi(14\log d)\\
& \le & 15\beta  \Ent(f)\log d. 
\end{eqnarray*}
In the last line,  we have used the fact that $\frac{\phi(r)}{r}=\frac{e^{r/2+1}}{e^{r/2}-1}$ is a decreasing function of $r$ which equals $\frac{129}{127}\le\frac{15}{14}$ when $r=14\log 2$. Since $f$ is not constant, we may safely simplify through by $\Ent(f)$. 
\end{proof}
\begin{remark}[Diameter estimate]\label{rk:diammls}It follows from Theorem \ref{th:LSIvsMLSI} and Lemma \ref{lm:diamLSI} that
\begin{eqnarray*}
\diam(\dX) & \le & \frac{c}{\alpha}\log d,
\end{eqnarray*} where $c$ is a universal constant. However, this can also be deduced directly from Corollary \ref{co:diam} (which only requires weak reversibility) and  the easy observation that  $\pi_\star\ge (2d)^{-\diam(\dX)}$.
\end{remark}
\newpage
\section{Curvature of Markov chains}
\label{sec:curvature}
In Riemannian geometry, a lower bound  on the Ricci curvature classically  implies an array of powerful estimates on the underlying manifold, including diameter bounds, volume growth, comparison principles, splitting theorems, spectral estimates, and concentration inequalities; see, e.g., the classical textbook \cite{MR3726907}. Over the past decade, those remarkable implications have motivated the development of non-smooth analogues of curvature that can be applied to discrete state spaces, such as graphs and Markov chains. In this chapter, we present two very successful such theories: the Ollivier-Ricci curvature, and the Bakry-\'Emery curvature. 
\subsection{Ollivier-Ricci curvature}
 In an inspiring series of papers \cite{MR2371483,MR2484937,MR2648269}, Y. Ollivier   developed a Wasserstein-based definition  of curvature that makes sense on arbitrary metric spaces. For simplicity, we will here assume that the chain is weakly reversible and equip $\dX$  with the hop-count distance (\ref{def:hopcount}). Focusing on this concrete and intuitive metric will help the unfamiliar reader without too much loss in terms of generality. We emphasize, however, that the theory presented here in fact applies to \emph{any} distance on $\dX$, and that this degree of freedom can actually be   useful, especially in Theorem \ref{th:PT} below. Once we have agreed on a way to measure distances between \emph{points}, we can measure distances between \emph{probability measures}  using optimal transport: specifically, we define the \red[Wasserstein distance] between  $\mu,\nu\in\cP(\dX)$ as
\begin{eqnarray}
\label{def:W}
W(\mu,\nu) & := & \inf_{X\sim\mu,Y\sim\nu}\EE[\dist(X,Y)],
\end{eqnarray}
where the infimum runs over all random pairs $(X,Y)$ with respective marginals $\mu$ and $\nu$. Let us make a number of  important comments about this definition.
First, by compactness, the infimum is always achieved, and  minimizers will henceforth be called \red[optimal couplings]. 
Second, it is not hard to show that $W(\cdot,\cdot)$ inherits the symmetry, separation and triangle inequality from $\dist(\cdot,\cdot)$, and is therefore a distance on $\cP(\dX)$.
Third, the Wasserstein distance is known to admit the \red[Kantorovich dual formulation]
\begin{eqnarray}
\label{kantorovich}
W(\mu,\nu) &  =  & \sup\left\{\left|\mu f-\nu f\right|\colon f\in\cF,
\Lip(f)\le 1\right\},
\end{eqnarray}
as a special case of the celebrated duality theory in convex optimization  \cite{MR1451876}. Fourth, $W(\cdot,\cdot)$ can be seen as an extension of $\dist(\cdot,\cdot)$ to distributions, since
\begin{eqnarray}
\forall x,y\in\dX,\qquad W(\delta_x,\delta_y) & = & \dist(x,y).
\end{eqnarray}
Finally, note that replacing the hop-count distance by the  \red[trivial metric] $(x,y)\mapsto  {\bf 1}_{x\ne y}$ in (\ref{def:W}) produces exactly the total-variation distance, which is totally  \emph{blind} to the underlying geometry: we have $\dtv(\delta_x,\delta_y)=1$ for all $x\ne y$, regardless of the number of transitions needed to move from $x$ to $y$. In particular, the total variation distance can \emph{not} exhibit uniform exponential decay along the semi-group, except in the unrealistic case where $E=\dX^2$. To overcome this limitation, it is natural to consider a metric that really \emph{feels} the intrinsic geometry of the chain, and this is exactly the idea behind the Ollivier-Ricci curvature.  Recall that a transition matrix $T$ is called \red[lazy] if its diagonal entries are at least $1/2$, and that this condition can be enforced at essentially no cost by  replacing $T$ with its \red[lazy version] 
\begin{eqnarray}
\label{def:lazy}
\widehat{T} & := & \frac{T+\mathrm{Id}}{2},
\end{eqnarray}
which simply amounts to running the semi-group at half speed (i.e. $\widehat{P}_t=P_{t/2}$ for all $t\ge 0$). 
\begin{theorem}[Wasserstein contraction]\label{th:OR}For each $\kappa\in\dR$, the following  are equivalent:
\begin{enumerate}[(i)]
\item A lazy transition brings adjacent states closer by an amount at least $\kappa/2$,  on average:
\begin{eqnarray}
\label{OR:local}
\forall \{x,y\}\in E, \qquad W\left(\widehat{T}(x,\cdot),\widehat{T}(y,\cdot)\right) & \le & 1-\frac{\kappa}{2}.
\end{eqnarray}
\item Lipschitz norms decay at rate at least $\kappa$ along the semi-group: 
\begin{eqnarray}
\label{def:ORdual}
\forall f\in\cF,\quad \forall t\ge 0,\quad \Lip(P_tf) & \le & e^{-\kappa t}\Lip(f).
\end{eqnarray}
\item Wasserstein distances decay at rate at least $\kappa$ along the semi-group:
\begin{eqnarray}
\label{def:OR}
\forall \mu,\nu\in\cP(\dX),\quad\forall t\ge 0,\quad W(\mu P_t,\nu P_t)  & \le &  e^{-\kappa t}W(\mu,\nu).
\end{eqnarray}
\end{enumerate}
Moreover, when those equivalent properties hold with  $\kappa>0$, we have the mixing-time estimate 
\begin{eqnarray}
\label{Wmix}
\forall \varepsilon\in(0,1),\qquad \tmix(\varepsilon) & \le & \frac{1}{\kappa}\log\left( \frac{\diam(\dX)}{\varepsilon}\right).
\end{eqnarray}
\end{theorem}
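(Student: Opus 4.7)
My plan is to prove the equivalence by the cyclic scheme (i) $\Rightarrow$ (iii) $\Leftrightarrow$ (ii) $\Rightarrow$ (i), and then to deduce (\ref{Wmix}) from (iii) via the comparison $\dtv \le W$. The middle equivalence (ii) $\Leftrightarrow$ (iii) is immediate from Kantorovich duality (\ref{kantorovich}): the identity $(\mu P_t)f = \mu(P_tf)$ makes (\ref{def:ORdual}) and (\ref{def:OR}) dual to each other under exchange of the sup over $f$ with the sup over $(\mu,\nu)$.

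The heart of the proof is the implication (i) $\Rightarrow$ (iii). Starting from (\ref{OR:local}), I first extend the bound to all $x,y\in\dX$ via the triangle inequality of $W$ along a geodesic $x=x_0,x_1,\ldots,x_k=y$ with $k=\dist(x,y)$:
\begin{eqnarray*}
W\bigl(\widehat T(x,\cdot),\widehat T(y,\cdot)\bigr) & \le & \sum_{i=1}^k W\bigl(\widehat T(x_{i-1},\cdot),\widehat T(x_i,\cdot)\bigr) \ \le \ \bigl(1-\tfrac{\kappa}{2}\bigr)\dist(x,y).
\end{eqnarray*}
Next, I pass from Dirac masses to arbitrary $\mu,\nu\in\cP(\dX)$ by a two-stage coupling: draw $(X,Y)$ from an optimal coupling realizing $W(\mu,\nu)$, then conditionally draw $(X',Y')$ from an optimal coupling of $(\widehat T(X,\cdot),\widehat T(Y,\cdot))$. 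Tower expectation yields $W(\mu\widehat T,\nu\widehat T)\le (1-\kappa/2)W(\mu,\nu)$, and iterating gives $W(\mu\widehat T^k,\nu\widehat T^k)\le(1-\kappa/2)^kW(\mu,\nu)$ for all $k\in\dN$. Mixing these bounds against the Poisson weights in the representation $\widehat P_t=P_{t/2}=e^{-t}\sum_k(t^k/k!)\widehat T^k$ and using the convexity of $W$ under mixtures, I obtain
\begin{eqnarray*}
W(\mu P_{t/2},\nu P_{t/2}) & \le & e^{-t}\sum_{k\ge 0}\frac{t^k}{k!}\bigl(1-\tfrac{\kappa}{2}\bigr)^k W(\mu,\nu) \ = \ e^{-\kappa t/2}W(\mu,\nu),
\end{eqnarray*}
which after the substitution $s=t/2$ is exactly (\ref{def:OR}).

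The reverse direction (ii) $\Rightarrow$ (i) is the most delicate step. For adjacent $x,y$ and a $1$-Lipschitz function $f$ with $f(x)-f(y)=1$, (\ref{def:ORdual}) gives $P_tf(x)-P_tf(y)\le e^{-\kappa t}\Lip(f)\dist(x,y)\le e^{-\kappa t}=1-\kappa t+O(t^2)$, while a first-order Taylor expansion yields $P_tf(x)-P_tf(y)=1+t(Lf(x)-Lf(y))+O(t^2)$. Comparing the two gives $Lf(x)-Lf(y)\le-\kappa$, hence $Tf(x)-Tf(y)\le 1-\kappa$. Substituting into $\widehat Tf(x)-\widehat Tf(y)=\tfrac{1}{2}(f(x)-f(y))+\tfrac{1}{2}(Tf(x)-Tf(y))$ produces the bound $1-\kappa/2$ for such $f$. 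The main obstacle I expect to tackle is upgrading this conclusion from $f$ saturating $f(x)-f(y)=1$ to an arbitrary $1$-Lipschitz $f$, which I would handle via an extreme-point analysis showing that the Kantorovich supremum defining $W(\widehat T(x,\cdot),\widehat T(y,\cdot))$ is always attained at some potential saturating the Lipschitz bound on the edge $\{x,y\}$.

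Finally, (\ref{Wmix}) falls out of (iii) by combining three elementary observations: $\dist\ge\mathbf{1}_{\ne}$ implies $\dtv(\mu,\nu)\le W(\mu,\nu)$; any coupling costs at most $\diam(\dX)$, so $W(\mu,\pi)\le\diam(\dX)$; and (iii) applied with $\nu=\pi$ gives $W(\mu P_t,\pi)\le e^{-\kappa t}W(\mu,\pi)$. Chaining these yields $\dtv(t)\le e^{-\kappa t}\diam(\dX)$, and setting the right-hand side equal to $\varepsilon$ produces (\ref{Wmix}).
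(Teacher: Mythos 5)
Your implications (i)~$\Rightarrow$~(iii) and (ii)~$\Leftrightarrow$~(iii) are sound. The first is the primal/coupling counterpart of the paper's (i)~$\Rightarrow$~(ii): where you extend (\ref{OR:local}) along geodesics and iterate a two-stage coupling before mixing against the Poisson weights, the paper iterates the equivalent Lipschitz contraction $\Lip(\widehat T f) \le (1-\kappa/2)\Lip(f)$ in the dual and mixes against the same weights using convexity of $\Lip$; both work. The Kantorovich-duality argument for (ii)~$\Leftrightarrow$~(iii) is also correct (restricting (iii) to Dirac masses gives (ii), and applying (ii) to a Kantorovich potential gives (iii)).

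The genuine gap is in your (ii)~$\Rightarrow$~(i). Your Taylor-at-$t=0$ argument yields $Tg(x)-Tg(y)\le 1-\kappa$ \emph{only} for $1$-Lipschitz $g$ with $g(x)-g(y)=1$, and this is not an accident: when $|g(x)-g(y)|<1$, the constraint $|P_tg(x)-P_tg(y)|\le e^{-\kappa t}$ is strictly slack at $t=0$ and therefore gives no first-order information on $Lg(x)-Lg(y)$. You propose to close this by arguing that the Kantorovich supremum defining $W(\widehat T(x,\cdot),\widehat T(y,\cdot))$ is always attained at a potential saturating $|g(x)-g(y)|=1$. That claim is unproven in your write-up, and it is not obvious: vertices of the $1$-Lipschitz polytope saturate \emph{some} spanning family of edges, but there is no a priori reason this family must contain the particular edge $\{x,y\}$; you would have to exploit the special structure of the measures $\widehat T(x,\cdot),\widehat T(y,\cdot)$ (in particular their large atoms at $x$ and $y$) to force saturation, and this was not done. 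The paper avoids the issue entirely by proving (iii)~$\Rightarrow$~(i) in the \emph{primal}: it takes a $W_1$-optimal coupling $(X_t,Y_t)$ of $P_t(x,\cdot),P_t(y,\cdot)$, uses $\PP(X_t=x,Y_t=y)\ge 2e^{-t}-1\ge 1-2t$ to write $\mathrm{Law}(X_t,Y_t)=(1-2t)\delta_{(x,y)}+2t\,\mathrm{Law}(X_t^\star,Y_t^\star)$, extracts a subsequential limit $(X_0^\star,Y_0^\star)$ as $t\to 0$ which is a coupling of $\widehat T(x,\cdot)$ and $\widehat T(y,\cdot)$, and then reads off $\EE[\dist(X_0^\star,Y_0^\star)]\le 1-\kappa/2$ from the assumed contraction. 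This compactness argument on couplings is the key step your proof is missing; I recommend replacing your (ii)~$\Rightarrow$~(i) paragraph by a (iii)~$\Rightarrow$~(i) argument of this type (which is consistent with your cyclic scheme since you already have (ii)~$\Leftrightarrow$~(iii)).

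The derivation of~(\ref{Wmix}) is correct and identical to the paper's.
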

\begin{proof}[Proof of $(i)\Longrightarrow(ii)$]When applied to $\mu=\widehat{T}(x,\cdot)$ and $\nu=\widehat{T}(y,\cdot)$, the duality (\ref{kantorovich}) reads
\begin{eqnarray*}
W\left(\widehat{T}(x,\cdot),\widehat{T}(y,\cdot)\right) & = & \sup_{f\in\cF,\Lip(f)\le 1} \left|\widehat{T}f(x)-\widehat{T}f(y)\right|.
\end{eqnarray*}
Consequently, the condition (i) is equivalent to the Lipshitz contraction property
\begin{eqnarray*}
\forall f\in\cF,\quad \Lip\left(\widehat{T}f\right) & \le & \left(1-\frac{\kappa}{2}\right)\Lip(f).
\end{eqnarray*}
If we iterate this inequality $k$ times, multiply the result through by $e^{-2t}{(2t)}^k/k!$, and sum over $k\ge 0$, we easily arrive at (ii), because $f\mapsto\Lip(f)$ is convex and $P_t=\widehat{P}_{2t}$. 
\end{proof}
\begin{proof}[Proof of $(ii)\Longrightarrow(iii)$]Consider an optimal coupling $(X,Y)$ of two measures $\mu,\nu\in\cP(\dX)$. As is clear from its dual formulation, $W(\cdot,\cdot)$ is convex, so for any $t\ge 0$, we can write 
\begin{eqnarray*}
W\left(\mu P_t,\nu P_t\right) & \le & \sum_{x,y\in\dX}\PP(X=x,Y=y)W\left(P_t(x,\cdot),P_t(y,\cdot)\right)\\
& = & \sum_{x,y\in\dX}\PP(X=x,Y=y)\sup_{f\in\cF,\Lip(f)\le 1}\left|(P_tf)(x)-(P_tf)(y)\right|\\
& \le & e^{-\kappa t}\sum_{x,y\in\dX}\PP(X=x,Y=y)\dist(x,y)\\
& = & e^{-\kappa t}W(\mu,\nu),
\end{eqnarray*}
where we have successively used  (\ref{kantorovich}),  (ii), and the optimality of $(X,Y)$. 
\end{proof}
\begin{proof}[Proof of $(iii)\Longrightarrow(i)$]Fix an edge $\{x,y\}\in E$ and a time $t\in (0,1/2]$. Assumption (iii) ensures the existence of a coupling $(X_t,Y_t)$ of $P_t(x,\cdot)$ and $P_t(y,\cdot)$ such that
\begin{eqnarray}
\label{optcoup}
\EE[\dist(X_t,Y_t)] & \le & e^{-\kappa t}. 
\end{eqnarray}
Now, because transitions occur at rate $1$ under our semi-group, we know that
\begin{eqnarray*}
\PP(X_t=x,Y_t=y) & = & \PP(X_t=x)+\PP(Y_t=y)-\PP(X_t=x\textrm{ or }Y_t=y)\\
& \ge & 2e^{-t}-1\\
& \ge & 1-2t.
\end{eqnarray*}
This means that we can decompose the law of the random pair $(X_t,Y_t)$ as
\begin{eqnarray}
\label{lazydec}
\mathrm{Law}(X_t,Y_t) & = & (1-2t)\delta_{(x,y)}+2t\mathrm{Law}(X_t^\star,Y_t^\star),
\end{eqnarray}
for some random pair $(X^\star_t,Y^\star_t)$. By tightness ($\dX^2$ is finite), the latter admits a sub-sequential limit $(X_0^\star,Y_0^\star)$ as $t\to 0$. To determine its marginals, observe that by definition of the lazy matrix $\widehat{T}$, we have $P_t=\widehat{P}_{2t}=(1-2t)\mathrm{Id}+2t\widehat{T}+o(t)$ as $t\to 0$, so that
\begin{eqnarray*}
\mathrm{Law}(X_t) & = & (1-2t)\delta_{x}+2t\widehat{T}(x,\cdot)+o(t),\\
\mathrm{Law}(Y_t) & = & (1-2t)\delta_{y}+2t\widehat{T}(y,\cdot)+o(t).
\end{eqnarray*}
Comparing this with the decomposition (\ref{lazydec}), we readily deduce that 
\begin{eqnarray*}
\mathrm{Law}(X_0^\star) \ = \ \widehat{T}(x,\cdot), & \textrm{ and } & 
\mathrm{Law}(Y_0^\star) \ = \ \widehat{T}(y,\cdot).
\end{eqnarray*}
In other words, $(X_0^\star,Y_0^\star)$ is a coupling of $\widehat{T}(x,\cdot)$ and $\widehat{T}(y,\cdot)$, and this ensures that 
\begin{eqnarray*}
W\left(\widehat{T}(x,\cdot),\widehat{T}(y,\cdot)\right) & \le & \EE[\dist(X_0^\star,Y_0^\star)].
\end{eqnarray*}
But our assumption (\ref{optcoup}), once decomposed according to (\ref{lazydec}), yields
\begin{eqnarray*}
\EE\left[\dist\left({X}_t^\star,{Y}_t^\star\right)\right]  & \le &  1-\frac{1-e^{-\kappa t}}{2t},
\end{eqnarray*}
and the right-hand side tends to $1-\frac{\kappa}{2}$ as $t\to 0$, establishing the claim.
\end{proof}
\begin{proof}[Proof of (\ref{Wmix})]
The trivial metric is always a lower-bound on the hop-count distance, and this  pointwise comparison is clearly preserved under the Wasserstein lift (\ref{def:W}). Thus, 
\begin{eqnarray*}
\dtv(\mu P_t,\nu P_t) & \le & W(\mu P_t,\nu P_t)\
\ \le \ W(\mu,\nu) e^{-\kappa t},
\end{eqnarray*}
for any $\mu,\nu\in\cP(\dX)$ and  $t\ge 0$. Choosing $\nu=\pi$ and maximizing over all $\mu\in\cP(\dX)$ yields
\begin{eqnarray}
\label{Wexpdecay}
\dtv(t) & \le & \diam(\dX)e^{-\kappa t},
\end{eqnarray}
from which the claim follows by taking $t=\tmix(\varepsilon)$.  
\end{proof}
\begin{definition}[Ollivier-Ricci curvature]The  \red[Ollivier-Ricci curvature],  henceforth denoted by $\kappa$, is the largest constant for which the equivalent conditions (i)-(ii)-(iii) hold
\end{definition} It is perhaps worth emphasizing that this constant quantifies the regularizing nature of the semi-group, just like the Poincaré and modified log-Sobolev constants. Indeed, the Lipschitz norm $\Lip(f)$ is a particular way of measuring how far a function $f$ is from being flat, just like the variance $\Var(f)$ and the entropy $\Ent(f)$, and substituting those for it in (\ref{def:ORdual}) leads to the  Poincaré and modified log-Sobolev constants, respectively. There is, however, an important conceptual difference: the Lipschitz norm  measures the variations of $f$ in a  purely \emph{geometric} way (based on the hop-count distances), whereas the variance and the entropy were purely \emph{statistical} (measuring fluctuations under the equilibrium law). Note also that $\Lip(f)$ is not a smooth function of $f$ in the sense of (\ref{assume:smooth}), so that Theorem \ref{th:OR} can not be obtained by a direct application of our meta theorem (Theorem \ref{th:analytic}).
\begin{remark}[Lichnerowicz estimate]\label{rk:Lichnerowicz}The exponential decay (\ref{Wexpdecay}) shows that 
\begin{eqnarray}
\label{Lichnerowicz}
\lambda & \ge & \kappa.
\end{eqnarray}
This can be seen as a discrete analogue of a celebrated result in differential geometry, due to Lichnerowicz, which asserts that the Ricci curvature of a manifold is a lower-bound on the spectral gap of the Laplace-Beltrami operator. 
\end{remark}
\begin{remark}[Diameter estimate]\label{rk:ORdiam}As many other mixing parameters, the Ollivier-Ricci curvature provides a simple bound on the diameter, namely
\begin{eqnarray}
\label{ORdiam}
\kappa\, \diam(\dX) & \le & 2.
\end{eqnarray}
Indeed, for any $x,y\in\dX$, we can consider an coupling $(X,Y)$ of $\widehat{T}(x,\cdot)$ and $\widehat{T}(y,\cdot)$  to write
\begin{eqnarray*}
\dist(x,y) & \le & \EE\left[\dist(x,X)\right]+\EE[\dist(X,Y)]+\EE[\dist(Y,y)]\\
& \le & \frac 12+W\left(\widehat{T}(x,\cdot),\widehat{T}(y,\cdot)\right)+\frac 12\\
& \le & 1+\left(1-\frac{\kappa}{2}\right)\dist(x,y), 
\end{eqnarray*}
thanks to (\ref{OR:local}). Choosing 
 $x,y$ at maximal distance and simplifying gives exactly (\ref{ORdiam}). 
\end{remark}
\begin{example}[The $n-$cube]\label{exOR:cube}On the $n-$cube, a lazy transition  consists in choosing a  coordinate uniformly at random in $\{1,\ldots,n\}$ and resampling its value uniformly at random from the set $\{-1,+1\}$. If we couple the  transitions from two adjacent states $x$ and $y$ by using the same randomness for both updates, we generate a random pair $(X,Y)$ that satisfies
\begin{eqnarray*}
\dist(X,Y) & = & \left\{
\begin{array}{ll}
1 & \textrm{with probability }1-1/n\\
0 & \textrm{with probability }1/n.
\end{array}
\right.
\end{eqnarray*}
By the local criterion (\ref{OR:local}), we deduce that $\kappa\ge 2/n$, so that  (\ref{Lichnerowicz})-(\ref{ORdiam}) are saturated: 
\begin{eqnarray}
\kappa & = & \gamma \ = \  \frac{2}{\diam(\dX)} \ = \ \frac{2}{n}.
\end{eqnarray}
Note  that the mixing-time estimate (\ref{Wmix})  reads
$
\tmix(\varepsilon)  \le  \frac{n\log n}{2}+ O_\varepsilon(n),
$
as $n\to\infty$, which is only off by a factor $2$ when compared with  (\ref{ex:cube}).
\end{example}
\subsection{The Peres-Tetali conjecture}

At a high level, the Ollivier-Ricci curvature shares many similarities with the modified log-Sobolev constant: both quantify how functions \emph{flatten} along the semi-group, both constitute lower bounds on the spectral gap, both provide sharp estimates on mixing times, and both are known to imply sub-Gaussian concentration under the stationary measure \cite{MR3726607}. This observation, along with explicit computations on several examples, has led the community to hope for a systematic relation of the following form, possibly up to universal pre-factors.
\begin{question}[The Peres-Tetali conjecture]Do we always have $\alpha \ge \kappa$?
\end{question}
Note that this would constitute a substantial strengthening   of the Lichnerowicz estimate (\ref{Lichnerowicz}). Beyond the obvious conceptual interest of connecting a geometric notion to an information-theoretic one, such a statement would have considerable practical value: indeed, the  local, coupling-based characterization of curvature makes it very easy to evaluate on concrete models, and the inequality $\alpha\ge \kappa$ would thus provide an effective way to estimate the much more delicate modified log-Sobolev constant. Unfortunately, counter-examples to the Peres-Tetali conjecture have been constructed \cite{munch2023ollivier}. Nevertheless, we will show that the conjecture can be repaired at a reasonable price, as discovered in \cite{munch2023ollivier} and refined in \cite{MR4891383}. To this aim, we generalize the Wasserstein distance by setting
\begin{eqnarray}
W_p(\mu,\nu) & := & \inf_{X\sim\mu,Y\sim\nu}\EE\left[\dist^p(X,Y)\right]^{\frac 1p},
\end{eqnarray}
for any $p\in[1,+\infty]$, with the usual understanding that $\EE[Z^\infty]^{1/\infty}$ is the essential supremum of $Z$. We then naturally define the \red[Ollivier-Ricci $p-$curvature]  as the largest $\kappa_p$ such that
\begin{eqnarray*}
\forall \mu,\nu\in\cP(\dX),\quad \forall t\ge 0,\quad W_p(\mu P_t,\nu P_t) & \le & e^{-\kappa_p t}W_p(\mu,\nu).
\end{eqnarray*} 
As before, it is in fact enough to check this when $\mu$ and $\nu$ are Dirac masses at adjacent states:
\begin{eqnarray*}
\forall \{x,y\}\in E,\quad \forall t\ge 0,\quad W_p(P_t(x,\cdot),P_t(y,\cdot)) & \le & e^{-\kappa_p t}\dist(x,y).
\end{eqnarray*} 
From this formulation, it is clear that $\kappa_p$ is non-increasing in $p$, so the weakest notion is  the Ollivier-Ricci curvature $\kappa_1$ defined earlier, whereas the strongest is the so-called \red[sectional curvature] $\kappa_\infty$.  We naturally write $\kappa_p^\star$ for the $p-$curvature of the adjoint chain $T^\star$. With this notation at hand, we can state our ``repaired'' Peres-Tetali conjecture.
\begin{theorem}[Curvature and entropy]\label{th:PT}We always have
$
\alpha \ge \kappa_1+\kappa_\infty^\star.
$
\end{theorem}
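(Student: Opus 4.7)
The plan is to establish, for every $h\in\cF_1^{++}$, the MLSI-type inequality
\[
\cE(h,\log h) \ \ge \ (\kappa_1+\kappa_\infty^\star)\,\Ent(h),
\]
which by the entropy-decay specialisation of the general recipe (Theorem~\ref{th:analytic} with $\Phi=\Ent$) is equivalent to MLSI with constant $\kappa_1+\kappa_\infty^\star$ and hence yields $\alpha\ge \kappa_1+\kappa_\infty^\star$.

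Before attacking the inequality I would reinterpret the two curvatures in their dual, Lipschitz form. By Theorem~\ref{th:OR} applied to $T^\star$, the sectional-curvature bound $\kappa_\infty^\star$ is equivalent to the pointwise contraction $\Lip(P_s^\star g)\le e^{-\kappa_\infty^\star s}\Lip(g)$ for every $g\in\cF$ and $s\ge 0$, i.e.\ to uniform Lipschitz regularisation of arbitrary test functions along the adjoint semigroup. Symmetrically, $\kappa_1$ is equivalent to $\Lip(P_s f)\le e^{-\kappa_1 s}\Lip(f)$ via Kantorovich duality, and by Theorem~\ref{th:OR} again admits an edgewise coupling formulation $W_1(P_s(x,\cdot),P_s(y,\cdot))\le e^{-\kappa_1 s}\dist(x,y)$ for $\{x,y\}\in E$.

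The main step is a smoothing argument along the adjoint semigroup. Set $h_s:=P_s^\star h$, so that $h_0=h$, and study the entropy-dissipation $D(s):=\cE(h_s,\log h_s)$. Using the reversibility relation $\pi(x)T(x,y)=\pi(y)T^\star(y,x)$ and a coupling realising the $W_1$-contraction for $P_s$ on each edge, I would bound the edge contribution to $D(s)$ by $\Lip(\log h_s)$ times the coupled expected distance; the Lipschitz norm of $\log h_s$ in turn is controlled by $\kappa_\infty^\star$ through the action of $P_s^\star$ on $\log h$. The two contraction rates combine multiplicatively at the level of the semigroup, hence \emph{additively} at the level of exponential rates, which is the mechanism producing the sum $\kappa_1+\kappa_\infty^\star$. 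Integrating the resulting infinitesimal estimate against the Duhamel representation of $\Ent(h)=\int_0^\infty D(s)\,\dd s$ delivers the functional inequality.

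The main obstacle is the failure of the discrete chain rule: $\log h_s$ is not the evolution of $\log h$ under the adjoint, so a Lipschitz bound on $\log h$ does not propagate trivially to $\log h_s$. I would address this using the approximate chain rule of Lemma~\ref{lm:LSIvsMLSI} to convert $\cE(h_s,\log h_s)$ into a quadratic form in $\sqrt{h_s}$, together with a Bochner-type symmetrisation of $L^\star$ acting on the result; the chain-rule defect then manifests as a roughness term which is precisely absorbed by the $\kappa_\infty^\star$ contribution. Keeping this defect additive rather than multiplicative is the delicate crux of the argument, and explains why the final constant takes the form $\kappa_1+\kappa_\infty^\star$ rather than, say, $\min(\kappa_1,\kappa_\infty^\star)$ or a product.
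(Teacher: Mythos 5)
Your high-level plan — reduce the claim to a statement about entropy decay along $P_t^\star$, interpret both curvatures in their Lipschitz/dual form, and observe that two semigroup contractions with rates $\kappa_1$ and $\kappa_\infty^\star$ should compose to rate $\kappa_1+\kappa_\infty^\star$ — is the right intuition, and matches the mechanism driving the paper's proof. But there are two genuine gaps in the proposal.

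First, you misidentify the dual content of $\kappa_\infty^\star$. Theorem~\ref{th:OR} characterizes the \emph{$W_1$-curvature} $\kappa_1$, so ``Theorem~\ref{th:OR} applied to $T^\star$'' gives $\Lip(P_s^\star g)\le e^{-\kappa_1^\star s}\Lip(g)$, a contraction tied to $\kappa_1^\star$, not $\kappa_\infty^\star$. The sectional curvature $\kappa_\infty^\star$ has a different, and essential, dual role captured by the paper's Lemma~\ref{lm:optimizer}-companion result: it yields $\Lip\log P_s^\star g \le e^{-\kappa_\infty^\star s}\Lip\log g$, i.e.\ contraction of the Lipschitz norm of the \emph{logarithm} of the density. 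That logarithm is precisely what makes the estimate interlock with entropy and $\cE(\cdot,\log\cdot)$, and it is a strictly stronger requirement than $\kappa_1^\star$-contraction; writing the $W_1$-type statement in its place both misattributes the equivalence and discards the feature the proof actually needs.

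Second, the central step is absent. The paper does not attempt to bound the Fisher information $D(s)=\cE(P_s^\star h,\log P_s^\star h)$ directly via couplings, edge-wise Lipschitz bounds and an approximate chain rule — it is far from clear such a programme would close, since $\cE(f,\log f)$ is not a Lipschitz-type functional and the roughness cost $\phi(r)$ in Lemma~\ref{lm:LSIvsMLSI} has no a priori relation to $\kappa_\infty^\star$; your remark that ``the chain-rule defect then manifests as a roughness term which is precisely absorbed by the $\kappa_\infty^\star$ contribution'' is a target, not an argument. Instead, the paper works with the optimization problem $\chi_t=\sup_f \Ent(P_t^\star f)/\Ent(f)$ and the key device is the Euler–Lagrange equation for its extremizer, $P_t\log P_t^\star f = \chi_t\log f$ (Lemma~\ref{lm:optimizer}). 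Once one has an \emph{identity} rather than an inequality, one may take Lipschitz norms of both sides and apply the two dual contractions in sequence: $\chi_t\,\Lip\log f = \Lip(P_t\log P_t^\star f)\le e^{-\kappa_1 t}\Lip(\log P_t^\star f)\le e^{-(\kappa_1+\kappa_\infty^\star)t}\Lip\log f$, which is where the sum of rates appears cleanly. Your draft has no analogue of the extremizer characterization, and this variational step — not the semigroup telescoping per se — is what makes the bound go through without any excess cost such as a $\log d$ factor.
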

Note that the sectional curvature $\kappa_\infty^\star$ can not be positive in our discrete setting, since in a small enough time interval there is always a non-zero chance that no transition occur, regardless of the considered coupling. However, many interesting Markov chains satisfy $\kappa_\infty^\star=0$, and verifying this is fairly easy on concrete models. Indeed, all we have to do is to find, for each edge $\{x,y\}\in E$, a coupling $(X,Y)$ of $\widehat{T}^\star(x,\cdot)$ and $\widehat{T}^\star(y,\cdot)$ satisfying
\begin{eqnarray}
\label{assume:NNSC}
\PP\left(\dist(X,Y) \le  1\right) & = & 1.
\end{eqnarray}
Examples include random walks on Abelian groups, conjugacy-invariant random walks, high-temperature Glauber dynamics, or zero-range processes with monotone jump rates. For concreteness, let us illustrate the above theorem on  a couple of simple models.
\begin{example}[Random walk on the $n-$cube]The coupling defined in Example \ref{exOR:cube} clearly satisfies (\ref{assume:NNSC}). Thus, Theorem \ref{th:PT} guarantees that $\alpha\ge 2/n$, which is half the true value!
\end{example}
\begin{example}[Rank-one chains]\label{ex:rankone}Given a target measure $\pi\in\cP(\dX)$, consider the rank-one chain $T(x,y)  :=  \pi(y)$ that mixes in a single transition. Since trajectories can be coupled so as to coincide after the first transition, we have $\kappa_1=1$ and $\kappa_\infty=0$. Thus, Theorem \ref{th:PT} guarantees that $\alpha\ge 1$, and since $\gamma=1$ we conclude that $\alpha\in[1,2]$. Interestingly, we have $d=\frac{1}{\pi_\star}$ here, so we can invoke Theorem \ref{th:LSIvsMLSI} and (\ref{LSvsDimension})  to obtain
\begin{eqnarray*}
\beta & \in & \left[\frac{1}{30\log\frac{1}{\pi_\star}}, \frac{1}{\log\frac{1}{\pi_\star}}\right].
\end{eqnarray*}
This shows that the $\log d$ dependency in  Theorem \ref{th:LSIvsMLSI} is actually sharp. 
\end{example}

In order to establish Theorem \ref{th:PT}, we fix $t\ge 0$ and analyze the optimization problem
\begin{eqnarray}
\label{def:rho}
\chi_t & = & \sup_{f\in\cF^+_1\setminus\{1\}}\frac{\Ent(P_t^\star f)}{\Ent(f)}.
\end{eqnarray}
Our starting point is the following characterization of maximizers,  similar to  Lemma \ref{lm:LSI}.
\begin{lemma}[Maximizers]\label{lm:optimizer}Either (\ref{def:rho}) is attained by some $f\in\cF^{++}_1\setminus\{1\}$, and
\begin{eqnarray}
\label{chi:local}
P_t \log {P_t^\star f} & = & \chi_t\log {f},
\end{eqnarray}
or it is not attained at all, and $\chi_t$ is then exactly the second largest eigenvalue of $P_tP_t^\star$.
\begin{proof}
We may assume that $t>0$, otherwise the claim is trivial.  The functional $\Phi\colon f\mapsto \frac{\Ent(P_t^\star f)}{\Ent(f)}$ is continuous on the space of non-constant non-negative functions and invariant under scaling. Moreover, it is continuously differentiable in the interior, with gradient
\begin{eqnarray}
\label{nablaphi2}
\nabla \Phi(f) & = & \frac{1}{\Ent(f)}\left[P_t\log \frac{P_t^\star f}{\bE[f]}-\Phi(f)\log \frac{f}{\bE[f]}\right].
\end{eqnarray}
Thus, the first part of the claim is just a particular instance of Fermat's theorem, which asserts that the differential of a smooth function must vanish at any interior maximizer. Next, consider a non-interior point in the above space, i.e. a function $f\in\cF^+$ that vanishes somewhere, but not everywhere. Since all entries of $P_t$ are positive,  $P_t^\star f$ is then positive everywhere, so  the right-hand side of (\ref{nablaphi2}) extends continuously to $f$, with limits being understood in $\dR\cup\{+\infty\}$. But $f$ vanishes at some $x\in\dX$, so $\nabla \Phi(f)(x)=+\infty$, implying that $\Phi(f+\eta\delta_x)>\Phi(f)$ for all small enough $\eta>0$: thus, $\Phi$ cannot attain its supremum at a non-interior point. Finally, suppose that $\Phi$ does not attain its supremum at all. Then, we can at least find a sequence $(f_n)_{n\ge 1}$ in $\cF_1^{+}\setminus\{1\}$ such that
\begin{eqnarray*}
\Phi(f_n) & \xrightarrow[n\to\infty]{} & \chi_t.
\end{eqnarray*} 
Upon extracting a subsequence if needed, we may assume that $(f_n)_{n\ge 1}$ has a limit in $\cF_1^+$. If the latter were not identically $1$, we would have found a maximizer of $\Phi$, contradicting our assumption. Thus, $f_n\to 1$ as $n\to\infty$ and by linearization (Remark \ref{rk:linearization}), this implies  
\begin{eqnarray*}
\Phi(f_n) & \sim & \frac{\Var(P_t^\star f_n)}{\Var(f_n)}.
\end{eqnarray*}
Clearly, the expression on the right-hand side is maximized when $f_n$ is an eigenvector corresponding to the second largest eigenvalue of $P_tP_t^\star$ (shifted  and normalized to be in $\cF_1^{++}$). 
\end{proof}
\end{lemma}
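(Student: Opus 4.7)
The plan is to follow the three-step strategy of Lemma \ref{lm:LSI}: extend $\Phi\colon f\mapsto \Ent(P_t^\star f)/\Ent(f)$ to the cone of non-constant non-negative functions, compute its gradient at interior points, apply Fermat's theorem at any interior maximizer to derive (\ref{chi:local}), rule out maximizers on the boundary, and in the case of non-attainment use linearization to reduce to a spectral problem for $P_tP_t^\star$. Throughout, I would exploit the scale-invariance $\Phi(cf)=\Phi(f)$ to work on $\cF_1^+$ without loss of generality.

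First I would derive the gradient formula (\ref{nablaphi2}). Since $\nabla\Ent(g)=\log(g/\bE[g])$ and $\bE[P_t^\star f]=\bE[f]$ (because $P_t^\star 1=1$), the adjoint identity $\langle P_t^\star g,\varphi\rangle=\langle g,P_t\varphi\rangle$ combined with the quotient rule yields the claimed expression for $\nabla\Phi$. Scale-invariance, differentiated at $c=1$, further produces the identity $\langle \nabla\Phi(f),f\rangle=0$ at every interior point. At an interior maximizer $f\in\cF_1^{++}\setminus\{1\}$, Lagrange multipliers for the constraint $\bE[f]=1$ force $\nabla\Phi(f)\in\mathrm{span}(1)$, and combining this with $\langle\nabla\Phi(f),f\rangle=0$ and $\bE[f]=1$ forces the multiplier to vanish. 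With this normalization, $\nabla\Phi(f)=0$ reads exactly $P_t\log P_t^\star f=\chi_t\log f$, which is (\ref{chi:local}).

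Next I would rule out maximizers on the boundary. If $f\in\cF_1^+$ vanishes at some $x\in\dX$ but not everywhere, then irreducibility and $t>0$ ensure that $P_t^\star f$ is strictly positive everywhere, so the first term of (\ref{nablaphi2}) is finite at $x$ while $\log f(x)=-\infty$; hence $\nabla\Phi(f)(x)=+\infty$, and the perturbation $f+\eta\mathbf{1}_{\{x\}}/\pi(x)$ (renormalized to stay in $\cF_1^+$) strictly increases $\Phi$ for small $\eta>0$, contradicting maximality. In the residual case where $\Phi$ attains no maximum at all, I would extract a sequence $f_n\in\cF_1^+\setminus\{1\}$ with $\Phi(f_n)\to\chi_t$; its subsequential limit $f_\infty$ lies in $\cF_1^+$ by compactness, cannot be an interior extremizer by assumption, cannot be a non-interior extremizer by the previous step, so continuity of $\Phi$ on $\cF_1^+\setminus\{1\}$ forces $f_\infty\equiv 1$. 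Writing $f_n=1+\varepsilon_n g_n$ with $\bE[g_n]=0$ and $\varepsilon_n\to 0$, Remark \ref{rk:linearization} applied to both $f_n$ and $P_t^\star f_n=1+\varepsilon_n P_t^\star g_n$ gives $\Phi(f_n)\sim\|P_t^\star g_n\|_2^2/\|g_n\|_2^2$. Maximizing over non-zero zero-mean $g$ yields the squared operator norm of $P_t^\star$ restricted to zero-mean functions, which by self-adjointness of $P_tP_t^\star$ equals the top eigenvalue of $P_tP_t^\star$ on that subspace; since irreducibility makes $1$ a simple eigenvalue of $P_tP_t^\star$ with the constant eigenvector, this is exactly the second largest eigenvalue of $P_tP_t^\star$.

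The main obstacle I anticipate is the rigorous execution of the boundary perturbation: $\Phi$ is only of class $C^1$ on the interior $\cF_1^{++}$, and the gradient formula develops $-\infty$ singularities wherever $f$ vanishes, so the informal implication ``$\nabla\Phi(f)(x)=+\infty$, therefore $\Phi$ strictly increases in the $\delta_x$ direction'' must be backed by an explicit one-sided expansion showing that the unbounded contribution dominates the bounded corrections uniformly for small $\eta>0$. Fortunately Lemma \ref{lm:LSI} already deploys this device successfully, so I would transport its argument nearly verbatim, the only required input being the strict positivity of $P_t^\star f$ away from the zero set of $f$, which follows from the all-positive entries of $P_t$ guaranteed by irreducibility.
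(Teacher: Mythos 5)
Your proposal is correct and follows essentially the same route as the paper: the gradient formula (\ref{nablaphi2}), Fermat's theorem at an interior maximizer yielding (\ref{chi:local}), the $+\infty$-gradient perturbation to exclude maximizers where $f$ vanishes, and linearization to the Rayleigh quotient of $P_tP_t^\star$ on zero-mean functions in the non-attainment case. The only cosmetic difference is that you handle the normalization $\bE[f]=1$ via a Lagrange multiplier which you then kill with the Euler identity $\langle \nabla\Phi(f),f\rangle=0$, whereas the paper works directly on the scale-invariant cone of non-constant non-negative functions, where the full gradient must vanish at a maximizer.
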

We will also need the following dual formulation of sectional curvature. 
\begin{lemma}[Dual formulation of $\kappa_\infty$]For any $f\in\cF^{++}$ and  $t\ge 0$, we have
\begin{eqnarray*}
\Lip \log P_tf  & \le & e^{-t\kappa_\infty }\Lip \log f.
\end{eqnarray*}
\end{lemma}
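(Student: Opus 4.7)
The plan is to convert the $W_\infty$-contraction hypothesis into a pointwise multiplicative comparison between $P_tf(x)$ and $P_tf(y)$, and then take logarithms. The key observation is that when two measures are $W_\infty$-close, the ratio of their expectations of a positive log-Lipschitz function is controlled by $\Lip\log f$ times the $W_\infty$-distance.

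I would first fix an arbitrary pair $x,y\in\dX$ and apply the definition of $\kappa_\infty$ to the Dirac masses $\delta_x$ and $\delta_y$, which gives
\[W_\infty\bigl(P_t(x,\cdot),P_t(y,\cdot)\bigr)\ \le\ e^{-\kappa_\infty t}\,\dist(x,y).\]
Denote the left-hand side by $K$. Because $\dX$ is finite, every coupling of $P_t(x,\cdot)$ and $P_t(y,\cdot)$ is simply a stochastic matrix, and the quantity $\max\{\dist(u,v):\pi(u,v)>0\}$ attached to it takes only finitely many integer values as $\pi$ varies. A minimizing sequence is therefore eventually constant and the infimum defining $W_\infty$ is actually attained; I pick such an optimal coupling $(X,Y)$, which satisfies $\dist(X,Y)\le K$ almost surely.

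I would next exploit the definition of $\Lip\log f$ pointwise along this coupling:
\[\log f(X)-\log f(Y)\ \le\ \dist(X,Y)\,\Lip\log f\ \le\ K\,\Lip\log f,\]
almost surely. Exponentiating gives $f(X)\le e^{K\Lip\log f}\,f(Y)$ almost surely, and taking expectations then yields $P_tf(x)\le e^{K\Lip\log f}\,P_tf(y)$. Reorganizing and inserting the bound on $K$,
\[\log P_tf(x)-\log P_tf(y)\ \le\ K\,\Lip\log f\ \le\ e^{-\kappa_\infty t}\,\dist(x,y)\,\Lip\log f.\]
Swapping the roles of $x$ and $y$ gives the same bound on the absolute value; specializing to adjacent $x\sim y$, so that $\dist(x,y)=1$, and maximizing over edges then yields the claim.

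The argument runs parallel to the $(iii)\Rightarrow(ii)$ implication of Theorem \ref{th:OR}, with the ordinary Lipschitz constant replaced by the logarithmic one, so no serious obstacle is expected. The only point deserving a touch of care is the attainment of the infimum defining $W_\infty$, which is handled above by the observation that its objective takes only finitely many integer values on our finite state space.
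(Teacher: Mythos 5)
Your argument is correct and follows essentially the same path as the paper's: fix $x,y$, take a $W_\infty$-optimal coupling $(X,Y)$ of $P_t(x,\cdot)$ and $P_t(y,\cdot)$, bound $\log f(X)-\log f(Y)$ almost surely by $\Lip(\log f)\cdot W_\infty(P_t(x,\cdot),P_t(y,\cdot))$, exponentiate, take expectations, and take logs again. The only extra material is your explicit remark on attainment of the $W_\infty$ infimum, which the paper takes for granted; the substance and ordering of the steps are the same.
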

\begin{proof}
 Fix a function $f\in\cF^{++}$, a time $t\ge 0$, and two states $x,y\in\dX$, and let $(X_t,Y_t)$ realize a  $W_\infty-$optimal coupling of $P_t(x,\cdot),P_t(y,\cdot)$. We can then write, almost-surely,
\begin{eqnarray*}
\log f(X_t) & \le & \log f(Y_t)+\Lip(\log f)\dist(X_t,Y_t)\\
& \le & \log f(Y_t)+\Lip(\log f)W_\infty\left(P_t(x,\cdot),P_t(y,\cdot)\right)\\
& \le & \log f(Y_t)+\Lip(\log f)e^{-t\kappa_\infty}\dist(x,y).
\end{eqnarray*}
Taking exponentials, then expectations, and then logarithms again yields
\begin{eqnarray*}
\log \EE[f(X_t)] & \le & \log \EE[f(Y_t)]+\Lip(\log f)e^{-t\kappa_\infty}\dist(x,y).
\end{eqnarray*}
The claim follows, since by construction $\EE[f(X_t)]=P_tf(x)$ and $\EE[f(Y_t)]=P_tf(y)$.
\end{proof}
We now have everything we need to prove our main result.
\begin{proof}[Proof of Theorem \ref{th:PT}]In view of the definitions of $\alpha$ and $\chi_t$, our task is to prove that 
\begin{eqnarray*}
\forall t\ge 0,\quad \chi_t & \le & e^{-(\kappa_1+\kappa_\infty^\star)t}.
\end{eqnarray*}
To do so, we fix $t\ge 0$ and we first assume that the supremum  (\ref{def:rho}) is not attained. Then, Lemma \ref{lm:optimizer} ensures that $\chi_t$ is the second largest eigenvalue of $P_tP_t^\star$. Denoting by $f$ a corresponding eigenvector, and using the functional characterization of $\kappa_1$, we then have 
\begin{eqnarray*}
\chi_t\Lip f & = & \Lip(P_tP_t^\star f) \ \le \ e^{-(\kappa_1+\kappa_1^\star)t}\Lip f,
\end{eqnarray*}  
and simplifying through by $\Lip(f)$ allows us to conclude, since $\kappa_1^\star\ge\kappa_\infty^\star$. On the other hand, if the supremum (\ref{def:rho}) is attained, it must be  at some $f\in\cF^{++}_1\setminus\{1\}$ satisfying
(\ref{chi:local}). Taking Lipschitz norms on both sides, and using our dual formulations of $\kappa_1,\kappa_\infty^\star$, we get
\begin{eqnarray*}
\chi_t\Lip \log f & = & \Lip\left(P_t\log P_t^\star f\right)
\ \le \ e^{-(\kappa_1 +\kappa_\infty^\star)t}\Lip \log f, 
\end{eqnarray*}
which produces the desired conclusion after simplifying through by $\Lip \log f$.
\end{proof}

\subsection{Bakry-\'Emery curvature}
Introduced four decades ago in the context of diffusions on manifolds \cite{MR889476}, the Bakry-\'Emery theory of curvature  has quickly become one of the most powerful tools in the quantitative study of geometric, probabilistic and functional-analytical properties of Markov semi-groups. We refer the reader to the  textbook \cite{MR3155209} for a comprehensive introduction, and to the seminal papers \cite{MR1168070, MR1665591, MR2644381} for the discrete version considered here. If $(X_t)_{t\ge 0}$ is a realization of our continuous-time Markov chain, started from a deterministic state $X_0=x\in\dX$, then it follows from the very definition of the generator that for any $f\in \cF$,
\begin{eqnarray}
\label{expectation}
\EE[f(X_t)] & = & f(x)+t Lf(x)+o(t),
\end{eqnarray}
as $t\to 0$. We claim that we have the following second-order version: for any  $f,g\in\cF$, 
\begin{eqnarray}
\label{covariation}
\Cov\left(f(X_t),g(X_t)\right) & = & 2t\,\Gamma(f,g)(x)+o(t),
\end{eqnarray}
where we have introduced the \red[carré du champ operator]
\begin{eqnarray}
\label{def:Gamma}
\Gamma(f,g) & := & \frac{1}{2}\left(L(fg)-fLg-gLf\right).
\end{eqnarray}
Indeed, by polarization, it is enough to prove (\ref{covariation}) when $f=g$, which is readily done by applying (\ref{expectation}) to $f^2$ and substracting the square of (\ref{expectation}). Recalling that $L=T-{\rm Id}$, we easily arrive at the following concrete expression:
\begin{eqnarray*}
\Gamma(f,g)(x) & = & \frac{1}{2}\sum_{y\in\dX}T(x,y)\left(f(x)-f(y)\right)\left(g(x)-g(y)\right).
\end{eqnarray*}
In particular, integrating this against the stationary law gives rise to the symmetrized Dirichlet form (\ref{rk:sym}), which is of course just $\cE(f,g)$ when $T$ is reversible or when $f=g$. In what follows, it will be convenient to introduce the \red[iterated carré du champ operator], defined by replacing the multiplication $\cdot\times\cdot$ with $\Gamma(\cdot,\cdot)$ in the definition (\ref{def:Gamma}):
\begin{eqnarray}
\Gamma_2(f,g) & := & \frac{1}{2}\left(L\Gamma(f,g)-\Gamma(f,Lg)-\Gamma(g,Lf)\right).
\end{eqnarray}
The interest of this object is contained in the following fundamental result. 
Here and throughout the notes, we will simply write $\Gamma f$ and $\Gamma_2 f$ when $f=g$. 
\begin{theorem}[The Bakry-\'Emery criterion]\label{th:BE}For any $\rho\in\dR$, the following are equivalent:
\begin{enumerate}[(i)]
\item \red[Bakry-\'Emery $\mathrm{CD}(\rho,\infty)$ criterion]:
\begin{eqnarray}
\label{criterion:CD}
\forall f\in\cF,\qquad \Gamma_2 f & \ge & \rho\,\Gamma f.
\end{eqnarray}
\item \red[Exponential sub-commutation]: 
\begin{eqnarray}
\label{subcommute}
\forall f\in\cF,\quad\forall t\ge 0,\quad \Gamma P_t f & \le & e^{-2\rho t}P_t\Gamma f.
\end{eqnarray} 
\item \red[Local Poincaré inequality]: from any non-random initial condition $X_0$, we have
\begin{eqnarray}
\label{def:localPoincaré}
\forall f\in\cF,\quad\forall t\ge 0,\quad
\Var(f(X_t)) & \le & \left(\int_0^t 2e^{-2\rho u}\dd u\right)\EE[\Gamma f(X_t)].
\end{eqnarray}
\end{enumerate}
\end{theorem}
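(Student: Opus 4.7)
The plan is to establish the cyclic chain $(i)\Longrightarrow(ii)\Longrightarrow(iii)\Longrightarrow(i)$, relying throughout on time-interpolation along the semi-group together with the Kolmogorov equation~(\ref{kolmogorov}).

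For $(i)\Longrightarrow(ii)$, I fix $f\in\cF$ and $t>0$, and introduce the auxiliary function $\psi(s):=P_s\bigl(\Gamma P_{t-s}f\bigr)$ for $s\in[0,t]$. Differentiating via the Kolmogorov equation, the bilinearity of $\Gamma$, and the very definition of $\Gamma_2$ should produce
$$\psi'(s) \;=\; P_s\bigl[L\Gamma(P_{t-s}f)-2\Gamma(P_{t-s}f,LP_{t-s}f)\bigr] \;=\; 2\,P_s\,\Gamma_2(P_{t-s}f).$$
Assumption (i) then upgrades this identity into the differential inequality $\psi'(s)\ge 2\rho\,\psi(s)$, so Grönwall's lemma yields $\psi(t)\ge e^{2\rho t}\psi(0)$, which is exactly the sub-commutation~(\ref{subcommute}).

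For $(ii)\Longrightarrow(iii)$, I fix a deterministic starting state $X_0=x$ and consider the companion interpolation $\phi(s):=P_s\bigl((P_{t-s}f)^2\bigr)(x)$ for $s\in[0,t]$, whose endpoint difference is exactly $\Var(f(X_t))$. The same bookkeeping as above, applied this time to $g_s:=(P_{t-s}f)^2$ and using the elementary identity $L(h^2)=2hLh+2\Gamma h$, gives $\phi'(s)=2\,P_s\Gamma(P_{t-s}f)(x)$. Feeding (ii) with time $t-s$ into the right-hand side, integrating over $s\in[0,t]$ and performing the change of variable $u:=t-s$ produces precisely (\ref{def:localPoincaré}).

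For $(iii)\Longrightarrow(i)$, the strategy is a careful Taylor expansion of both sides of (\ref{def:localPoincaré}) at $t=0$. The leading $O(t)$ contributions agree (both equal $2t\,\Gamma f(x)$), so the content of the inequality is encoded in the $O(t^2)$ coefficients; matching them yields $\Gamma_2 f(x)\ge \rho\,\Gamma f(x)$ pointwise, and since $x$ was arbitrary this is (i). The main obstacle here will be the second-order algebra: one must expand $P_t(f^2)-(P_tf)^2$ to order $t^2$ and rewrite the combinations $L^2(f^2)$, $fL^2f$, $(Lf)^2$ in terms of $\Gamma f$, $\Gamma_2 f$ and $L\Gamma f$ before the claim emerges cleanly. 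Apart from this bookkeeping, each implication reduces to a one-line differential argument.
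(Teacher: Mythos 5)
Your proposal is correct and follows essentially the same route as the paper's own proof: the same cyclic chain $(i)\Rightarrow(ii)\Rightarrow(iii)\Rightarrow(i)$, with the same semi-group interpolation (your $\psi(s)=P_s\Gamma P_{t-s}f$ and $\phi(s)=P_s(P_{t-s}f)^2$ are the paper's $P_{t-s}\Gamma P_sf$ and $P_{t-s}(P_sf)^2$ under the substitution $s\leftrightarrow t-s$, giving a Grönwall inequality with the opposite sign but the same conclusion), and the same second-order Taylor expansion at $t=0$ to recover the $\mathrm{CD}(\rho,\infty)$ criterion from the local Poincar\'e inequality.
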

The local Poincaré inequality (\ref{def:localPoincaré}) controls the variance of any observable in terms of its local quadratic variations, exactly like our classical Poincaré inequality (\ref{def:poincaré}). The key difference is that the underlying measure here is the law of the chain at an arbitrary time $t\ge 0$, rather than the equilibrium distribution. The ability to estimate non-equilibrium variances is of course very useful for the practical analysis of Markov processes, and it will play a crucial in our understanding of the cutoff phenomenon in the next chapter. The optimal parameter $\rho$ appearing in (i)-(ii)-(iii) certainly deserves a fancy name. 
\begin{definition}[Bakry-\'Emery curvature]The \red[Bakry-\'Emery curvature] of the chain, henceforth denoted by $\rho$, is the largest number for which the  properties (i)-(ii)-(iii) hold. 
\end{definition}
\begin{proof}[Proof of Theorem \ref{th:BE}]For any $f\in\cF$ and $0\le s\le t$, we easily compute
\begin{eqnarray}
\label{BEdiff}
\frac{\dd P_{t-s} (P_s f)^2}{\dd s} & = & -2P_{t-s}\Gamma P_s f,
\end{eqnarray} 
and integrating yields the following useful integral expression for variances:
\begin{eqnarray}
\label{variance}
P_t (f^2)-(P_t f)^2 & = & 2\int_0^tP_{t-s}\Gamma P_s f\dd s.
\end{eqnarray}
In particular, using (ii) inside the integral readily yields
\begin{eqnarray}
\label{BEiii}
P_t (f^2)-(P_t f)^2 & \le & \left(\int_0^t 2e^{-2\rho s}\dd s\right)P_t \Gamma f,
\end{eqnarray}
which is exactly (iii). 
Also, replacing multiplication with $\Gamma(\cdot,\cdot)$ in (\ref{BEdiff}) gives 
\begin{eqnarray}
\label{BEdiff2}
\frac{\dd P_{t-s} \Gamma P_s f}{\dd s} & = & -2P_{t-s}\Gamma_2 P_s f.
\end{eqnarray}
Thus,  (i) implies that the function $F\colon s\mapsto P_{t-s} \Gamma P_s f$ satisfies the differential inequality $F'\le -2\rho F$ on $[0,t]$, which we can integrate to obtain  exactly (ii). Finally, we have the following (easy, but tedious) Taylor expansion:
\begin{eqnarray*}
P_t (f^2)-(P_t f)^2 & = & 2t \Gamma f+2t^2\left[L\Gamma f-\Gamma_2f\right] +o(t^2);\\
 \left(\int_0^t 2e^{-2\rho s}\dd s\right)P_t \Gamma f & = & 2t\Gamma f+2t^2\left[L\Gamma f-\rho \Gamma f\right]+o(t^2),
\end{eqnarray*}
as $t\to 0$, so that the local Poincaré inequality (iii) forces $\Gamma_2 f\ge \rho \Gamma f$, as desired. 
\end{proof}
\begin{remark}[Lichnerowicz estimate]\label{rk:Lichnerowicz2}
When the Bakry-\'Emery curvature $\rho$ is positive, we may simply send $t\to\infty$ in (\ref{def:localPoincaré}) to recover exactly the Poincaré inequality (\ref{def:poincaré}). Thus,
\begin{eqnarray*}
\gamma & \ge & \rho,
\end{eqnarray*}
which constitutes yet another discrete analogue of the celebrated Lichnerowicz estimate.
\end{remark}
\begin{remark}[Comparison with $\kappa_1$]\label{rk:ORvsBE}As all our  functional-analytic constants $(\gamma,\alpha,\beta,\kappa,\cdots)$, the Bakry-\'Emery curvature $\rho$ seeks to capture the regularizing nature of the semi-group. Indeed, the quantity $\|\Gamma f\|_\infty$ measures how far a function $f$ is from being flat, just like $\Var(f),\Ent(f)$ or $\Lip(f)$, and the sub-commutation property (\ref{subcommute}) readily implies that
\begin{eqnarray}
\label{Gammadecay}
\forall t\ge 0,\forall f\in\cF,\quad \|\Gamma P_tf\|_\infty & \le & e^{-2\rho t} \|\Gamma f\|_\infty. 
\end{eqnarray}
The resemblance with  $\kappa$ is particularly strong, since we have the general comparison 
\begin{eqnarray}
\label{LipvsGamma}
2\|\Gamma f\|_\infty & \le \ \Lip^2(f) \ \le &  2d\|\Gamma f\|_\infty.
\end{eqnarray}
In particular, $\kappa$ also implies a local Poincaré inequality, albeit in the  weaker form
\begin{eqnarray}
\Var(f(X_t)) & \le & \left(\int_0^t e^{-2\kappa s}\dd s\right)\Lip^2(f),
\end{eqnarray}
as can be easily derived by combining (\ref{def:ORdual}), (\ref{variance}) and  (\ref{LipvsGamma}).
\end{remark}
\subsection{A discrete Bakry-\'Emery theorem}

A cornerstone result in the Bakry-\'Emery theory (see, e.g.,  \cite{MR889476,MR1767995,MR3155209}) is the fact that the curvature of a Markov diffusion provides a lower-bound on the log-Sobolev constant:
\begin{eqnarray}
\label{BELSI}
\rho & \leq & 2\beta. 
\end{eqnarray}
This beautiful connection between geometry and functional analysis constitutes a remarkable strengthening of the Lichnerowicz estimate (Remark \ref{rk:Lichnerowicz2}). Unfortunately, its proof crucially relies on the following chain rule, which is specific to second-order differential generators:
\begin{eqnarray}
\label{chain}
\Gamma \log f & = & \frac{Lf}{f}-L\log f.
\end{eqnarray}
 Nevertheless, an approximate version of (\ref{BELSI}) turns out to hold for reversible chains on discrete state spaces, with a cost that is logarithmic in the sparsity parameter (\ref{def:d}). In particular, for models in which $d$ is uniformly bounded (such as  random walks on bounded-degree graphs), the remarkable relation (\ref{BELSI}) remains essentially valid.  
\begin{theorem}[Curvature and LSI]\label{th:BELSI}Any reversible chain satisfies $\rho  \leq 33 \beta\log d$. 
\end{theorem}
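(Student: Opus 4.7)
The plan is to mimic, in approximate discrete form, the classical diffusion argument that the $CD(\rho,\infty)$ condition implies $\beta \ge \rho/2$. That proof relies critically on the chain rule $\Gamma \log f = \Gamma f / f^2$, which fails on discrete spaces; here, the defect will be absorbed via the approximate chain rule of Lemma \ref{lm:LSIvsMLSI} and the regularity estimates from the extremizer analysis of Section \ref{sec:MLSIvsLSI}, and this defect is exactly what produces the multiplicative cost $\log d$.

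First I reduce to the case where the log-Sobolev extremizer exists. By Lemma \ref{lm:LSI}, if the infimum in (\ref{def:beta}) is not attained then $4\beta=\alpha=2\gamma$, and then the Lichnerowicz estimate $\rho\le\gamma$ of Remark \ref{rk:Lichnerowicz2} yields $\rho\le 2\beta$, which is much stronger than the stated bound. Otherwise, let $f\in\cF_1^{++}\setminus\{1\}$ realize the infimum. By Lemma \ref{lm:LSI}, $f$ solves the pointwise equation $L\sqrt f=-\beta\sqrt f\log f$, and by Lemma \ref{lm:regularity}, the key regularity bound
\begin{eqnarray*}
r := \Lip\log f \le 14\log d
\end{eqnarray*}
is at our disposal. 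Integrating the extremizer equation against $1$ also produces the orthogonality identity $\bE[\sqrt f\log f]=0$.

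Next I apply the $CD(\rho,\infty)$ criterion (\ref{criterion:CD}) pointwise to $g=\sqrt f$ and integrate against $\pi$. Using stationarity ($\bE[L\Gamma g]=0$), the identity $\Gamma_2 g=\tfrac 12 L\Gamma g-\Gamma(g,Lg)$, and reversibility (which yields $-\bE[\Gamma(g,Lg)]=\bE[(Lg)^2]$), I obtain
\begin{eqnarray*}
\bE[(L\sqrt f)^2] \ \ge\ \rho\,\cE(\sqrt f,\sqrt f) \ =\ \rho\beta\,\Ent(f),
\end{eqnarray*}
where the final equality uses the extremality relation $\cE(\sqrt f,\sqrt f)=\beta\Ent(f)$. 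Substituting the local equation $L\sqrt f=-\beta\sqrt f\log f$ on the left-hand side gives
\begin{eqnarray*}
\rho \ \le\ \beta\cdot\frac{\bE[f\log^2 f]}{\Ent(f)}.
\end{eqnarray*}
So the theorem reduces to establishing the deterministic inequality
\begin{eqnarray*}
\bE[f\log^2 f] \ \le\ 33\log d\cdot\Ent(f),
\end{eqnarray*}
valid for any density $f$ simultaneously satisfying the regularity $\Lip\log f\le 14\log d$ and the orthogonality $\bE[\sqrt f\log f]=0$ forced by the extremizer equation.

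The main obstacle is this last functional inequality. The naive pointwise bound $|\log f|\le r\diam(\dX)\le \sqrt 2\,r/\beta$ obtained from Lemmas \ref{lm:regularity} and \ref{lm:diamLSI} is too crude on its own: it scales as $\log d/\beta$ and, once inserted, would yield $\rho \le O(\log d)\cdot(1+O(1/\Ent(f)))$, which blows up when the extremizer has small entropy. The orthogonality $\bE[\sqrt f\log f]=0$ is essential to cure this: it forces $\sqrt f$ and $\log f$ to sit on opposite sides of the origin in a correlated way that exactly offsets the amplification by $1/\beta$. The proof will combine this orthogonality with a Jensen/H\"older argument leveraging the Lipschitz regularity, splitting the integral over $\{\log f\ge 0\}$ and $\{\log f<0\}$ and using $u\log^2 u\le 2u\log u + 4u\log u\cdot(1+\text{correction})$ on the positive part, while on the negative part controlling the contribution via the elementary inequality $-u\log u\le 1/e$ and the orthogonality to avoid a standalone additive term. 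As an alternative route that bypasses this delicate inequality, one could instead first establish the intermediate bound $\rho\le C\alpha$ for a universal constant $C\approx 2$ directly from curvature (running an analogous argument at the level of the MLSI extremizer), and then invoke Theorem \ref{th:LSIvsMLSI} to absorb the $\log d$ factor; this cleanly separates the curvature input from the chain-rule defect at the modest cost of a slightly worse constant.
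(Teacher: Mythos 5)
Your proposal takes a genuinely different route from the paper's, but it has a gap at the decisive step that you do not close.

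The setup you share with the paper: reduce to the case where an LSI extremizer $f\in\cF_1^{++}\setminus\{1\}$ exists via Lemma~\ref{lm:LSI}, using $\rho\le\gamma=2\beta$ in the non-attained case, and invoke the pointwise equation $L\sqrt f+\beta\sqrt f\log f=0$ together with the roughness bound $\Lip\log f\le 14\log d$. From there the two proofs diverge. The paper works dynamically: it bounds $-\frac{\dd}{\dd t}\Ent(P_tf)=\cE(P_tf,\log P_tf)$ at every $t\ge 0$ via a Cauchy--Schwarz identity for $\cE(g^2,h)$, the sub-commutation $\Gamma P_t\le e^{-2\rho t}P_t\Gamma$, and the approximate chain rule (Lemmas~\ref{lm:chain} and~\ref{lm:timeregularity}), and then integrates over $[0,\infty)$. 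You instead work statically at $t=0$: integrating $\Gamma_2\sqrt f\ge\rho\,\Gamma\sqrt f$ against $\pi$ and using the reversible identity $\bE[\Gamma_2 g]=\|Lg\|_2^2$, then substituting the extremizer equation, you correctly arrive at $\rho\le\beta\,\bE[f\log^2 f]/\Ent(f)$. The derivation up to here is sound, as is the orthogonality $\bE[\sqrt f\log f]=0$.

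The gap is that you never actually prove the reduced functional inequality $\bE[f\log^2 f]\le 33\log d\cdot\Ent(f)$; you only gesture at a Jensen/H\"older split over $\{\log f\ge 0\}$ and $\{\log f<0\}$ combined with the orthogonality, without carrying it out, and the arguments one naturally tries do not close. The pointwise bound $|\log f|\le r\,\diam(\dX)\le\sqrt 2\,r/\beta$ gives $\bE[f\log^2 f]\le(\sqrt 2\,r/\beta)(\Ent(f)+2/e)$, leaving an uncancelled $\beta^{-1}$ factor precisely when $\Ent(f)$ is small; and writing $\bE[f\log^2 f]=\beta^{-2}\|L\sqrt f\|_2^2$, $\Ent(f)=\beta^{-1}\cE(\sqrt f,\sqrt f)$ shows that the target is $\|L\sqrt f\|_2^2\le 33\beta\log d\cdot\cE(\sqrt f,\sqrt f)$, a non-trivial spectral-localization statement for the extremizer that the universal bound $\|Lg\|_2^2\le 2\cE(g,g)$ does not deliver (it only yields the vacuous $\rho\le 2$). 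You would need a genuine new argument combining the orthogonality constraint with the Lipschitz regularity, and it is not supplied. The proposed alternative (prove $\rho\le C\alpha$ at the level of an MLSI extremizer, then apply Theorem~\ref{th:LSIvsMLSI}) is similarly underdeveloped: the MLSI functional $\cE(f,\log f)/\Ent(f)$ is not a Rayleigh quotient, its Euler--Lagrange equation is not of the clean form exploited for $\beta$, and the ``analogous argument'' is not written down, so this route would require its own justification before it could be cited.
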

\begin{exo}[Sharpness]Consider again the rank-one chain $T(x,y)=\pi(y)$ of Example \ref{ex:rankone}. Check that for any $f\in\cF$ with mean zero and variance $1$, we have 
\begin{eqnarray*}
\Gamma f \ = \ \frac{1+f^2}{2} & \textrm{ and } & \Gamma_2f \ = \ \frac{3+f^2}{4},
\end{eqnarray*}
and deduce that $\rho=\frac{1}{2}+\frac{\pi_\star}{1+\pi_\star}$. Thus, the two sides of the inequality in Theorem \ref{th:BELSI} have the same order of magnitude in this generic class, confirming that $\log d$ is the right price to pay.
\end{exo}

To prove Theorem \ref{th:BELSI}, we follow the strategy developed in \cite{salez2025intrinsicregularitydiscretelogsobolev}, which is similar to the one already used for Theorem \ref{th:LSIvsMLSI}: we will establish a discrete approximate version of the chain rule (\ref{chain}), with an extra cost that depends on the roughness $r:=\Lip\log f$ of the considered observable. This is the content of the following lemma, which does not require reversibility, and of which (\ref{chain}) can be seen as the infinitely-smooth limit $r\to 0$. 
\begin{lemma}[Approximate chain rule]\label{lm:chain}Fix $f\in\cF^{++}$ and set $r:=\Lip \log f$. Then,
\begin{eqnarray*}
\phi(-r)\,\left(\frac{Lf}{f}-L\log f\right)\ \le & \Gamma \log f & \le \ \phi(r)\,\left(\frac{Lf}{f}-L\log f\right),
\end{eqnarray*}
where we have introduced the cost function $\phi(r)  = \frac{r^2}{2\left(r +e^{-r}-1\right)}$. In particular, 
\begin{eqnarray*}
\EE\left[f\Gamma \log f\right] & \le & \left(1+r\right)\cE(f,\log f).
\end{eqnarray*}
\end{lemma}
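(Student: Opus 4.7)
\medskip

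\noindent\textbf{Proof proposal.} The plan is to reduce the statement to a purely scalar inequality on $[-r,r]$, prove that inequality by a short calculus lemma, and then derive the integrated form by multiplying through, integrating against $\pi$, and invoking stationarity.

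First, I would fix $x\in\dX$ and, for every $y\in\dX$, set $u_y := \log f(y)-\log f(x)$, which satisfies $|u_y|\le r$ whenever $T(x,y)>0$. Writing $f(y)/f(x) = e^{u_y}$ gives the two elementary identities
\begin{eqnarray*}
\Gamma \log f(x) & = & \frac{1}{2}\sum_{y\in\dX}T(x,y)\,u_y^{2},\\
\frac{Lf(x)}{f(x)}-L\log f(x) & = & \sum_{y\in\dX}T(x,y)\left(e^{u_y}-1-u_y\right).
\end{eqnarray*}
Note that the summand on the right is non-negative term-by-term, since $e^{u}-1-u\ge 0$ for all $u\in\dR$. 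Thus, the desired double inequality will follow as soon as we establish the following scalar claim: for all $u\in[-r,r]$,
\begin{eqnarray}
\label{plan:scalar}
\phi(-r)\left(e^{u}-1-u\right) \ \le \ \frac{u^{2}}{2} \ \le \ \phi(r)\left(e^{u}-1-u\right).
\end{eqnarray}

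The crux of the argument is thus the following one-variable statement: the function
\begin{eqnarray*}
g(u) \ := \ \frac{u^{2}/2}{e^{u}-1-u}
\end{eqnarray*}
(extended by $g(0)=1$) is non-increasing on $\dR$. Accepting this for a moment, we get $g(r)\le g(u)\le g(-r)$ for all $u\in[-r,r]$; a direct substitution identifies $g(r)=\phi(-r)$ and $g(-r)=\phi(r)$, and rewriting $\frac{u^{2}}{2}=g(u)(e^{u}-1-u)$ yields exactly (\ref{plan:scalar}). To prove the monotonicity, I would compute the derivative and factor the numerator to reach
\begin{eqnarray*}
g'(u) \ = \ \frac{u\,h(u)}{\left(e^{u}-1-u\right)^{2}},\qquad h(u):=e^{u}-1-\frac{u}{2}(e^{u}+1),
\end{eqnarray*}
and then check that $h''(u) = -\tfrac{1}{2}ue^{u}$, so that $h'$ attains its maximum value $0$ at $u=0$, making $h$ non-increasing; combined with $h(0)=0$, this forces $u\,h(u)\le 0$ throughout $\dR$. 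This is the main technical step, but it is purely a calculus exercise.

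Finally, for the integrated consequence, I would multiply the upper bound in the first inequality by the positive function $f$ and integrate against $\pi$. Using stationarity $\bE[Lf]=0$ and the identity $-\bE[f\,L\log f]=\cE(f,\log f)$, this yields $\bE[f\,\Gamma\log f]\le \phi(r)\,\cE(f,\log f)$, so it only remains to show that $\phi(r)\le 1+r$ for $r\ge 0$. Clearing denominators, this amounts to showing
\begin{eqnarray*}
\psi(r) \ := \ 2(1+r)\left(r+e^{-r}-1\right)-r^{2} \ \ge \ 0,
\end{eqnarray*}
for $r\ge 0$. A direct differentiation collapses the formula to $\psi'(r)=2r(1-e^{-r})\ge 0$, and since $\psi(0)=0$, the conclusion follows. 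The only real obstacle is the calculus lemma about $g$; once the factorization $g'(u)=u\,h(u)/(\cdots)^{2}$ is in hand, the argument is mechanical.
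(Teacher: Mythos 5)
Your proposal is correct and follows essentially the same route as the paper: reduce the double inequality to the same scalar bound on $[-r,r]$ (your $g(u)=\phi(-u)$, so $g$ non-increasing is exactly the paper's ``$\phi$ is increasing''), then sum against $T(x,y)$ and, for the integrated form, multiply by $f$ and use $\bE[Lf]=0$ together with $\phi(r)\le 1+r$. The only addition is that you actually verify the monotonicity of $\phi$ via the factorization $g'(u)=u\,h(u)/(e^{u}-1-u)^2$ and $h''(u)=-\tfrac12 u e^{u}$, a step the paper asserts without proof.
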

\begin{proof}
Since the above function $\phi\colon\dR\to[0,\infty)$  is  increasing, we have $\phi(-r)\le \phi(u)\le\phi(r)$ for all $u\in[-r,r]$. Choosing $u=\log\frac{f(x)}{f(y)}$ and $r=\Lip \log f $, we obtain
\begin{equation*}
\phi(-r)\left(\frac{f(y)}{f(x)}-1+\log\frac{f(x)}{f(y)}\right) \ \le \ \frac{1}{2}\log^2\frac{f(x)}{f(y)} \ \le \ \phi(r)\left(\frac{f(y)}{f(x)}-1+\log\frac{f(x)}{f(y)}\right),
\end{equation*}
for all $(x,y)\in E$. Multiplying through by $T(x,y)$ and summing over all $y\in\dX$ yields exactly the first claim. To deduce the second, we multiply through by $f$, take expectations, and observe that $\bE[Lf]=0$ (by stationarity), and that $\phi(r)\le 1+r$ for all $r\ge 0$. 
\end{proof}
Our plan is to  apply this chain rule to a function $f$ that saturates the LSI. Recall that the roughness of such a function is guaranteed to be $O(\log d)$, by Lemma \ref{lm:optimizer}. However, we will also have to  work with the time-evolved functions $P_tf,t\ge 0$, hence the following lemma. 
\begin{lemma}[Roughness does not blow up under the semi-group]\label{lm:timeregularity}
\begin{eqnarray*}
\forall f\in\cF^{++},\quad\forall t\ge 0,\quad \Lip \log P_tf  &  \le & \log d+\Lip \log f.
\end{eqnarray*}
\end{lemma}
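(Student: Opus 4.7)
The plan is to fix an adjacent pair $x,y\in\dX$, set $r:=\Lip \log f$, and show by symmetry that $P_tf(x)\le d\,e^r P_tf(y)$ for every $t\ge 0$. The starting point will be a Duhamel-type lower bound on $P_tf(y)$. Applying variation of parameters to the forward Kolmogorov equation $\partial_t P_t=LP_t=TP_t-P_t$ yields
\begin{equation*}
e^t P_t(y,z)=\delta_{yz}+\int_0^t e^s\sum_w T(y,w)P_s(w,z)\,\mathrm{d}s.
\end{equation*}
Retaining only the $w=x$ contribution and using $T(y,x)\ge 1/d$, then integrating against $f$, I will obtain
\begin{equation*}
e^t P_tf(y)\ \ge\ f(y)+\frac{1}{d}\int_0^t e^s P_sf(x)\,\mathrm{d}s.
\end{equation*}
Since $\Lip\log f=r$ and $\dist(x,y)=1$ give $f(y)\ge e^{-r}f(x)$, the target bound reduces to the intrinsic-to-$x$ inequality
\begin{equation*}
e^t P_tf(x)\ \le\ d\,f(x)\;+\;e^r\int_0^t e^s P_s f(x)\,\mathrm{d}s.\qquad(\star)
\end{equation*}

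Next I would establish $(\star)$ via an ODE-type comparison. Setting $V(s):=e^s P_sf(x)$, note that $V'(s)=e^s TP_sf(x)\ge 0$, so $V$ is non-decreasing with $V(0)=f(x)$; the inequality $(\star)$ rewrites as $V(t)\le d\,f(x)+e^r\int_0^t V(s)\,\mathrm{d}s$. A key supporting observation is the elementary estimate
\begin{equation*}
Tg(x)\ \le\ e^{\Lip\log g}\,g(x),\qquad g\in\cF^{++},
\end{equation*}
which follows because the support of $T(x,\cdot)$ lies inside the $1$-neighborhood of $x$, where $g/g(x)\le e^{\Lip\log g}$. At time $s=0$ this specializes to $Tf(x)\le e^rf(x)$, i.e., $V'(0)\le e^r V(0)$, which is exactly the derivative comparison needed for $(\star)$ at the initial time. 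Combined with the ``safety margin'' $(d-1)f(x)\ge 0$ built into $(\star)$ at $t=0$, this should fuel a bootstrap argument propagating the comparison $V(t)\le d\,f(x)+e^r\int_0^t V(s)\,\mathrm{d}s$ forward in time.

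The hard part will be formalizing this bootstrap without circularity: while the instantaneous estimate $TP_0f(x)\le e^r P_0f(x)$ is free, its analogue at later times $s>0$ demands a control on $\Lip\log P_sf$, which is precisely the statement under proof. The proper way to close the loop is probably to introduce a more refined monotone invariant that packages both $V(s)$ and $\int_0^s V(u)\,\mathrm{d}u$ together, possibly exploiting the convexity-type identity $V''(s)=e^s T^2 P_sf(x)\ge 0$, or to run an envelope/first-passage argument: define $t_0$ as the infimum of times where $(\star)$ fails, deduce from the integrated form of $(\star)$ on $[0,t_0)$ and the continuity of the semi-group that $TP_{t_0}f(x)\le e^r P_{t_0}f(x)$ must hold at $t_0$, and conclude that the comparison cannot in fact be violated at $t_0$. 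The success of the whole strategy hinges on this final step: the sparsity parameter $d$ enters precisely once (through $T(y,x)\ge 1/d$), in exactly the way needed to absorb the extra $\log d$ in the roughness of $\log P_t f$ over that of $\log f$.
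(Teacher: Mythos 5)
Your Duhamel framework is sound, and the sparsity parameter $d$ enters exactly where you say it does; the remaining work is indeed to prove the inequality you call $(\star)$. But the bootstrap you sketch for $(\star)$ genuinely does not close, for a reason worth confronting directly: even if you knew the lemma's conclusion on all of $[0,t_0)$, this would only give $\Lip\log P_sf\le r+\log d$ there, hence $TP_sf\le de^{r}P_sf$, not $\le e^{r}P_sf$. Feeding $de^{r}$ in place of $e^{r}$ back into the integral inequality does not reproduce $(\star)$, so the proposed first-passage argument finds no barrier at $t_0$ and the roughness bound runs away.

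The missing idea is simpler and already in your hands, just misapplied. Your observation $Tg\le e^{\Lip\log g}g$ should be invoked \emph{once, at} $g=f$: it gives the pointwise inequality $Tf\le e^{r}f$. Because $P_s$ commutes with $T$ and, having a non-negative kernel, preserves pointwise inequalities, this propagates for free: $TP_sf=P_s(Tf)\le e^{r}P_sf$ for every $s\ge 0$. Then $V'(s)=e^{s}TP_sf(x)\le e^{r}V(s)$, so $V(t)\le f(x)+e^{r}\int_0^t V(s)\,\dd s$, which is $(\star)$ with room to spare; paired with your Duhamel lower bound on $e^{t}P_tf(y)$ and $f(y)\ge e^{-r}f(x)$, the ratio $P_tf(x)/P_tf(y)\le de^{r}$ follows, the final reduction being simply $f(x)\le df(x)$. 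No circularity arises because $Tf\le e^{r}f$ uses the roughness of the \emph{initial} density only. This is in fact the paper's own argument, written there in discrete time: $T^{k}f(x)\le e^{r}T^{k-1}f(x)$ (which is $T^{k-1}$ applied to $Tf\le e^{r}f$) together with $T^{k}f(y)\ge d^{-1}T^{k-1}f(x)$, followed by a Poisson average over $k$.
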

\begin{proof}
Fix $f\in\cF^{++}$ and write $r:=\Lip \log f$. For any $x\in\dX$ and $k\in\dN$, we have
\begin{eqnarray*}
T^{k}f(x) &  = & \sum_{z,w\in\dX}T^{k-1}(x,z)T(z,w)f(w)\\
& \le & \sum_{z,w\in\dX}T^{k-1}(x,z)T(z,w)f(z)e^{r}\ = \ e^{r} T^{k-1}f(x),
\end{eqnarray*}
by definition of $r$. On the other hand, for any neighbor $y$ of $x$, we also have
\begin{eqnarray*}
T^{k}f(y) &  = & \sum_{z\in\dX}T(y,z)T^{k-1}f(z)\\
& \ge & T(y,x)T^{k-1}f(x)\\
& \ge & d^{-1}T^{k-1}f(x).
\end{eqnarray*}
Combining those two bounds, we obtain 
$
T^{k}f(x)  \le  de^{r}T^{k}f(y),
$
and this also holds  when $k=0$. Finally, multiplying through by $e^{-t}t^k/k!$ and summing over $k\in\mathbb N$ yields
\begin{eqnarray*}
P_tf(x) & \le & de^{r}P_tf(y),
\end{eqnarray*}
for any adjacent states $x,y\in\dX$. Taking logarithms concludes the proof. 
\end{proof}

\begin{proof}[Proof of Theorem \ref{th:BELSI}]
First note that by reversibility, we have for any $g,h\in\cF$,
\begin{eqnarray*}
   \cE(g^2,h )
   & = &\frac12 \sum_{x,y\in \dX} \pi(x)T(x,y)(g(x)+g(y))(g(x)-g(y))(h(x)-h(y)) \\
   & = & \sum_{x,y\in \dX} \pi(x)T(x,y)g(x)(g(x)-g(y))(h(x)-h(y))\\
   & = & 2\bE \left[g\Gamma(g,h)\right]\\
   & \le & 2\bE \left[g\sqrt{\Gamma g\Gamma h}\right]\\
   & \le & 2\sqrt{\cE(g,g){\bE\left[g^2\Gamma h\right]}},
\end{eqnarray*}
where we have used the Cauchy-Schwarz inequality for $\Gamma(\cdot,\cdot)(x)$ and for the scalar product in $L^2$.  
We now fix $f\in\cF^{++}$ and $t\ge 0$, and apply the above inequality to $g=\sqrt{f}$ and $h=P_t\log P_t f$: using reversibility, we obtain
\begin{eqnarray*}
\cE( P_t f,\log P_tf) & \le & 2\sqrt{\cE(\sqrt{f},\sqrt{f})\bE\left[f\Gamma P_t\log P_t f\right]}.\end{eqnarray*}
On the other hand, 
\begin{eqnarray*}
\bE\left[f\Gamma P_t\log P_t f\right] & \le & e^{-2\rho t}\bE\left[fP_t\Gamma\log P_t f\right]\\
& = & e^{-2\rho t}\bE\left[(P_tf)\Gamma \log P_t f\right]\\
& \le & e^{-2\rho t}\left(1+\Lip \log P_tf\right)\cE(P_tf,\log P_tf)\\
& \le & e^{-2\rho t}\left(1+\log d+\Lip \log f\right)\cE(P_tf,\log P_tf),
\end{eqnarray*}
where we have successively used the sub-commutation relation (\ref{subcommute}), reversibility, and   Lemmas \ref{lm:chain}-\ref{lm:timeregularity}. Combining those two observations, we deduce that
\begin{eqnarray*}
\cE( P_t f,\log P_tf) & \le & 4e^{-2\rho t}\left(1+\log d+\Lip\log f\right)\cE(\sqrt{f},\sqrt{f}).
\end{eqnarray*}
But the left-hand side is exactly $-\frac{\dd\Ent(P_t f)}{\dd t}$, so we may integrate on $[0,\infty)$ to obtain
\begin{eqnarray*}
       \Ent(f) & \le & \frac{2}{\rho}\left(1+\log d+\Lip\log f\right)\cE(\sqrt{f},\sqrt{f}).
\end{eqnarray*}
In particular, if $f$ is a non-constant function realizing the definition  of the log-Sobolev constant $\beta$, then by Lemma \ref{lm:regularity} and the fact that $d\ge 2$, we obtain
\begin{eqnarray*}
\frac{1}{\beta}\cE(\sqrt{f},\sqrt{f}) & \le & \frac{33\log d}{\rho}\cE(\sqrt{f},\sqrt{f}),
\end{eqnarray*}
which readily yields the desired conclusion. On the other hand, if no non-constant function achieves equality in (\ref{LSI}), then Lemma \ref{lm:LSI} guarantees that $\beta=\gamma/2$, which  is more than enough to conclude in view of Remark \ref{rk:Lichnerowicz2}.
\end{proof}

\subsection{Examples of non-negatively curved chains}
In this illustrative section, we investigate the Ollivier-Ricci and Bakry-\'Emery curvatures of some of the important models introduced in the second chapter. Let us start with the smooth setup of Langevin diffusions, for which the answer is particularly simple and elegant. 
\begin{theorem}[Langevin dynamics]The Bakry-\'Emery curvature of the Langevin diffusion  (\ref{SDE})  is exactly the largest number $\rho\in[-\infty,\infty)$ such that
\begin{eqnarray}
\label{assume:convexity}
\mathrm{Hess}(U) & \ge & \rho\textrm{Id},
\end{eqnarray}
and the same holds for the Ollivier-Ricci $p-$curvature, for any $p\in[1,\infty]$.
\end{theorem}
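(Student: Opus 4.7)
The plan is to compute both curvatures via two ``test families'': infinitesimal test functions give the easy upper bounds, while a well chosen coupling (the synchronous one) gives the matching lower bounds.

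\textbf{Step 1: Bakry--\'Emery curvature.} First I compute $\Gamma$ and $\Gamma_2$ explicitly from $L=\Delta-\nabla U\cdot\nabla$, using the chain rule (\ref{def:CR}). A direct calculation (the classical Bochner--Weitzenb\"ock identity) gives, for any smooth $f$,
\begin{eqnarray*}
\Gamma f \ =\ |\nabla f|^2 & \text{and} & \Gamma_2 f\ =\ \|\mathrm{Hess}\,f\|_{\mathrm{HS}}^2+(\nabla f)^\top\mathrm{Hess}(U)\,\nabla f.
\end{eqnarray*}
Since $\|\mathrm{Hess}\,f\|_{\mathrm{HS}}^2\ge 0$, the hypothesis (\ref{assume:convexity}) immediately yields $\Gamma_2 f\ge\rho\,\Gamma f$, so the Bakry--\'Emery curvature is at least $\rho$. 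For the converse, I plug in affine test functions $f(x)=v\cdot x$ at an arbitrary base point $x_0$: the Hessian term vanishes, and the criterion $\Gamma_2 f(x_0)\ge \rho\,\Gamma f(x_0)$ reduces to $v^\top\mathrm{Hess}(U)(x_0)\,v\ge\rho|v|^2$. Taking the supremum over $v$ and $x_0$ shows that the Bakry--\'Emery curvature cannot exceed the largest $\rho$ allowed by (\ref{assume:convexity}).

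\textbf{Step 2: Upper bound on $\kappa_1$.} Using the dual formulation $\Lip(P_tf)\le e^{-\kappa_1 t}\Lip(f)$ (Theorem \ref{th:OR}, extended to this smooth setup with $\dist$ being the Euclidean distance), I again take $f(x)=v\cdot x$. Then $\Lip(f)=|v|$, and a first-order expansion gives $P_tf(x)=v\cdot x-t\nabla U(x)\cdot v+o(t)$, whence $\nabla P_tf(x)=v-t\,\mathrm{Hess}(U)(x)\,v+o(t)$. Requiring $|\nabla P_tf(x)|^2\le|v|^2(1-2\kappa_1 t)+o(t)$ at every $x$ forces $v^\top\mathrm{Hess}(U)(x)\,v\ge\kappa_1|v|^2$ for every $(x,v)$, i.e.\ $\mathrm{Hess}(U)\ge\kappa_1\mathrm{Id}$. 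This gives $\kappa_1\le\rho$.

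\textbf{Step 3: Synchronous coupling gives $\kappa_\infty\ge\rho$.} Given two starting points $x,y\in\dX$, I drive the two SDEs (\ref{SDE}) with the \emph{same} Brownian motion: $dX_t=-\nabla U(X_t)dt+\sqrt{2}\,dB_t$ and $dY_t=-\nabla U(Y_t)dt+\sqrt{2}\,dB_t$. The noise cancels in the difference $Z_t:=X_t-Y_t$, so
\begin{eqnarray*}
\frac{\dd |Z_t|^2}{\dd t} & = & -2Z_t\cdot\bigl(\nabla U(X_t)-\nabla U(Y_t)\bigr)\ \le\ -2\rho|Z_t|^2,
\end{eqnarray*}
where the inequality follows from (\ref{assume:convexity}) and the fundamental theorem of calculus applied to $s\mapsto\nabla U(Y_t+sZ_t)$. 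Gronwall then yields $|X_t-Y_t|\le e^{-\rho t}|x-y|$ \emph{almost surely}, which means $W_\infty(P_t(x,\cdot),P_t(y,\cdot))\le e^{-\rho t}\dist(x,y)$, i.e.\ $\kappa_\infty\ge\rho$.

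\textbf{Step 4: Conclusion.} Combining Steps 2 and 3 with the monotonicity $\kappa_1\le\kappa_p\le\kappa_\infty$ yields $\kappa_p=\rho$ for every $p\in[1,\infty]$. The main obstacle I anticipate is a purely expository one: the Ollivier--Ricci framework was set up in the excerpt for discrete chains with the hop-count distance, so some care is needed to justify that the equivalences of Theorem \ref{th:OR} (in particular the local-to-global passage and the duality with $\Lip$-contraction) transfer verbatim to the smooth Euclidean setting once $\dist$ is taken to be the Euclidean metric. The probabilistic heart of the argument -- the synchronous coupling and the affine test functions -- is otherwise very clean.
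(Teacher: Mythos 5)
Your proof is correct and follows essentially the same route as the paper: the explicit Bochner identity for $\Gamma_2$ handles the Bakry--\'Emery part, the synchronous (parallel) coupling gives $\kappa_\infty\ge\rho$, and a first-order expansion of $W_1$ (the paper compares means $\EE[X_t]-\EE[Y_t]$, you equivalently test linear $f(x)=v\cdot x$ against the $\Lip$-contraction dual) gives $\kappa_1\le\rho$, after which $p\mapsto\kappa_p$ non-increasing closes the circle. Your Step 1 converse via affine test functions is in fact slightly more explicit than the paper, which leaves the sharpness of the Bakry--\'Emery bound implicit after writing the Bochner formula.
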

\begin{proof}The carré du champ and its iterated version are easily computed to be
\begin{eqnarray*}
\Gamma(f,g) \ = \ \nabla f \cdot \nabla g, & \textrm{ and } & 
\Gamma_2(f,g) \ = \ \|\mathrm{Hess\,}(f)\|_{HS}^2+\nabla f\cdot \mathrm{Hess\,}(U) \nabla g,
\end{eqnarray*}
Thus, the Bakry-\'Emery curvature is exactly the optimal constant in (\ref{assume:convexity}). On the other hand, an obvious way to couple two solutions $(X_t)_{t\ge 0}$  and $(Y_t)_{t\ge 0}$ of the Langevin equation (\ref{SDE}) starting from different points $x,y\in\dR^d$ consists in using the same Brownian motion to drive both trajectories. Under this \red[parallel coupling], we readily find
\begin{eqnarray*}
\frac{\dd |X_t-Y_t|^2}{\dd t} & = & -2\left[\nabla U(X_t)-\nabla U(Y_t)\right]\cdot\left[X_t-Y_t\right] \ 
 \le \ -2\rho |X_t-Y_t|^2,
\end{eqnarray*}
 thanks to (\ref{assume:convexity}), and this shows that $\kappa_\infty\ge \rho$. Conversely, we always have
\begin{eqnarray*}
\left|\EE[X_t]-\EE[Y_t]\right| & \le & W_1\left(P_t(x,\cdot),P_t(y,\cdot)\right) \ \le \ e^{-\kappa_1 t}|x-y|,
\end{eqnarray*}
and comparing the first-order Taylor expansion on both sides as $t\to 0$ yields
\begin{eqnarray*}
[x-y]\cdot[\nabla U(x)-\nabla U(y)] & \ge & \kappa_1 |x-y|^2.
\end{eqnarray*}
Since the points $x,y\in\dR^d$ are arbitrary, we deduce that $\mathrm{Hess\,}(U)\ge \kappa_1\textrm{Id}$. Thus, we have shown $\kappa_1\le \rho\le \kappa_\infty$, and the conclusion follows because $p\mapsto\kappa_p$ is non-increasing. 
\end{proof}
\begin{remark}[Log-Sobolev constant] Theorem \ref{th:PT} gives $\alpha\ge  2\rho$, which is equivalent to $\beta\ge \rho/2$ thanks to (\ref{MLSI=LSI}). Thus, we recover exactly the Bakry-\'Emery result (\ref{BELSI}).
\end{remark}
For random walks on groups, curvature turns out to be related to the commutativity structure of the generators. In particular, we always have $\rho,\kappa_\infty\ge 0$ on Abelian groups.
\begin{theorem}[Conjugacy-invariant walks are non-negatively curved]\label{th:CIW} For the random walk generated  by a measure $\nu\in\cP(\dX)$ on a finite group  $\dX$, we have
\begin{enumerate}
\item non-negative sectional curvature ($\kappa_\infty\ge 0$, hence also $\kappa_1\ge 0$), as soon as 
\begin{eqnarray}
\label{def:CIsupp}
\forall x,y\in\dX,\qquad \nu(xy)>0 & \Longrightarrow & \nu(yx)>0;
\end{eqnarray} 
\item non-negative Bakry-\'Emery curvature ($\rho\ge 0$), as soon as 
\begin{eqnarray}
\label{def:CI}
\forall x,y\in\dX,\qquad \nu(xy) & = & \nu(yx).
\end{eqnarray}
\end{enumerate}
\end{theorem}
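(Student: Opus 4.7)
My plan is to treat the two parts separately, since they rest on different kinds of arguments: a coupling bound for the first, and an operator-level identity for the second.

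\paragraph{Part 1.} For $\kappa_\infty\ge 0$, I would use the local coupling characterization: it is enough to produce, for every pair $x,y\in\dX$ and every $t\ge 0$, a coupling of $(P_t(x,\cdot),P_t(y,\cdot))$ whose hop-count cost almost surely does not exceed $\dist(x,y)$. The natural candidate is the \emph{parallel coupling}: drive both walks by the same Poisson clock $(N_t)_{t\ge 0}$ and the same sequence of i.i.d.\ increments $(W_i)_{i\ge 1}\sim\nu^{\otimes\infty}$. This produces $X_t:=G_tx$ and $Y_t:=G_ty$ with $G_t:=W_{N_t}\cdots W_1$, and by construction each marginal is correct. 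Writing $y=zx$ with $z=yx^{-1}$, a direct calculation gives
\begin{equation*}
Y_tX_t^{-1} \ = \ G_t z G_t^{-1}.
\end{equation*}
So the only issue is whether conjugating an element of $\supp(\nu)$ by an arbitrary element of $\dX$ keeps one in $\supp(\nu)$. Here I would note that the substitution $u:=xy$ turns $yx$ into $x^{-1}ux$, so that (\ref{def:CIsupp}) is equivalent to the assertion that $\supp(\nu)$ is a union of conjugacy classes. Hence $G_tzG_t^{-1}\in\supp(\nu)$, the coupled states remain at graph distance $|z|_S=\dist(x,y)$ for all $t$, and $\kappa_\infty\ge 0$ follows.

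\paragraph{Part 2.} For $\rho\ge 0$, I would verify the sub-commutation relation $\Gamma P_tf\le P_t\Gamma f$ from Theorem \ref{th:BE}. Two facts combine. First, because $T(x,y)=\nu(yx^{-1})$ is invariant under simultaneous right-multiplication $(x,y)\mapsto(xg,yg)$, the transition kernel, and hence the entire semi-group $P_t$, commutes with every right-shift $R_g\colon f\mapsto f(\,\cdot\, g)$; this is a free consequence of left-invariance and does \emph{not} require (\ref{def:CI}). Second, under (\ref{def:CI}) the change of variable $h\mapsto h':=x^{-1}hx$ preserves $\nu$, so the carré du champ admits a \emph{right-shift form}
\begin{equation*}
2\Gamma f(x)\ =\ \sum_h\nu(h)(f(hx)-f(x))^2\ =\ \sum_h\nu(h)(f(xh)-f(x))^2.
\end{equation*}
Applying the right-shift form to $P_tf$, using the commutation $P_tR_h=R_hP_t$ from the first fact, invoking Jensen's inequality $(P_tg)^2\le P_t(g^2)$ to $g:=R_hf-f$, and then the right-shift form once more to reassemble $\Gamma f$, one obtains
\begin{eqnarray*}
2\Gamma P_tf(x) & = & \sum_h\nu(h)(P_tR_hf(x)-P_tf(x))^2\\
& = & \sum_h\nu(h)(P_t(R_hf-f)(x))^2\\
& \le & \sum_h\nu(h)\,P_t((R_hf-f)^2)(x)\ =\ 2P_t\Gamma f(x),
\end{eqnarray*}
which is exactly the $\rho=0$ case of (\ref{subcommute}).

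\paragraph{Main obstacle.} I expect the subtle step to be part 2. A brute-force attempt to prove $\Gamma_2f\ge 0$ pointwise, by expanding everything in terms of nested left-shifts, produces a non-sign-definite cross term as soon as $\dX$ is non-Abelian, and it is not clear how to tame it at the level of $\Gamma_2$. The right move is to lift to the semi-group and realize that conjugation-invariance is precisely what converts $\Gamma$ into a form compatible with the ever-present right-invariance of $P_t$, after which one application of Jensen closes the argument. Part 1 by contrast reduces to a structural observation about $\supp(\nu)$, once the parallel coupling is written down.
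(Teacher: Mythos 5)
Your Part~1 is essentially the paper's own argument. Both run a parallel coupling: you synchronize the Poisson clock and the i.i.d.\ increments and observe that $Y_tX_t^{-1}=G_t\,(yx^{-1})\,G_t^{-1}$, while the paper phrases the same idea at the level of a single step with $(X,Y)=(Zx,Zy)$; in both cases the key observation is that (\ref{def:CIsupp}) is exactly conjugation-closedness of $\supp(\nu)$, so the coupled trajectories stay at constant hop-count distance.

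Your Part~2, however, takes a genuinely different and correct route. The paper verifies the pointwise condition $\Gamma_2 f\ge 0$ at $\id$, using the bijection $(z,w)\mapsto(w,w^{-1}zw)$ -- which preserves $\nu\otimes\nu$ precisely under (\ref{def:CI}) -- to symmetrize $4\Gamma_2 f(\id)$ into the manifestly non-negative form $\sum_{z,w}\nu(z)\nu(w)\bigl(f(zw)-f(z)-f(w)\bigr)^2$; this is precisely the taming of the cross terms that you anticipate to be delicate at the pointwise level. You instead verify the sub-commutation relation (\ref{subcommute}) with $\rho=0$ directly, by combining three facts: $P_t$ always commutes with right-shifts $R_g$ (you call this ``left-invariance'', though it is more accurately the right-invariance of the kernel $T(x,y)=\nu(yx^{-1})$; in any case, no hypothesis on $\nu$ is needed); under (\ref{def:CI}) the carr\'e du champ admits the right-shift form $2\Gamma f(x)=\sum_h\nu(h)\bigl(f(xh)-f(x)\bigr)^2$; and Jensen's inequality $(P_tg)^2\le P_t(g^2)$ then closes the loop. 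Both proofs are correct and of comparable length. What the paper's pointwise computation additionally buys is the closed formula for $\Gamma_2 f(\id)$, which it re-uses in Remark~\ref{rk:kappa} to extract the quantitative lower bound $\rho\ge 2\nu_\star$ for self-inverse supported $\nu$; your semi-group argument establishes $\rho\ge 0$ cleanly but the Jensen step throws away that quantitative information. Conversely, your route bypasses the $\Gamma_2$ bookkeeping entirely and makes transparent which hypothesis does what: right-invariance of $P_t$ is automatic, and conjugation-invariance of $\nu$ is exactly what rewrites $\Gamma$ in a right-shift-compatible form.
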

\begin{proof}
Since a transition here consists in left-multiplying the current position by a $\nu-$distributed variable $Z$, we can couple the transitions from two neighboring states $x,y\in\dX$ by setting
\begin{eqnarray*}
(X,Y) & := & (Zx,Zy).
\end{eqnarray*}
Now, under the assumption (\ref{def:CIsupp}), this parallel coupling preserves adjacency since we have
\begin{eqnarray*}
T(X,Y)>0 & \Longleftrightarrow & \nu(YX^{-1})>0 \\
& \Longleftrightarrow & \nu\left(Zyx^{-1}Z^{-1}\right)>0\\
& \Longleftrightarrow & \nu\left(yx^{-1}\right)>0\\
& \Longleftrightarrow & T(x,y)>0,
\end{eqnarray*}
 and the first claim follows.  For the second, we make two simplifying observations. First, by symmetry, the Bakry-\'Emery criterion (\ref{criterion:CD}) needs only be verified at a single point, say the identity element $\id$. Second, since the functions $Lf,\Gamma(f,g)$ and $\Gamma_2(f,g)$ do not change upon adding a constant to $f$ or $g$, we may restrict the proof of  (\ref{criterion:CD}) to test functions $f$ that satisfy $f(\id)=0$. With those simplifications at hand, we easily compute:
\begin{eqnarray}
\label{Gamma2L}
4\Gamma_2 f(\id) & = & \sum_{z,w\in\dX}\nu(z)\nu(w)F(z,w)+2\left(\sum_{z\in\dX}\nu(z)f(z)\right)^2,
\end{eqnarray}
where we have introduced the short-hand
\begin{eqnarray*}
F(z,w) & := & f^2(zw)-4f(z)f(zw)+2f^2(z).
\end{eqnarray*}
Now, under assumption (\ref{def:CI}), the bijection $(z,w)\mapsto(w,w^{-1}zw)$ preserves the measure $\nu\otimes\nu$. Thus, we may  replace $F(z,w)$ in (\ref{Gamma2L}) with $F(w,w^{-1}zw)$ or even with 
\begin{eqnarray*}
\frac 12\left(F(z,w)+F(w,w^{-1}zw)\right)  
& = & \left(f(zw)-f(z)-f(w)\right)^2-2f(z)f(w).
\end{eqnarray*}
Inserting this back into the above computation finally gives
\begin{eqnarray}
\label{CIconclude}
4\Gamma_2 f(\id) & = & \sum_{z,w\in\dX}\nu(z)\nu(w) \left(f(zw)-f(z)-f(w)\right)^2,
\end{eqnarray}which is indeed non-negative, as desired. 
\end{proof}
\begin{remark}[Self-inverse random walks]\label{rk:kappa}The above argument can be substantially refined under the simple additional assumption that  $\nu$ is supported on \red[self-inverse] elements, i.e.
\begin{eqnarray*}
\forall z\in\dX,\qquad \nu(z)>0 & \Longrightarrow & z=z^{-1}.
\end{eqnarray*}
Indeed, keeping only the diagonal terms in (\ref{CIconclude}) and using   $f(z^2)=f(\id)=0$, we find
\begin{eqnarray*}
4\Gamma_2 f(\id ) & \ge &  4\sum_{z\in\dX}\nu^2(z)f^2(z) \ \ge \  8\nu_{\star}\Gamma f(\id),
\end{eqnarray*}
where  $\nu_{\star}$ denotes the smallest non-zero value of $\nu$. This shows that $\rho \ge   2\nu_{\star}$, which is a much stronger conclusion than the second part of the theorem. The first part can also be improved. Indeed, a transition of the lazy chain $\widehat{T}=(T+{\mathrm Id})/2$ corresponds to left-multiplying the current position by $Z^B$, with $Z$ as above and $B$  an independent Bernoulli$(1/2)$ variable. Given two neighbors $x,y\in\dX$, we may thus couple $\widehat{T}(x,\cdot)$ and $\widehat{T}(y,\cdot)$ by setting
\begin{eqnarray*}
(X,Y) & := & \left\{
\begin{array}{ll}
(Z^B x,Z^B y) & \textrm{if } Z\ne yx^{-1}\\
(Z^{B}x,Z^{1-B}y) & \textrm{if }Z = yx^{-1}.
\end{array}
\right.
\end{eqnarray*}
In the first case, we have $\dist(X,Y)=1$ thanks to (\ref{def:CIsupp}), as before. However, in the second case, we now have $X=Y$ because $yx^{-1}=xy^{-1}$. Thus, $\EE[\dist(X,Y)]\le 1-\nu_\star$, showing that $
\kappa_1  \ge  2\nu_\star.
$
When applied  to random walk on the $n-$cube, this gives $\rho\ge 2/n$ and $\kappa_1\ge 2/n$, which happen to exactly reverse the Lichnerowicz estimates in Remarks \ref{rk:Lichnerowicz}-\ref{rk:Lichnerowicz2}. Thus, 
\begin{eqnarray*}
\rho & = & \kappa_1\ = \ \gamma\ =\ 2/n.
\end{eqnarray*}
\end{remark}
Finally, let us turn to Monte Carlo Markov Chains for sampling from a target distribution $\pi$ on $\{-1,1\}^n$, as defined at (\ref{def:Glauber}). Recall that under this dynamics, the $i-$th coordinate of $x$ is flipped at a prescribed rate $c_i(x)$, chosen so as to ensure reversibility w.r.t. $\pi$. In this context, curvature turns out to be related to the product-like nature of $\pi$. 
\begin{theorem}[Non-negative curvature for MCMC]\label{th:MCMC}Consider a generator of the form 
(\ref{def:Glauber}) and suppose that for each $x\in\dX$ and $i\in[n]$, the quantity
\begin{eqnarray*}
\delta_i(x) & := & c_i(x)-\sum_{j\in[n]\setminus\{i\}}\left(c_j(x^i)-c_j(x)\right)_+,
\end{eqnarray*}
is non-negative. Then $\kappa_\infty=0$ and $\kappa_1\ge \min_{x,i}\{\Delta_i(x)+\Delta_i(x^i)\}$, hence in particular
\begin{eqnarray*}
\alpha & \ge & \min_{x,i}\{\delta_i(x)+\delta_i(x^i)\}.
\end{eqnarray*}
Moreover, in the special case $c_i(x)=\sqrt{\frac{\pi(x^i)}{\pi(x)}}$, we also have
\begin{eqnarray*}
\rho & \ge & \min_{x,i}\{\delta_i(x)\}+\frac{1}{2}\min_{x,i}\{\Delta_i(x)+\delta_i(x^i)\}.
\end{eqnarray*}
\end{theorem}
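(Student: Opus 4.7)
The plan is to construct, for each edge $\{x, x^k\}$ of the hop-count graph, an explicit Markovian coupling of two continuous-time Glauber processes $(X_t, Y_t)_{t \geq 0}$ with $X_0 = x$ and $Y_0 = x^k$ such that $\dist(X_t, Y_t) \leq 1$ almost surely for every $t \geq 0$. Once this is achieved, we obtain $W_\infty(P_t(x,\cdot), P_t(x^k,\cdot)) \leq 1 = \dist(x, x^k)$ for all $t$, hence $\kappa_\infty \geq 0$; combined with the general upper bound $\kappa_\infty \leq 0$ recalled in Section \ref{sec:curvature}, this yields $\kappa_\infty = 0$. The same coupling will simultaneously deliver the desired $W_1$-contraction rate.

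The coupling is defined dynamically as follows. Whenever $X_t = Y_t$, both processes evolve in lockstep via common clocks. Whenever $(X_t, Y_t) = (z, z^\ell)$ differ only at coordinate $\ell$, I prescribe the following independent families of Poisson events: for each $j \neq \ell$, a \emph{parallel} $j$-flip at rate $\min(c_j(z), c_j(z^\ell))$ (both processes flip coordinate $j$, yielding the distance-$1$ state $(z^j, z^{\{j,\ell\}})$); a \emph{cross} event at rate $(c_j(z) - c_j(z^\ell))_+$ in which $X$ flips coordinate $j$ while $Y$ simultaneously flips coordinate $\ell$, sending the pair to $(z^j, z)$; and a symmetric cross event at rate $(c_j(z^\ell) - c_j(z))_+$, sending the pair to $(z^\ell, z^{\{j, \ell\}})$. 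Finally, two \emph{solo} coordinate-$\ell$ flips: one in $X$ at rate $\delta_\ell(z)$ and one in $Y$ at rate $\delta_\ell(z^\ell)$, each of which brings the distance to $0$. All five rates are non-negative by the standing hypothesis $\delta_i \geq 0$, and a short summation over each coordinate verifies that the marginal rate at which $X_t$ (resp.\ $Y_t$) flips coordinate $i$ equals $c_i(X_t)$ (resp.\ $c_i(Y_t)$); direct inspection of each transition confirms that distance $1$ is preserved and only the two solo flips bring it to $0$.

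Starting from $(x, x^k)$, the distance is $1$ and drops to $0$ at total rate $\delta_k(x) + \delta_k(x^k)$, giving $\EE[\dist(X_t, Y_t)] = 1 - t(\delta_k(x) + \delta_k(x^k)) + o(t)$ as $t \to 0$. By the infinitesimal form of the Wasserstein-contraction criterion in Theorem \ref{th:OR}, this yields $\kappa_1 \geq \min_{x, k}\{\delta_k(x) + \delta_k(x^k)\}$, and the stated bound on $\alpha$ follows from Theorem \ref{th:PT} applied with $\kappa_\infty^\star = \kappa_\infty = 0$ (the equality $\kappa_\infty^\star = \kappa_\infty$ being a consequence of reversibility).

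For the Bakry-Émery bound in the special case $c_i(x) = \sqrt{\pi(x^i)/\pi(x)}$, I would compute $2\Gamma_2 f(x) = L\Gamma f(x) - 2\Gamma(f, Lf)(x)$ directly in terms of the discrete gradients $\nabla_i f(x) := f(x^i) - f(x)$. The expansion splits into a diagonal part (products $(\nabla_i f(x))^2$) and an off-diagonal part (pairs $i \neq j$, involving both $\nabla_j f(x)$ and $\nabla_j f(x^i)$). The key algebraic input specific to the sqrt rates is the identity $c_i(x) c_i(x^i) = 1$, which renders products of the form $c_i(x) c_j(x^i)$ invariant under the natural coordinate involutions and permits grouping the off-diagonal contributions into complete squares of the form $c_i(x) c_j(x^i) \bigl(\nabla_j f(x^i) - \nabla_j f(x)\bigr)^2$, up to a manageable remainder. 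Discarding the non-negative squares leaves a diagonal form whose coefficient in front of $c_i(x) (\nabla_i f(x))^2$ can be shown to be at least $\delta_i(x) + \tfrac{1}{2}(\delta_i(x) + \delta_i(x^i))$, yielding the claimed lower bound on $\rho$ after taking the minimum over $(x, i)$. The main obstacle lies squarely in this last step: the bookkeeping of cross-terms is delicate and the recombination into squares depends crucially on the sqrt-identity, which fails for the Gibbs and Metropolis rate choices; by contrast, the coupling used in the first two parts is essentially forced, its only nontrivial check being the preservation of marginals.
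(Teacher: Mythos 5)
For the bounds on $\kappa_\infty$ and $\kappa_1$, your construction is essentially the paper's: the parallel, cross and solo events you prescribe have exactly the rates that appear in the paper's one-step coupling of the lazy matrix $\widehat{T}$ (namely $c_j(x)\wedge c_j(y)$ for the parallel flips, $(c_j(y)-c_j(x))_+$ and $(c_j(x)-c_j(y))_+$ for the two kinds of cross flips, and $\delta_i(x)$, $\delta_i(y)$ for the two solo flips that coalesce the pair). You simply realize the coupling pathwise in continuous time rather than as a single lazy transition, which is an equivalent presentation via the characterization (i)$\Longleftrightarrow$(iii) in Theorem \ref{th:OR}; your marginal-rate verification and distance bookkeeping are precisely the checks the paper declares are ``not hard'', and the derivation of the $\alpha$ bound through Theorem \ref{th:PT} with $\kappa_\infty^\star=\kappa_\infty=0$ (using reversibility) is the intended route. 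On the Bakry--\'Emery part, the paper itself supplies no proof and defers to \cite{pedrotti2023contractivecouplingratescurvature}; you instead sketch a direct $\Gamma_2$ expansion with completion of squares driven by the identity $c_i(x)c_i(x^i)=1$. Neither argument is worked out here, and your flag that the off-diagonal bookkeeping is the bottleneck is fair --- but note that the paper says its deferred proof remains coupling-based, so your planned route through $\Gamma_2$ would be a genuinely different one if completed.
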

\begin{proof}Upon dividing all rates by a large enough positive constant, we may assume that $L=T-\mathrm{Id}$ for some lazy stochastic matrix $T$, in compliance with out setting. Now, consider two states $x,y\in\dX$ that differ at a single coordinate (say, the $i-$th one), and let us construct a coupling $(X,Y)$ of  $T(x,\cdot)$ and $T(y,\cdot)$ as follows:
\begin{eqnarray*}
\left(X,Y\right) & := & \left\{
\begin{array}{ll}
(x^i,y) & \textrm{with probability }\delta_i(x)\\
(x,y^i) & \textrm{with probability }\delta_i(y)\\
 (x^j,y^j) & \textrm{with probability }c_j(x)\wedge c_j(y), \textrm{ for any }j\in[n]\setminus\{i\}\\
 (x^i,y^j) & \textrm{with probability }\left(c_j(y)-c_j(x)\right)^+, \textrm{ for any }j\in[n]\setminus\{i\}\\
 (x^j,y^i) & \textrm{with probability }\left(c_j(x)-c_j(y)\right)^+, \textrm{ for any }j\in[n]\setminus\{i\}\\
(x,y) & \textrm{otherwise}.
\end{array}
\right.
\end{eqnarray*}
It is not hard to check that this is indeed a coupling of $T(x,\cdot)$ and $T(y,\cdot)$. Moreover, we have $\dist(X,Y)=0$ in the first two cases, and $\dist(X,Y)=1$ in all other cases. Thus, $\PP(\dist(X,Y)\le 1)=1$  and 
$
\EE[\dist(X,Y)]  =  1-\delta_i(x)-\delta_i(y),
$
which readily leads to the claims about $\kappa_\infty$ and $\kappa_1$. While also based on the above coupling, the estimate on $\rho$ is slightly more involved, so we lazily refer the reader to \cite{pedrotti2023contractivecouplingratescurvature} for details.
\end{proof}
\begin{remark}[Weak dependency]In the case $c_i(x)=\sqrt{\frac{\pi(x^i)}{\pi(x)}}$, the condition  $\delta_i(x^i)\ge 0$ reads
\begin{eqnarray}
\label{weakdep}
\sum_{j\in[n]\setminus\{i\}}\left(\sqrt{\frac{\pi(x^{j})\pi(x^i)}{\pi(x)\pi(x)}}-\sqrt{\frac{\pi(x^{ij})}{\pi(x)}}\right)_+ & \le & 1.
\end{eqnarray}
Note that the left-hand side measures the influence of the $i-th$ coordinate on the other ones, under the measure $\pi$. In particular, it vanishes when the $i-$th coordinate is independent of the $n-1$ others, and it is small when $\pi$ has weak dependencies. For example, in the hard-core model with fugacity $\zeta$ on a graph of maximum degree $\Delta$, (\ref{weakdep}) holds as soon as
\begin{eqnarray*}
\Delta  \zeta & \le & 1,
\end{eqnarray*}
while for the Ising model with inverse temperature $\beta$, (\ref{weakdep}) holds as soon as 
\begin{eqnarray*}
\Delta (1-e^{-2\beta})e^{2\Delta \beta} & \le & 1.
\end{eqnarray*} 
Similar conclusions can be derived when $c_i(x)=\frac{\pi(x^i)}{\pi(x^i)+\pi(x)}$ or $c_i(x)=1\wedge \frac{\pi(x^i)}{\pi(x)}$.
\end{remark}
\begin{exo}[Zero-Range Process]Consider the mean-field Zero-Range process defined in Section \ref{sec:interacting}, and assume that the rate functions are non-decreasing, i.e. 
\begin{eqnarray*}
\forall i\in[n],\quad\forall k\in[m],\quad r_i(k)-r_i(k-1) & \ge & \delta,
\end{eqnarray*}
for some $\delta\ge 0$. Prove that $\kappa_\infty\ge 0$ and  $\kappa_1\ge \delta$, so that $\alpha\ge \delta$.
\end{exo}
\newpage
\section{The varentropy approach}
In this final chapter, whose content is borrowed from the recent papers \cite{MR4780485,salez2025cutoffnonnegativelycurveddiffusions,
pedrotti2025newcutoffcriterionnonnegatively}, we return to our main challenge:  developing a unified theory of cutoff. As already explained, the traditional approach is restricted to highly structured models, because it is  based on the overly delicate task of determining exactly when the transition to equilibrium starts and ends. We propose instead to focus on the \emph{duration} of the transition, i.e., on the difference
\begin{eqnarray}
\label{def:wmix}
\wmix(\varepsilon) & := & \tmix(\varepsilon)-\tmix(1-\varepsilon),
\end{eqnarray}
called the \red[width] of the mixing window. 
This is motivated by the general principle, embodied by the Central Limit Theorem and many other universality results in probability, that first-order asymptotics are often highly dependent on the specific details of the model, whereas their second-order counterparts tend to be more robust and amenable to a systematic analysis. Of course, it is unclear a priori that the  difference in (\ref{def:wmix}) can be estimated without having to actually compute each of the two terms separately. Fortunately, this turns out to be possible, thanks to a beautiful information-theoretic statistics known as varentropy. 
\subsection{The varentropy criterion}
Consider a random variable $X$ with density $f$ on our workspace $(\dX,\pi)$. In information theory, the real-valued random variable $-\log f(X)$ is known as the \red[information content], because it quantifies the amount of \emph{suprise}, or \emph{unpredictability} inherent to the observation of $X$. We find it more convenient to drop the minus sign here, so that  the expected information coincides with our definition of entropy:  
\begin{eqnarray*}
\Ent(f) & = & \EE\left[\log f(X)\right].
\end{eqnarray*}
Motivated by the conceptual shift described above, we propose to investigate a second-order version of entropy, obtained by replacing the above expectation with a variance:
\begin{eqnarray*}
\Varent(f) & := &  \Var\left[\log f(X)\right].
\end{eqnarray*}
Because it measures the dispersion of information around the entropy, this quantity is  called \red[varentropy]. It appeared a decade ago in the completely different context of data compression, to quantify the error in the celebrated \emph{Asymptotic Equipartition Property} \cite{inproceedings}. Its main interest for us lies in its ability to reverse Pinsker's inequality, which asserts that
\begin{eqnarray*}
2\TV^2(f) & \le & \Ent(f),
\end{eqnarray*}
where $\TV(f):=\frac 12\|f-1\|_1$ denotes the total-variation distance between the law of $X$ and $\pi$. 
\begin{lemma}[Reversed Pinsker Inequality]\label{lm:reversepinsker}For any density $f\in\cF_1^+$, we have 
\begin{eqnarray*}
\Ent(f) & \le & \frac{1+\sqrt{\Varent(f)}}{1-\TV(f)}.
\end{eqnarray*}
\end{lemma}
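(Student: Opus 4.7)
Let $X \sim \mu := f\pi$ and write $L := \log f(X)$, so that $\Ent(f) = \EE_\mu L =: m$ and $\Varent(f) = \Var_\mu L =: v$. By Jensen's inequality $m \ge 0$, and I may assume $m > 0$ (else the bound is trivial). Since $L_- \ge 0$, we have the elementary estimate
\[
m \;\le\; \EE_\mu[L_+] \;=\; \int_0^\infty \PP_\mu(L > s)\,\dd s,
\]
and the whole proof reduces to controlling this tail integral by splitting at the threshold $s = m$.

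On the range $s \in [0, m]$, the plan is to combine the definition of $\TV$ with Markov's inequality applied to $f$:
\begin{align*}
\PP_\mu(L > s) - \PP_\pi(L > s) & \;=\; \EE_\pi\bigl[(f-1)\mathbf{1}_{f > e^s}\bigr] \;\le\; \TV(f),\\
\PP_\pi(L > s) & \;=\; \PP_\pi(f > e^s) \;\le\; e^{-s},
\end{align*}
so that $\PP_\mu(L > s) \le \TV(f) + e^{-s}$, and integration yields
\[
\int_0^m \PP_\mu(L > s)\,\dd s \;\le\; m\,\TV(f) + 1 - e^{-m} \;\le\; m\,\TV(f) + 1.
\]

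On the complementary range $s > m$, the plan is to exploit the variance via Cauchy--Schwarz. Since $L - m$ has zero mean under $\mu$, its positive and negative parts have the same expectation, so $\EE_\mu|L - m| = 2\,\EE_\mu(L - m)_+$, and Cauchy--Schwarz gives $\EE_\mu|L - m| \le \sqrt{v}$. Hence
\[
\int_m^\infty \PP_\mu(L > s)\,\dd s \;=\; \EE_\mu\bigl[(L-m)_+\bigr] \;\le\; \tfrac{1}{2}\sqrt{v}.
\]
Adding the two contributions yields $m \le m\,\TV(f) + 1 + \tfrac{1}{2}\sqrt{v}$, and rearranging produces the claim (in fact with a slightly better constant $1/2$ in front of $\sqrt{\Varent(f)}$). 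No step is genuinely hard; the conceptual insight is that the information content $L$ admits two complementary tail controls---a $\TV$-and-Markov bound that is sharp near the typical scale of $L$, and a variance bound that is sharp well above this scale---and that the natural breakpoint between them is precisely the entropy $m$.
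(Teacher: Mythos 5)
Your proof is correct and takes a genuinely different route from the paper's. The paper's argument is a single-threshold construction: it sets $\theta=\Ent(f)-\sqrt{\Varent(f)}/\varepsilon$, lower-bounds $\TV(f)$ by $(1-\varepsilon^2)(1-e^{-\theta})$ using Chebyshev's inequality to guarantee $\PP(\log f(X)\ge\theta)\ge 1-\varepsilon^2$, and then inverts after the optimized choice $\varepsilon=1-\TV(f)$. You instead write $\Ent(f)\le\int_0^\infty\PP_\mu(\log f(X)>s)\,\dd s$ and split this tail integral at the entropy itself, bounding the sub-entropic range by combining the $\TV$ identity with Markov's inequality under $\pi$, and the super-entropic range by Cauchy--Schwarz via the mean absolute deviation. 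The two proofs are comparably elementary; yours yields the slightly sharper constant $\Ent(f)\le(1+\tfrac12\sqrt{\Varent(f)})/(1-\TV(f))$, essentially because Cauchy--Schwarz on $\EE_\mu|L-m|$ is a bit tighter than Chebyshev applied to a one-sided tail. The paper's version has the modest advantage that it does not require the tail-integral decomposition to be split at a data-dependent point, but this is purely cosmetic. Both approaches scale identically in the regime that matters for the cutoff application ($\TV(f)\to 1-\varepsilon$, $\Varent$ large).
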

\begin{proof}Let $X$ have density $f$, and consider the event $A := \left\{x\in \dX\colon  \log f(x)\ge \theta\right\}$, where $\theta\in\dR$ is a parameter that will be adjusted later. By definition, we then have
\begin{eqnarray*}
\TV(f) & \ge & \PP(X\in A)-\pi(A) \ = \ \sum_{x\in A}\PP(X=x)\left(1-\frac{1}{f(x)}\right) \ \ge \ \PP(X\in A)\left(1-e^{-\theta}\right).
\end{eqnarray*}
We now fix a precision $\varepsilon\in(0,1)$ to be adjusted later, and choose $\theta$ as follows:
\begin{eqnarray*}
\theta & = & \Ent(f)-\frac{\sqrt{\Varent(f)}}{\varepsilon}.
\end{eqnarray*}
Since the random variable $\log X$ has mean $\Ent(f)$ and variance $\Varent(f)$,  Chebychev's inequality ensures that $\PP\left(X\in A\right)  \ge    1-\varepsilon^2$. Inserting this above, we obtain
\begin{eqnarray*}
\TV(f) & \ge & (1-\varepsilon^2)\left(1-e^{-\theta}\right).
\end{eqnarray*}
Finally, let us choose $\varepsilon:=1-\TV(f)$. Rearranging the above inequality, we arrive at
\begin{eqnarray*}
\theta & \le & \log\left(1+\frac{1}{\varepsilon}\right), 
\end{eqnarray*}
and since the right-hand side is at most $1/\varepsilon$, the claimed inequality follows.  
\end{proof}
The interest of this reversed Pinsker inequality will become clear once we combine it with the observation that the convergence to equilibrium is fast (namely, of order the inverse Poincaré constant) when the initial condition already has low entropy. To formalize this, let us generalize our definition of $\tmix$ as follows: for any initial density $f\in\cF_1^+$, we let
\begin{eqnarray}
\tmix(f,\varepsilon) & := & \min\left\{t\ge 0\colon \TV(P_t^\star f) \le \varepsilon\right\},
\end{eqnarray}
denote the $\varepsilon-$mixing-time of our Markov process $(X_t)_{t\ge 0}$ when the initial condition $X_0$ is distributed according to $f$. We simply write $\tmix(x,\varepsilon)$ in the extremal case where $f=f^x$. Note that our earlier definition is then obtained as  $\tmix(\varepsilon)=\max_{x\in\dX}\tmix(x,\varepsilon)$.
\begin{lemma}[Fast mixing from low-entropy initializations]\label{lm:fastmixing} We have
\begin{eqnarray*}
\forall f\in\cF_1^+,\quad\forall \varepsilon\in(0,1),\quad 
\tmix(f,\varepsilon) & \le & \frac{1+\Ent(f)}{\gamma\varepsilon}.
\end{eqnarray*}
\end{lemma}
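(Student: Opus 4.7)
The plan is to track the relative entropy along the semigroup and show that its dissipation forces an early drop in total variation. Write $\phi(t):=\Ent(P_t^\star f)$; by the Fokker-Planck equation, $\phi'(t)=-\cE(P_t^\star f,\log P_t^\star f)$. I would then chain three reductions: (i) the easy half of the approximate chain rule (Lemma~\ref{lm:LSIvsMLSI}), namely $\cE(g,\log g)\ge 4\,\cE(\sqrt g,\sqrt g)$; (ii) the Poincaré inequality (Corollary~\ref{co:var}) applied to $\sqrt{P_t^\star f}$, giving $\cE(\sqrt g,\sqrt g)\ge\gamma\Var(\sqrt g)$; and (iii) the Cauchy--Schwarz identity
\begin{eqnarray*}
\TV(g)^2 \ = \ \bigl(\tfrac{1}{2}\bE\bigl[|\sqrt g-1|(\sqrt g+1)\bigr]\bigr)^2 \ \le \ \tfrac{1}{4}\,\bE[(\sqrt g-1)^2]\,\bE[(\sqrt g+1)^2] \ = \ \Var(\sqrt g),
\end{eqnarray*}
which bounds $\Var(\sqrt g)$ below by $\TV(g)^2$. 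Concatenating produces the differential inequality $-\phi'(t)\ge 4\gamma\,\TV(P_t^\star f)^2$.

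Integrating this on $[0,\tmix(f,\varepsilon)]$, on which $\TV(P_t^\star f)\ge\varepsilon$ by definition of the mixing time, and using $\phi\ge 0$, yields $\Ent(f)\ge 4\gamma\varepsilon^2\tmix(f,\varepsilon)$, so
\begin{eqnarray*}
\tmix(f,\varepsilon) & \le & \frac{\Ent(f)}{4\gamma\varepsilon^2}.
\end{eqnarray*}
This already has the right flavour (linear in $\Ent(f)$, inverse in $\gamma$), and is actually sharper than the announced bound when $\varepsilon$ is bounded away from $0$; but it produces an $\varepsilon^{-2}$ rather than $\varepsilon^{-1}$ dependence.

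The main obstacle is thus to sharpen the $\varepsilon$-dependence for small $\varepsilon$. The $\TV^2\le\Var(\sqrt g)$ step is essentially tight pointwise, so the $\varepsilon^{-1}$ form must come from a different mechanism. Two complementary routes suggest themselves. First, one can exploit that a large entropy forces the density to be concentrated, so that $\Var(\sqrt{P_t^\star f})=1-\bE[\sqrt{P_t^\star f}]^2$ is bounded below by an \emph{absolute} constant rather than merely by $\TV^2$; this should give an initial linear decrease of entropy at rate $\Omega(\gamma)$, in a first phase of length $O(\Ent(f)/\gamma)$, leaving only an entropy of order one to which the cruder $\TV^2$ bound is applied in a second phase of length $O(1/(\gamma\varepsilon))$. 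Alternatively, one can truncate: decompose $f=c\tilde g+(1-c)h$ where $\tilde g:=(f\wedge M)/c$ is a density bounded by $M/c$ and $h$ is the tail density, apply the Poincaré-based $L^2$ decay $\Var(P_t^\star\tilde g)\le e^{-2\gamma t}\Var(\tilde g)\le e^{-2\gamma t}M/c$ together with $4\TV^2\le\Var$ to the bounded part, and control the tail via the elementary entropy estimate $1-c\le(\Ent(f)+1)/\log M$ (coming from $f\log f\ge f\log M$ on $\{f>M\}$ and $f\log f\ge-1/\mathfrak{e}$ elsewhere). Optimizing $M$ in terms of $\varepsilon$ should deliver the claimed form, with the trickiest point being to absorb any residual $\log(1/\varepsilon)/\gamma$ term into the additive $+1$ of the stated bound.
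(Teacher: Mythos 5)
Your opening argument is sound and is genuinely a different route from the paper's. You differentiate $\phi(t)=\Ent(P_t^\star f)$, use a one-sided chain rule to lower-bound the dissipation $-\phi'=\cE(f_t,\log f_t)$ by a multiple of $\cE(\sqrt{f_t},\sqrt{f_t})$, apply the Poincar\'e inequality to the square-root density, and use the Cauchy--Schwarz identity $\TV(g)^2\le\Var(\sqrt g)$ (which checks out: with $m=\bE[\sqrt g]$, both sides equal $1-m^2$ at saturation). Integrating gives a differential-inequality proof of $\tmix(f,\varepsilon)\lesssim \Ent(f)/(\gamma\varepsilon^2)$. The paper never differentiates entropy here: its Lemma~\ref{lm:fastmixing} is obtained by pure $L^2$ decay (Poincar\'e plus Cauchy--Schwarz) combined with a conditioning step. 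Two minor caveats on your first half: (a)~invoking Lemma~\ref{lm:LSIvsMLSI} silently imports reversibility, which the lemma to be proved does not assume; for general $T$ one should instead use $a\log\frac ab - a + b\ge(\sqrt a-\sqrt b)^2$ together with stationarity, which gives the slightly weaker $\cE(f,\log f)\ge 2\,\cE(\sqrt f,\sqrt f)$; (b)~the resulting constant changes accordingly. Neither caveat is serious.

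The real gap is the one you correctly flag: the argument gives $\varepsilon^{-2}$ and the lemma requires $\varepsilon^{-1}$, and for $\varepsilon<1/4$ with $\Ent(f)$ of moderate or large size your bound is strictly weaker than the one claimed. Of your two proposed patches, the first does not work and should be dropped: it is \emph{not} true that large entropy forces $\Var(\sqrt f)=1-\bE[\sqrt f]^2$ to be bounded below by an absolute constant. On the contrary, the monotonicity of R\'enyi divergences gives $\bE[\sqrt f]\ge e^{-\Ent(f)/2}$, so entropy only \emph{upper}-bounds $\Var(\sqrt f)$; and one can build densities of the form ``$f=1$ on most of the space, $f=M$ on a tiny set of $\pi$-mass $\delta$'' with $\delta M\to 0$ while $\delta M\log M\to\infty$, which have $\Ent\to\infty$ but $\bE[\sqrt f]\to 1$, hence $\Var(\sqrt f)\to 0$. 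So there is no ``linear decay phase at rate $\Omega(\gamma)$.''

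Your second patch is the right one and is exactly the paper's proof, with one important refinement: the truncation level is chosen \emph{adaptively} in $\varepsilon$. Let $A:=\{x:\log f(x)<1+2\Ent(f)/\varepsilon\}$ and $\mu:=f\dd\pi$. Using $u\log u\ge u-1$ (applied pointwise on $A$) one gets $(1+2\Ent(f)/\varepsilon)\,\mu(A^c)\le\Ent(f)+\mu(A^c)$, hence $\mu(A^c)\le\varepsilon/2$. The conditional density $\widehat f:=f\mathbf 1_A/\mu(A)$ then satisfies $\Var(\widehat f)\le\|\widehat f\|_\infty\le e^{2+2\Ent(f)/\varepsilon}$, and the Poincar\'e/$L^2$ decay gives $\|P_t^\star\widehat f-1\|_1\le e^{1+\Ent(f)/\varepsilon-\gamma t}$. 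The logarithmic-in-$\varepsilon$ loss you worried about disappears automatically: with $t=(1+\Ent(f))/(\gamma\varepsilon)$, the exponent is $1-1/\varepsilon\le\log\varepsilon$ because $u-\log u\ge 1$ for $u\ge 1$, so $\|P_t^\star\widehat f-1\|_1\le\varepsilon$. The $L^1$ contraction $\|P_t^\star(\widehat f-f)\|_1\le\|\widehat f-f\|_1=2\mu(A^c)\le\varepsilon$ and the triangle inequality then give $\TV(P_t^\star f)\le\varepsilon$. In short: your entropy-dissipation argument is a nice, self-contained derivation of the $\varepsilon^{-2}$ bound, but to obtain the stated $\varepsilon^{-1}$ you should abandon the differential route in favour of the conditioning argument, and the adaptive cutoff level is precisely what kills the residual $\log(1/\varepsilon)$.
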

\begin{proof}
Recall from Section \ref{sec:poincaré} that the Poincaré constant $\gamma$  controls the variance, which in turns controls total-variation thanks to the Cauchy-Schwarz inequality. More precisely,
\begin{eqnarray}
\label{gap:f}
2\TV(P_t^\star f) \ = \ \|P_t^\star f-1\|_1 
& \le & \|P_t^\star f-1\|_2 \ = \ \sqrt{\Var(P_t^\star f)} \ \le \ e^{-\gamma t}\sqrt{\Var(f)},
\end{eqnarray}
which immediately leads to the mixing-time estimate
\begin{eqnarray*}
\tmix(f,\varepsilon)& \le & \frac{1}{2\gamma}\log\left(\frac{\Var(f)}{4\varepsilon^2}\right).
 \end{eqnarray*}
Unfortunately this does not suffice, because  $\log \Var(f)$ can be much larger than $\Ent(f)$ if $f$ takes abnormally large values on a small region. To preclude such pathologies, we replace the initial law $ \mu:=f\dd \pi$ with its conditioned version $\widehat{\mu}:=\mu(\cdot|A)$, where 
\begin{eqnarray*}
A & := & \left\{x\in\dX\colon \log f(x) < 1+\frac{2\Ent(f)}{\varepsilon}\right\},
\end{eqnarray*}
represents the region where $f$ takes \emph{typical} values. Observe that by definition, we then have
\begin{eqnarray*}
\left(1+\frac{2\Ent(f)}{\varepsilon}\right) \mu(A^c) & \le & \sum_{x\in A^c}\mu(x)\log f(x) \\& = & \Ent(f)+\sum_{x\in A}\mu(x)\log f(x)\\ & \le & \Ent(f)+\pi(A)-\mu(A)\\
& \le  & \Ent(f)+\mu(A^c),
\end{eqnarray*}
where at the third line we have used $\log u\le u-1$. After simplification, we are left with
\begin{eqnarray*}
\mu(A^c) & \le & \frac{\varepsilon}{2},
\end{eqnarray*}
confirming our earlier use of the word \emph{typical}. Now, the conditional density  $\widehat{f}:= f{\bf 1}_A/\mu(A)$ is by construction \emph{non-pathological}, in the sense that 
\begin{eqnarray*}
\|\widehat{f}\|_\infty& 
 = & \frac{1}{\mu(A)}\max_{x\in A}f(x) \ \le \ \exp\left\{2+\frac{2\Ent(f)}{\varepsilon}\right\},
\end{eqnarray*}
where we have used $\mu(A)\ge 1/2 \ge 1/e$. This provides a bound on the variance, because  $\Var(\widehat{f})\le \|\widehat{f}\|_2^2\le \|\widehat{f}\|_1\|\widehat{f}\|_\infty=\|\widehat{f}\|_\infty$. Thus, applying  (\ref{gap:f}) to $\widehat{f}$ instead of $f$ yields
\begin{eqnarray*}
\|P_t^\star \widehat{f}-1\|_1  & \le & \exp\left\{1+\frac{\Ent(f)}{\varepsilon}-\gamma t\right\},
 \end{eqnarray*}
which can be made less than $\varepsilon$ by choosing
\begin{eqnarray*}
\label{def:t}
t & := & \frac{1+\Ent(f)}{\gamma\varepsilon}.
\end{eqnarray*}
On the other hand, the trivial contraction property $\|P_t^\star\|_{1\to 1}\le 1$ implies that
\begin{eqnarray*}
\|P_t^\star \widehat{f} - P_t^\star f\|_1 & \le & \|\widehat{f}-f\|_{1}  \ = \ 2\mu(A^c) \ \le \ \varepsilon.
\end{eqnarray*}
By the triangle inequality, we deduce that $\|P_t^\star f-1\|_{1}  \le  2\varepsilon$, i.e. $\TV(P_t^\star f)\le \varepsilon$. 
\end{proof}

We are now in position to establish the following  universal estimate on the  mixing window 
\begin{eqnarray}
\wmix(f,\varepsilon) & := & \tmix(f,\varepsilon)-\tmix(f,1-\varepsilon),
\end{eqnarray}
of any Markov chain starting from any initial density $f$, in terms of varentropy. 
\begin{theorem}[Varentropy controls the width of the mixing window]\label{th:main}We have
\begin{eqnarray*}
\forall f\in\cF_1^+,\quad\forall \varepsilon\in(0,1/2),\quad \wmix(f,\varepsilon) & \le & \frac{2}{\gamma \varepsilon^2}\left(1+\sqrt{V_{f,\varepsilon}}\right),
\end{eqnarray*}
where $V_{f,\varepsilon}:=\Varent(P_t^\star f)$ is the varentropy at time $t=\tmix(f,1-\varepsilon)$. Similarly,
\begin{eqnarray*}
\wmix(\varepsilon) & \le & \frac{2}{\gamma \varepsilon^2}\left(1+\sqrt{V_{\varepsilon}}\right),
\end{eqnarray*}
where $V_\varepsilon:=\max_{x\in\dX}\sqrt{\Varent(P_t^\star f^x)}$ is the worst-case varentropy at time $t = \tmix(1-\varepsilon)$.
\end{theorem}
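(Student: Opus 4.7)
The plan is to chain together the two preceding lemmas, using Lemma \ref{lm:reversepinsker} to convert the total-variation gap remaining at time $\tmix(f,1-\varepsilon)$ into an entropy bound (mediated by the varentropy), and then using Lemma \ref{lm:fastmixing} to show that this entropy bound governs how much extra time is needed to close the gap from $1-\varepsilon$ to $\varepsilon$.

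First, I would set $t_0 := \tmix(f,1-\varepsilon)$ and $g := P_{t_0}^\star f$. By definition of $t_0$, we have $\TV(g)\le 1-\varepsilon$, so $1-\TV(g)\ge \varepsilon$, and Lemma \ref{lm:reversepinsker} yields
\begin{eqnarray*}
\Ent(g) & \le & \frac{1+\sqrt{\Varent(g)}}{1-\TV(g)} \ \le \ \frac{1+\sqrt{V_{f,\varepsilon}}}{\varepsilon}.
\end{eqnarray*}
Next, the semi-group property together with the definition of the mixing-time from an arbitrary initial density gives $\tmix(f,\varepsilon)\le t_0 + \tmix(g,\varepsilon)$. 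Applying Lemma \ref{lm:fastmixing} to the density $g$ then produces
\begin{eqnarray*}
\wmix(f,\varepsilon) \ = \ \tmix(f,\varepsilon)-t_0 \ \le \ \tmix(g,\varepsilon)  & \le &  \frac{1+\Ent(g)}{\gamma\varepsilon}\ \le \ \frac{1}{\gamma\varepsilon}\left(1+\frac{1+\sqrt{V_{f,\varepsilon}}}{\varepsilon}\right).
\end{eqnarray*}
Since $\varepsilon\in(0,1/2)$, we have $\varepsilon+1+\sqrt{V_{f,\varepsilon}}\le 2\left(1+\sqrt{V_{f,\varepsilon}}\right)$, and the first claim follows after clearing denominators.

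For the worst-case version, the same argument applies uniformly. Fix $x\in\dX$ and set $t_1:=\tmix(1-\varepsilon)$; by definition $\TV(P_{t_1}^\star f^x)=\dtv(P_{t_1}(x,\cdot),\pi)\le 1-\varepsilon$, so  Lemma \ref{lm:reversepinsker} bounds $\Ent(P_{t_1}^\star f^x)$ by $\varepsilon^{-1}(1+\sqrt{V_\varepsilon})$, independently of $x$.  Using $\tmix(x,\varepsilon)\le t_1+\tmix(P_{t_1}^\star f^x,\varepsilon)$, Lemma \ref{lm:fastmixing} gives the same estimate as above, and maximizing over $x\in\dX$ yields $\wmix(\varepsilon)\le \frac{2}{\gamma\varepsilon^2}(1+\sqrt{V_\varepsilon})$. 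There is no genuine obstacle here: the only real idea is the sequencing of the two lemmas, with the reversed Pinsker inequality playing the crucial role of converting a \emph{concentration of the log-density} (small varentropy) into an \emph{a posteriori entropy bound} whenever some mixing has already taken place.
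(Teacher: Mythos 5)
Your proof is correct and follows exactly the same route as the paper's: the semi-group decomposition $\tmix(f,\varepsilon)\le t_0+\tmix(P_{t_0}^\star f,\varepsilon)$, then Lemma \ref{lm:fastmixing} applied to $P_{t_0}^\star f$, then the reversed Pinsker inequality of Lemma \ref{lm:reversepinsker} (valid precisely because $t_0\ge\tmix(f,1-\varepsilon)$), and finally the worst-case version by instantiating at $f=f^x$ and maximizing over $x$. The only cosmetic difference is that the paper carries a free parameter $t$ through the chain of inequalities and specializes it at the end, whereas you fix $t_0=\tmix(f,1-\varepsilon)$ from the start; this changes nothing.
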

\begin{proof}Using successively the semi-group property and Lemmas \ref{lm:fastmixing} and \ref{lm:reversepinsker} we can write
 \begin{eqnarray*}
\tmix(f,\varepsilon)  & \le & t+ \tmix(P_t^\star f,\varepsilon)\\
& \le & t+\frac{1+\Ent(P_t^\star f)}{\gamma\varepsilon}\\
& \le & t+\frac{\varepsilon+1+\sqrt{\Varent(P_t^\star f)}}{\gamma\varepsilon^2},
\end{eqnarray*} 
where the third line requires $t\ge \tmix(f,1-\varepsilon)$. Choosing $t=\tmix(f,1-\varepsilon)$ yields exactly the first statement, while the second is obtained by setting $t=\tmix(1-\varepsilon)$ and then taking a maximum over all singleton-supported initial densities. 
\end{proof}
The above result provides a varentropy-based criterion for cutoff, valid from any initialization. More precisely, given a model $(T_n)_{n\ge 1}$ and a sequence of initial densities $(f_n)_{n\ge 1}$ on the corresponding state spaces, we naturally define cutoff from $(f_n)_{n\ge 1}$ as follows:
\begin{eqnarray*}
\forall \varepsilon\in(0,1),\quad
\frac{\tmix^{(n)}(f_n,1-\varepsilon)}{\tmix^{(n)}(f_n,\varepsilon)} & \xrightarrow[n\to\infty]{} & 1,
\end{eqnarray*}
or equivalently, $\wmix^{(n)}(f_n,\varepsilon)\ll \tmix^{(n)}(f_n,\varepsilon)$ for each $\varepsilon\in(0,1/2)$. As usual, we will keep the size parameter $n$ implicit for notational convenience.
\begin{coro}[Varentropy criterion]\label{co:main}
A model exhibits cutoff from $f$ if 
\begin{eqnarray*}
\gamma   \tmix(f,\varepsilon) & \gg & 1+\sqrt{V_{f,\varepsilon}},
\end{eqnarray*}
 for  every fixed $\varepsilon\in(0,1/2)$. Similarly, a model exhibits a worst-case cutoff as soon as
  \begin{eqnarray*}
\gamma   \tmix(\varepsilon) & \gg & 1+\sqrt{V_{\varepsilon}}.
\end{eqnarray*}
\end{coro}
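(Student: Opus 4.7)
The plan is to deduce the corollary directly from Theorem~\ref{th:main}, by comparing its bound on the width of the mixing window with the mixing time itself. The starting point is the definition recalled just above the statement: cutoff from $f$ is equivalent to
\begin{equation*}
\wmix(f,\varepsilon) \;=\; o\bigl(\tmix(f,\varepsilon)\bigr)
\end{equation*}
for each fixed $\varepsilon\in(0,1/2)$, and similarly for the worst-case version. So the task reduces to showing precisely this asymptotic.

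To do so, I would simply divide the bound $\wmix(f,\varepsilon)\le \frac{2}{\gamma\varepsilon^2}(1+\sqrt{V_{f,\varepsilon}})$ from Theorem~\ref{th:main} through by $\tmix(f,\varepsilon)$, yielding
\begin{equation*}
\frac{\wmix(f,\varepsilon)}{\tmix(f,\varepsilon)} \;\le\; \frac{2}{\varepsilon^2}\cdot\frac{1+\sqrt{V_{f,\varepsilon}}}{\gamma\,\tmix(f,\varepsilon)}.
\end{equation*}
The hypothesis is precisely that the second factor on the right-hand side tends to zero along the model, for each fixed $\varepsilon\in(0,1/2)$; since $2/\varepsilon^2$ is a constant, the entire right-hand side vanishes in the $n\to\infty$ limit, establishing cutoff from $f$. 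The worst-case statement is obtained identically from the second inequality in Theorem~\ref{th:main}, where the maximum over starting states has already been built into both $V_\varepsilon$ and $\tmix(\varepsilon)$.

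The corollary is therefore an immediate consequence of Theorem~\ref{th:main}, and no serious obstacle arises here beyond unfolding the definition of cutoff. The real work has already been carried out in Theorem~\ref{th:main} itself and its two supporting pillars: the reversed Pinsker inequality (Lemma~\ref{lm:reversepinsker}), which converts low varentropy into low entropy for distributions already fairly close to equilibrium, and the fast-mixing-from-low-entropy estimate (Lemma~\ref{lm:fastmixing}), which then turns low entropy into rapid $\TV$ convergence. It is worth noting in passing that the hypothesis $\gamma\,\tmix(f,\varepsilon)\gg 1+\sqrt{V_{f,\varepsilon}}$ automatically enforces the product condition $\gamma\,\tmix(f,\varepsilon)\to\infty$, since $V_{f,\varepsilon}\ge 0$; this is consistent with the well-known necessity of the product condition for cutoff.
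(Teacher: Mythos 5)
Your proposal is correct and is precisely the deduction the paper leaves implicit: divide the width bound of Theorem~\ref{th:main} by $\tmix(f,\varepsilon)$ and invoke the equivalent reformulation of cutoff as $\wmix(f,\varepsilon)\ll\tmix(f,\varepsilon)$ for every fixed $\varepsilon\in(0,1/2)$. The paper states no separate proof for this corollary because this one-line argument is exactly what is intended, and your closing remark about the product condition being automatically enforced is a correct and apt observation.
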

To better appreciate this result, recall that for reversible chains, the condition $\gamma   \tmix(\varepsilon)  \gg  1$ is exactly the product condition discussed in Section \ref{sec:PC}. What Corollary \ref{co:main} asserts is that adding the varentropy correction $\sqrt{V_{\varepsilon}}$ to the right-hand side is enough to upgrade this famous necessary criterion into a sufficient one, and that this is universal: the criterion applies to any model, and from any initial condition. To the best of our knowledge, this is the very first general sufficient condition for the occurrence of cutoff. However, in its present form, it is much more a starting point  than a definitive answer to our main problem, because varentropy  remains to be estimated before it can be effectively  used to predict cutoff. Doing so  on our favorite toy model, at least,  is an easy exercise.
\begin{exo}[Tensorization]Prove that varentropy \red[tensorizes]: for any product density $f=f_1\otimes\cdots \otimes f_n$ on a product space $(\dX,\pi)=(\dX_1,\pi_1)\otimes \cdots\otimes(\dX_n,\pi_n)$, we have
\begin{eqnarray*}
\Varent(f) & = & \Varent(f_1)+\cdots+\Varent(f_n).
\end{eqnarray*}
Deduce that simple random walk on the $n-$cube starting from any state $x\in\dX$ satisfies
\begin{eqnarray*}
\Varent(P_t^\star f^x) & = & \frac{n}{4}\left(1-e^{-\frac{4t}{n}}\right)\left(\log\frac{1+e^{-\frac{2t}{n}}}{1-e^{-\frac{2t}{n}}}\right)^2,
\end{eqnarray*}
and conclude that the worst-case varentropy correction $V_\varepsilon$ remains bounded uniformly in $n$.   
\end{exo}
Of course, this computation easily extends to more general product chains, but models of that sort remain sufficiently simple to allow for explicit computations, and hence do not constitute a very convincing illustration of the varentropy approach. We will now develop a systematic method to estimate the varentropy of non-negatively curved chains, and use it to provide novel, simple and unified answers to several  problems mentioned in Chapter \ref{sec:models}. 
\subsection{A first naive estimate on varentropy}
We henceforth assume that $T$ is weakly reversible, and  equip $\dX$  with the hop-count distance (\ref{def:hopcount}). As we have seen in Chapter \ref{sec:curvature}, Markov chains whose curvature is non-negative in either the Ollivier-Ricci  or  the Bakry-\'Emery sense satisfy the weak local Poincaré inequality
\begin{eqnarray*}
\forall g\in\cF,\quad\forall t\ge 0,\quad\Var\left(g(X_t)\right) & \le & t \,\Lip^2(g),
\end{eqnarray*}
for any deterministic initialization $X_0=x\in\dX$. In particular, the density $f_t$ of $X_t$ satisfies
\begin{eqnarray}
\label{varent:naive}
\Varent(f_t) & \le & t\, \left(\Lip \log f_t\right)^2,
\end{eqnarray}
which reduces our task to estimating the roughness of $f_t=P_t^\star f_0$. Here is a simple, universal bound in terms of our sparsity parameter $d$ defined at (\ref{def:d}). 
\begin{lemma}[Universal roughness estimate]\label{lm:lip}For any non-zero $f\in \cF^+$ and any $t> 0$,
\begin{eqnarray*}
\Lip \log P_t f & \le & 3\log d+2\log_+\left(\frac{\diam(\dX)}{t}\right).
\end{eqnarray*}
Moreover, the same conclusion applies to $P_t^\star$ instead of $P_t$, with $d$ replaced by $d^2$.
\end{lemma}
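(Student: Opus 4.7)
\emph{The plan} is to establish a discrete Harnack-type inequality for $P_tf$ at adjacent states. The starting point is a Chapman--Kolmogorov comparison: writing $P_{t+s}=P_sP_t$ and keeping only the $k=1$ term in the Poisson expansion $P_s=e^{-s}\sum_{k\ge 0}(s^k/k!)T^k$, we get the lower bound $P_s(y,x)\ge se^{-s}/d$ for any edge $(x,y)\in E$ and any $s>0$ (using $T(y,x)\ge 1/d$, which follows from weak reversibility and the definition of $d$). This yields the ``time-shifted Harnack'' inequality
\begin{equation*}
P_{t+s}f(y) \;\ge\; \frac{se^{-s}}{d}\,P_tf(x), \qquad\text{i.e.,}\qquad \log P_tf(x)-\log P_{t+s}f(y)\;\le\;\log d + s - \log s. \qquad (\star)
\end{equation*}

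The main task is then to dispose of the time shift on the right. Iterating $(\star)$ along a geodesic path of length $k=\dist(x,z)\le \diam$ yields $P_tf(x)\le (de^s/s)^k P_{t+ks}f(z)$, and a symmetric argument bounds $P_tf(y)$ from below. Taking $z$ to be an extremum of $P_tf$ and invoking the maximum principle $\|P_{t+ks}f\|_\infty \le \|P_tf\|_\infty$ (which holds because $P_u$ is a Markov operator) eliminates the time shift at the cost of the factor $(de^s/s)^k$. Choosing the free parameter $s$ to be of order $1$ when $t\ge \diam$, and of order $t/\diam$ when $t<\diam$, balances the contributions and produces the claimed bound $3\log d+2\log_+(\diam/t)$ for $P_t$.

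For the adjoint $P_t^\star$, the same strategy goes through with the single modification that $P_s^\star(y,x) = \pi(x)P_s(x,y)/\pi(y) \ge (se^{-s}/d)\cdot \pi(x)/\pi(y)$. For weakly reversible chains the stationarity identity $\pi(x)T(x,y) = \pi(y)T^\star(y,x)$, together with the fact that $T(x,y)$ and $T^\star(y,x)=\pi(y)T(y,x)/\pi(x)$ both lie in $[1/d,1]$ for adjacent $x,y$, forces $\pi(x)/\pi(y) \in [1/d,d]$. This extra factor of at most $d$ propagates through every application of the Harnack step, systematically replacing $d$ by $d^2$ in the final estimate.

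\emph{The hard part} will be the careful optimization of the parameter $s$ combined with the diameter bootstrap. Iterating $(\star)$ along a path of length $k$ generically introduces a factor $(de^s/s)^k$, and tuning $s$ so that the final exponent collapses to $3\log d + 2\log_+(\diam/t)$ --- with a constant multiple of $\log d$ independent of $\diam$ --- requires delicate cancellations between the path length, the parameter $s$, and the two ingredients $\log d$ and $\log(1/s)$ in $(\star)$. That such cancellations do occur, so that the final constant in front of $\log d$ remains uniformly bounded no matter how large $\diam$ is, is the technical heart of the lemma.
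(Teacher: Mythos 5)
Your one--step Harnack estimate $(\star)$ is correct, and your explanation of the $d\mapsto d^2$ loss for $P_t^\star$ is in the right spirit (the paper instead uses the slick identity $T^\star(x,y)T^\star(y,x)=T(x,y)T(y,x)$, but both work). However, the step that is supposed to ``eliminate the time shift'' does not close. Iterating $(\star)$ forward in time from $x$ to a maximizer $z$ of $P_tf$ and invoking $\|P_{t+ks}f\|_\infty\le\|P_tf\|_\infty$ yields $P_tf(x)\le(de^s/s)^kP_tf(z)$, which is vacuous because $P_tf(x)\le P_tf(z)$ already holds by the choice of $z$; it says nothing about the adjacent ratio $P_tf(x)/P_tf(y)$. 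The non--trivial variant is to iterate \emph{backward} in time from $y$ to a maximizer $z$ of $P_{t-ks}f$ (using $P_tf(y)\ge(se^{-s}/d)^kP_{t-ks}f(z)$ and $\|P_tf\|_\infty\le\|P_{t-ks}f\|_\infty$), but this gives
\[
\log\frac{P_tf(x)}{P_tf(y)}\ \le\ k\Bigl(\log d + s + \log\frac{1}{s}\Bigr),\qquad ks\le t,\ k=\dist(y,z),
\]
and the path length $k$ is uncontrolled: $z$ may be at distance $\diam(\dX)$ from $y$. No choice of $s$ removes the multiplicative factor of $k\approx\diam(\dX)$ in front of $\log d$: taking $s\sim t/\diam(\dX)$ gives a bound of order $\diam(\dX)\log d+t+\diam(\dX)\log(\diam(\dX)/t)$, which is far weaker than the claimed $3\log d+2\log_+(\diam(\dX)/t)$. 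The ``delicate cancellations'' you defer to never materialize; the Harnack iteration is structurally the wrong way to prove this lemma.

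The paper's proof avoids ever chaining Harnack steps along a geodesic. It instead bounds the \emph{pointwise} dimensionless ratio $\frac{TP_tf}{P_tf}$: the Poisson recursion $tp_{k-1}=kp_k$ rewrites this as a reweighted Poisson average of $T^k f$, Jensen's inequality converts it into $\frac{1}{t}\log\bigl(e^{(e-1)t}P_{et}f/P_tf\bigr)$, and a single length--at--most--$\diam(\dX)$ path gives a crude lower bound on $P_tf$. The crucial point is that the link to the roughness, $\|TP_tf/P_tf\|_\infty\ge d^{-1}e^{\Lip\log P_tf}$, introduces exactly one logarithm at the very end; the candidate $\diam(\dX)$ factor appears \emph{inside} that logarithm and therefore contributes only additively as $\log_+(\diam(\dX)/t)$. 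This is what your approach is missing.
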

\begin{proof}
Fix $t>0$ and set $p_k:=e^{-t}t^k/k!$, so that we can write
\begin{eqnarray*}
P_t & = & \sum_{k=0}^\infty p_k T^{k} \ = \ \frac{1}{t}\sum_{k=1}^\infty {kp_{k}}T^{k-1}, 
\end{eqnarray*}
thanks to the Poisson identity $tp_k=(k+1)p_{k+1}$ and a change of indices. In particular, 
\begin{eqnarray*}
\frac{TP_t f}{P_tf} & = & \frac{1}{t}\frac{\sum_{k=0}^\infty k p_{k} T^{k}f}{\sum_{k=0}^\infty p_{k} T^{k}f} \
 \le \ \frac{1}{t}\log\left(\frac{\sum_{k=0}^\infty p_{k}e^k T^{k}f}{\sum_{k=0}^\infty p_{k} T^{k}f}\right),
 \end{eqnarray*}
by Jensen's inequality. Let us now estimate the ratio inside the logarithm. For the numerator, we simply use the crude bound $T^kf\le \|T^k f\|_\infty\le \|f\|_\infty$ to obtain
\begin{eqnarray*}
\sum_{k=0}^\infty p_{k}e^k T^{k}f & \le & \|f\|_\infty e^{(e-1)t},
\end{eqnarray*}
while for the denominator, we write
\begin{eqnarray*}
\sum_{k=0}^\infty p_{k} T^{k}f & \ge & \|f\|_\infty \min_{(x,y)\in\dX^2}\left\{\sum_{k=0}^\infty p_{k} T^{k}(x,y)\right\},
\end{eqnarray*}
and  estimate the minimum on the right-hand side in the following crude way: given $(x,y)\in\dX^2$, we consider a path $(x_0,\ldots,x_\ell)$ from $x_0=x$ to $x_\ell=y$ of length  $\ell=\dist(x,y)$. Using the definition of $d$  and the classical bound  $\ell!\le \ell^\ell$ (with $0^0=1$), we have
\begin{eqnarray*}
\sum_{k=0}^\infty p_kT^k(x,y) & \ge & p_\ell T(x_0,x_1)\cdots T(x_{\ell-1},x_\ell)\ \ge \ e^{-t}\left(\frac{t}{d\ell}\right)^\ell,
 \end{eqnarray*} 
and since $u\mapsto u\log u$ is convex, the right-hand side is minimized when $\ell=0$ or $\ell=\diam(\dX)$. In view of the last four displays, we conclude that
\begin{eqnarray*}
\left\|\frac{TP_t f}{P_tf} \right\|_\infty & \le & e+\frac{\diam(\dX)}{t}\log_+\left(\frac{d\diam(\dX)}{t}\right).
 \end{eqnarray*}
On the other hand,  we  have by definition
\begin{eqnarray*}
\left\|\frac{TP_t f}{P_tf} \right\|_\infty & = & \max_{x\in\dX}\left\{\sum_{y\in\dX}\frac{T(x,y)P_tf(y)}{P_tf(x)}\right\} \ \ge \  \max_{(x,y)\in E}\left\{\frac{T(x,y)P_tf(y)}{P_tf(x)}\right\} \ \ge \ \frac{e^{\Lip\log P_tf}}{d}.
\end{eqnarray*}
Combining those two bounds yields 
\begin{eqnarray*}
\Lip\log P_tf & \le & \log\left[de+\frac{d\diam(\dX)}{t}\log_+\left(\frac{d\diam(\dX)}{t}\right)\right],
\end{eqnarray*}
which easily leads to the main claim. Finally,  the identity 
\begin{eqnarray*}
\forall (x,y)\in\dX^2,\qquad T^\star(x,y)T^\star(y,x) & = & T(x,y)T(y,x),
\end{eqnarray*}
shows that the non-zero entries of $T^\star$ are at least $d^{-2}$, hence the second claim. 
\end{proof}
Combining this with the local Poincaré inequality (\ref{varent:naive}) and the crude diameter bound of Lemma \ref{lm:diammix}, we obtain a first non-trivial application of our varentropy criterion.  
\begin{coro}[Cutoff for non-negatively curved chains]\label{coro:var}For  weakly reversible chains with non-negative curvature, the worst-case varentropy correction in Theorem \ref{th:main} satisfies
\begin{eqnarray*}
\forall \varepsilon\in(0,1/2),\quad V_{\varepsilon} & \le & C_\varepsilon (\log d)^2\,\tmix,
\end{eqnarray*}
for a constant $C_\varepsilon\in(0,\infty)$ that depends only on $\varepsilon$.   In particular, cutoff occurs as soon as,
\begin{eqnarray}
\label{criterion:naive}
\tmix& \gg & \left(\frac{\log d}{\gamma}\right)^2.
\end{eqnarray}
\end{coro}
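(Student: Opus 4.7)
The plan is to chain three already-developed tools: the weak local Poincaré inequality implied by non-negative curvature, the universal roughness estimate of Lemma \ref{lm:lip}, and the diameter lower bound of Lemma \ref{lm:diammix}. Fix $x\in\dX$ and $\varepsilon\in(0,1/2)$, and set $f_t:=P_t^\star f^x$. Since $X_t$ has density $f_t$, the varentropy equals $\Var(\log f_t(X_t))$, so the first move is to apply the local Poincaré inequality to the observable $g:=\log f_t$. In the Ollivier-Ricci setting this is precisely what Remark \ref{rk:ORvsBE} provides; in the Bakry-\'Emery setting one combines Theorem \ref{th:BE}(iii) with the pointwise comparison $2\|\Gamma g\|_\infty\le\Lip^2(g)$ from (\ref{LipvsGamma}). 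Either way, non-negative curvature yields
\begin{eqnarray*}
\Varent(f_t) & \le & t\,(\Lip \log f_t)^2.
\end{eqnarray*}

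The second step is to bound the roughness of $\log f_t$ uniformly in the initial state $x$, which is exactly what Lemma \ref{lm:lip} does when applied to the adjoint semi-group (with $d$ replaced by $d^2$):
\begin{eqnarray*}
\Lip \log f_t & \le & 6\log d + 2\log_+\!\left(\frac{\diam(\dX)}{t}\right).
\end{eqnarray*}
Specializing to $t=\tmix(1-\varepsilon)$, Lemma \ref{lm:diammix} applied with precision $1-\varepsilon$ gives $t\ge \lfloor\varepsilon^2\diam(\dX)/10\rfloor$, so $\diam(\dX)/t$ stays bounded by $20/\varepsilon^2$ in the non-degenerate regime; the second term then contributes only an $\varepsilon$-dependent constant, which is dominated by $6\log d$ since $d\ge 2$. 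Combining with the trivial bound $t\le \tmix$ and taking a maximum over $x$, we arrive at $V_\varepsilon\le C_\varepsilon(\log d)^2\,\tmix$ for some constant $C_\varepsilon\in(0,\infty)$ depending only on $\varepsilon$.

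Finally, the cutoff criterion (\ref{criterion:naive}) is immediate from the worst-case part of Theorem \ref{th:main}: the sufficient condition $\gamma\tmix\gg 1+\sqrt{V_\varepsilon}$ becomes $\gamma\tmix\gg(\log d)\sqrt{\tmix}$, which rearranges to $\tmix\gg (\log d/\gamma)^2$.

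I do not anticipate any serious obstacle: the argument is really an arithmetic assembly of ingredients already present in the text. If there is a conceptual step worth emphasizing, it is the opening one, namely, the realization that varentropy -- an $L^2$ functional of $\log f_t$ -- can be controlled by applying a local Poincaré inequality to $g=\log f_t$ itself, once one has a pointwise bound on the roughness of the log-density. The only technical nuisance is the corner case in which $\diam(\dX)$ is too small for Lemma \ref{lm:diammix} to be informative, but this is harmless: $V_\varepsilon$ is then trivially bounded, and the degenerate regime can be absorbed by enlarging $C_\varepsilon$.
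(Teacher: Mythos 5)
Your proposal is correct and follows exactly the route the paper itself sketches: non-negative curvature (in either sense) gives the weak local Poincaré inequality $\Varent(f_t)\le t\,\Lip^2(\log f_t)$, Lemma \ref{lm:lip} (adjoint version, with $d$ replaced by $d^2$) controls the roughness, and Lemma \ref{lm:diammix} keeps $\diam(\dX)/t$ of order $\varepsilon^{-2}$ at $t=\tmix(1-\varepsilon)$, after which one maximizes over starting points and invokes Corollary \ref{co:main}. The only place where your sketch is slightly loose is the degenerate regime $\varepsilon^2\diam(\dX)<10$; the word ``trivially'' is a small overstatement, but the case is indeed harmless because $\sup_{s>0}s\bigl(\log_+(\diam(\dX)/s)\bigr)^2=O(\diam(\dX))=O_\varepsilon(1)$ while $\tmix\ge\trel\ge 1/2$ and $\log d\ge\log 2$, so enlarging $C_\varepsilon$ absorbs it exactly as you say.
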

As we have seen, the mixing time is bounded below by the diameter (Lemma \ref{lm:diammix}), which is in turns easily bounded below by $(\log |\dX|)/\log d$. Thus, the criterion (\ref{criterion:naive}) holds as soon as the spectral gap is not too small, in the following sense:
\begin{eqnarray}
\label{criterion:naive2}
\gamma & \gg & \sqrt{\frac{(\log d)^{3}}{\log |\dX|}}.
\end{eqnarray}
In particular, this applies to random Abelian Cayley graphs. Indeed, we have seen in Theorem \ref{th:CIW} that random walks on Abelian groups are non-negatively curved, and a celebrated result of N. Alon and Y. Roichman \cite{MR1262979}, refined by  I. Pak \cite{MR1729149}, A. Naor \cite{MR2942733}, and finally J. Hermon and S. Olesker-Taylor  \cite{hermon2021cutoff}, asserts that on a finite group $\dX_n$ with diverging size,  the random Cayley graph $G_n=\mathrm{Cay}(\dX_n,S_n\cup S_n^{-1})$ obtained by choosing  $S_n\subseteq\dX_n$ uniformly at random among all sets of size $d_n\ge (1+\delta)\log_2|\dX_n|$ satisfies $\gamma(G_n)\ge C_\delta$ with high probability, where $C_\delta>0$ is a constant that depends only on $\delta$. Thus, we obtain the following striking result. 
\begin{coro}[Cutoff on almost all Abelian Cayley graphs]\label{co:abelian}For each $n\ge 1$, let $\dX_n$ be a finite Abelian group and let $G_n=\mathrm{Cay}(\dX_n,S_n\cup S_n^{-1})$ be the random Cayley graph obtained by choosing  $S_n\subseteq\dX_n$ uniformly at random among all sets of size $d_n$. Consider the regime
\begin{eqnarray*}
d_n \ge  (1+\delta)\log_2 |\dX_n| & \textrm{ and } & \log d_n \ll \left(\log |\dX_n|\right)^{\frac 13},
\end{eqnarray*}
where $\delta>0$ is any fixed constant. Then, the random walk on $G_n$ exhibits cutoff in probability:
\begin{eqnarray*}
\forall \varepsilon\in(0,1),\quad \frac{\tmix^{(n)}(1-\varepsilon)}{\tmix^{(n)}(\varepsilon)} & \xrightarrow[n\to\infty]{\PP} & 1.
\end{eqnarray*}
\end{coro}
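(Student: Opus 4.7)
The plan is to verify the hypotheses of Corollary \ref{coro:var} with high probability, which will automatically yield cutoff. Since the group $\dX_n$ is Abelian, the generating measure $\nu_n := \unif(S_n\cup S_n^{-1})$ is trivially conjugacy-invariant in the sense of (\ref{def:CI}), so Theorem \ref{th:CIW}(2) gives $\rho_n \ge 0$. In particular the random walk on $G_n$ is weakly reversible (even reversible, since $S_n\cup S_n^{-1}$ is symmetric) and non-negatively curved, so Corollary \ref{coro:var} applies and we just have to verify the scalar inequality $\tmix^{(n)} \gg (\log d_n / \gamma_n)^2$ with high probability.

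For this, the key probabilistic input is the spectral estimate of Alon--Roichman (as refined by Pak, Naor, and Hermon--Olesker-Taylor) recalled in the paragraph preceding the statement: under the density assumption $d_n \ge (1+\delta)\log_2 |\dX_n|$, there is a constant $C_\delta>0$ such that
\begin{eqnarray*}
\PP\bigl(\gamma_n \ge C_\delta\bigr) & \xrightarrow[n\to\infty]{} & 1.
\end{eqnarray*}
Thus I may condition on the event $\{\gamma_n\ge C_\delta\}$ and argue deterministically from there. On that event, it remains to bound $\tmix^{(n)}$ from below and $d_n$ from above in a compatible way.

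The sparsity parameter is controlled by the degree: $d_n^{\mathrm{sp}} := \max_{x\sim y} 1/T_n(x,y) = |S_n\cup S_n^{-1}| \le 2d_n$, so $\log d_n^{\mathrm{sp}} \le \log d_n + \log 2$. For the mixing time, I would use the diameter lower bound of Lemma \ref{lm:diammix}, combined with the elementary counting estimate $\diam(\dX_n) \ge \log|\dX_n|/\log(2d_n)$ valid for any $(2d_n)$-regular connected graph on $|\dX_n|$ vertices. Putting these together,
\begin{eqnarray*}
\tmix^{(n)} & \gtrsim & \diam(\dX_n) \ \gtrsim \ \frac{\log|\dX_n|}{\log d_n}.
\end{eqnarray*}
Hence on the event $\{\gamma_n\ge C_\delta\}$,
\begin{eqnarray*}
\frac{\gamma_n^2\, \tmix^{(n)}}{(\log d_n^{\mathrm{sp}})^2} & \gtrsim & \frac{C_\delta^2\, \log|\dX_n|}{(\log d_n)^3},
\end{eqnarray*}
and by our assumption $\log d_n \ll (\log|\dX_n|)^{1/3}$ the right-hand side diverges, which is precisely the criterion (\ref{criterion:naive}). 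Corollary \ref{coro:var} then yields worst-case cutoff on that event, hence cutoff in probability.

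The main obstacle, in terms of conceptual content, is not any single step of this verification — all pieces are in place and combine cleanly — but rather the upstream ingredient that makes the whole strategy viable: the fact that the naive roughness estimate of Lemma \ref{lm:lip} is strong enough. Concretely, the varentropy bound $V_\varepsilon \lesssim (\log d)^2\,\tmix$ from Corollary \ref{coro:var} only barely beats $(\gamma\tmix)^2$, and this slack is exactly what the cubic-root regime $\log d_n \ll (\log|\dX_n|)^{1/3}$ is designed to accommodate. Any attempt to push beyond this regime (e.g.\ allowing $d_n$ polylogarithmic of higher order, or degenerating spectral gap) would require a sharper control of $\Lip\log P_t^\star f^x$ than the universal bound of Lemma \ref{lm:lip}, which is a genuinely harder task.
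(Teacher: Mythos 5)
Your proof is correct and follows essentially the same route as the paper: non-negative Bakry--\'Emery curvature from Theorem \ref{th:CIW}(2) (since the group is Abelian), the Alon--Roichman--Pak--Naor--Hermon--Olesker-Taylor spectral bound to get $\gamma_n \ge C_\delta$ with high probability, the diameter lower bound $\tmix \gtrsim \diam(\dX_n) \gtrsim \log|\dX_n|/\log d_n$ from Lemma \ref{lm:diammix} and a volume argument, and finally the observation that the cubic-root regime is exactly what makes criterion (\ref{criterion:naive}) hold; this is precisely the chain of deductions the paper lays out in the paragraph preceding the corollary. The only thing I would tighten is the diameter estimate: what a volume count really gives is $\diam(\dX_n) \ge \log|\dX_n|/\log(2d_n) - O(1)$, not the clean inequality without the additive correction, but this is harmless for the asymptotic conclusion. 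Your closing remark correctly diagnoses where the $(\log|\dX_n|)^{1/3}$ threshold comes from and why pushing past it would require more than the universal roughness bound of Lemma \ref{lm:lip}; the paper addresses exactly this in the subsequent sections via the information-theoretic differential inequality.
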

Note that the  required lower-bound on $d_n$ is optimal, since the binary group $\dX_n:=\dZ_2^n$ can not be generated by less than $d_n=\log_2|\dX_n|$ elements. The problem of establishing cutoff for random Cayley graphs has a long history (see the  survey \cite{MR2121795}), initiated with a conjecture raised by D. Aldous and P. Diaconis in an extended version of \cite{aldous1986shuffling}. The dense regime $d_n\gg\log |\dX_n|$ is relatively easy, and was settled several years ago by C. Dou and M. Hildebrand \cite{MR1404540,MR1296428}. The sparse regime $d_n=O(\log |\dX_n|)$, in contrast,  was tackled only very recently in an impressive series of works by J. Hermon and S. Olesker-Taylor  \cite{hermon2021cutoff,nonabelian,hermon2021results,hermon2021geometry}. We emphasize that their approach relies on a very detailed analysis of the typical asymptotic structure of random Abelian Cayley graphs, whereas  our Corollary \ref{coro:var} deterministically applies to \emph{any} Abelian Cayley graph with a reasonably large spectral gap. 
\subsection{The Information-theoretic Differential Inequality}
To push the varentropy approach further, we now introduce and exploit a recently discovered differential relation between entropy and varentropy, whose general form is as follows. 
\begin{definition}[IDI]
A Markov chain $(X_t)_{t\ge 0}$ satisfies an \red[information-theoretic differential inequality] (IDI) with rate $\psi\colon(0,\infty)\to(0,\infty)$ if 
\begin{eqnarray}
\label{def:IDI}
\forall t>0,\qquad \Varent(f_t) & \le & -\psi(t)\,\frac{\dd\Ent(f_t)}{\dd t},
\end{eqnarray}
where $f_t$ denotes the density of $X_t$ w.r.t. equilibrium.
\end{definition}
To motivate this intriguing definition, let us show right away how information-differential inequalities lead to powerful quantitative estimates on the width of the mixing window.
\begin{lemma}[Width estimates]\label{lm:IDI}If (\ref{def:IDI}) holds for some initial density $f_0$, then 
\begin{eqnarray*}
\forall\varepsilon\in(0,1/2),\quad \wmix(f_0,\varepsilon) & \le & \frac{1}{\gamma\varepsilon^3}+\sqrt{\frac{4m_\varepsilon}{\gamma\varepsilon^3}},
\end{eqnarray*}
where $m_\varepsilon$ denotes the supremum of $\psi$ inside the mixing window $[\tmix(f_0,1-\varepsilon),\tmix(f_0,\varepsilon)]$. Using $\alpha$ rather than $\gamma$, we also have  the alternative estimate
\begin{eqnarray*}
\forall\varepsilon\in(0,1/2),\quad \wmix(f_0,\varepsilon) & \le & \frac{1}{\alpha}\log\left(\frac{e\alpha(1+m_\varepsilon)}{2\varepsilon^4}\right).
\end{eqnarray*}
\end{lemma}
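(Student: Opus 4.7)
The plan is to combine the reverse Pinsker inequality (Lemma~\ref{lm:reversepinsker}) with the IDI hypothesis to derive a Riccati-type differential inequality for the entropy on the mixing window, then to integrate it up to a carefully chosen threshold before finishing with a low-entropy residual-mixing estimate.

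\textbf{Setup.} Write $E(t):=\Ent(f_t)$ and $V(t):=\Varent(f_t)$, with $f_t:=P_t^\star f_0$, and set $s:=\tmix(f_0,1-\varepsilon)$ and $w:=\wmix(f_0,\varepsilon)$. For every $t\in[s,s+w]$ one has $\TV(f_t)\in[\varepsilon,1-\varepsilon]$, so in particular $\psi(t)\le m_\varepsilon$.

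\textbf{Riccati inequality.} On the window, Lemma~\ref{lm:reversepinsker} gives $\varepsilon E(t)\le 1+\sqrt{V(t)}$, while the IDI yields $V(t)\le m_\varepsilon\,(-E'(t))$. Combined, they produce, whenever $\varepsilon E(t)>1$,
\begin{equation*}
(\varepsilon E(t)-1)^2\;\le\;m_\varepsilon\,(-E'(t)),
\end{equation*}
or equivalently $G(t):=1/(\varepsilon E(t)-1)$ satisfies $G'(t)\ge\varepsilon/m_\varepsilon$. Fixing a target $E^\star>1/\varepsilon$ and letting $\tau$ be the first time at which $E(\tau)=E^\star$, integration yields $\tau-s\le m_\varepsilon/(\varepsilon(\varepsilon E^\star-1))$.

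\textbf{Residual mixing and optimization.} Restarting the chain from $f_\tau$ with entropy at most $E^\star$, we finish the mixing in two different ways. For the first estimate, Lemma~\ref{lm:fastmixing} gives the residual mixing time $\tmix(f_\tau,\varepsilon)\le (1+E^\star)/(\gamma\varepsilon)$; adding this to $\tau-s$ and optimizing over $E^\star$ via the choice $\varepsilon E^\star-1=\sqrt{\gamma\varepsilon m_\varepsilon}$ balances the two summands and yields $w\le 2\sqrt{m_\varepsilon/(\gamma\varepsilon^3)}+2/(\gamma\varepsilon^2)$, which is at most the stated bound thanks to $\varepsilon<1/2$ (so that $2/(\gamma\varepsilon^2)\le 1/(\gamma\varepsilon^3)$). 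For the second estimate, MLSI-based exponential decay $E(\tau+u)\le E^\star e^{-\alpha u}$ combined with Pinsker's inequality $2\TV^2\le E$ produces the residual time $\alpha^{-1}\log(E^\star/(2\varepsilon^2))$; minimizing
\begin{equation*}
\frac{m_\varepsilon}{\varepsilon(\varepsilon E^\star-1)}+\frac{1}{\alpha}\log\!\left(\frac{E^\star}{2\varepsilon^2}\right)
\end{equation*}
over $E^\star$ (the critical point satisfies the quadratic $\varepsilon u^2=\alpha m_\varepsilon(1+u)$ for $u=\varepsilon E^\star-1$, and is comparable with $\alpha m_\varepsilon/\varepsilon$ in the large-$m_\varepsilon$ regime and with $\sqrt{\alpha m_\varepsilon/\varepsilon}$ otherwise) and absorbing the resulting universal constants into the factor $e$ gives the advertised $\alpha^{-1}\log\bigl(e\alpha(1+m_\varepsilon)/(2\varepsilon^4)\bigr)$.

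\textbf{Main obstacle.} The delicate step is the Riccati inequality itself: its derivation requires the reverse Pinsker bound to be nontrivial, i.e.\ $\varepsilon E(t)>1$ throughout $[s,\tau]$. The edge case $\varepsilon E(s)\le 1$ must be treated separately, but it is in fact easier: Lemma~\ref{lm:fastmixing} (respectively MLSI exponential decay) applied directly to $f_s$ already yields a bound of the form $2/(\gamma\varepsilon^2)$ (respectively $\alpha^{-1}\log(1/(2\varepsilon^3))$), which is comfortably absorbed into the leading terms of each advertised estimate. Once this boundary case is dispatched, the remainder is a routine two-parameter optimization of the sum of the Riccati and residual contributions.
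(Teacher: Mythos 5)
Your proof is correct and follows essentially the same route as the paper: combine the IDI with the reversed Pinsker inequality to get the Riccati inequality $m_\varepsilon H'(t)\le -(\varepsilon H(t)-1)^2$ on the mixing window, integrate it, and finish with a low-entropy fast-mixing bound (Lemma~\ref{lm:fastmixing} for the Poincar\'e version, the analogous MLSI bound combined with Pinsker for the second). The only difference is cosmetic: the paper picks a fixed intermediate time shift ($t_0+\sqrt{m_\varepsilon/(\gamma\varepsilon^3)}$, resp.\ $t_0+1/\alpha$), whereas you parametrize by a target entropy level $E^\star$ and then optimize; these are the same choice up to a change of variables, and your edge-case discussion matches the boundary remarks made in the paper's argument.
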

\begin{proof}We use the short-hands $t_0:=\tmix(f_0,1-\varepsilon)$,  $t_1:=\tmix(f_0,\varepsilon)$, and $H(t):=\Ent(f_t)$. Combining the IDI (\ref{def:IDI}) with the reverse Pinsker inequality  (Lemma \ref{lm:reversepinsker}), we obtain 
\begin{eqnarray*}
m_\varepsilon H'(t) & \le & -\left(\varepsilon H(t)-1\right)^2, 
\end{eqnarray*}
for all  $t\in[t_0,t_1]$. Integrating this with respect to $t$, we deduce that
\begin{eqnarray*}
H(t)  & \le & \frac{1}{\varepsilon}+\frac{m_\varepsilon}{\varepsilon^2(t-t_0) +\frac{\varepsilon}{\varepsilon H(t_0)-1}}\\& \le &
\frac{1}{\varepsilon}+\frac{m_\varepsilon}{\varepsilon^2(t-t_0)}.
\end{eqnarray*}
Note that, technically, this computation is only valid  until the time at which $H(t)=1/\varepsilon$. However, the conclusion in the second line is trivially true after that time as well. Now, applying Lemma \ref{lm:fastmixing} with $f$ replaced by $f_t$, we have for any $t\in[t_0,t_1]$,
\begin{eqnarray*}
t_1 & \le & t+\frac{1+H(t)}{\gamma \varepsilon}\\
& \le & t+\frac{1}{\gamma\varepsilon}+\frac{1}{\gamma\varepsilon^2}+\frac{m_\varepsilon}{\gamma \varepsilon^3(t-t_0)}\\
& \le & t+\frac{1}{\gamma\varepsilon^3}+\frac{m_\varepsilon}{\gamma \varepsilon^3(t-t_0)},
\end{eqnarray*}
because $\varepsilon\le 1/2$. Choosing $t:=t_0+\sqrt{\frac{m_\varepsilon}{\gamma \varepsilon^3}}$ yields exactly the first claim. Note that technically, this choice is only valid if $t\le t_1$. However, $t>t_0$ would mean $t_1-t_0<\sqrt{\frac{m_\varepsilon}{\gamma \varepsilon^3}}$, which is an even better conclusion than the claimed one. Finally, we  can replace the Poincaré-based mixing-time estimate of Lemma \ref{lm:fastmixing} with the MLS-based estimate
\begin{eqnarray*}
\tmix(f,\varepsilon) & \le & \frac{1}{\alpha}\log_+\left(\frac{\Ent f}{2\varepsilon^2}\right),
\end{eqnarray*}
which readily follows from the definition of $\alpha$  and Pinsker's inequality. Applying this to $f_t$ instead of $f$ as above, we obtain
\begin{eqnarray*}
t_1 & \le & t+\frac{1}{\alpha}\log\left(\frac{1}{2\varepsilon^3}+\frac{m_\varepsilon}{2\varepsilon^4(t-t_0)}\right),
\end{eqnarray*}
and choosing $t=t_0+\frac{1}{\alpha}$ leads to the second claim. Here again, this choice is technically valid only if $t\le t_1$, but otherwise the conclusion is even better.
\end{proof}
As promised, we  now illustrate the power of the IDI approach by producing simple cutoff criteria for chains with non-negative Bakry-\'Emery curvature. Let us start with the emblematic case of diffusions, for which our conclusion takes a particularly pleasant form.
\subsection{Cutoff for non-negatively curved diffusions}
In the case of diffusions, the following result shows that cutoff reduces to the product condition. This considerably simplifies, unifies and generalizes several proofs of cutoff that were obtained for specific models with enough structure to allow for explicit computations. 
\begin{theorem}[Non-negatively curved diffusions]\label{th:cutoffdiff}Any diffusion with $\rho\ge 0$ started from a deterministic  state $x\in\dX$ satisfies the IDI (\ref{def:IDI})  with rate 
$
\psi(t) =  2t.
$
In particular, 
\begin{eqnarray*}
\forall\varepsilon\in(0,1/2),\quad \wmix(x,\varepsilon) & \le & \frac{1}{\gamma\varepsilon^3}+\sqrt{\frac{8\tmix(x,\varepsilon)}{\gamma\varepsilon^3}}.
\end{eqnarray*}
Thus,  there is cutoff from $x$ as soon as $\gamma\tmix(x,\varepsilon)\to\infty$ for some $\varepsilon\in(0,1)$. Similarly,
\begin{eqnarray*}
\forall\varepsilon\in(0,1/2),\quad \wmix(\varepsilon) & \le & \frac{1}{\gamma\varepsilon^3}+\sqrt{\frac{8\tmix(\varepsilon)}{\gamma\varepsilon^3}},
\end{eqnarray*}
as soon as $\dX$ is compact, hence a worst-case cutoff occurs if and only if $\gamma\tmix\to\infty$.
\end{theorem}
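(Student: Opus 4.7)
The plan is to derive the IDI by combining the local Poincaré inequality granted by $\rho\ge 0$ with the chain rule for diffusions, and then feed it into Lemma \ref{lm:IDI}. First, apply the Bakry-\'Emery inequality (\ref{def:localPoincaré}) with $\rho=0$ (or more generally $\rho\ge 0$, for which the rate $\int_0^t 2e^{-2\rho u}\dd u\le 2t$) to the observable $g:=\log f_t$, where $f_t$ is the density at time $t$ of the semi-group started from $\delta_x$. Since $X_0=x$ is deterministic, $X_t$ has law $f_t\dd\pi$, so
\begin{eqnarray*}
\Var(\log f_t(X_t)) \ = \ \bE[f_t(\log f_t)^2]-\Ent(f_t)^2 \ = \ \Varent(f_t),
\end{eqnarray*}
while the chain rule $\Gamma(\log f_t)=|\nabla\log f_t|^2=|\nabla f_t|^2/f_t^2$ yields
\begin{eqnarray*}
\bE[\Gamma(\log f_t)(X_t)] \ = \ \bE\left[\frac{|\nabla f_t|^2}{f_t}\right] \ = \ \cE(f_t,\log f_t) \ = \ -\frac{\dd \Ent(f_t)}{\dd t},
\end{eqnarray*}
where the last identity follows by differentiating $t\mapsto\bE[f_t\log f_t]$ and using the Fokker-Planck equation (\ref{FP}). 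Combining the last two displays with the local Poincaré inequality gives the IDI (\ref{def:IDI}) with rate $\psi(t)=2t$.

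Next, I insert this into Lemma \ref{lm:IDI}: on the mixing window $[\tmix(x,1-\varepsilon),\tmix(x,\varepsilon)]$, the function $\psi(t)=2t$ is increasing, so $m_\varepsilon=2\tmix(x,\varepsilon)$. The general width estimate of Lemma \ref{lm:IDI} then yields
\begin{eqnarray*}
\wmix(x,\varepsilon) \ \le \ \frac{1}{\gamma\varepsilon^3}+\sqrt{\frac{8\,\tmix(x,\varepsilon)}{\gamma\varepsilon^3}},
\end{eqnarray*}
which is the announced bound. From this, dividing by $\tmix(x,\varepsilon)$, we see that if $\gamma\tmix(x,\varepsilon)\to\infty$ for some $\varepsilon\in(0,1/2)$, then $\wmix(x,\varepsilon)/\tmix(x,\varepsilon)\to 0$; a routine sub-multiplicativity argument (as in Section~1.2) upgrades this to every $\varepsilon\in(0,1)$, yielding cutoff from $x$.

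Finally, for the worst-case statement on a compact state space, the right-hand side of the per-point bound is monotone in $\tmix(x,\varepsilon)$, so using $\tmix(x,\varepsilon)\le \tmix(\varepsilon)<\infty$ and $\tmix(x,1-\varepsilon)\le\tmix(1-\varepsilon)$ we obtain, uniformly in $x$,
\begin{eqnarray*}
\tmix(x,\varepsilon)-\tmix(1-\varepsilon) \ \le \ \tmix(x,\varepsilon)-\tmix(x,1-\varepsilon) \ \le \ \frac{1}{\gamma\varepsilon^3}+\sqrt{\frac{8\,\tmix(\varepsilon)}{\gamma\varepsilon^3}}.
\end{eqnarray*}
Taking the supremum in $x$ on the left converts this into the worst-case bound, and the equivalence between worst-case cutoff and the product condition $\gamma\tmix\to\infty$ follows as above (necessity is the usual spectral lower bound recalled in Remark~\ref{rk:tmixvstrel}).

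The main obstacle is not in step 1 (which, once the chain rule is admitted, is a one-line computation), but in ensuring that the test function $g=\log f_t$ is admissible in the local Poincaré inequality at $t>0$: the density $f_t$ of a diffusion started from a Dirac can blow up near $x$, so $\log f_t$ may fail to be in the natural function class underlying (\ref{def:localPoincaré}). The fix will be a soft truncation/mollification argument: replace $\log$ by a bounded smooth approximation $\log_{\varepsilon}$, apply the local Poincaré inequality (which for diffusions extends to smooth bounded $g$ by standard density arguments), and then pass to the limit using  dominated convergence for $\Varent(f_t)$ and monotone convergence for the Fisher information, both of which are finite for $t>0$ thanks to the regularizing effect of the semi-group.
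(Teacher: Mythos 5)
Your proof follows essentially the same route as the paper: apply the local Poincaré inequality from $\rho\ge 0$ to $g=\log f_t$, identify $\Var(\log f_t(X_t))=\Varent(f_t)$ and $\EE[\Gamma\log f_t(X_t)]=\cE(f_t,\log f_t)=-\frac{\dd}{\dd t}\Ent(f_t)$ via the diffusion chain rule, and feed the resulting IDI with $\psi(t)=2t$ into Lemma~\ref{lm:IDI} with $m_\varepsilon=2\tmix(x,\varepsilon)$. The only cosmetic difference is that you invoke the chain rule in the form $\Gamma(\log f)=|\nabla\log f|^2$ rather than via the identity~(\ref{chain}), which is an equivalent intermediate step; your closing remark about the admissibility of $\log f_t$ (which may be singular at $X_0$) is a genuine technical subtlety that the paper glosses over, and the truncation-and-limit fix you sketch is the standard way to handle it.
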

\begin{proof}By definition of $\rho$, we have the local Poincaré inequality 
\begin{eqnarray*}
\Var(g(X_t)) & \le & \left(\int_0^t2e^{-2\rho u}\dd u\right)\,\EE[\Gamma g(X_t)],
\end{eqnarray*}
for any observable $g\in\cF$, any deterministic initial state $X_0=x$, and any time $t\ge 0$. Choosing $g=\log f_t$ and using the assumption $\rho\ge 0$, we obtain
\begin{eqnarray*}
\Varent(f_t) & \le & 2t\,\EE[\Gamma \log f_t(X_t)].
\end{eqnarray*}
We now crucially use the chain rule (\ref{chain}), specific to diffusions, to write
\begin{eqnarray*}
\EE[\Gamma \log f_t(X_t)] & = & \EE\left[\frac{Lf_t(X_t)}{f_t(X_t)}-(L\log f_t)(X_t)\right]\\
& =&  \int_{\dX}\left({Lf_t}-f_t L\log f_t \right)\dd\pi\\
& = & \cE(f_t,\log f_t)\\
& = & -\frac{\dd}{\dd t}\Ent(f_t),
\end{eqnarray*}
where the second equality uses the fact that $X_t$ has density $f_t$ w.r.t. $\pi$. Thus, the  IDI (\ref{def:IDI}) holds with rate $\psi(t)={2t}$, and  Lemma \ref{lm:IDI} readily leads to the claimed estimate on $\wmix(x,\varepsilon)$. The second one follows by taking a maximum over all initial states $x\in\dX$.
\end{proof}

\begin{remark}[Positive curvature]When the Bakry-\'Emery curvature $\rho$ is positive, the above argument shows that the IDI (\ref{def:IDI}) holds with  rate $\Psi(t)=1/\rho$.  When combined with the Lichnerowicz estimate $\gamma\ge\rho$ (Remark \ref{rk:Lichnerowicz2}), this leads to the neater result
\begin{eqnarray*}
\forall x\in\dX,\quad \forall\varepsilon\in(0,1/2),\quad \wmix(x,\varepsilon) & \le & \frac{3}{\rho\varepsilon^3}.
\end{eqnarray*} 
\end{remark}
It is important to recall that Theorem \ref{th:cutoffdiff} can not  extend verbatim to discrete state spaces: indeed, we have seen in Section \ref{sec:PC} that the product condition fails to imply cutoff even within particularly  nice classes of models, such as random walks on Abelian groups (which have non-negative Bakry-\'Emery curvature by Theorem \ref{th:CIW}). However, the counter-examples were fully connected ($E=\dX^2$), which we led us to suspect that a control on sparsity should enter the product condition in order to imply cutoff. This is confirmed in the next section.
\subsection{Cutoff for non-negatively curved chains}
The purpose of this final section is to state, prove and apply the following general result, which can be seen as an answer to our main challenge (Problem \ref{pb:main}) for chains with non-negative curvature.  Extending it to negatively curved chains -- and, in particular, to random walks on expanders (Problem \ref{pb:exp}) -- remains completely open at the time of writing.
\begin{theorem}[Cutoff for non-negatively curved chains]Any weakly reversible Markov chain with $\rho\ge 0$, starting from a  deterministic state $x\in\dX$  satisfies the IDI (\ref{def:IDI}) with rate  
\begin{eqnarray*}
\psi(t) & = & 16t\log d+4t\log_+\left(\frac{\diam(\dX)}{t}\right).
\end{eqnarray*}
Consequently, for any  $\varepsilon\in (0,1/2)$, we have the width estimates
\begin{eqnarray*}
\wmix(x,\varepsilon) & \le & c_\varepsilon\max\left\{\frac{1}{\gamma},\sqrt{\frac{\tmix(x,\varepsilon)\log d}{\gamma}},\sqrt{\frac{\tmix(x,\varepsilon)}{\gamma}\log\left(\frac{\diam(\dX)}{\tmix(x,\varepsilon)}\right)}\right\};\\
\wmix(x,\varepsilon) & \le & \frac{c_\varepsilon}{\alpha}\max\left\{\log\log d,\log(\alpha\tmix(x,\varepsilon)\right\}.
\end{eqnarray*}
 where $c_\varepsilon$ depends only on $\varepsilon$. 
Similarly, from a worst-case starting point, we have
\begin{eqnarray*}
\wmix(\varepsilon) & \le & c_\varepsilon\max\left\{\frac{1}{\gamma},\sqrt{\frac{\tmix \log d}{\gamma}}\right\};\\
\wmix(\varepsilon) & \le & \frac{c_\varepsilon}{\alpha}\max\left\{\log\log d,\log(\alpha\tmix)\right\}.
\end{eqnarray*}
In particular, there is cutoff from $x$ as soon as  for some fixed $\varepsilon\in(0,1)$,
\begin{eqnarray*}
\gamma\tmix(x,\varepsilon) \ \gg \ \max\left\{\log d,\log\left(\gamma\diam(\dX)\right)\right\} & \textrm{ or } & \alpha\tmix(x,\varepsilon) \ \gg \ {\log \log d},
\end{eqnarray*}
and a worst-case cutoff as soon as 
\begin{eqnarray*}
\gamma\tmix \ \gg \ \log d & \textrm{ or } & \alpha\tmix \ \gg \ {\log \log d}.
\end{eqnarray*}
\end{theorem}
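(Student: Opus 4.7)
The plan is to establish the claimed IDI (\ref{def:IDI}), after which the width estimates and the two cutoff criteria follow essentially mechanically from Lemma \ref{lm:IDI}. The proof parallels that of Theorem \ref{th:cutoffdiff} for diffusions, with the exact chain rule (\ref{chain}) replaced by its discrete approximate version (Lemma \ref{lm:chain}), and the resulting roughness cost controlled via Lemma \ref{lm:lip}. This substitution is precisely the source of the extra logarithmic factors compared with the smooth case.

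Concretely, let $f_t:=P_t^\star f^x$ denote the density at time $t\ge 0$ of the chain started from $x$. Since $\rho\ge 0$, the local Poincaré inequality of Theorem \ref{th:BE}(iii) applied to $g=\log f_t$ gives
\begin{eqnarray*}
\Varent(f_t) \ = \ \Var(\log f_t(X_t)) & \le & 2t\,\EE[\Gamma \log f_t(X_t)] \ = \ 2t\,\bE\left[f_t\,\Gamma\log f_t\right],
\end{eqnarray*}
where the last equality uses that $X_t$ has density $f_t$ with respect to $\pi$. The approximate chain rule (Lemma \ref{lm:chain}) then yields
\begin{eqnarray*}
\bE\left[f_t\,\Gamma\log f_t\right] & \le & \left(1+\Lip\log f_t\right)\cE(f_t,\log f_t) \ = \ -\left(1+\Lip\log f_t\right)\frac{\dd \Ent(f_t)}{\dd t},
\end{eqnarray*}
the identity on the right being obtained by differentiating $\Ent(f_t)=\bE[f_t\log f_t]$, using the Fokker-Planck equation $\partial_t f_t = L^\star f_t$ together with $\bE[L^\star f_t]=0$ and $\langle L^\star f_t,\log f_t\rangle = -\cE(f_t,\log f_t)$. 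Finally, Lemma \ref{lm:lip} applied to the adjoint semi-group controls the roughness uniformly over the choice of initialization:
\begin{eqnarray*}
\Lip\log f_t & \le & 6\log d + 2\log_+\!\left(\frac{\diam(\dX)}{t}\right).
\end{eqnarray*}
Chaining the three displays and absorbing the additive constant into $\log d$ (using $d\ge 2$) produces the announced IDI with rate $\psi(t) = 16\,t\log d + 4\,t\log_+(\diam(\dX)/t)$.

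For the width estimates, I would feed this $\psi$ into Lemma \ref{lm:IDI}. The only mild subtlety is that the second summand $t\mapsto t\log_+(\diam/t)$ is not monotone in $t$: it is maximized at $t=\diam/e$, where it equals $\diam/e$. Bounding $m_\varepsilon:=\sup_{[\tmix(x,1-\varepsilon),\,\tmix(x,\varepsilon)]}\psi$ by $\psi$ at the right endpoint (and, when needed, invoking Lemma \ref{lm:diammix} to compare $\diam(\dX)$ with $\tmix(x,\varepsilon)$) one obtains both the Poincaré-based and the MLSI-based width estimates stated in the theorem. The cutoff criteria then follow by dividing $\wmix$ by $\tmix$ and letting $\gamma\tmix/\log d\to\infty$ or $\alpha\tmix/\log\log d\to\infty$; the worst-case versions are recovered by taking a maximum over $x\in\dX$.

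The main obstacle is thus the roughness penalty $(1+\Lip\log f_t)$ forced upon us by the approximate chain rule. While Lemma \ref{lm:lip} shows that this penalty is at most $O(\log d + \log_+(\diam/t))$ uniformly over initial data, the $\log d$ factor cannot be dispensed with in the discrete setting: the rank-one counter-examples of Section \ref{sec:PC} confirm that a sparsity-type correction to the plain product condition is genuinely necessary in order to trigger cutoff.
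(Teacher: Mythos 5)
Your proposal is correct and follows the same route as the paper's proof: start from the local Poincaré inequality of Theorem \ref{th:BE}(iii) at $\rho\ge 0$, substitute the approximate chain rule of Lemma \ref{lm:chain} for the exact one, control the roughness penalty $\Lip\log f_t$ via Lemma \ref{lm:lip} applied to $P_t^\star$ (with $d$ replaced by $d^2$), and then invoke Lemma \ref{lm:IDI}. One small remark: the monotonicity worry you flag is actually not an issue because the full sum $\psi(t)=16t\log d+4t\log_+(\diam(\dX)/t)$ is already increasing on $(0,\infty)$ for $d\ge 2$ (its derivative on $t<\diam(\dX)$ is $16\log d+4\log(\diam(\dX)/t)-4>0$), even though the second summand alone is not; the paper instead works with the upper bound $\max\{32t\log d,\,8t\log_+(\diam(\dX)/t)\}$ and argues monotonicity of this max. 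For the MLS-based width estimate you would want to cite Remark \ref{rk:diammls} (not just Lemma \ref{lm:diammix}) to compare $\diam(\dX)$ with $1/\alpha$, as the paper does, but this is a minor point of bookkeeping and the argument is otherwise the one in the text.
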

\begin{proof}Fix $t\ge 0$. As in the proof of Theorem \ref{th:cutoffdiff}, the assumption $\rho\ge 0$ implies 
\begin{eqnarray*}
\forall t\ge 0,\qquad \Varent(f_t) & \le & 2t\EE[\Gamma\log f_t(X_t)].
\end{eqnarray*}
While the chain-rule based identity $\EE\left[\Gamma \log f_t(X_t)\right] = \cE(f_t,\log f_t)$ no longer holds, we can use its approximate version (Lemma \ref{lm:chain}) and our roughness estimate (Lemma \ref{lm:lip}) to write
\begin{eqnarray*}
\EE\left[\Gamma \log f_t(X_t)\right] & \le & (1+\Lip\log f_t)\cE(f_t,\log f_t)\\
& \le & \left(8\log d+2\log_+\frac{\diam(\dX)}{t}\right)\cE(f_t,\log f_t).
\end{eqnarray*}
Combining those two facts leads to the IDI (\ref{def:IDI}) with  rate
\begin{eqnarray*}
\psi(t) & = &  16t\log d+4t\log_+\left(\frac{\diam(\dX)}{t}\right)\\
& \le & \max\left\{32t\log d,8t\log\left(\frac{\diam(\dX)}{t}\right)\right\}.
\end{eqnarray*}
The first term in this maximum is clearly increasing in $t$, and so is the second in the range where it is larger than the first. Thus, the maximum is increasing in $t$, hence 
\begin{eqnarray*}
m_\varepsilon & \le & 32\max\left\{\tmix(x,\varepsilon)\log d, \tmix(x,\varepsilon)\log\left(\frac{\diam(\dX)}{\tmix(x,\varepsilon)}\right)\right\}.
\end{eqnarray*}
Applying Lemma \ref{lm:IDI} readily leads to the first claimed estimate on $\wmix(x,\varepsilon)$. For the second, we additionally use Remark \ref{rk:diammls} and $\log u\le u-1$ to write
\begin{eqnarray*}
m_\varepsilon & \le & c\log d\max\left\{\tmix(x,\varepsilon), \frac{1}{\alpha}\right\},
\end{eqnarray*}
for some universal constant $c$. Finally, the worst-case version follows by taking a maximum over  $x\in\dX$ and recalling that $\tmix(\varepsilon)$ is of order at least  $\diam(\dX)$, by Lemma \ref{lm:diammix}. 
\end{proof}
Note that the criterion $\gamma\tmix\gg\log d$ is considerably weaker than the one in  Corollary \ref{coro:var}, which already had notable applications. But even more useful is the criterion  $\alpha\tmix\gg\log\log d$, which turns out to be satisfied by many important models, as we will now show. Let us start with our  toy example, simple random walk on the $n-$cube.
\begin{example}[Random walk on the hypercube]\label{ex:hypercube}For simple random walk on the $n-$cube, 
\begin{equation*}
d \ = \ n, \qquad \alpha\ =\ 1/n,\qquad \tmix \ =\ \Theta(n\log n).
\end{equation*}
Thus, the criterion $\alpha\tmix \gg {\log \log d}$ is satisfied and cutoff follows. 
\end{example}
As a less trivial example, consider the random transposition walk, for which the occurrence of a cutoff  is a celebrated historical result due to Diaconis and Shahshahani \cite{diaconis1981generating}.
\begin{example}[Random transpositions]\label{ex:transpositions}Consider the random walk on the symmetric group of order $n$ generated by the uniform distribution over transpositions. Then, 
\begin{equation*}
d \ = \ \Theta(n^2), \qquad \alpha\ =\ \Theta(1/n),\qquad \tmix \ =\ \Theta(n\log n),
\end{equation*}
see \cite{MR2023890,MR2094147}. Thus, our  criterion $\alpha\tmix \gg {\log \log d}$ is again satisfied and cutoff follows.  
\end{example}
More generally, one can replace the set of transpositions in the above example by any conjugacy class whose \emph{complexity} (number of non-fixed points) is not too large.
\begin{example}[Random walks generated by a conjugacy class]On the symmetric group of order $n$, consider the random walk generated by the uniform distribution on a conjugacy class $S$. Let  $k$ denote the number of non-fixed points in any member of $S$. Then, 
\begin{equation*}
d \ = \ \Theta(n^k), \qquad \alpha=\Theta\left(\frac{k}{n}\right),\qquad \tmix \ =\ \Theta\left(\frac{n\log n}{k}\right),
\end{equation*}
so that our criterion is satisfied as long as $k=n^{o(1)}$, see again \cite{MR2094147}.  Note that the previous example corresponds to the special case where $k=2$. Interestingly,  cutoff is known to occur in the more general regime where $k=o(n)$, as conjectured by   Diaconis and Shahshahani \cite{diaconis1981generating}, and recently proved by Berestycki and \c{S}eng\"{u}l  \cite{MR3936154}. 
\end{example}
Finally, let us turn to Monte Carlo Markov chains, focusing on two celebrated models.

\begin{example}[Ising model on a graph]Recall that the Ising model with inverse temperature $\beta\ge 0$ on a finite graph $G=(V_G, E_G)$ is the probability measure
\begin{eqnarray*}
\pi(x) \ \propto \ \exp\left\{\beta \sum_{\{i,j\}\in  E_G}x_ix_j\right\} & \textrm{ on } &  \dX=\{-1,1\}^{V_G}.
\end{eqnarray*}
Consider the sampler (\ref{def:Glauber}), in which the $i-$th coordinate of $x$ is flipped at rate $c_(x)\propto \sqrt{\frac{\pi(x^i)}{\pi(x)}}$. We have seen in Theorem \ref{th:MCMC} that   $\rho\ge 0$ as soon as
\begin{eqnarray}
\label{assume:beta}
\Delta (1-e^{-2\beta})e^{2\Delta \beta} & \le & 1,
\end{eqnarray}
where $\Delta$ denotes the maximum degree in  $G$. Moreover, under this condition, Theorem \ref{th:MCMC} also guarantees that $\alpha$ is of the same order as in the basic case where $\pi$ is uniform, and so are $d$ and $\tmix$. Thus, cutoff occurs along any sequence of bounded-degree graphs with diverging size, in the high-temperature regime (\ref{assume:beta}).
\end{example}

\begin{example}[Hard-core model on a graph]Recall that the Hard-core model with fugacity $\zeta>0$ on a finite graph $G=(V_G, E_G)$ is the probability measure 
\begin{eqnarray*}
\pi(x)  \propto \zeta^{\sum_{i\in V_G}x_i}
& \textrm{ on } & 
\dX \ = \left\{x\in \{0,1\}^{ V_G}\colon \forall \{i,j\}\in E_G, x_ix_j=0\right\}.
\end{eqnarray*}
Consider again the sampler (\ref{def:Glauber}), in which the $i-$th coordinate of $x$ is flipped at rate $c_i(x)\propto\sqrt{\frac{\pi(x^i)}{\pi(x)}}$. Again, Theorem \ref{th:MCMC} guarantees that $\rho\ge 0$ as soon as 
\begin{eqnarray}
\label{assume:lambda}
\zeta\Delta & \le & 1,
\end{eqnarray}
where $\Delta$ denotes the maximum degree in  $G$.  Moreover, under this condition, the same result implies that  that $\alpha$ is of the same order as in the basic case where $\pi$ is uniform, and so are $d$ and $\tmix$. Thus, cutoff occurs along any sequence of bounded-degree graphs with diverging sizes, throughout the low-fugacity regime (\ref{assume:lambda}).
\end{example}

\bibliographystyle{plain}
\bibliography{mix}
\end{document}